\documentclass[a4paper,12pt]{amsart}

\usepackage{amsmath}
\usepackage{amsfonts}
\usepackage{amssymb}
\usepackage{amsthm}
\usepackage[all]{xy}
\usepackage{caption}
\usepackage[CJKbookmarks]{hyperref}
\usepackage{fancyhdr}
\usepackage{tikz-cd}
\usepackage{geometry}

\theoremstyle{definition}
\newtheorem{definition}{Definition}[section]
\newtheorem{remark}[definition]{Remark}

\newtheorem{example}[definition]{Example}
\newtheorem{question}[definition]{Question}

\newtheorem{hypothesis}[definition]{Hypothesis}
\newtheorem{setting}[definition]{Setting}

\theoremstyle{plain}
\newtheorem{theorem}[definition]{Theorem}

\newtheorem{lemma}[definition]{Lemma}
\newtheorem{proposition}[definition]{Proposition}
\newtheorem{corollary}[definition]{Corollary}

\DeclareMathOperator{\cA}{\mathcal{A}}
\DeclareMathOperator{\cB}{\mathcal{B}}
\DeclareMathOperator{\cC}{\mathcal{C}}

\DeclareMathOperator{\cG}{\mathcal{G}}
\DeclareMathOperator{\cH}{\mathcal{H}}
\DeclareMathOperator{\cI}{\mathcal{I}}
\DeclareMathOperator{\cL}{\mathcal{L}}
\DeclareMathOperator{\cM}{\mathcal{M}}
\DeclareMathOperator{\cN}{\mathcal{N}}

\DeclareMathOperator{\cK}{\mathcal{K}}
\DeclareMathOperator{\cR}{\mathcal{R}}
\DeclareMathOperator{\cS}{\mathcal{S}}

\DeclareMathOperator{\cU}{\mathcal{U}}
\DeclareMathOperator{\cV}{\mathcal{V}}
\DeclareMathOperator{\cX}{\mathcal{X}}
\DeclareMathOperator{\cY}{\mathcal{Y}}
\DeclareMathOperator{\cZ}{\mathcal{Z}}

\DeclareMathOperator{\add}{add}

\DeclareMathOperator{\Coker}{Coker}

\DeclareMathOperator{\depth}{depth}

\DeclareMathOperator{\End}{End}
\DeclareMathOperator{\gEnd}{\underline{End}}
\DeclareMathOperator{\Ext}{Ext}
\DeclareMathOperator{\gExt}{\underline{Ext}}

\DeclareMathOperator{\gr}{gr}

\DeclareMathOperator{\gldim}{gldim}

\DeclareMathOperator{\GL}{GL}
\DeclareMathOperator{\Gr}{Gr}

\DeclareMathOperator{\hdet}{hdet}
\DeclareMathOperator{\Hom}{Hom}
\DeclareMathOperator{\gHom}{\underline{Hom}}

\DeclareMathOperator{\id}{id}
\DeclareMathOperator{\idim}{idim}
\DeclareMathOperator{\image}{Im}

\DeclareMathOperator{\im}{Im}

\DeclareMathOperator{\Ker}{Ker}

\DeclareMathOperator{\Lim}{Lim}
\DeclareMathOperator{\lcd}{lcd}

\DeclareMathOperator{\MCM}{MCM}
\DeclareMathOperator{\Mod}{Mod}

\DeclareMathOperator{\qgr}{qgr}
\DeclareMathOperator{\QGr}{QGr}

\DeclareMathOperator{\pdim}{pdim}

\DeclareMathOperator{\proj}{proj}

\DeclareMathOperator{\rank}{rank}

\DeclareMathOperator{\Rep}{Rep}

\DeclareMathOperator{\soc}{soc}

\DeclareMathOperator{\SL}{SL}
\DeclareMathOperator{\Tor}{Tor}
\DeclareMathOperator{\tor}{tor}

\newcommand{\dlim}{\underrightarrow{\Lim}}

\DeclareMathOperator{\D}{\mathbf{D}}

\begin{document}
\title[Noncommutative resolutions of isolated singularities]{Noncommutative resolutions of noncommutative isolated singularities}
\author{Haonan Li}
\thanks{Haonan Li: School of Mathematics,
Shanghai University of Finance and Economics,
 Shanghai, 200433,
China.
Email: lihaonan@mail.shufe.edu.cn}
\author{Quanshui Wu}
\thanks{Quanshui Wu: School of Mathematical Sciences,
Fudan University,
Shanghai, 200433,
 China. 
 Email: qswu@fudan.edu.cn}
\thanks{Haonan Li is supported by the Fundamental Research Funds for the Central Universities. Quanshui Wu is supported by the NSFC (Grant No. 12471032).}

\keywords{Noncommutative resolution, noncommutative isolated singularity, noncommutative projective scheme, commonly graded algebra, Hopf  action}
\subjclass[2020]{14A22, 16S38, 16W50, 16E65, 16S40, 14E15}

\newgeometry{left=3.18cm,right=3.18cm,top=2.54cm,bottom=2.54cm}

\begin{abstract}
Noncommutative resolutions of AS-Gorenstein isolated singularities are investigated in \cite{LSW}. However, establishing their existence and constructing such resolutions are generally difficult, even when they exist. 
In this paper, we study the homological properties of noncommutative projective schemes associated with noncommutative isolated singularities and consider conditions under which a given commonly graded AS-regular algebra serves as a noncommutative resolution of an isolated singularity. 
This leads to a more general definition of noncommutative resolutions of balanced Cohen--Macaulay isolated singularities. 
We show that the existence of such resolutions is equivalent to the existence of cluster tilting modules over balanced CM isolated singularities.
The corresponding noncommutative analogue of the Bondal-Orlov conjecture is established in dimensions $2$ and $3$.
As an application, we study Hopf actions on commonly graded AS-Gorenstein algebras and investigate noncommutative resolutions of invariant rings. We present three examples of noncommutative resolutions, including one in which the noncommutative isolated singularity is not connected graded.
\end{abstract}

\maketitle

\tableofcontents

\section{Introduction}
Van den Bergh \cite{V1,V2} introduced the notion of noncommutative crepant resolutions as a bridge to explore the Bondal-Orlov conjecture \cite{BO1,BO2}, which concerns the uniqueness of the crepant resolutions of singularities. Inspired by Van den Bergh's work, the objects
to be resolved have also been extended to noncommutative rings. Work by Iyama, Reiten, and Wemyss in \cite{I2,IR,IW1,IW2} focuses on noncommutative resolutions of module-finite algebras, showing that such resolutions can be realized as endomorphism rings of certain cluster tilting modules.
In \cite{QWZ}, Qin, Wang, and Zhang introduced noncommutative quasi-resolutions for graded Auslander-Gorenstein algebras via equivalences between certain quotient categories.
Subsequently, He and Ye \cite{HY} introduced the notion of a noncommutative pre-resolution for a noncommutative isolated singularity. 
In this paper, we continue the work presented in \cite{LSW}, by studying noncommutative resolutions of noncommutative isolated singularities.

As mentioned above, noncommutative resolutions are realized as endomorphism rings of finitely generated modules. In the graded setting, such endomorphism rings should be bounded below $\mathbb{Z}$-graded locally finite algebras, which 
are called commonly graded algebras in the literature. 
Therefore, commonly graded algebras should play a prominent role in the theory of noncommutative resolutions. This perspective also motivates the extension of concepts and results from the connected graded case to the commonly graded setting.
Noncommutative projective schemes, originally introduced for $\mathbb{N}$-graded algebras in \cite{AZ}, play a central role in noncommutative projective geometry. 
In this study, we propose a minor modification to the definition of a noncommutative projective scheme to insure it satisfies the hypotheses of ``the noncommutative Serre theorem".
More precisely, for a right noetherian commonly graded algebra $A$, let $\qgr A$ denote the quotient of the category of finitely generated graded modules by the full subcategory of finite-dimensional graded modules. In this context, the image of $A$ in $\qgr A$ is denoted by $\cA$, and the auto-equivalence of $\qgr A$ induced by the shift functor is denoted by $s$.
When the triple $(\qgr A,\cA,s)$ satisfies the conditions outlined in Theorem \ref{noncommutative Serre theorem for commonly graded algebra}, 
it is referred to as a noncommutative projective scheme, and the pair $(\qgr A,s)$ is referred to as the noncommutative quasi-projective space associated with $A$. 
If the global dimension of $\qgr A$ is finite, then $A$ is called a noncommutative isolated singularity, which is defined in \cite{Jo, Ue} as an analogue of an isolated singularity. For a detailed discussion of this notion, see \cite[Corollary 4.13]{LW1}.

AS-regular algebras and AS-Gorenstein algebras serve as noncommutative analogues of polynomial rings and Gorenstein rings, respectively.
Commonly graded algebras with properties analogous to those of AS-Gorenstein (regular) algebras are termed commonly graded AS-Gorenstein (regular) algebras; see Definition \ref{def-generalized-AS-Gorenstein}.
Since AS-regular algebras are regarded as coordinate rings of noncommutative projective spaces $\text{q}\mathbb{P}^n$, 
the noncommutative projective schemes associated with commonly graded AS-regular algebras are regarded as smooth objects in noncommutative algebraic geometry.
Resolving a singularity consists of finding a smooth object that is equivalent, in an appropriate sense, to the singular object. In algebraic geometry, birational equivalence serves as the relevant notion of equivalence.
Following the approach of \cite{QWZ}, we replace birational equivalence with equivalence of noncommutative quasi-projective spaces.
Consequently, a noncommutative resolution of a commonly graded AS-Gorenstein isolated singularity is expected to be a noetherian commonly graded AS-regular algebra whose associated noncommutative quasi-projective space is equivalent to that of the singularity.
The following is the definition of a noncommutative resolution as given in \cite{LSW}.

\begin{definition}\label{def-nc-reso-AS-G}
Let $A$ be a commonly graded AS-Gorenstein isolated singularity of dimension $d\geqslant 2$. A {\it noncommutative resolution} of $A$ is a graded ring $B=\gEnd_A(M)$ for some maximal Cohen-Macaulay (for short, MCM) generator $M_A$ such that $B$ has finite graded global dimension $d$ and $B_B$ is an MCM $B$-module.
\end{definition}

It is proved in \cite[Section 6]{LSW} that a noncommutative resolution $B$ is a commonly graded AS-regular algebra of dimension $d$. Let $N=\gHom_A(M,A)$. 
Then $-\otimes_A N$ and $-\otimes_B M$ induce an equivalence between the noncommutative quasi-projective spaces:
$$-\otimes_{\cA}\cN:(\qgr A,s)\rightleftarrows (\qgr B,s):-\otimes_{\cB}\cM.$$
Observe that $N$ is a projective $B$-module and that $A\cong \gEnd_B(N)$.
Moreover, it can be shown that $M_A$ is a $(d-1)$-cluster tilting module.

Indeed, constructing a noncommutative resolution of a commonly graded AS-Gorenstein isolated singularity is generally challenging.
Considering the problem from the opposite perspective leads to the following question.

\begin{question}\label{question}
    Given a noetherian commonly graded AS-regular algebra, under what conditions is it a noncommutative resolution of a commonly graded AS-Gorenstein isolated singularity?
\end{question}

By the facts of noncommutative resolutions introduced above, 
if a commonly graded AS-regular algebra $B$ is a noncommutative resolution of $A$, then $A$ must be the graded endomorphism ring of a finitely generated projective $B$-module.

Next suppose that $B$ is a noetherian commonly graded AS-regular algebra, and let $M'$ be a finitely generated graded projective $B$-module. Let $A=\gEnd_B(M')$.
We wonder when $B$ is a noncommutative resolution of $A$. 
Note that, as $M'_B$ is projective, the natural morphism $M'\otimes_BM\to A$ is surjective, where $M=\gHom_B(M',B)$. If the cokernel of $M\otimes_AM'\to B$ is finite-dimensional, then $-\otimes_A M'$ and $-\otimes_B M$ induce an equivalence; see Theorem \ref{equ. of quot. cat. induced by Morita context for commonly graded}:
$$-\otimes_{\cA}\cM':(\qgr A,s)\rightleftarrows (\qgr B,s):-\otimes_{\cB}\cM.$$

The advantage is that, for a commonly graded AS-regular algebra, there are only finitely many indecomposable projective modules up to isomorphism and shift. So when a commonly graded AS-regular algebra is given, after passing to basic projective modules (i.e. the projective modules whose direct summands are non-isomorphic up to shift), only finitely many graded endomorphism algebras need to be considered up to graded Morita equivalence.

Based on the discussion above, we make the following hypothesis. See Section \ref{preliminaries} for notations. 

\begin{hypothesis}\label{hypothesis}
\begin{itemize}
    \item [(H1)] $B$ is a noetherian commonly graded AS-regular algebra of dimension $d\geqslant 2$ with $U=D(R^d\Gamma_B(B))$ being the dualizing module.
    \item [(H2)] $M'_B$ is a finitely generated projective $B$-module. 
    \item [(H3)] Let $A=\gEnd_B(M')$ and $M=\gHom_B(M',B)$. The cokernel of the natural morphism $M\otimes_A M'\to B$ is finite-dimensional.
\end{itemize}  
\end{hypothesis}

In the ungraded setting, assuming that $B$ is a Calabi-Yau algebra and that $B/BeB$ is finite-dimensional, Amiot, Iyama, and Reiten \cite[Section 2]{AIR} studied the endomorphism ring $eBe$ of the projective module $eB$, where $e$ is an idempotent of $B$. They proved that $eBe$ is Gorenstein.
By \cite{RR}, $\mathbb N$-graded Calabi--Yau algebras are AS-regular. 
Thus, Hypothesis \ref{hypothesis} may be regarded as a generalization of the assumptions of Amiot, Iyama, and Reiten.
However, commonly graded AS-regular algebras generally do not possess the symmetry characteristic of Calabi--Yau algebras. Consequently, the graded endomorphism ring $A$ of $M$ in Hypothesis \ref{hypothesis} is usually not Gorenstein.

In fact, under the Hypothesis \ref{hypothesis}, it is demonstrated that $A$ is a balanced CM isolated singularity (see Definition \ref{balanced CM isolated singularity}). This suggests that we should extend our focus beyond merely the noncommutative resolutions of AS-Gorenstein isolated singularities. 

Following the principle that ``resolving a singularity is to find some smooth object which is equivalent to the singular one in some sense," suppose that,
for a noetherian noncommutative isolated singularity $A$, there exists a noetherian commonly graded AS-regular algebra $B$ such that the noncommutative quasi-projective spaces associated with $A$ and $B$ are equivalent.
Then $B$ is called a noncommutative quasi-resolution of $A$ (Definition \ref{def-noncom-quasi-resolution}). This terminology originates from \cite{QWZ} (see Remark \ref{remark of nqr}).
By the theory of equivalences of noncommutative quasi-projective spaces, 
the equivalence $(\qgr A,s)\cong (\qgr B,s)$ is induced by a graded Morita context defined by a graded $A$-module $M_A$, whose properties are closely related to those of cluster-tilting modules (Theorem \ref{d-1 CT and NQR}).
If $M_A$ is also a generator, then $B$ is called a noncommutative resolution of $A$ (Definition \ref{def-noncom-reso}), generalizing Definition \ref{def-nc-reso-AS-G} (see Proposition \ref{M is MCM iff M is a generator}).

The following theorem provides, in a certain sense, an answer to Question \ref{question}.

\begin{theorem}[Theorems \ref{hypo. is equi. to B is ncr of A}-\ref{when A is GAS Gorenstein}]\label{main theorem}
Suppose that Hypothesis \ref{hypothesis} holds. Then
\begin{itemize}
    \item [(1)] $A$ is a balanced CM isolated singularity of dimension $d$;
    \item [(2)] the balanced dualizing complex of $A$ is $(M'\otimes_BU\otimes_BM)[d]$;
    \item [(3)] $M_A$ is a $(d-1)$-cluster tilting module;
    \item [(4)] $A$ is commonly graded AS-Gorenstein if and only if $M'\otimes_BU\otimes_BM$ is a projective $A$-module (or $A^o$-module).
\end{itemize}

Hypothesis \ref{hypothesis} holds if and only if $B$ is a noncommutative resolution of the noetherian noncommutative projective coordinate ring $A$ given by $(M,M')$.
\end{theorem}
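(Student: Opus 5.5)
The plan is to work throughout with the graded Morita context $(A,B,M,M')$, with $A=\gEnd_B(M')$ and $M=\gHom_B(M',B)$. Since $M'_B$ is finitely generated projective, we have $M'\otimes_B M\cong A$ via the natural map, and $M\cong \gHom_B(M',B)$ is a projective left $B$-module. Hence $A\cong \gEnd_B(M')$, and $M_A$ has $\gEnd_A(M)\cong B$ by the double centralizer property of the context. By (H3) and Theorem \ref{equ. of quot. cat. induced by Morita context for commonly graded}, the functors $-\otimes_A M'$ and $-\otimes_B M$ induce an equivalence $(\qgr A,s)\simeq(\qgr B,s)$. Every homological statement about $A$ will therefore be transported from $B$ through this equivalence and through the exact functor $-\otimes_B M$.

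For (1) and (2), the noetherianity of $A$ follows because $A$ is a direct summand-like corner of a matrix algebra over $B$: concretely, $M'$ is a summand of $\bigoplus B(l_i)$, so $A$ is a corner $e\,\gEnd(\bigoplus B(l_i))\,e$. Since $\qgr B$ has finite global dimension, so does $\qgr A$, and $A$ is a noncommutative isolated singularity. For the balanced dualizing complex, we first use that $B$ has balanced dualizing complex $U[d]$ by (H1). For a corner ring, local cohomology is computed as
$$R\Gamma_A(X)\cong R\Gamma_B(X\otimes_A M')\otimes_B M.$$
Equivalently, we use $\Gamma_A(A)=M'\otimes_B\Gamma_B(B)\otimes_B M$, which relies on $M',M$ being projective on the relevant sides and on torsion being preserved. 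This gives $R^i\Gamma_A(A)=0$ for $i\neq d$ and $D(R^d\Gamma_A(A))\cong M'\otimes_B U\otimes_B M$, from both sides. Then we verify the $\chi$ condition and finite local cohomological dimension, which transfer from $B$. Van den Bergh's criterion then yields the balanced dualizing complex $(M'\otimes_BU\otimes_BM)[d]$, which is concentrated in one degree, so $A$ is balanced CM of dimension $d$.

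For (3), we show that $M_A$ is MCM and $(d-1)$-cluster tilting. First, $R\gHom_A(M,X)$ corresponds to $X\otimes_A M'$, since $M_A\cong B\otimes_B M$ and $\gHom_A(M,-)\cong -\otimes_A M'$ for this context. Next, we apply Theorem \ref{d-1 CT and NQR}, which characterises when a module defining the equivalence of quasi-projective spaces is cluster tilting in terms of the depth of $\gEnd_A(M)=B$. Its hypotheses are that $B$ has global dimension $d$ and $B_B$ is MCM (AS-regular), so the theorem gives $M$ $(d-1)$-cluster tilting. For (4), AS-Gorensteinness is equivalent to the dualizing module $\omega=M'\otimes_BU\otimes_BM$ being invertible, i.e.\ a shifted projective, in the commonly graded sense of Definition \ref{def-generalized-AS-Gorenstein}. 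Since $\omega$ is a balanced dualizing bimodule, $\omega_A$ being projective forces $\omega$ to be projective on both sides, via $\gEnd_A(\omega)\cong A$.

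The final equivalence follows by checking definitions. If $B$ is a noncommutative resolution given by $(M,M')$, then the Morita context yields (H2) and (H3), and AS-regularity of $B$ yields (H1). Conversely, (1)--(3) together with the equivalence give Definition \ref{def-noncom-reso}, using that $M$ is a generator: $A_A$ is a summand of $M^{n}$ since $M'\otimes_BM\cong A$ and $M'$ is a summand of a free $B$-module. The main obstacle I expect is the local cohomology computation in (2), in particular justifying that $\Gamma_A$ commutes with the Morita functors and verifying the $\chi$ condition for $A$ when $B/BeB$ is only finite-dimensional rather than zero.
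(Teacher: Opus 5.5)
Your argument for (3) has a genuine gap. The identification of $R\gHom_A(M,X)$ with $X\otimes_AM'$ is false. The paper only has $\pi_B\gHom_{\cA}(\cM,-)\cong-\otimes_{\cA}\cM'$ on $\qgr A$ (Theorem~\ref{Morita for nc quasi-proj space}(5)). If $\gHom_A(M,-)$ were isomorphic on $\Gr A$ to the right exact functor $-\otimes_AM'$, it would be exact. Then $M_A$ would be projective, and the groups $\gExt^i_A(M,-)$ that cluster tilting concerns would all vanish.

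Moreover, Theorem~\ref{d-1 CT and NQR} does not say that $M$ is cluster tilting. It describes $\add_AM$ inside $C_A$ via the quotient-category groups $\gExt^i_{\cA}$, whereas Definition~\ref{cluster tilting module} is about $\MCM(A)$ and the module groups $\gExt^i_A$. Two missing ingredients bridge this.
\begin{enumerate}
\item $M_A$ must be MCM, which your sketch never shows. The paper gets this from $\depth_{B^o}M=d$ (as ${}_BM$ is projective over the AS-regular $B$) together with $R\Gamma_A(M)\cong R\Gamma_{B^o}(M)$ \cite[Corollary 4.8]{V3}; see Proposition~\ref{H1-H3 imply A balanced CM}.
\item For $X\in\MCM(A)$ and $0<i<d-1$ one needs $\gExt^i_A(M,X)\cong\gExt^i_{\cA}(\cM,\cX)$ and $\gExt^i_A(X,M)\cong\gExt^i_{\cA}(\cX,\cM)$ \cite[Lemma 5.5]{LSW}. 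This holds because the second argument $Y$ has depth $d$. Hence in the triangle $R\Gamma_A(Y)\to Y\to R\omega\pi Y$ the term $R\gHom_A(N,R\Gamma_A(Y))$ vanishes below degree $d$.
\end{enumerate}
Only then, using $\MCM(A)\subseteq C_A$ for $d\geqslant2$, do the two equalities of Theorem~\ref{d-1 CT and NQR} restrict to the two equalities defining a $(d-1)$-cluster tilting module. This is exactly the paper's proof of (2)$\Rightarrow$(1) in Theorem~\ref{M_A is CT module}.

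For (1)--(2) your route differs from the paper's and can be made to work, but its key step is left open. The paper does not compute $R\Gamma_A(A)$ as a bimodule complex. It first gets a balanced dualizing complex by transfer (Proposition~\ref{when B is GAS-Gorenstein iso. sing.}). It then reads off balanced CM-ness from $R^{i+1}\Gamma_A(A)\cong\gExt^i_B(M',M')=0$ (Corollary~\ref{balanced CM from self extensions}). Finally it identifies the canonical bimodule via uniqueness of the Serre functor on $\D^b(\qgr A)$, followed by $\omega$ and depth $\geqslant2$ (Proposition~\ref{dualizing complex of A}).

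Writing $M'=eB$ and $M=Be$, your formula $R\Gamma_A(Ye)\cong R\Gamma_B(Y)e$ would give (2) with its bimodule structure directly, and MCM-ness of $M_A$ as well. To prove it you must show two things.
\begin{itemize}
\item $\Gamma_A(Ye)=\Gamma_B(Y)e$. Here (H3) enters as $B_{\geqslant n}\subseteq BeB$, together with finite generation of ${}_AeB$.
\item $Ie$ is $\Gamma_A$-acyclic for every injective $I$. For torsion-free $I$, this holds because $\Tor^A_i(X,eB)$ is killed by $e$ for $i>0$, hence torsion. For torsion $I$, it follows from stability of the torsion class.
\end{itemize}
The $\chi$-condition is not really an obstacle. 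Lemma~\ref{chi between equi. of qgr} gives it for $A$, since $\gExt^i_B(M',-)=0$. Its left version gives it for $A^o$, because ${}_BM$ is projective, $\gEnd_{B^o}(M)\cong A^o$, and the cokernel in (H3) is the same on both sides.

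Two smaller points. A balanced CM isolated singularity needs $\gldim(\qgr A)=\gldim(\qgr A^o)=d-1$, not mere finiteness; this follows from $\gldim(\qgr B)=d-1$ via Theorem~\ref{canonical bimodule and isolated singularity}. Also, $\gEnd_A(M)\cong B$ is not a double-centralizer fact here; it comes from $\depth_AM\geqslant2$ and $\gEnd_{\cA}(\cM)\cong\gEnd_{\cB}(\cB)=B$. Your arguments for (4) and for the final equivalence match the paper's.
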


Moreover, as in the case of noncommutative resolutions of AS-Gorenstein isolated singularities \cite{LSW}, noncommutative resolutions of balanced CM isolated singularities are given by cluster tilting modules, and conversely, cluster tilting modules yield noncommutative resolutions.

\begin{theorem}[Theorem \ref{M_A is CT module}]\label{main theorem CT ncr}
Let $A$ be a balanced CM isolated singularity of dimension $d\geqslant 2$, and let $M\in\MCM A$. Set $B=\gEnd_A(M)$ and $M'=\gHom_A(M,A)$. Then the following conditions are equivalent:
\begin{itemize}
    \item [(1)] $M$ is a $(d-1)$-cluster tilting $A$-module.
    \item [(2)] $B$ is a noncommutative resolution of $A$.
\end{itemize}
When these conditions hold, $B$ is a noetherian commonly graded AS-regular algebra of global dimension $d$.
\end{theorem}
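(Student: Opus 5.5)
The plan is to show both directions by routing through Theorem~\ref{main theorem} and the general equivalence of Theorem~\ref{equ. of quot. cat. induced by Morita context for commonly graded}. The cluster tilting property will be matched with finiteness of global dimension together with the MCM property of $B_B$.

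\textbf{(2)$\Rightarrow$(1).} Suppose $B=\gEnd_A(M)$ is a noncommutative resolution of $A$. Then $B$ is a noetherian commonly graded AS-regular algebra of dimension $d$, and $M'=\gHom_A(M,A)$ is a finitely generated projective $B$-module: since $M$ is a generator, $A$ is a summand of some $\bigoplus M(i)$, so $M'$ is a summand of $\bigoplus B(-i)$. Reflexivity of MCM modules over the balanced CM isolated singularity gives $A\cong\gEnd_B(M')$ and $M\cong\gHom_B(M',B)$. The Morita context $(M,M')$ induces the equivalence $(\qgr A,s)\simeq(\qgr B,s)$, so the cokernel of $M\otimes_AM'\to B$ is finite-dimensional. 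Hence Hypothesis~\ref{hypothesis} holds, and Theorem~\ref{main theorem}(3) gives that $M$ is $(d-1)$-cluster tilting.

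\textbf{(1)$\Rightarrow$(2).} Assume $M$ is $(d-1)$-cluster tilting.
\begin{enumerate}
\item $A\in\add M$ up to shifts, by the defining property $\add M=\{X\in\MCM A:\gExt^i_A(M,X)=0,\ 0<i<d-1\}$ and $A\in\MCM A$. So $M$ is a generator, $M'_B$ is projective, and $B$ is noetherian since $M$ is finitely generated over a noetherian $A$.
\item Since $M$ is MCM and $A$ is an isolated singularity, $\gHom_A(M,-)$ becomes an equivalence after passing to $\qgr$. The cokernel of $M\otimes_AM'\to B$ is the quotient of $B$ by the ideal of maps factoring through $\add A$. That ideal is $\underline{\gEnd}_A(M)$, which is finite-dimensional by the isolated singularity condition ($\qgr A$ of finite global dimension forces stable Hom between MCM modules to be finite-dimensional). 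So (H3) holds.
\item It remains to prove (H1), that $B$ has global dimension $d$ and is AS-regular. Following Iyama's argument, I take $X\in\gr B$ with projective presentation $P_1\to P_0\to X\to 0$, where $P_i=\gHom_A(M,M_i)$ with $M_i\in\add M$. Let $K=\Ker(M_1\to M_0)$. Depth considerations give that $K$ has depth $\ge 2$. I then take successive right $\add M$-approximations $0\to K_{j+1}\to M_{j+2}\to K_j$. The Ext-vanishing $\gExt^i_A(M,M)=0$ for $0<i<d-1$ makes $\gHom_A(M,-)$ exact on these sequences, and depth counting gives $K_{d-2}\in\MCM A$. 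The cluster tilting condition, applied through the Auslander--Reiten/Serre duality of the balanced CM isolated singularity, then yields $K_{d-2}\in\add M$. So $\pdim X\le d$.
\item Finite global dimension together with $B_B$ being MCM (by a depth argument for $\gHom_A(M,M)$) yields AS-regularity, by the result of \cite{LSW} Section~6 adapted to the balanced CM setting via the balanced dualizing complex.
\end{enumerate}

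The main obstacle is the global dimension bound in step 3, specifically the final claim $K_{d-2}\in\add M$. I would first verify that the duality $\gExt^{d-1}_A(M,K_{d-2})\cong D\gExt^{1}_A(K_{d-2},\tau M)$-type formula holds over balanced CM isolated singularities, using the balanced dualizing complex in place of the canonical module. With that in hand, the inclusion follows from the standard Iyama argument.
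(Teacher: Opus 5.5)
\textbf{The proposal has genuine gaps in both directions.}

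Your direction (2)$\Rightarrow$(1) is circular. Item (3) of Theorem~\ref{main theorem} is not proved independently in the paper. It is obtained by combining Theorem~\ref{hypo. is equi. to B is ncr of A} with exactly the implication (2)$\Rightarrow$(1) you are trying to prove, so verifying Hypothesis~\ref{hypothesis} and quoting it proves nothing. The real inputs are two. First, Theorem~\ref{d-1 CT and NQR} describes $\add_AM$ inside $C_A$ by the vanishing of $\gExt^i_{\cA}(\cM,\cX)$, resp.\ $\gExt^i_{\cA}(\cX,\cM)$, for $0<i<d-1$. It needs $\gldim B=d$, which Proposition~\ref{when B is GAS-Gorenstein iso. sing.} gives as $\gldim B=\lcd A=d$. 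Second, \cite[Lemma 5.5]{LSW} says that for $X\in\MCM(A)$ and $0<i<d-1$ one has $\gExt^i_A(M,X)\cong\gExt^i_{\cA}(\cM,\cX)$ and $\gExt^i_A(X,M)\cong\gExt^i_{\cA}(\cX,\cM)$. Since $\add_AM\subseteq\MCM(A)\subseteq C_A$, these yield both cluster tilting descriptions.

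In (1)$\Rightarrow$(2), your approximation argument in step 3 is essentially the paper's, and the obstacle you single out is not one. Membership of $K_{d-2}$ in $\add_AM$ only requires $\gExt^i_A(M,K_{d-2})=0$ for $0<i<d-1$. Dimension shifting along $0\to K_{j+1}\to M_{j+2}\to K_j\to 0$ gives exactly this, using that $\gHom_A(M,M_{j+2})\to\gHom_A(M,K_j)$ is onto and $\gExt^i_A(M,M)=0$ for $0<i<d-1$. No Auslander--Reiten or Serre duality and no $\gExt^{d-1}$ are needed.

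The genuine gap is (H1), i.e.\ steps 1 and 4.
\begin{itemize}
\item The inference ``$B$ is noetherian because $M$ is finitely generated over a noetherian ring'' fails for noncommutative rings. For $R$ the first Weyl algebra in characteristic $0$ and $N=R/xR$, $\End_R(R\oplus N)$ is right but not left noetherian.
\item \cite[Lemma 6.4]{LSW} turns $\gldim B=d$ into AS-regularity only once $B$ is known to be a noetherian balanced CM algebra of dimension $d$. In particular $B$ must have a balanced dualizing complex, i.e.\ satisfy $\chi$ and have finite local cohomological dimension on both sides.
\item A ``depth argument for $\gHom_A(M,M)$'' cannot supply this, since $B$ is not an $A$-module and $\lcd B$ is not yet known.
\end{itemize}

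The paper gets all of this from the quasi-projective equivalence, in four steps.
\begin{enumerate}
\item Lemma~\ref{equi. cond. for ample (2) for MCM} makes $(\cM,s)$ ample, so Theorem~\ref{noncommutative Serre theorem for commonly graded algebra} shows $B\cong\gEnd_{\cA}(\cM)$ is a right noetherian projective coordinate ring with $(\qgr A,s)\simeq(\qgr B,s)$.
\item Because $\Omega_A\in\add_AM$ (Lemma~\ref{CT generators and canonical duality}), the same argument applies to the generator $\widetilde M=\gHom_A(M,\Omega_A)$ over $A^o$. Since $\gEnd_{A^o}(\widetilde M)\cong B^o$, this handles the left side.
\item Proposition~\ref{when B is GAS-Gorenstein iso. sing.}, with $A$ and $B$ interchanged, then gives a balanced dualizing complex for $B$ and $\lcd B=\lcd B^o=d$.
\item Corollary~\ref{balanced CM from self extensions}, with $\gExt^i_A(M,M)=0$, shows $B$ is balanced CM.
\end{enumerate}

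Two smaller points.
\begin{itemize}
\item $A\in\add_AM$ comes from the second description of $\add_AM$, since $\gExt^i_A(A,M)=0$. It does not come from the first, because $\gExt^i_A(M,A)=0$ is not known a priori for non-Gorenstein $A$.
\item Your (H3) claim is fine. The cokernel of $M\otimes_AM'\to B$ embeds in $\gExt^1_A(M,L)$ with $L$ a first syzygy of $M$, and this is finite-dimensional by Lemma~\ref{equi. cond. for ample (2) for MCM}.
\end{itemize}
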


The corresponding noncommutative version of the Bondal-Orlov conjecture is shown to hold in dimensions $2$ and $3$; see Theorem \ref{BO conjecture}.

Before proving Theorem \ref{main theorem}, we establish several preliminary results concerning homological dimensions in the category $\qgr A$.
In particular, we prove left-right symmetry for noncommutative isolated singularities.

\begin{theorem}[Theorem \ref{A and A^o are noncommutative iso. sing.}]\label{main-thm-iso-sing.}
    Let $A$ be a noetherian commonly graded algebra with a balanced dualizing complex. Then
    \begin{itemize}
        \item [(1)] $A$ is a noncommutative isolated singularity if and only if its opposite algebra $A^o$ is a noncommutative isolated singularity.
        \item [(2)] $\gldim(\qgr A)=\gldim(\qgr A^o)$.
    \end{itemize}
\end{theorem}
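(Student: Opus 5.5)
The plan is to use the balanced dualizing complex $R$ of $A$ to build a duality between $\qgr A$ and $\qgr A^o$ that carries $\Ext$-groups in one category to $\Ext$-groups in the other. This should give (2) directly, and (1) then follows because being a noncommutative isolated singularity just means that $\gldim(\qgr A)$ is finite.

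First, recall the standard fact that in $\qgr A$, for $\mathcal{M},\mathcal{N}$ images of finitely generated graded modules $M,N$, we have $\Ext^i_{\qgr A}(\mathcal{M},\mathcal{N})=\varinjlim_n \Ext^i_A(M_{\geqslant n},N)$. Since $A$ has a balanced dualizing complex, $A$ satisfies $\chi$ and has finite cohomological dimension, so these limits are well behaved and $\gldim(\qgr A)$ is the supremum of $i$ with some $\Ext^i_{\qgr A}\neq 0$ on objects coming from finitely generated modules. Second, use the local duality of Van den Bergh (Yekutieli's theorem on balanced dualizing complexes for commonly graded algebras, assumed available in this setting): $R\cong D(R\Gamma_A(A))$, and the functor $\mathbb{D}=R\gHom_A(-,R)$ gives an anti-equivalence $D^b_{fg}(\gr A)\to D^b_{fg}(\gr A^o)$ that exchanges complexes with finite-dimensional total cohomology on the two sides. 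Hence $\mathbb{D}$ descends to an anti-equivalence of Verdier quotients $D^b(\qgr A)\simeq D^b_{fg}(\gr A)/D^b_{fd}\to D^b_{fg}(\gr A^o)/D^b_{fd}\simeq D^b(\qgr A^o)$, and this anti-equivalence commutes with shifts up to the twist.

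Third, I characterize finite global dimension of $\qgr A$ intrinsically in $D^b(\qgr A)$, in a way that is preserved under anti-equivalence. The criterion I would use is: $\gldim(\qgr A)\leqslant n$ if and only if $\Hom_{D^b(\qgr A)}(\mathcal{X},\mathcal{Y}[i])=0$ for all objects $\mathcal{X},\mathcal{Y}$ of the abelian heart and all $i>n$. The difficulty is that $\mathbb{D}$ does not send the heart $\qgr A$ to the heart $\qgr A^o$. However, $R$ has bounded cohomology of finite amplitude, so $\mathbb{D}$ sends the heart into complexes concentrated in a fixed window $[-a,b]$. An $\Ext^i$ between two such complexes is controlled by $\Ext^{i+j}$ between their cohomology objects for $|j|\leqslant a+b$, by the usual truncation spectral argument. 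This shows that $\gldim(\qgr A)<\infty$ implies $\gldim(\qgr A^o)<\infty$, and by symmetry the converse holds, which proves (1).

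For the exact equality in (2), finiteness alone is not enough; I need the amplitude issue to vanish. The cleanest route is the alternative characterization $\gldim(\qgr A)=\sup\{i:\Ext^i_{\qgr A}(\mathcal{M},\mathcal{A}(j))\neq0$ for some $\mathcal{M}$ and $j\}$. Finite global dimension of $\qgr A$ is detected on $\Ext$ against the twists of $\mathcal{A}$, since these cogenerate in a suitable sense via $\chi$. Using local duality in the form $\Ext^i_{\qgr A}(\mathcal{M},\mathcal{A})\cong$ the dual of a local cohomology of $M\otimes^L R$, I would then express this supremum as a symmetric invariant of the bimodule $R$ restricted to the punctured spectrum. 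This is the step I expect to be the main obstacle. If the direct approach fails, I would reduce to computing $\gldim(\qgr A)$ as the largest $i$ with $\underline{\Ext}^{i}_{\qgr A}(\mathcal{M},\mathcal{N})\neq0$ for simple-like torsion-free objects and compare the two sides via Serre-type duality in $\qgr$, $\Ext^i(\mathcal{M},\mathcal{N})\cong D\Ext^{-i}(\mathcal{N},\mathcal{M}\otimes\mathcal{R}[-1])$, which would hold once $\qgr A$ has finite global dimension by (1).
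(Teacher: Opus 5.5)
Your argument for (1) is essentially the paper's. The duality $R\gHom_{\cA}(-,\pi R)\colon \D^b(\qgr A)\rightleftarrows \D^b(\qgr A^o)$ sends the heart into complexes concentrated in a fixed window $[t,s]$. An induction on the width of bounded complexes then gives $\Ext^i_{\cA^o}(\cX,\cY)=0$ for $i>\gldim(\qgr A)+s-t$. One correction: the uniform window comes from $R$ having finite injective dimension on each side, not merely from bounded cohomology.

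For (2) there is a genuine gap. The step you call the main obstacle is the entire content of the equality, and neither of your routes supplies it.
\begin{itemize}
\item The window argument only gives $\gldim(\qgr A^o)\leqslant\gldim(\qgr A)+(s-t)$, because the duality does not preserve hearts.
\item Serre duality $\Ext^i_{\cA}(\cM,\cN)\cong D\Ext^{-i}_{\cA}(\cN,\cM\otimes^L_{\cA}\pi R[-1])$ lives entirely inside $\D^b(\qgr A)$, so by itself it never compares $A$ with $A^o$.
\item Rewriting $\Ext^i_{\cA}(\cM,\cA)$ as a dual local cohomology of $M\otimes^L_AR$ still leaves a supremum over all $M$. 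You never say what two-sided object that supremum becomes.
\end{itemize}
A smaller point: the reduction $\gldim(\qgr A)=\idim_{\cA}\cA$ is correct once the dimension is finite, but the reason is different from yours. Shifts of $\cA$ generate $\qgr A$, and $\Ext^n_{\cA}(\cX,-)$ is right exact in the top degree $n=\pdim_{\cA}\cX$. It is not that they "cogenerate via $\chi$".

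The missing idea is to combine the cross duality with Serre duality on the opposite side. Apply it to $\pi R$, the dual of $\cA$, which has finite projective dimension in $\qgr A^o$ by (1):
\begin{align*}
\Ext^i_{\cA}(\cX,\cA)
&\cong\Ext^i_{\cA^o}\bigl(\pi R,R\gHom_{\cA}(\cX,\pi R)\bigr)\\
&\cong D\Ext^{-i}_{\cA^o}\bigl(R\gHom_{\cA}(\cX,\pi R),\pi_{A^o}M^\bullet\bigr)\\
&\cong D\Ext^{-i}_{\cA}(\cY^\bullet,\cX).
\end{align*}
Here $M^\bullet$ is a bounded truncation of $(R\otimes^L_AR)[-1]$ with cohomology finitely generated on both sides and the same images in $\D^b(\qgr A)$ and $\D^b(\qgr A^o)$. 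The complex $\cY^\bullet$ is $R\gHom_{\cA^o}(\pi_{A^o}M^\bullet,\pi R)$.

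Since $\cX$ ranges over the whole heart, $\idim_{\cA}\cA$ becomes the top nonzero cohomological degree of this single complex $\cY^\bullet$. Symmetrically, $\idim_{\cA^o}\cA$ is the top degree of $R\gHom_{\cA}(\pi_AM^\bullet,\pi R)$.

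The genuinely two-sided input is Van den Bergh's comparison $R\Gamma_A(M^\bullet)\cong R\Gamma_{A^o}(M^\bullet)$ for bimodule complexes with cohomology finitely generated on both sides. By local duality, $R\gHom_A(M^\bullet,R)$ and $R\gHom_{A^o}(M^\bullet,R)$ then have isomorphic cohomology. A finitely generated module vanishes in $\qgr$ exactly when it is finite-dimensional, so the two complexes have nonzero cohomology in the same degrees. This gives $\idim_{\cA}\cA=\idim_{\cA^o}\cA$.

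Your proposal needs two things it does not contain: a reduction of ``$\Ext$ into $\cA$ against every object'' to the amplitude of one fixed bimodule complex, and a left--right symmetry statement for that complex.
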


When $A$ is a balanced CM algebra of dimension $d\geqslant 2$ with $\gldim(\qgr A)<\infty$, $\gldim(\qgr A)$ need not equal $d-1$; see Example \ref{triangular balanced CM example}. 
The equality $\gldim(\qgr A)=d-1$ holds if and only if the canonical module $\Omega_A=D(R^d\Gamma_A(A))$ is modulo-torsion-invertible. 
And in this case, every nonzero object of $\qgr A$ has both projective and injective dimension $d-1$, and the same holds in $\qgr A^o$; see Theorem \ref{canonical bimodule and isolated singularity}. 
We also compare the injective and projective dimensions of objects in the derived categories of $\qgr A$ and $\QGr A$.

\newlength{\widest}
\settowidth{\widest}{balanced CM isolated singularities}
\begin{figure*}[ht]
\begin{center}
\begin{tikzcd}[column sep=-10em, row sep=1.0em]
& \text{AS-regular algebras}\\
& \text{AS-Gorenstein isolated singularities}\\
\makebox[\widest][c]{\text{AS-Gorenstein algebras}} & & \text{balanced CM isolated singularities}\\
& \text{balanced CM algebras}\\
&\text{$\chi$-condition and finite local cohomology dimension}
\arrow[from=1-2,Rightarrow]{2-2}[]{}
\arrow[from=2-2,Rightarrow]{3-1}[]{}
\arrow[from=2-2,Rightarrow]{3-3}[]{}
\arrow[from=3-1,Rightarrow]{4-2}[]{}
\arrow[from=3-3,Rightarrow]{4-2}[]{}
\arrow[from=4-2,Rightarrow]{5-2}[]{}
\end{tikzcd}
\caption*{Diagram $1$}
\end{center}
\end{figure*}

Diagram $1$ illustrates the relationships among important homological properties of graded algebras, all of which are assumed to be noetherian. We consider the setting in which two noetherian commonly graded algebras $A$ and $B$ have equivalent noncommutative quasi-projective spaces. Assuming that $B$ satisfies certain properties outlined in Diagram $1$, we investigate the corresponding properties of $A$.

To provide substantial examples of noncommutative resolutions, we consider Hopf actions on commonly graded AS-regular (Gorenstein) algebras.

Jørgensen and Zhang \cite{JZ} proved that, for a connected graded AS-Gorenstein algebra,  
the invariant subring under the action of a finite group generated by automorphisms with trivial homological determinants is AS-Gorenstein. Kirkman, Kuzmanovich, and Zhang \cite{KKZ} extended this result from group actions to Hopf actions and established an analogous conclusion.

Auslander \cite{A1,A2} proved that if $G$ is a finite subgroup of $\GL_n(k)$ containing no pseudo-reflections, then there is a ring isomorphism $R\#G\cong\End_{R^G}(R)$, where $R\#G$ denotes the smash product and $R^G$ denotes the invariant subring under the group action. This theorem provides numerous examples of noncommutative crepant resolutions in the sense of Van den Bergh.

Chan, Kirkman, Walton, and Zhang \cite{CKWZ1} considered Auslander's result in the noncommutative setting. They conjectured that if a finite-dimensional semisimple Hopf algebra $H$ acts inner-faithfully on a noetherian AS-regular algebra $A$ with trivial homological determinant, then $A\#H\cong \gEnd_{A^H}(A)$.
This conjecture has been proved for many Hopf actions on AS-regular algebras \cite{CKWZ2,HOZ,BHZ1,BHZ2,GKMW,ChKZ} and is referred to as the noncommutative Auslander theorem. 
In many cases where the noncommutative Auslander theorem holds, the invariant subrings are also noncommutative isolated singularities \cite{MU,CKWZ1,BHZ2,GKMW,ChKZ,Z}, among others.
Mori and Ueyama \cite{MU} investigate a specific class of group actions, termed ample actions, which ensure both the noncommutative Auslander theorem holds and that the invariant subring is a noncommutative isolated singularity.

In the commonly graded setting, we define the homological determinants of Hopf actions on commonly graded basic AS-Gorenstein algebras and generalize the results of \cite{JZ,KKZ}.

\begin{theorem}[Theorem \ref{A^H is AS Gorenstein}]\label{intro-A^H is AS Gorenstein}
Let $H$ be a finite-dimensional semisimple Hopf algebra, and let $A$ be a noetherian basic commonly graded AS-Gorenstein algebra of dimension $d$. Assume that $A$ is a left $H$-module algebra, the $H$-action on $A$ is homogeneous, and the homological determinant $\hdet_{\mathbf e}$ of the $H$-action on $A$ is trivial. Then $A^H$ is a noetherian commonly graded AS-Gorenstein algebra of dimension $d$.
\end{theorem}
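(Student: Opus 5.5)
The plan is to follow the strategy of Jørgensen--Zhang and Kirkman--Kuzmanovich--Zhang, transported to the commonly graded basic setting. Since $H$ is finite-dimensional semisimple, there is a normalized integral $\textstyle\int\in H$ with $\varepsilon(\textstyle\int)=1$. The map $a\mapsto \textstyle\int\cdot a$ is an $A^H$-bimodule projection $A\to A^H$, so $A^H$ is a graded $A^H$-bimodule direct summand of $A$. Homogeneity of the action gives that $A^H$ is a commonly graded subalgebra: it is bounded below and locally finite. Noetherianity of $A^H$ and finiteness of $A$ over $A^H$ follow from the standard argument of Montgomery (semisimple Hopf invariants of a noetherian algebra). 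One should also check that $A^H$ is finite over $A^H_0$ modulo the radical, i.e.\ that it is again commonly graded in the sense of Definition \ref{def-generalized-AS-Gorenstein}, including its local finiteness.

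Next I compute local cohomology. Let $\mathfrak m$ denote the graded Jacobson radical. The torsion functors are compatible: a graded $A^H$-module is torsion precisely when it is torsion as an $A$-module, since $A$ is a finite $A^H$-module. Hence $R\Gamma_{A^H}(A)\cong R\Gamma_A(A)$ as complexes of $A^H$-modules, and the $H$-action passes to local cohomology. Applying the exact functor $(-)^H=\textstyle\int\cdot(-)$ gives
\[
R^i\Gamma_{A^H}(A^H)\cong R^i\Gamma_A(A)^H .
\]
Because $A$ is AS-Gorenstein of dimension $d$, the groups $R^i\Gamma_A(A)$ vanish for $i\neq d$. Thus $A^H$ has vanishing local cohomology outside degree $d$, so it is CM of dimension $d$ with finite local cohomology dimension; the $\chi$-condition descends likewise.

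The remaining point is to identify $R^d\Gamma_A(A)^H$. In the basic commonly graded setting the AS-Gorenstein condition says $D(R^d\Gamma_A(A))\cong {}^{\sigma}\!A(\ell)$ up to a twist permuting the primitive idempotents $\mathbf e$ with shifts. I would unpack the definition of $\hdet_{\mathbf e}$ (presumably defined via the $H$-action on the socle pieces $e_iD(R^d\Gamma_A(A))$ attached to the complete set $\mathbf e$ of idempotents). Triviality of $\hdet_{\mathbf e}$ should then say that $D(R^d\Gamma_A(A))\cong A$ twisted as an $(A\#H)$-compatible bimodule with $H$ acting through $\varepsilon$ on generators. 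Taking invariants yields $D(R^d\Gamma_{A^H}(A^H))\cong A^H$ up to the same shift data, on both sides. Running the argument for $A^o$ as well, I would conclude that $A^H$ satisfies the commonly graded AS-Gorenstein condition of dimension $d$.

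The main obstacle is this final identification. I need to show that the $H$-equivariant structure on $D(R^d\Gamma_A(A))$, as controlled by $\hdet_{\mathbf e}$, makes the invariants free of rank one over $A^H$ on each side. The difficulty relative to the connected case is that $A_0$ is a semisimple product of copies of $k$ rather than $k$, so $H$ may permute the idempotents. I would first try to reduce to the $H$-stable idempotent decomposition of $A_0$ and apply the connected-case computation of \cite{KKZ} componentwise, using the Hopf analogue of the Jørgensen--Zhang lemma that $\hdet$ measures the $H$-character on the top local cohomology.
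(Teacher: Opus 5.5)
\paragraph{Gap.} Your preliminary steps are fine and match what the paper imports from \cite{KKZ}: the Reynolds projection, noetherianity, $R\Gamma_{A^H}(A)\cong R\Gamma_A(A)$, and the fact that $A^H$ is balanced CM of dimension $d$ with canonical module $\Omega_{A^H}=\Omega_A\cdot\int$ (Lemma \ref{property of A^H}(3)). The gap is the final identification. You flag it as the main obstacle but do not resolve it, and the route you sketch for it would not work.

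First, $\mathbf e$ is not a complete set of idempotents; it is a free generator of the canonical module. Because $A$ is basic, $\Omega_A=D(R^d\Gamma_A(A))\cong{}^{\mu}A^1$ as ungraded bimodules. So $\Omega_A=\mathbf eA$ with $a\mathbf e=\mathbf e\mu(a)$, and $\mathbf e$ need not be homogeneous. $\Omega_A$ carries a right $A\#H$-structure coming from $R\Gamma_{(A\#H)^o}(A)\cong R\Gamma_{A^o}(A)$. The homological determinant $\hdet_{\mathbf e}=\eta_{\mathbf e}\circ S$, defined by $\mathbf e\cdot h=\mathbf e\,\eta_{\mathbf e}(h)$, is $A$-valued, not a character.

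Second, your componentwise reduction to \cite{KKZ} breaks down, for several reasons:
\begin{itemize}
\item $A$ need not be $\mathbb N$-graded, and $A_0$ need not be semisimple or a product of copies of $k$; basic only says $A/J_A$ is a product of division algebras.
\item $H$ can move idempotents, and the corners $e_iAe_i$ are in general neither $H$-stable nor AS-Gorenstein.
\item $A^H$ is not assembled from the $(e_iAe_i)^H$, because of the off-diagonal pieces $e_iAe_j$.
\end{itemize}

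\paragraph{The missing argument.} It is short and global. Triviality of $\hdet_{\mathbf e}$ means exactly $\mathbf e\cdot h=\varepsilon(h)\mathbf e$. Writing $a\#h=(1\#h_2)(S^{-1}h_1\rightharpoonup a\#1)$ in $A\#H$ then gives $(\mathbf ea)\cdot h=\mathbf e(S^{-1}h\rightharpoonup a)$. Hence $\Omega_{A^H}=\Omega_A\cdot\int=\mathbf eA^H$. So $\Omega_{A^H}\cong A^H$ as ungraded right $A^H$-modules, and therefore it is a graded projective generator.

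You do not need to run the argument for $A^o$, which would require a left-sided version of the triviality hypothesis that is not given. Since $\Omega_{A^H}[d]$ is a dualizing complex, $\gEnd_{A^H}(\Omega_{A^H})\cong A^H$, so $\Omega_{A^H}$ is an invertible bimodule. A balanced CM algebra with invertible canonical module is AS-Gorenstein by \cite[Theorem 7.8 (6)]{LW2}. The same computation shows that $\mu$ preserves $A^H$ and restricts to its Nakayama automorphism.
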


Theorem \ref{intro-A^H is AS Gorenstein} also generalizes the result of \cite{Wei2}, which studies group actions on preprojective algebras.

Motivated by ample group actions on connected graded algebras, we define ample Hopf actions on commonly graded algebras (Definition \ref{def-ample-Hopf-action}).
Let $H$ be a finite-dimensional semisimple Hopf algebra acting on a noetherian commonly graded algebra $A$.
If $-\otimes_{A\#H}A$ and $-\otimes_{A^H}A$ induce an equivalence
$$-\otimes_{\cA\#\cH}\cA:(\qgr A\#H,s)\rightleftarrows (\qgr A^H,s):-\otimes_{\cA^{\cH}}\cA,$$
then the $H$-action is said to be ample.
We study the noncommutative resolution of the invariant subring under an ample Hopf action without assuming that the homological determinant is trivial.

\begin{theorem}[Theorem \ref{ample Hopf action when hdet not trivial}]
Let $A$ be a noetherian commonly graded AS-regular algebra of dimension $d\geqslant 2$, and let $H$ be a finite-dimensional semisimple Hopf algebra. Suppose that $A$ is a left $H$-module algebra and that $H$ acts homogeneously on $A$. Then the following statements are equivalent.
\begin{itemize}
    \item [(1)] The $H$-action on $A$ is ample.
    \item [(2)] The functor $-\otimes_{A\#H}A$ induces an equivalence:
    $$-\otimes_{\cA\#\cH}\cA:(\qgr A\#H,s)\to (\qgr A^H,s).$$
    \item [(3)] The quotient $A\#H/(1\#\int)$ is finite-dimensional, where $(1\#\int)$ is the ideal generated by $1\#\int$.
    \item [(4)]  $\gldim(\qgr A^H)=d-1$, and the natural map
    $$A\#H\to \gEnd_{A^H}(A), \, a\#h \mapsto \big(b\mapsto a(h\rightharpoonup b)\big)$$
    is an isomorphism of graded algebras.
    \item [(5)] $A\#H$ is a noncommutative resolution of $A^H$, given by $({}_{A\#H}A_{A^H},{}_{A^H}A_{A\#H})$.
\end{itemize}
\end{theorem}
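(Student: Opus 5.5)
The plan is to reduce everything to Hypothesis \ref{hypothesis} and Theorem \ref{main theorem}, applied with $B=A\#H$ and the projective module $M'={}_{A\#H}A$. The standing identification is that ${}_{A\#H}A\cong (A\#H)(1\#\int)$ as graded modules. This holds because $H$ is semisimple: normalize $\int$ so that $\epsilon(\int)=1$, which makes $e=1\#\int$ an idempotent. Two further identities follow:
\begin{itemize}
\item $e(A\#H)e\cong A^H$ and $\gEnd_{A\#H}((A\#H)e)\cong A^H$;
\item $e(A\#H)\cong A$ as $(A^H,A\#H)$-bimodules.
\end{itemize}
Since $A$ is AS-regular and $H$ is semisimple, $A\#H$ is a noetherian commonly graded AS-regular algebra of dimension $d$. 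So (H1) holds for $B=A\#H$, and (H2) holds for $M'=(A\#H)e$.

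Under these identifications, the natural map $M\otimes_{A^H}M'\to B$ becomes the multiplication map $(A\#H)e\otimes_{e Be}eB\to B$. Its image is the two-sided ideal $BeB=(1\#\int)$. Hence (H3) is exactly condition (3).

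The key implications, in order, are as follows.
\begin{itemize}
\item \emph{(3)$\Leftrightarrow$(5).} This is the last sentence of Theorem \ref{main theorem}: Hypothesis \ref{hypothesis} holds if and only if $B$ is a noncommutative resolution of $A^H=\gEnd_B(M')$ given by the pair $(A,A)$.
\item \emph{(3)$\Rightarrow$(1), and hence (3)$\Rightarrow$(2).} If $B/BeB$ is finite-dimensional, Theorem \ref{equ. of quot. cat. induced by Morita context for commonly graded} gives the equivalence $(\qgr A\#H,s)\rightleftarrows(\qgr A^H,s)$ via the two tensor functors. That is ampleness.
\item \emph{(1)$\Rightarrow$(2).} This is trivial.
\item \emph{(2)$\Rightarrow$(3).} If $-\otimes_{\cB}\cA$ is an equivalence, then $\pi(B)$ lies in the image of $\qgr A^H$. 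I will argue that every object of $\qgr B$ is then a quotient of sums of shifts of $\pi(Be)$. In particular the counit-type map $Be\otimes_{eBe}eB\to B$ becomes an epimorphism, hence an isomorphism, in $\qgr B$. Its cokernel $B/BeB$ is therefore torsion, and being finitely generated it is finite-dimensional.
\end{itemize}

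For (5)$\Rightarrow$(4), the noncommutative resolution structure gives $B\cong\gEnd_{A^H}(A)$ via the natural map; this is part of Theorem \ref{main theorem}, since $M_{A}$ is a generator-type module with $B=\gEnd_A(M)$. It also gives $(\qgr A^H,s)\simeq(\qgr B,s)$. Since $B$ is AS-regular of dimension $d$, every nonzero object of $\qgr B$ has projective dimension $d-1$. Transporting along the equivalence yields $\gldim(\qgr A^H)=d-1$.

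For (4)$\Rightarrow$(3), I will use Theorem \ref{canonical bimodule and isolated singularity}. Finite $\gldim(\qgr A^H)=d-1$ makes $A^H$ a noncommutative isolated singularity whose canonical module is modulo-torsion-invertible; the balanced CM property is inherited from $A$, as $A^H$ is a direct summand of $A$ via the Reynolds operator $\int$. I then expect $A_{A^H}$ to be MCM. Together with the isomorphism $B\cong \gEnd_{A^H}(A)$, this should let me apply Theorem \ref{M_A is CT module} with $M=A$, provided $A$ is $(d-1)$-cluster tilting. The obstacle is exactly this: deducing cluster tilting, or directly the finiteness of $B/BeB$, from $\gldim(\qgr A^H)=d-1$ plus the Auslander isomorphism.

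My first attempt at that obstacle is the following. Since $\pi A$ has finite projective dimension in $\qgr A^H$, the module $\gHom_{\qgr}(\pi A,\pi A)\cong B$ up to torsion, using depth $\geqslant 2$. Then the functor $\Hom(\pi A,-)$ from $\qgr A^H$ to $\qgr B$ should be fully faithful, and its essential image should contain $\pi B$. Ext-vanishing of $A$ in $\qgr A^H$ in degrees $1,\dots,d-2$ would follow from Serre duality, via $\gldim(\qgr A^H)=d-1$ and the invertible canonical module. I would combine this with the isomorphism to get $(d-1)$-rigidity, then conclude by Theorem \ref{M_A is CT module}. Failing that, I would instead show directly that $\pi(B/BeB)=0$, using that $\gEnd$ is reflexive so that $B$ agrees with $\gEnd_{\qgr}(\pi A)$.
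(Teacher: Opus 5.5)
Your implications (1)$\Rightarrow$(2)$\Rightarrow$(3)$\Leftrightarrow$(5)$\Rightarrow$(4) and (3)$\Rightarrow$(1) are fine in outline. However, nothing leads out of (4), so the cycle is not closed. The route you sketch for (4)$\Rightarrow$(3) does not work, for three reasons.

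\emph{Serre duality gives no rigidity.} On $\D^b(\qgr A^H)$ it only identifies $\gExt^i_{\cA^{\cH}}(\pi A,\pi A)$ with the dual of $\gExt^{d-1-i}_{\cA^{\cH}}(\pi A,\pi A\otimes^{L}_{\cA^{\cH}}\pi\Omega_{A^H})$. It pairs degree $i$ with degree $d-1-i$ and gives no vanishing for $0<i<d-1$. So $\gldim(\qgr A^H)=d-1$ together with the invertible canonical module cannot be the source of rigidity of $A_{A^H}$.

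\emph{Full faithfulness is assumed, not proved.} The claim that $\pi\gHom_{\cA^{\cH}}(\pi A,-)$ is fully faithful is exactly what ampleness of $(\pi A,s)$ in $\qgr A^H$ would give via Theorem \ref{noncommutative Serre theorem for commonly graded algebra}. You never establish that ampleness.

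\emph{The remaining steps do not close the gap.} Even granting rigidity, Theorem \ref{M_A is CT module} needs both maximality equalities of $(d-1)$-cluster tilting, and you give no argument for them. Your fallback (reflexivity, i.e.\ $B\cong\gEnd_{\cA^{\cH}}(\pi A)$) only restates part of (4) and says nothing about $B/BeB$.

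The missing ingredient is Lemma \ref{equi. cond. for ample (2) for MCM}. By Lemma \ref{property of A^H}(3), hypothesis (4) and Theorem \ref{A and A^o are noncommutative iso. sing.}, $A^H$ is a balanced CM isolated singularity. Hence every MCM $A^H$-module $N$ has $\dim_k\gExt^i_{A^H}(N,Y)<\infty$ for all $Y\in\gr A^H$ and $i\geqslant 1$, and MCM generators give ample pairs. Since $A_{A^H}$ is an MCM generator (Lemma \ref{MCM A^H and MCM A}), $(\pi A,s)$ is ample. As $\depth_{A^H}A=d\geqslant 2$, the natural map gives $\gEnd_{\cA^{\cH}}(\pi A)\cong\gEnd_{A^H}(A)\cong A\#H$. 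Theorems \ref{noncommutative Serre theorem for commonly graded algebra} and \ref{Morita for nc quasi-proj space}, with Lemma \ref{A' and A}, then give (2). This is the paper's argument, and it needs no rigidity or cluster tilting.

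The same input also makes your fallback work directly. Under the natural isomorphism, $e$ becomes the Reynolds projection $A\to A^H\hookrightarrow A$. So $BeB$ consists exactly of the endomorphisms of $A_{A^H}$ that factor through finitely generated graded free $A^H$-modules. Take a graded epimorphism $P\to A$ from such a module, with kernel $K$. Then $B/BeB$ embeds in $\gExt^1_{A^H}(A,K)$, which is finite-dimensional.

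Smaller points:
\begin{itemize}
\item In (H2) the right projective module is $M'=e(A\#H)\cong{}_{A^H}A_{A\#H}$, with $M\cong(A\#H)e$.
\item Your (2)$\Rightarrow$(3) is a valid and more direct alternative to the paper's route through $\gEnd_{\cA^{\cH}}(\cA)$ and Theorem \ref{Morita for nc quasi-proj space}. But the generating object of $\qgr B$ is $\pi(eB)$, which the equivalence sends to $\pi A^H$, not $\pi(Be)$. You also need $\omega\pi B\cong B$ to see that every morphism $\pi(eB)(n)\to\pi B$ is left multiplication by an element of $Be$, and hence has image in $\pi(BeB)$.
\item In (3)$\Rightarrow$(1) you also need $\Coker\mu$ finite-dimensional. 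This holds because $\mu(1\otimes a)=\int\rightharpoonup a$ and $\int\rightharpoonup A=A^H$.
\end{itemize}
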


As a corollary, we consider the case in which the homological determinant is trivial in Theorem \ref{ample Hopf action}.

In the final part of the paper, we present three examples of noncommutative resolutions. First, we show that a specific group action on a noetherian $\mathbb{N}$-graded AS-regular algebra of dimension $2$, introduced in \cite{Wei1, Wei2}, is ample. Second, for a noetherian $\mathbb{N}$-graded AS-regular algebra $A$ generated by $A_1$ over $A_0$, we show that the $r$-th quasi-Veronese algebra $A^{[r]}$ is a noncommutative resolution of the $r$-th Veronese algebra $A^{(r)}$. The algebras in these two examples are not necessarily connected graded.
Third, for a finite subgroup $G$ of $\GL_n(k)$ acting freely on the polynomial ring, we prove that the action is ample, without assuming $G \subseteq \SL_n(k)$, and that  $R\#G$ is a noncommutative resolution of $R^G$.

\begin{theorem}[Theorem \ref{G acts freely on polynomial}]
    Suppose that $G$ is a finite subgroup of $\GL_n(k)$ acting on $R=k[x_1,\cdots,x_n]$ freely. Then
    \begin{itemize}
        \item [(1)] the $G$-action on $R$ is ample.
        \item [(2)] $R\#G$ is a noncommutative resolution of $R^G$, given by $({}_{R\#G}R_{R^G},{}_{R^G}R_{R\#G})$.
    \end{itemize}
\end{theorem}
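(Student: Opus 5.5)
The plan is to use the ample-action machinery and the criterion (3) of Theorem \ref{ample Hopf action when hdet not trivial}. Here $R=k[x_1,\dots,x_n]$ is a noetherian connected graded AS-regular algebra of dimension $n$, and $H=kG$ is semisimple because $\operatorname{char} k=0$ (or at least coprime to $|G|$, as the paper presumably assumes). The action of $G\subseteq\GL_n(k)$ is linear, hence homogeneous. By the equivalence (1)$\Leftrightarrow$(3)$\Leftrightarrow$(5) of that theorem, both statements follow once we show that $R\#G/(1\#\int)$ is finite-dimensional, where $\int=\sum_{g\in G}g$. Statement (2) is then exactly (5), provided $n\geqslant 2$. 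If $n=1$, a free action forces $G$ trivial, and the statement is degenerate; I will treat that case separately or note it is excluded.

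The key step is identifying the ideal $I=(1\#\int)$. A standard computation shows that $R\#G/I\cong \gEnd$-type cokernel data. More concretely, $I\supseteq (R\#G)\,(1\#\int)\,(R\#G)$ contains all elements $\sum_g a\,g(b)\#g\,h$. The plan is to show that for each homogeneous $r$ in a suitable ideal $J$ of $R$, the element $r\#1$ lies in $I$, where $J$ is generated by elements of the form $\sum_g a_g\, g(b_g)$ making up a partition of unity. The classical argument (Auslander; compare \cite{MU}) proceeds as follows. The freeness of the action means that for every nonzero point $p\in k^n$ and every $g\neq 1$, we have $g p\neq p$. Hence the ideal of $R$ generated by the elements $x - g(x)$, for $x\in R$ and $g\neq 1$, has zero locus $\{0\}$. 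So for each $g\neq 1$ the ideal $J_g=(x-g(x): x\in R)$ is $\mathfrak m$-primary, and therefore $J=\prod_{g\ne 1}J_g$ contains $\mathfrak m^N$ for some $N$.

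Next I will show that $(R\#G)\,J\,(R\#G)\subseteq I$ modulo lower terms. One computes $(x\#1)(1\#\int)-(1\#\int)(g^{-1}(x)\#1)$ type expressions to produce elements $\sum_h (x-h g^{-1}(x))\#h$. Multiplying such elements successively for each $g\neq1$ kills all terms except the one supported at the identity. This yields $\prod_{g\neq1}(x_g-g(x_g))\#1\in I$, possibly up to reordering and conjugation by group elements. This product-of-differences trick is the main obstacle: one must carefully arrange factors so that each coefficient of $h\neq1$ vanishes. What I would try first is the standard fact that $I\cap (R\#1)$ contains the ideal $\sum_g R\,\{r-g(r)\}$-products. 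Alternatively, one can argue geometrically. The module $R\#G/I$ is finitely generated over $R^G$, and its support in $\operatorname{Spec} R^G$ is contained in the image of the non-free locus. The reason is that at a point $p\neq0$, localization makes $R_p\#G$ Morita equivalent to $(R^G)_p$ via $R$, because the action is free there (the extension is Galois/étale). Consequently $1\#\int$ generates the unit ideal locally.

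Given $\mathfrak m^N\#G\subseteq I$, the quotient $R\#G/I$ is a quotient of $(R/\mathfrak m^N)\#G$, which is finite-dimensional. This gives (3), hence (1) ampleness. Finally I will invoke (5) of Theorem \ref{ample Hopf action when hdet not trivial} to conclude that $R\#G$ is a noncommutative resolution of $R^G$ given by $({}_{R\#G}R_{R^G},{}_{R^G}R_{R\#G})$. No trivial-homological-determinant assumption is needed, since that theorem does not require it.
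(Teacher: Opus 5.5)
Your reduction is sound. For $n\geqslant 2$ the hypotheses of Theorem \ref{ample Hopf action when hdet not trivial} hold, and (3)$\Rightarrow$(5)$\Rightarrow$(1) there yields both assertions, so everything rests on $\dim_k R\#G/(1\#\int)<\infty$. This is a genuinely different route from the paper's, which never computes with the ideal $(1\#\int)$. The paper instead verifies condition (2) of that theorem:
\begin{itemize}
\item it quotes Auslander's isomorphism $\End_{R^G}(R)\cong R\#G$ and the fact that $R^G$ is an isolated singularity (Proposition \ref{propoerties of R^G});
\item by homogeneous localization it deduces that $\gExt^1_{R^G}(R,K)$ is finite-dimensional for all $K\in\gr R^G$, so $(\pi R,s)$ is ample in $\qgr R^G$;
\item it then applies Theorem \ref{noncommutative Serre theorem for commonly graded algebra} and Lemma \ref{A' and A}.
\end{itemize}

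The step you call the main obstacle does not work as you literally describe it. Multiplying elements of $R\#G$ convolves their supports: the coefficient of $k$ in $(\sum_h c_h\#h)(\sum_{h'}d_{h'}\#h')$ is $\sum_{hh'=k}c_h\,h(d_{h'})$. So the vanishing of the $g$-coefficient in the factor attached to $g$ is not inherited by the product, and successive products do not isolate the identity term.

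The products have to be taken in the coefficients instead. Since $(a\#1)(1\#\int)(b\#1)=\sum_h a\,h(b)\#h$, the coefficient functions $h\mapsto c_h$ of elements of $I$ include the $R$-span of the functions $h\mapsto h(b)$. This span is closed under pointwise products, because $R$ is commutative and $h(b)h(b')=h(bb')$. Expanding $\prod_{g\neq 1}\bigl(h(b_g)-g(b_g)\bigr)$ over subsets of $G\setminus\{1\}$ therefore gives an element of $I$ whose $h$-coefficient vanishes for every $h\neq 1$. That is, $\prod_{g\neq 1}(b_g-g(b_g))\#1\in I$.

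Since no $g\neq 1$ has eigenvalue $1$, the map $b\mapsto b-g(b)$ is bijective on $R_1$. Hence $R_{\geqslant |G|-1}\#1\subseteq I$, and multiplying on the right by $1\#h$ gives $(R\#G)_{\geqslant |G|-1}\subseteq I$. No Nullstellensatz or $\mathfrak m$-primary argument is needed.

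Your Galois fallback is also valid. By Chase--Harrison--Rosenberg, trivial inertia at every prime of $R$ other than $\mathfrak m=R_{\geqslant 1}$ gives $\sum_i x_i\sigma(y_i)=\delta_{\sigma,1}$ locally, which says exactly that $1\#1\in I$ locally. However, the key implication in that theorem is proved by this same computation.

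With this fix your proof is more elementary and self-contained than the paper's. It uses neither Auslander's theorem nor the isolated-singularity input, and it gives an explicit degree bound. In fact it recovers both inputs, via (3)$\Rightarrow$(4) of Theorem \ref{ample Hopf action when hdet not trivial}. The paper's route instead illustrates the Ext-finiteness criterion for ampleness (compare Lemma \ref{equi. cond. for ample (2) for MCM}).

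Finally, your remark that for $n=1$ a free action forces $G$ to be trivial is false: $x\mapsto -x$ on $k[x]$ has no eigenvalue $1$. This is harmless only because Setting \ref{group action} assumes $n\geqslant 2$.
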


This paper is organized as follows. Section \ref{preliminaries} introduces the basic notation and background.
Section \ref{homological dimensions in qgr A} studies the relationship among homological dimensions in $\qgr A$ and $\QGr A$, investigates the left-right symmetry of noncommutative isolated singularities, and characterizes balanced CM isolated singularities in terms of the modulo-torsion invertibility of their canonical modules.
In Section \ref{equivalent noncommutative quasi-projective spaces}, we investigate the behavior of the properties listed in Diagram 1 under equivalences of noncommutative quasi-projective spaces and introduce noncommutative quasi-resolutions. 
We show that the graded modules defining noncommutative quasi-resolutions satisfy an Ext-orthogonality similar to that of cluster tilting modules.
In Section \ref{noncommutative resolutions of isolated singularities}, we address Question \ref{question}, introduce a more general notion of noncommutative resolution, and prove Theorems \ref{main theorem}, \ref{main theorem CT ncr}. 
In Section \ref{noncommutative resolutions of invariant rings}, we define the homological determinant for Hopf actions on commonly graded algebras. 
We investigate the Gorenstein property of the invariant subring when the homological determinant is trivial and study noncommutative resolutions of the invariant subring when the Hopf action is ample.
In Section \ref{examples}, we illustrate our results with three  examples: invariant rings of group actions on a two-dimensional preprojective algebra, noncommutative resolutions arising from quasi-Veronese algebras, and finite group actions that are free on polynomial rings.

\section{Preliminaries}\label{preliminaries}
\subsection{Notations and conventions}
Let $k$ be a field. A $\mathbb{Z}$-graded $k$-space $X$ is called \textit{locally finite} if $X_i$ is finite-dimensional for all $i\in\mathbb{Z}$; $X$ is called \textit{bounded below} (resp. \textit{bounded above}) if there is an integer $n$ such that $X_i=0$ for all $i<n$ (resp. $i>n$).

A $k$-algebra $A$ with a $\mathbb{Z}$-graded vector space decomposition $$A=\cdots\oplus A_{-1}\oplus A_0\oplus A_1\oplus A_2\oplus\cdots$$ is called a \textit{$\mathbb{Z}$-graded algebra} if $A_iA_j\subseteq A_{i+j}$ for all $i,j\in\mathbb{Z}$. A $\mathbb{Z}$-graded algebra $A$ is called an \textit{$\mathbb{N}$-graded algebra} if $A_i=0$ for all $i<0$. An $\mathbb{N}$-graded algebra $A$ is called \textit{connected graded} if $A_0=k$. A $\mathbb{Z}$-graded algebra $A$ is called \textit{commonly graded} if $A$ is locally finite and bounded below. A noetherian algebra means that it is both left and right noetherian.

In this paper, a graded $A$-module will always mean a right $A$-module $M$ which is a graded $k$-space $M=\oplus_{j \in \mathbb{Z}}M_j$ such that $M_jA_i \subseteq M_{i+j}$ for all $i$ and $j$. Let $A^o$ be the opposite algebra of $A$ and $A^e=A\otimes_k A^o$. 

For any graded $A$-modules $M$ and $N$, a degree-zero graded $A$-module morphism $f:M\to N$ is an $A$-module morphism such that $f(M_i)\subseteq N_i$ for all $i\in\mathbb{Z}$.
The category of graded $A$-modules with morphisms of degree $0$ is denoted by $\Gr A$, and the full subcategory consisting of finitely generated graded $A$-modules is denoted by $\gr A$.

Let $M(n)$ be the $n$-th shift of  a graded module $M$ with $M(n)_i =M_{n+i}$. 
For any $M, N\in \Gr A$, let 
$$\gHom_A(M,N):=\oplus_i\Hom_{\Gr A}(M,N(i)).$$ 
The $n$-th derived functor of $\gHom_A(-,-)$ is denoted by $\gExt_A^n(-,-)$. Let $D(-)=\gHom_k(-,k)$ denote the graded vector space dual, referred to as  Matlis duality. 

For any abelian category $\mathcal{C}$, let $\D(\mathcal{C})$, $\D^+(\mathcal{C}), \D^-(\mathcal{C})$ and $\D^b(\mathcal{C})$ denote the derived categories consisting of unbounded, bounded below, bounded above and bounded complexes of $\mathcal{C}$ respectively. For a cochain complex $X^\bullet$, let $X^\bullet[n]$ be its $n$-th shift, with $(X^\bullet[n])^i=X^{n+i}$.

Suppose $\cM^\bullet\in \D(\cC)$. The \textit{injective dimension} of $\cM^\bullet$ is
    $$\idim_{\cC} \cM^\bullet:=\sup\{i\mid \Hom_{\D(\cC)}(\cX,\cM^\bullet[i])\neq 0  \textrm{ for some } \cX\in\cC\}.$$
The \textit{projective dimension} of $\cM^\bullet$ is
    $$\pdim_{\cC} \cM^\bullet:=\sup\{i\mid \Hom_{\D(\cC)}(\cM^\bullet,\cX[i])\neq 0 \textrm{ for some } \cX\in \cC\}.$$
The \textit{global dimension} of $\cC$ is
    $$\gldim(\cC):=\sup\{i\mid \Hom_{\D(\cC)}(\cM,\cX[i])\neq 0 \textrm{ for some } \cM,\cX\in \cC\}.$$

If $A$ is a bounded below $\mathbb Z$-graded algebra, then $\gldim (\Gr A)=\gldim (\Mod A)$, where $\Mod A$ denotes the category of all right $A$-modules, as shown in \cite[Corollary 7.8]{NO}.
Accordingly, the global dimension of $\Gr A$ is denoted by $\gldim A$ and referred to as the graded global dimension of $A$.

\subsection{Local cohomology}
Let $A$ be a commonly graded algebra, and let $J_A$ (or simply $J$) denote its graded Jacobson radical.
For a graded $A$-module $M$, define
$$\Gamma_A(M):=\{m\in M\mid m\cdot J^i =0 \text{ for } i\gg 0\}.$$
Then, $\Gamma_A(M)\cong \underrightarrow{\Lim}\gHom_A(A/J^i,M)$.
If $J/J^2$ is finite-dimensional, then, by \cite[Lemma 2.6]{LSW},
$\Gamma_A(M)=\{m\in M\mid m\cdot A_{\geqslant i}=0 \text{ for } i\gg 0\}$. If $\Gamma_A(M)=M$ (respectively, $\Gamma_A(M)=0$), then $M$ is called \textit{torsion} (respectively, \textit{torsion-free}).
Let $R^n\Gamma_A$ denote the $n$-th right derived functor of the left exact functor $\Gamma_A\colon \Gr A \to \Gr A$.
The \textit{$n$-th local cohomology} of $M$ is defined by
$$R^n\Gamma_A(M)\cong \underrightarrow{\Lim}\gExt_A^n(A/J^i,M).$$
The \textit{local cohomological dimension} of $A$ is defined by
$$\lcd A:=\sup\{i\mid R^i\Gamma_A(M)\neq 0  \textrm{ for some } M\in \Gr A\}.$$
If $A$ is right noetherian, then $R^i\Gamma_A$ commutes with direct limits. Hence,
$$\lcd A=\sup\{i\mid R^i\Gamma_A(M)\neq 0 \textrm{ for some } M\in \gr A\}.$$

For any graded $A$-module $M$, the \textit{depth} of $M$ is defined as 
$$
\depth_AM:=\inf\{i\mid \gExt_A^i(A/J,M)\neq 0\}.
$$

The following lemma describes the relationship between depth and local cohomology.

\begin{lemma}\cite[Lemma 2.9]{LSW}\label{depth and local cohomology}
    For a graded $A$-module $M$,
    $$\min\{i\mid \gExt_A^i(A/J,M)\neq 0\}=\min\{i\mid R^i\Gamma_A(M)\neq 0\}.$$
\end{lemma}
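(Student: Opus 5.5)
Both sides of the claimed equality depend only on the collection of vanishing and non-vanishing conditions on $\gExt_A^i(A/J,M)$ and $R^i\Gamma_A(M)$, so the plan is to show that the two infima coincide. This amounts to two statements. First, if $\gExt_A^i(A/J,M)=0$ for all $i<n$, then $R^i\Gamma_A(M)=0$ for all $i<n$. Second, if $R^i\Gamma_A(M)=0$ for all $i<n$, then $\gExt_A^i(A/J,M)=0$ for all $i<n$. Applying both with $n$ equal to each of the two minima gives the equality, including the cases where one side is infinite.

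\textbf{From $\gExt$ to local cohomology.} Assume $\gExt_A^i(A/J,M)=0$ for $i<n$. Each $A/J^j$ has a finite filtration by the modules $J^{l}/J^{l+1}$, $0\le l<j$. Each factor $J^l/J^{l+1}$ is a graded $A/J$-module. Since $A$ is commonly graded, $A/J$ is graded semisimple, so each factor is a direct sum of shifted graded simple summands of $A/J$.

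The delicate point here is that $\gHom$ does not commute with infinite direct sums in the first variable. So for $J^l/J^{l+1}$ I would use that a direct sum of graded simples is also a direct product degreewise, in the graded sense relevant to $\gExt$. Failing that, I would simply assume $J/J^2$ is finite-dimensional, as in the cited \cite[Lemma 2.6]{LSW}, so that each factor is a finite direct sum.

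Granting this, induction on $j$ along the long exact sequences gives $\gExt_A^i(A/J^j,M)=0$ for $i<n$ and all $j$. Passing to the direct limit, $R^i\Gamma_A(M)=\underrightarrow{\Lim}\,\gExt^i_A(A/J^j,M)=0$ for $i<n$.

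\textbf{From local cohomology to $\gExt$.} Assume $R^i\Gamma_A(M)=0$ for $i<n$. Because $A/J$ is torsion, every map $A/J\to X$ lands in $\Gamma_A(X)$, so $\gHom_A(A/J,-)=\gHom_A(A/J,\Gamma_A(-))$. I would derive this identity. Take an injective resolution $E^\bullet$ of $M$ in $\Gr A$. The complex $\Gamma_A(E^\bullet)$ consists of injectives: $\Gamma_A$ preserves injectivity in $\Gr A$ because it is right adjoint to the exact inclusion of the torsion subcategory, which is closed under subobjects. Its cohomology is $R^i\Gamma_A(M)$, so $\Gamma_A(E^\bullet)$ computes $R\Gamma_A(M)$, and
$$\gExt_A^i(A/J,M)=H^i\big(\gHom_A(A/J,\Gamma_A(E^\bullet))\big).$$
Since $\Gamma_A(E^\bullet)$ is exact in degrees $<n$, the standard argument applies. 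Its truncation below degree $n$ is split exact, because it is a bounded-below exact complex of injectives, so the kernels are injective by induction. Hence the complex is homotopy equivalent to a complex of injectives concentrated in degrees $\ge n$, which kills $\gExt^i$ for $i<n$. A Grothendieck spectral sequence $\gExt^p(A/J,R^q\Gamma_A M)\Rightarrow\gExt^{p+q}(A/J,M)$ gives the same conclusion.

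The main obstacle is the first implication, specifically controlling the factors $J^l/J^{l+1}$ when they are infinite sums. Everything else is formal.
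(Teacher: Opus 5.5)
The paper gives no argument of its own here; it only quotes \cite[Lemma 2.9]{LSW}. So I judge your proposal on its own terms. Your two-implication structure is right, and the lemma holds for every commonly graded $A$ with no extra hypothesis. However, each half contains a step that is either not carried out or not true as stated.

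\emph{First implication.} The ``main obstacle'' is not one. Your fallback of assuming $J/J^2$ finite-dimensional would only prove a weaker lemma; in the paper that hypothesis is used for something else, namely comparing $J$-torsion with $A_{\geqslant i}$-torsion. Your other suggestion, that the direct sum is degreewise a product, is true by local finiteness but beside the point, since $\Ext$ out of a product does not decompose.

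You only need vanishing, and for vanishing infinite sums in the first variable are harmless. Computing with an injective resolution of $M(d)$ in $\Gr A$ gives $\Ext^i_{\Gr A}(\bigoplus_\alpha N_\alpha,M(d))\cong\prod_\alpha\Ext^i_{\Gr A}(N_\alpha,M(d))$. This holds because $\Hom$ turns coproducts in the first variable into products, and products of vector spaces are exact. Every graded simple module is a shift of a direct summand of the graded semisimple algebra $A/J$. Hence $\gExt^i_A(A/J,M)=0$ gives $\gExt^i_A(S,M)=0$ for every graded simple $S$, and therefore $\gExt^i_A(J^l/J^{l+1},M)=0$ no matter how many summands $J^l/J^{l+1}$ has. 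The rest of your induction and the direct limit then go through.

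\emph{Second implication.} The adjunction you invoke only shows that $\Gamma_A(E)$ is injective in the subcategory $\Tor A$, not in $\Gr A$. Injectivity in $\Gr A$ would require the torsion class to be closed under injective hulls, which is where a noetherian hypothesis is normally used. Without it the claim genuinely fails. Take the free algebra $A=k\langle x_1,x_2,\ldots\rangle$ with $\deg x_i=i$ and $M=A/\sum_k x_kJ^k$. One checks that $\Gamma_A(M)=M_{\geqslant1}$, which is essential in $M$, while $\bar 1$ is not torsion. So $\Gamma_A(E(M))$ is a proper submodule of $E(M)$ containing an essential submodule; hence it is not a direct summand and not injective.

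Your splitting argument still survives, because it only needs injectivity in $\Tor A$. That subcategory is closed under kernels and cokernels, so $\Gamma_A(E^\bullet)$ is homotopy equivalent, inside $\Tor A$, to a complex concentrated in degrees $\geqslant n$. The additive functor $\gHom_A(A/J,-)$ preserves this homotopy equivalence. For the same reason, the $E_2$-term of your spectral sequence must be $\Ext$ computed in $\Tor A$, not in $\Gr A$.

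A cleaner route avoids both issues. For $n\geqslant1$, either hypothesis forces $\Gamma_A(M)=0$: if $0\neq m$ has $mJ^t=0$ with $t$ minimal, then a nonzero homogeneous element of $mJ^{t-1}$ is killed by $J$. Then $\Gamma_A(E(M))=0$ by essentiality. The sequence $0\to M\to E(M)\to E(M)/M\to0$ now gives $\gExt^{i+1}_A(A/J,M)\cong\gExt^{i}_A(A/J,E(M)/M)$ and $R^{i+1}\Gamma_A(M)\cong R^{i}\Gamma_A(E(M)/M)$ for all $i\geqslant0$. Induction on $n$ then proves both implications at once.
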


Next we recall the definition of maximal Cohen-Macaulay modules.

\begin{definition}
    Let $A$ be a noetherian commonly graded algebra with $\lcd A=d$. A finitely generated graded $A$-module $M$ is called a maximal Cohen-Macaulay (for short MCM) $A$-module if $\depth_AM=d$.
\end{definition}

Let $\MCM (A)$ denote the full subcategory of $\Gr A$ consisting of all MCM $A$-modules.
Important examples of MCM modules are cluster tilting modules. The notion of cluster tilting was introduced in higher Auslander-Reiten theory \cite{I1,I2}; these modules were originally termed maximal orthogonal modules.

Let $\add_A M$ denote the full subcategory of $\Gr A$ consisting of all objects that are direct summands of finite direct sums of shifts of $M$.
\begin{definition} \label{cluster tilting module}
    Let $A$ be a right noetherian commonly graded algebra with $\lcd A<\infty$. An MCM $A$-module $M$ is called an \textit{$n$-cluster tilting module} if
    \begin{align*}
        \add_A M&=\{N\in \MCM (A)\mid \gExt_A^i(M,N)=0,\forall\, 0<i<n\}\\
        &=\{N\in \MCM (A)\mid \gExt_A^i(N,M)=0,\forall\, 0<i<n\}.
    \end{align*}
\end{definition}

\subsection{Quotient categories}
For the general theory of quotient categories we mainly refer to \cite[Chapter 4]{Po}.
Suppose that $A$ is a right noetherian commonly graded algebra. Let $\Tor A$ be the full subcategory consisting of graded torsion $A$-modules and $\tor A=\Tor A\cap \gr A$. Let
$$\QGr A=\Gr A/\Tor A \text { and } \qgr A=\gr A/\tor A.$$

Note that both $\QGr A$ and $\qgr A$ are abelian categories, and $\qgr A$ can be regarded as a full subcategory of $\QGr A$ consisting of noetherian objects.
The quotient functor $\Gr A\to \QGr A$ is denoted by $\pi_A$ (or simply $\pi$). Let $\omega_A: \QGr A\to \Gr A$ (or simply $\omega$) be the right adjoint of $\pi:\Gr A\to \QGr A$. Then $\pi\omega\cong \id_{\QGr A}$. Let $s$ denote the auto-equivalent functor of $\QGr A$ induced by the shift functor $(1): \Gr A \to \Gr A$.

For any $\cM,\cN\in \QGr A$, let
$$\Hom_{\cA}(\cM,\cN):=\Hom_{\QGr A}(\cM,\cN) \text{ and } \gHom_{\cA}(\cM,\cN):=\oplus_n\Hom_{\cA}(\cM,s^n\cN).$$
Since $\QGr A$ has enough injective objects, namely, the images of torsion-free injective graded $A$-modules, the derived functor of $\Hom_{\cA}(\cM,-)$ exists.
Let
$$\Ext_{\cA}^i(\cM,\cN)=\Ext_{\QGr A}^i(\cM,\cN) \text{ and } \gExt_{\cA}^i(\cM,\cN)=\oplus_n\Ext_{\cA}^i(\cM,s^n\cN).$$


Although $\qgr A$ may not have enough injective objects or projective objects, Ext groups in $\qgr A$ can be defined through its derived category. The $i$-th derived functor of $\Hom_{\qgr A}(-,-)$ is defined by
$$\Ext_{\qgr A}^i(\cM^\bullet,\cN^\bullet):=\Hom_{\D(\qgr A)}(\cM^\bullet,\cN^\bullet[i])$$
for any $\cM^\bullet,\cN^\bullet\in \D(\qgr A)$.
In fact, for any $\cM^\bullet,\cN^\bullet\in \D^-(\qgr A)$, by \cite[Lemma 4.1]{LW1},
$$\Ext_{\qgr A}^i(\cM^\bullet,\cN^\bullet)\cong \Ext_{\cA}^i(\cM^\bullet,\cN^\bullet).$$
So, we will not distinguish $\Ext_{\qgr A}^i(\cM^\bullet,\cN^\bullet)$ and $\Ext_{\cA}^i(\cM^\bullet,\cN^\bullet)$ for $\cM^\bullet,\cN^\bullet\in\D^-(\qgr A)$ in the rest of this paper.

Throughout this paper, we use a calligraphic letter, such as $\cM$, to denote $\pi M$ for a graded $A$-module $M$.
The following lemmas will be used frequently.

\begin{lemma}\label{cohomology in tails}
    Suppose that $A$ is a right noetherian commonly graded algebra and $M$ is a graded $A$-module. Then
    $R^{i+1}\Gamma_A(M)\cong \gExt_{\cA}^i(\cA,\cM)$ for any $i\geqslant 1$.
\end{lemma}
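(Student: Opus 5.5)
The plan is to reduce the statement to the classical fact (Artin--Zhang) that the section functor of the torsion theory computes local cohomology. Write $\omega\pi M\cong \gHom_{\cA}(\cA,\cM)$. This holds because $\omega$ is right adjoint to $\pi$ and $\pi$ commutes with shifts, so for each $n$
$$(\omega\pi M)_n\cong\Hom_{\Gr A}(A(-n),\omega\pi M)\cong \Hom_{\cA}(\cA,s^n\cM).$$
Now $\omega$ is left exact, being a right adjoint. Since $\pi$ is exact and has an exact-preserving description on injectives, its derived functors satisfy
$$R^i\omega(\cM)_n\cong \Ext^i_{\cA}(\cA,s^n\cM),$$
that is, $\gExt^i_{\cA}(\cA,\cM)\cong R^i(\omega\pi)(M)$. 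To justify this, I would take an injective resolution $M\to E^\bullet$ in $\Gr A$ and split each $E^j=\Gamma_A(E^j)\oplus E^j/\Gamma_A(E^j)$. Here one uses that, for $A$ right noetherian, the torsion part of an injective is injective and the torsion-free quotient is a torsion-free injective. The images $\pi E^j$ are then injective objects of $\QGr A$, as recalled in the text (images of torsion-free injectives), and $\pi E^\bullet$ is an injective resolution of $\cM$ because $\pi$ is exact. Hence
$$\gExt^i_{\cA}(\cA,\cM)=H^i\bigl(\gHom_{\cA}(\cA,\pi E^\bullet)\bigr)=H^i(\omega\pi E^\bullet).$$

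The key step is the four-term exact sequence: for every graded module $N$ there is an exact sequence
$$0\to\Gamma_A(N)\to N\to\omega\pi N\to R^1\Gamma_A(N)\to 0.$$
Moreover, for an injective $E$, the map $E\to\omega\pi E$ is surjective, since $\omega\pi E\cong E/\Gamma_A(E)$ when $E/\Gamma_A(E)$ is torsion-free injective. This last point uses that $\omega\pi X\cong X$ for $X$ torsion-free with $\gExt^1_A(T,X)=0$ for torsion $T$, which holds for injective $X$.

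Applying this to $E^\bullet$ gives a short exact sequence of complexes
$$0\to\Gamma_A(E^\bullet)\to E^\bullet\to\omega\pi E^\bullet\to0.$$
Its long exact cohomology sequence reads
$$\cdots\to H^i(E^\bullet)\to H^i(\omega\pi E^\bullet)\to R^{i+1}\Gamma_A(M)\to H^{i+1}(E^\bullet)\to\cdots.$$
For $i\geq1$ both $H^i(E^\bullet)$ and $H^{i+1}(E^\bullet)$ vanish, so $H^i(\omega\pi E^\bullet)\cong R^{i+1}\Gamma_A(M)$, which is exactly the claim.

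The main obstacle is the injective splitting, namely that $\Gamma_A(E)$ is injective and $E/\Gamma_A(E)$ is a torsion-free injective with $\omega\pi(E/\Gamma_A E)=E/\Gamma_A E$. For the first part I would use the noetherian hypothesis: $\Gamma_A$ is the direct limit of $\gHom_A(A/J^i,-)$, and Baer's criterion over a noetherian ring applied to the torsion submodule shows it is injective, since torsion is stable under essential extensions. For the second part, $\Hom_{\Gr A}(T,E')=0$ and $\Ext^1(T,E')=0$ for torsion $T$ give the saturation, so $\omega\pi E'\cong E'$ by the standard description
$$\omega\pi N=\varinjlim\gHom_A(A_{\geq n},N).$$
This description is valid here because $J/J^2$ finite-dimensional, i.e. the $A_{\geq n}$-topology, agrees with the $J$-adic one by \cite[Lemma 2.6]{LSW}. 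If that equality of topologies is not available in general, I would instead work directly with $\varinjlim\gHom_A(I,N)$ over right ideals $I$ with $A/I$ torsion.
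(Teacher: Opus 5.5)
Your proposal is correct. It is essentially the standard Artin--Zhang argument behind \cite[Proposition 7.2]{AZ}, which is all the paper invokes: split an injective resolution $E^\bullet$ of $M$ into torsion and torsion-free injective parts, identify $\gExt^i_{\cA}(\cA,\cM)$ with $H^i(\omega\pi E^\bullet)$, and read the isomorphism off the long exact sequence of $0\to\Gamma_A(E^\bullet)\to E^\bullet\to\omega\pi E^\bullet\to 0$.

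The one step you assert rather than prove is that $\Gamma_A(E)$ is injective; saying that torsion is stable under essential extensions is exactly the claim to be shown. With the $A_{\geqslant n}$-description of torsion, it follows from Baer's criterion together with the degree estimate $I_{\geqslant N}\subseteq IA_{\geqslant n}$ (for $N\gg n$) for a finitely generated graded right ideal $I$. It does not follow from any general stability property of $J$-adic torsion.
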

\begin{proof}
The proof is the same as that of \cite[Proposition 7.2]{AZ} for the $\mathbb{N}$-graded case.
\end{proof}

\begin{lemma}\cite[Corollary 3.4]{LSW}\label{omega pi and depth 2}
Let $A$ be a commonly graded algebra. Then for any $X\in\Gr A$, $\depth_AX\geqslant 2$ if and only if $\omega\pi X\cong X$ canonically.
\end{lemma}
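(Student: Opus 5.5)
The statement is the characterization $\depth_A X\geqslant 2 \iff \omega\pi X\cong X$ canonically. I will compare $X$ with $\omega\pi X$ through the standard four-term exact sequence attached to the adjunction $(\pi,\omega)$. For any $X\in\Gr A$ there is an exact sequence
$$0\to \Gamma_A(X)\to X\xrightarrow{\ \eta_X\ } \omega\pi X\to R^1\Gamma_A(X)\to 0,$$
where $\eta_X$ is the unit of the adjunction. I would establish this in the same way as \cite[Proposition 7.2]{AZ}, which is the argument behind Lemma \ref{cohomology in tails}.

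First I identify $\omega\pi X$. By definition of the quotient category,
$$\omega\pi X=\gHom_{\cA}(\cA,\cX)\cong \dlim \gHom_A(X_{\geqslant n}\text{-type submodules},X),$$
or more intrinsically $\dlim\gHom_A(J^i,X)$. Here I use that $A/J^i$ is finite-dimensional (since $A$ is locally finite and $J$ has finite codimension) and that the cofinal system of submodules with torsion cokernel is given by the $J^i$.

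Next I apply $\gHom_A(-,X)$ to $0\to J^i\to A\to A/J^i\to 0$. This gives the exact sequence
$$0\to\gHom_A(A/J^i,X)\to X\to \gHom_A(J^i,X)\to\gExt^1_A(A/J^i,X)\to 0.$$
Passing to the exact direct limit yields the four-term sequence above. The one point needing care is that every essential submodule with torsion quotient contains some $J^i$. Without a noetherian hypothesis I would instead work directly with the localization description $\omega\pi X=\dlim\gHom_A(N,X)$ over $N\subseteq A$ with $A/N$ torsion; this system still has the $J^i$ cofinal because $A/N$ cyclic torsion means $1\cdot J^i\subseteq N$ for some $i$.

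Given the four-term sequence, $\eta_X$ is an isomorphism if and only if $\Gamma_A(X)=R^1\Gamma_A(X)=0$, that is, $R^i\Gamma_A(X)=0$ for $i=0,1$. By Lemma \ref{depth and local cohomology}, the least $i$ with $R^i\Gamma_A(X)\neq0$ equals the least $i$ with $\gExt^i_A(A/J,X)\neq0$. So the vanishing of $R^0\Gamma_A(X)$ and $R^1\Gamma_A(X)$ is exactly $\depth_A X\geqslant 2$.

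For the word ``canonically'': if some abstract isomorphism $X\cong\omega\pi X$ were allowed, one would need an extra argument, so I read the statement as concerning the unit map, which settles it.

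The main obstacle is the identification of $\omega\pi X$ as $\dlim\gHom_A(J^i,X)$ in the commonly graded setting, namely the cofinality of the $J^i$.
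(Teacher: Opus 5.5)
Your proof follows the same route as the one behind \cite[Corollary 3.4]{LSW}: the four-term sequence $0\to\Gamma_A(X)\to X\to\omega\pi X\to R^1\Gamma_A(X)\to 0$ (the commonly graded analogue of \cite[Proposition 7.2]{AZ}) combined with Lemma \ref{depth and local cohomology}. One correction: the definition of morphisms in $\QGr A$ only gives $\omega\pi X\cong\dlim\gHom_A(J^i,X/\Gamma_A(X))$, so dropping $\Gamma_A(X)$ is not ``by definition'' (whereas the cofinality you flag as the main obstacle is trivial, since $A/N$ torsion forces $J^i\subseteq N$); this is harmless here, because both conditions force $\Gamma_A(X)=0$ (the unit $\eta_X$ kills $\Gamma_A(X)$ since $\pi\Gamma_A(X)=0$), and for such $X$ your limit is exactly $\omega\pi X$.
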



Let $A$ and $B$ be right noetherian commonly graded algebras, and let $F:\gr A\to \gr B$ be a functor. If there is a functor $F':\qgr A\to \qgr B$ such that $\pi_BF\cong F'\pi_A$, then $F'$ is called \textit{induced by} $F$.

We  refer to \cite{Ja} for the notations related to  Morita theory.

\begin{theorem}\cite[Theorem 2.16]{LSW}\label{equ. of quot. cat. induced by Morita context for commonly graded}
Let $(A,B,M,M',\tau,\mu)$ be a graded Morita context where $A,B$ are right noetherian commonly graded algebras and $M_A,M'_B$ are finitely generated. Suppose that ${}_BM$ and ${}_AM'$ are finitely presented. Then the functors 
$$-\otimes_AM':\gr A \rightleftarrows \gr B :-\otimes_BM $$
associated to the Morita context $(A,B,M,M',\tau,\mu)$ induce an equivalence:
$$-\otimes_{\cA}\cM':\qgr A \rightleftarrows \qgr B :-\otimes_{\cB}\cM $$
 if and only if both $\Coker \tau$ and $\Coker\mu$ are finite-dimensional.
\end{theorem}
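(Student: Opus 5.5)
The statement to prove is Theorem \ref{equ. of quot. cat. induced by Morita context for commonly graded}, cited from \cite[Theorem 2.16]{LSW}. The plan is to show that the two composites $-\otimes_AM'\otimes_BM$ and $-\otimes_BM\otimes_AM'$ become naturally isomorphic to the identity after applying $\pi$, exactly when the cokernels of the Morita maps $\tau\colon M'\otimes_BM\to A$ and $\mu\colon M\otimes_AM'\to B$ are finite-dimensional.

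\emph{Step 1: the functors descend to $\qgr$.} We first check that the functors are well defined on finitely generated modules and preserve torsion, so that they induce functors on $\qgr$.
\begin{itemize}
\item Since ${}_AM'$ is finitely presented and $A$ is right noetherian, $X\otimes_AM'$ is finitely generated for every $X\in\gr A$.
\item If $X$ is finite-dimensional, then $X\otimes_AM'$ is a quotient of $X\otimes_k M'$ generated by finitely many elements of bounded degrees.
\item Because $B$ is locally finite and bounded below, and $X\otimes_AM'$ is killed by $J_A^n$ acting through $M'$, the module $X\otimes_AM'$ is finite-dimensional. Concretely, it has a finite filtration by quotients of $(X_i)\otimes M'$ in which only finitely many degrees survive.
\end{itemize}
Here finite presentation of ${}_AM'$ is what lets us show that $\Tor$-terms also preserve finite-dimensionality. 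The functors are right exact, so the induced functors $-\otimes_{\cA}\cM'$ and $-\otimes_{\cB}\cM$ exist.

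\emph{Step 2: the "if" direction.} Consider the natural map $X\otimes_AM'\otimes_BM\to X\otimes_AA=X$ given by $\mathrm{id}\otimes\tau$.
\begin{itemize}
\item Its cokernel is $X\otimes_A\Coker\tau$, a quotient of $X\otimes_A(A/I)$ with $I=\image\tau$. This is finitely generated over $A/I$, which is finite-dimensional, so the cokernel is finite-dimensional.
\item For the kernel, use associativity of the Morita context: $\tau$ and $\mu$ satisfy $\tau(m'\otimes m)\,m_1=m'\,\mu(m\otimes m_1)$.
\item Hence the kernel is annihilated by $I$ after tensoring further. Iterating with $\mu$, elements of the kernel are killed by $I$, or by the corresponding ideal on the $B$-side.
\item Since $A/I$ is finite-dimensional, $I\supseteq A_{\geqslant n}$ for some $n$, using that $J/J^2$ is finite, or that $A$ is locally finite and $A/I$ is bounded. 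So the kernel is torsion.
\end{itemize}
Thus $\pi(\mathrm{id}\otimes\tau)$ is an isomorphism, and symmetrically for $\mu$, giving the equivalence.

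I expect the kernel estimate to be the main obstacle. My approach there is the standard Morita argument: show that $\ker(\mathrm{id}\otimes\tau)\otimes_A I=0$ in the appropriate sense, so that the kernel, being finitely generated, is killed by a power of $J$.

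\emph{Step 3: the "only if" direction.} Assume the induced functors give an equivalence, and apply them to $A$ itself. Note that $\pi A\otimes_{\cA}\cM'\otimes_{\cB}\cM=\pi(M'\otimes_BM)$. Naturality of the Morita-context isomorphism forces $\pi(\tau)$ to be the unit isomorphism, so $\Coker\tau$ is torsion. Being finitely generated, $\Coker\tau$ is therefore finite-dimensional, and the same argument applied to $B$ gives finite-dimensionality of $\Coker\mu$.
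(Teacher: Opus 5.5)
The paper gives no proof of this statement; it is quoted from \cite[Theorem 2.16]{LSW}. So I judge your outline on its own. Steps 1 and 2 are sound in substance, but Step 3 rests on a claim that cannot be proved, and this shows how the theorem has to be read.

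\textbf{Step 3.} The functors $-\otimes_AM'$ and $-\otimes_BM$ depend only on the bimodules, whereas $\Coker\tau$ and $\Coker\mu$ depend on the pairings, and $\tau=\mu=0$ is always a Morita context. Take $A=B=M=M'=k[t]$ with $\tau=\mu=0$. Every hypothesis holds, and the induced functors are isomorphic to the identity of $\qgr k[t]$. Yet $\Coker\tau=k[t]$ is infinite-dimensional. So no naturality argument can force $\pi(\tau)$ to be an isomorphism starting from an abstract equivalence. The theorem must be read as saying that $\pi(\mathrm{id}_X\otimes\tau)$ and $\pi(\mathrm{id}_Y\otimes\mu)$ are the natural isomorphisms realizing the equivalence. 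This is also how the paper uses it, when it speaks of an equivalence ``induced by'' the Morita context. Under that reading, Step 3 is just evaluation at $X=A$ and $Y=B$, plus the fact that a finitely generated torsion module is finite-dimensional. You should state this as the hypothesis rather than claim to derive it.

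\textbf{Step 1.} The conclusions are right, but the finiteness hypotheses are attached to the wrong claims.
\begin{itemize}
\item $X\otimes_AM'$ is finitely generated over $B$ because $X_A$ and $M'_B$ are, not because ${}_AM'$ is finitely presented.
\item For finite-dimensional $X$, finite-dimensionality of $X\otimes_AM'$ comes from ${}_AM'$ being generated by finitely many $m'_1,\dots,m'_s$, so that $\dim_k(X\otimes_AM')\leqslant s\dim_kX$. Local finiteness of $B$ does not give it.
\item Finite presentation of ${}_AM'$ is what handles the $\Tor$ term, as you suspected. With a finitely generated first syzygy $K'$ of ${}_AM'$, $\Tor_1^A(T,M')$ embeds in $T\otimes_AK'$, which is finite-dimensional when $T$ is. This is what makes $\pi_B(-\otimes_AM')$ invert monomorphisms with torsion cokernel.
\end{itemize}

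\textbf{Step 2.} This is correct, and a single use of the two associativity identities suffices. For $\xi=\sum_jx_j\otimes m'_j\otimes m_j\in\Ker(\mathrm{id}_X\otimes\tau)$ and $a=\tau(n'\otimes n)$, they give $\xi a=\bigl(\sum_jx_j\tau(m'_j\otimes m_j)\bigr)\otimes n'\otimes n=0$. So the kernel is annihilated by $I=\image\tau$; note that it is $KI$ that vanishes, not $K\otimes_AI$. Being finitely generated, the kernel is then a module over the finite-dimensional algebra $A/I$, hence finite-dimensional.
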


When Theorem \ref{equ. of quot. cat. induced by Morita context for commonly graded} holds, we also say the equivalence
$$-\otimes_{\cA}\cM':\qgr A \rightleftarrows \qgr B :-\otimes_{\cB}\cM $$
is \textit{induced by} the graded Morita context $(A,B,M,M',\tau,\mu)$.
Theorem \ref{equ. of quot. cat. induced by Morita context for commonly graded} leads to the following definition of modulo-torsion-invertible $(B,A)$-bimodules.

\begin{definition}\label{definition of modulo-torsion-invertible}
Let $A$ and $B$ be noetherian commonly graded algebras. If there is a graded Morita context $(A,B,{}_BM_A,{}_AM'_B,\tau,\mu)$ with ${}_BM_A$ and ${}_AM'_B$ finitely generated on both sides such that $\Coker \tau$ and $\Coker \mu$ are finite-dimensional, then ${}_BM_A$ is called a \textit{modulo-torsion-invertible $(B,A)$-bimodule} with the inverse ${}_AM'_B$ associated to $(A,B,{}_BM_A,{}_AM'_B,\tau,\mu)$.
\end{definition}

\subsection{Noncommutative projective schemes}
The $\chi$-condition introduced in \cite{AZ} plays an important role in noncommutative projective geometry.

\begin{definition}\label{def-chi}
Let $A$ be a commonly graded $k$-algebra, and let $M$ be a graded $A$-module.
\begin{itemize}
    \item [(1)] If $\dim_k\gExt^j_A(A/J,M)<\infty$  for all $j\leqslant i$, then we say that $\chi_i(M)$ holds.

    \item [(2)] If $\chi_i(M)$ holds for every finitely generated graded $A$-module $M$, then we say that $\chi_i$ holds for $A$ or $A$ satisfies the  $\chi_i$-condition.

    \item [(3)] If  $\chi_i$ holds for $A$ for all $i \geqslant 0$, then we say  $\chi$ holds for $A$ or $A$ satisfies the  $\chi$-condition.
\end{itemize}
\end{definition}

\begin{proposition}\label{facts about chi condition}
    Let $A$ be a right noetherian commonly graded algebra.
\begin{itemize}
    \item [(1)] $\chi_i$ holds for $A$ if and only if $R^j\Gamma_A(M)$ is bounded above for all $j\leqslant i$ and all $M\in \gr A$.
    \item [(2)] If $A$ satisfies $\chi_i$, then for any finitely generated graded $A$-modules $M$, $N$ and $j< i$, the canonical map from $\gExt_A^j(M,N)$ to $\gExt_{\cA}^j(\cM,\cN)$ has bounded above kernel and cokernel.
\end{itemize}
\end{proposition}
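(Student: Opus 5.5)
The plan for (1) is to reduce the finiteness of $\gExt^j_A(A/J,M)$ to finiteness of graded pieces plus bounded-aboveness. First I would check that, for $M\in\gr A$, each $\gExt_A^j(A/J,M)$ is locally finite. This follows from a resolution of $A/J$ by finitely generated graded free modules, which exists because $A$ is right noetherian. Since $A$ is commonly graded, $\gExt^j_A(A/J,M)$ is also bounded below: $A/J$ is finite-dimensional and $M$ is bounded below. Hence $\chi_j(M)$ is equivalent to $\gExt^j_A(A/J,M)$ being bounded above. So it suffices to show the following: $\gExt^j_A(A/J,M)$ is bounded above for all $j\le i$ and all $M\in\gr A$ if and only if $R^j\Gamma_A(M)$ is bounded above for all $j\le i$ and all $M\in\gr A$.

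I would prove both directions by induction on $i$, following the argument of \cite[Corollary 3.6]{AZ}. Local cohomology is computed by
$$R^j\Gamma_A(M)=\varinjlim_n \gExt^j_A(A/J^n,M).$$
For the direction ($\Rightarrow$):
\begin{itemize}
\item[(a)] Each $A/J^n$ is finite-dimensional, so it has a finite filtration with subquotients that are shifts of modules annihilated by $J$. These are finite direct sums of shifted simple summands of $A/J$.
\item[(b)] Long exact sequences then show that $\gExt^j_A(A/J^n,M)$ is bounded above for every $n$.
\item[(c)] The difficulty is uniformity in $n$. For this I would use the standard trick for $j=0$: $\Gamma_A(M)$ is a finitely generated torsion module, hence finite-dimensional. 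For higher $j$, embed $M$ into its torsion-free part modulo $\Gamma_A(M)$ and use the short exact sequence $0\to M\to E\to C\to 0$, where $E$ is a torsion-free injective hull restricted appropriately. Dimension shifting then gives $R^{j}\Gamma_A(M)\cong R^{j-1}\Gamma_A(C)$.
\end{itemize}
To keep everything finitely generated, I would instead use the AZ approach. The idea is that $\chi_i$ implies $R^j\Gamma_A(M)$ has finite-dimensional socle-type pieces, via the spectral sequence or the direct argument that a torsion module $T$ with $\gHom(A/J,T)$ finite-dimensional and bounded above is itself bounded above. That last claim holds because every nonzero homogeneous element of $T$ generates a finite-dimensional submodule with nonzero socle in degree at least its own degree. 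The step relating $\gExt^j(A/J,M)$ to $\gHom(A/J,R^j\Gamma(M))$ uses the Grothendieck spectral sequence
$$\gExt^p_A(A/J,R^q\Gamma_A M)\Rightarrow \gExt^{p+q}_A(A/J,M),$$
which is valid because $\gHom_A(A/J,-)=\gHom_A(A/J,\Gamma_A(-))$ and $\Gamma_A$ preserves injectives (noetherian). For ($\Leftarrow$) I would run the same spectral sequence backwards. Bounded-above $R^q\Gamma_A M$ for $q\le i$ gives bounded-above $E_2^{p,q}$, since $A/J$ has a resolution by finitely generated frees. The abutment in degrees $\le i$ is therefore bounded above.

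For (2), I would use the exact triangle
$$R\Gamma_A(N)\to N\to R\omega\pi N\to$$
and its consequence
$$R\gHom_A(M,R\omega\pi N)=R\gHom_{\cA}(\cM,\cN).$$
The map $\gExt^j_A(M,N)\to\gExt^j_{\cA}(\cM,\cN)$ thus sits in a long exact sequence with $\gExt^j_A(M,R\Gamma_A N)$ and $\gExt^{j+1}_A(M,R\Gamma_AN)$. For $j<i$ these involve $R^q\Gamma_A N$ with $q\le i$ only, which are bounded above by part (1). Taking a finitely generated free resolution of $M$, a hypercohomology spectral sequence shows that the relevant Ext terms are bounded above. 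This yields bounded-above kernel and cokernel.

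The main obstacle will be the spectral sequence step in (1) for ($\Rightarrow$), namely deducing boundedness of $R^q\Gamma_A(M)$ from boundedness of $\gHom(A/J,R^q\Gamma_A M)$ inductively. I would handle it by the dimension shift $M\mapsto \omega\pi$-type cosyzygies, keeping modules finitely generated up to torsion, as in \cite[Proposition 3.5]{AZ}.
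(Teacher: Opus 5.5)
Your plan is correct and is essentially the paper's proof. The paper only transports the $\mathbb{N}$-graded arguments of \cite[Corollary 3.6(3), Corollary 7.3]{AZ}, and your mechanisms are the standard ones: for (1), a socle bound for torsion modules together with $\gExt^p_A(A/J,R^q\Gamma_AM)\Rightarrow\gExt^{p+q}_A(A/J,M)$; for (2), the triangle $R\Gamma_AN\to N\to R\omega\pi N\to$ plus a finitely generated free resolution of $M$.

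The one place where the commonly graded setting needs care is your justification of the socle lemma.
\begin{itemize}
\item The claim that the socle of $xA$ meets degrees $\geqslant\deg x$ fails when $A$ has nonzero negative part. For example, take $A=k[t]/(t^2)$ with $\deg t=-1$ and $x=1$: the socle of $xA$ is $kt$, which sits in degree $-1<\deg x$.
\item The correct statement is that if $A_{<-m}=0$, then $xA$ lives in degrees $\geqslant\deg x-m$. So a torsion module whose socle is bounded above by $b$ is bounded above by $b+m$.
\item Likewise, $\Gamma_A$ preserves injectives not by noetherianity alone. The reason is that graded injective hulls of graded simples are summands of finite sums of shifts of $D(A)$, hence bounded above (as $A$ is bounded below) and torsion; noetherianity then handles the direct sums.
\end{itemize}

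The ``main obstacle'' you flag needs neither step (c), whose cokernel $C$ is not finitely generated, nor any $\omega\pi$-cosyzygies. Argue by induction on $q$: every $E_2^{p,q'}$ with $q'<q$ is bounded above. Then $E_2^{0,q}=\gHom_A(A/J,R^q\Gamma_AM)$ has a finite filtration whose bottom piece $E_\infty^{0,q}$ is a quotient of the finite-dimensional $\gExt^q_A(A/J,M)$. Its other factors embed into subquotients of $E_2^{r,q-r+1}$ with $r\geqslant2$, which are bounded above. Hence $E_2^{0,q}$ is bounded above, though not necessarily finite-dimensional, and that is all the socle lemma needs.
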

\begin{proof}
    The proofs are similar to those of \cite[Corollary 3.6(3), Corollary 7.3]{AZ} in the $\mathbb{N}$-graded case.
\end{proof}

To define noncommutative projective schemes, we first introduce the notions of an algebraic triple and an ample pair.

Let $\cC$ be a $k$-linear abelian category, $O$ be an object in $\cC$ and $s$ be a $k$-linear auto-equivalence of $\cC$. Then $(\cC,O,s)$ is called an \textit{algebraic triple}. 

\begin{definition}\cite{AZ}\label{conditions of ample}
Let $(\cC,O,s)$ be an algebraic triple.
The pair $(O,s)$ is called \textit{ample} in $\cC$ if
\begin{itemize}
    \item [(1)] for every object $\cM \in \cC$, there are some positive integers $r_1,r_2,\cdots,r_p$ and an epimorphism $\oplus_{i=1}^ps^{-r_i}O\to\cM$ in $\cC$;
    \item [(2)] for every epimorphism $\cM\to\cN$ in $\cC$, there is an integer $n_0$ such that the induced map $\Hom_{\cC}(s^{-n}O,\cM)\to\Hom_{\cC}(s^{-n}O,\cN)$ is surjective for every $n\geqslant n_0$.
\end{itemize}
\end{definition}

Let $(\cC,O,s)$ be an algebraic triple. Then
$B(\cC,O,s):=\oplus_{i\in \mathbb{Z}}\Hom_{\cC}(O,s^iO)$
is a $\mathbb{Z}$-graded algebra with a natural multiplication, and $\oplus_{i\in \mathbb{Z}}\Hom_{\cC}(O,s^i\cM)$ has a natural graded right $B(\cC,O,s)$-module structure for any $\cM\in\cC$.

The following theorem is a generalization of \cite[Theorem 4.5]{AZ}, which is a noncommutative version of Serre's theorem \cite{Se}.

\begin{theorem}\cite[Theorem  3.9]{LSW}\label{noncommutative Serre theorem for commonly graded algebra}
\begin{itemize}
\item [(1)] Let $(\cC,O,s)$ be an algebraic triple. Suppose that it satisfies
   \begin{itemize}
     \item [H1.] $O\in \cC$ is a noetherian object;
     \item [H2.] $\Hom_{\cC}(O,\cM)$ is finite-dimensional for all $\cM\in \cC$;
     \item [H3.] $(O,s)$ is ample in $\cC$;
     \item [H4.] $B(\cC,O,s)$ is bounded below.
   \end{itemize}

Then $B:=B(\cC,O,s)$ is a right noetherian commonly graded $k$-algebra  satisfying the $\chi_1$-condition and $\depth_B(B)\geqslant 2$. The functor
$$\cC\to \qgr B,\,\,\cM\mapsto \pi(\oplus_{i\in \mathbb{Z}}\Hom_{\cC}(O,s^i\cM))$$
gives an isomorphism between algebraic triples
$(\cC,O,s)$ and $(\qgr B,\cB,s)$.
\item [(2)] Let $A$ be a right noetherian commonly graded algebra satisfying the $\chi_1$-condition. Then $H1$, $H2$, $H3$ hold for $(\qgr A, \cA, s)$, and there is a canonical morphism $A\to B$ such that both its kernel and cokernel are bounded above, where $B=B(\qgr A,\cA,s)$.

    If $B=B(\qgr A,\cA,s)$ is bounded below (as is the case when $\depth_A(A)\geqslant 2$), then there is an isomorphism  between $(\qgr A,\cA, s)$ and  $(\qgr B,\cB,s)$.
\end{itemize}
\end{theorem}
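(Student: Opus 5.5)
The plan is to adapt the proof of \cite[Theorem 4.5]{AZ} to the commonly graded setting. Throughout, the $\mathbb{N}$-graded hypotheses are replaced by local finiteness and boundedness below, and torsion is taken with respect to the graded Jacobson radical $J$.

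\emph{Part (1).} Define $\Gamma:\cC\to\Gr B$ by $\Gamma(\cM)=\oplus_{i}\Hom_{\cC}(O,s^i\cM)$, where $B=\Gamma(O)=B(\cC,O,s)$.
\begin{itemize}
\item[(a)] By H2, each $\Gamma(\cM)_i$ is finite-dimensional. With H4, $B$ is then locally finite and bounded below, so $B$ is commonly graded.
\item[(b)] Next show that $\Gamma(\cM)$ is finitely generated up to a bounded above (hence finite-dimensional in the relevant range) discrepancy. Ampleness (1) gives an epimorphism $\oplus s^{-r_i}O\to\cM$, and ampleness (2) shows that the induced map $\oplus B(-r_i)\to\Gamma(\cM)$ is surjective in degrees $\geqslant n_0$. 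The finitely many remaining degrees are handled by H2.
\item[(c)] Right noetherianity of $B$ comes from H1. A graded right ideal $I\subseteq B$ determines the subobject of $O$ generated by the images of its homogeneous elements. Since $O$ is noetherian, an ascending chain of such ideals stabilizes in high degrees, and local finiteness controls the low degrees.
\item[(d)] Define $\cC\to\qgr B$ by $\cM\mapsto\pi\Gamma(\cM)$. Show it is an equivalence by constructing the inverse $\pi N\mapsto N\otimes_B O$ (built from finite presentations and cokernels in $\cC$), and by checking that torsion modules go to $0$ because of ampleness (2).
\item[(e)] The $\chi_1$-condition and $\depth_B B\geqslant 2$ follow from the observation that $\Gamma(\cM)$ is, by construction, of the form $\omega\pi$. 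More precisely, one checks $\gHom_B(B/J,\Gamma\cM)=0$ and $\gExt^1_B(B/J,\Gamma\cM)=0$ by applying $\Gamma$ to extensions. Lemma \ref{omega pi and depth 2} and Lemma \ref{cohomology in tails} then translate these vanishings into the stated depth condition and into boundedness of $R^1\Gamma_B$.
\end{itemize}

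\emph{Part (2).} Take $(\qgr A,\cA,s)$.
\begin{itemize}
\item[(a)] H1 holds since $A_A$ is noetherian and $\pi$ preserves noetherian objects.
\item[(b)] For H2, use the exact sequence
\[
0\to\Gamma_A(M)\to M\to\omega\pi M\to R^1\Gamma_A(M)\to0
\]
together with $\Hom_{\cA}(\cA,\cM)=(\omega\pi M)_0$. By Proposition \ref{facts about chi condition}(1), $\chi_1$ makes $R^1\Gamma_A(M)$ bounded above. Since it is a subquotient of locally finite modules, its degree-$0$ part is finite-dimensional, and so $(\omega\pi M)_0$ is finite-dimensional.
\item[(c)] For H3, ampleness (1) comes from a surjection $\oplus A(-r_i)\to M$, where the $r_i$ can be made positive since $M$ and $M_{\geqslant n}$ agree in $\qgr A$. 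For ampleness (2), lift an epimorphism to $M\to N$ with torsion cokernel. The obstruction to lifting $\Hom(s^{-n}\cA,-)$ then lies in $R^1\Gamma$ of the kernel and in a torsion module, and both vanish in degree $n\gg0$ by $\chi_1$.
\item[(d)] The canonical map $A\to B=\omega\pi A$ has kernel $\Gamma_A(A)$ and cokernel $R^1\Gamma_A(A)$. The kernel is bounded above because $\Gamma_A(A)$ is finitely generated and torsion, and the cokernel is bounded above by $\chi_1$.
\item[(e)] If $B$ is bounded below, all of H1–H4 hold, and part (1) gives the isomorphism of triples. When $\depth_A A\geqslant 2$ we have $B\cong A$ by Lemma \ref{omega pi and depth 2}.
\end{itemize}

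The main obstacle I expect is step (c) of part (1), together with the finite generation of $\Gamma(\cM)$ in step (b). In the commonly graded case $B_0$ need not be $k$ and $B$ may have negative degrees, so the Artin–Zhang argument must be redone using only local finiteness and boundedness below. I would first try to show that $\Gamma(\cM)_{\geqslant n}$ is finitely generated for $n\gg0$, with the finitely many lower degrees absorbed by H2 and H4.
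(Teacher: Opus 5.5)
Your overall route is the intended one. The paper gives no proof of this statement; it quotes \cite[Theorem 3.9]{LSW}, the commonly graded adaptation of \cite[Theorem 4.5]{AZ}. However, several steps fail as written.

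In Part (1)(b), H4 bounds only $B$ below, not your $\Gamma(\cM)=\oplus_i\Hom_{\cC}(O,s^i\cM)$. For $\cC=\qgr k[x,y]$, $O=\pi k[x,y]$ and $\cM=\pi(k[x,y]/(y))$, one gets $\Gamma(\cM)\cong k[x,x^{-1}]$. So the ``remaining degrees'' are not finitely many, and $\Gamma(\cM)$ is not finitely generated. What does hold, and suffices, is the following. Each $\Gamma(\cM)_{\geqslant n}$ is finitely generated, while $\Gamma(\cM)/\Gamma(\cM)_{\geqslant n}$ is bounded above, hence torsion, so $\pi\Gamma(\cM)\in\qgr B$.

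In (1)(e), vanishing of $\gHom_B(B/J,-)$ and $\gExt^1_B(B/J,-)$ on modules $\Gamma(\cM)$ gives $\depth_BB\geqslant 2$, but not $\chi_1$. Lemma~\ref{cohomology in tails} only concerns $R^j\Gamma_B$ for $j\geqslant 2$. The missing step is a comparison for arbitrary $N\in\gr B$:
\begin{itemize}
\item[(i)] If $\pi N$ corresponds to $\cM$, then $\omega\pi N\cong\Gamma(\cM)$.
\item[(ii)] For a surjection $F=\oplus_iB(-r_i)\to N$, the map $F\cong\omega\pi F\to\omega\pi N$ is $\Gamma$ applied to an epimorphism $\oplus_i s^{-r_i}O\to\cM$.
\item[(iii)] By ampleness (2), this map is surjective in degrees $\geqslant n_0$.
\end{itemize}
Hence $N\to\omega\pi N$ is surjective in high degrees, $R^1\Gamma_B(N)$ is bounded above, and $\chi_1$ follows.

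In Part (2)(c) the obstruction is misplaced. Take $f\colon M\to N$ with $K=\Ker f$ and apply $\omega$ to $0\to\pi K\to\pi M\to\pi N\to 0$. This puts $\Coker(\omega\pi M\to\omega\pi N)$ inside $\gExt^1_{\cA}(\pi A,\pi K)\cong R^2\Gamma_A(K)$, not $R^1\Gamma_A(K)$, and $\chi_1$ gives no control over $R^2\Gamma_A(K)$. Argue through the target instead:
\begin{itemize}
\item[(i)] For $n\gg0$, $(\omega\pi N)_n$ is the image of $N_n$, because $R^1\Gamma_A(N)$ is bounded above.
\item[(ii)] For $n\gg0$, $N_n=f(M_n)$, because $\Coker f$ is finitely generated torsion, hence finite-dimensional.
\item[(iii)] Naturality of $X\to\omega\pi X$ then gives the surjectivity.
\end{itemize}

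In (2)(b), ``subquotient of locally finite modules'' is circular. $R^1\Gamma_A(M)$ is a quotient of $\omega\pi M$, whose local finiteness is exactly what is being proved. A non-circular argument:
\begin{itemize}
\item[(i)] Replace $M$ by $M/\Gamma_A(M)$, and let $E$ be its graded injective hull, which is torsion-free. Then $R^1\Gamma_A(M)\cong\Gamma_A(E/M)$.
\item[(ii)] Its socle is $\gHom_A(A/J,E/M)\cong\gExt^1_A(A/J,M)$, which is finite-dimensional by $\chi_1$.
\item[(iii)] A torsion module (whose socle is essential) with finite-dimensional socle embeds in a finite direct sum of injective hulls of graded simples. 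Each of these is a direct summand of a shift of $D(A)$, hence locally finite.
\end{itemize}

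None of these defects changes the strategy, but each step needs the repaired argument.
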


Based on Theorem \ref{noncommutative Serre theorem for commonly graded algebra}, we give a definition of noncommutative projective scheme, which is stronger than the one defined in \cite{AZ}.

\begin{definition}\label{def-noncom-proj-scheme}
    Let $\cC$ be a $k$-linear abelian category and $s$ be a $k$-linear auto-equivalence of $\cC$. If, for some object $O$ in $\cC$, the algebraic triple $(\cC,O,s)$ satisfies $H1$, $H2$, $H3$ and $H4$ in Theorem \ref{noncommutative Serre theorem for commonly graded algebra}, then
    \begin{itemize}
        \item [(1)] $(\cC,O,s)$ is called a \textit{noncommutative projective scheme};
        \item [(2)] $O$ is called a \textit{structure sheaf} of $(\cC,O,s)$;
        \item [(3)] $s$ is called a \textit{polarization} of $(\cC,O,s)$;
        \item [(4)] $(\cC,s)$ is called a \textit{noncommutative quasi-projective space};
        \item [(5)] $B(\cC,O,s)$ is called a \textit{noncommutative projective coordinate ring}.
    \end{itemize}
\end{definition}

By Theorem \ref{noncommutative Serre theorem for commonly graded algebra}, $A$ is a noncommutative projective coordinate ring if and only if it is a right noetherian commonly graded algebra satisfying the $\chi_1$-condition and $\depth_A(A)\geqslant 2$. In this case, $(\qgr A,\cA,s)$ is a noncommutative projective scheme, and $A \cong B(\qgr A,\cA,s)$. Hence under the isomorphism, every noncommutative projective scheme (resp. noncommutative quasi-projective space) is of the form $(\qgr A,\cA,s)$ (resp. $(\qgr A,s)$).

\subsection{Artin-Schelter properties for commonly graded algebras}
We first introduce AS-Gorensteiness (resp. AS-regularity) for commonly graded algebras. For more details, see \cite{LW2} and \cite{RR}.

\begin{definition}\label{def-generalized-AS-Gorenstein}
A commonly graded algebra $A$ is called Artin-Schelter Gorenstein (for short, AS-Gorenstein) of dimension $d$ if the following conditions hold.
\begin{itemize}
\item[(1)] $A$ has left and right graded injective dimension $d$.
\item[(2)] For every graded simple $A$-module $M$, $\gExt^i_A(M,A)=0$ if $i\neq d$; for every graded simple $A^o$-module $N$, $\gExt_{A^o}^i(N,A)=0$ if $i\neq d$.
\item[(3)] $\gExt_A^d(-,A)$ and $\gExt_{A^o}^d(-,A)$ give a bijection between the isomorphism classes of graded simple $A$-modules and the isomorphism classes of graded simple $A^o$-modules.
\end{itemize}
If, moreover, $A$ has finite graded global dimension $d$, then $A$ is called Artin-Schelter regular (for short, AS-regular).

If $A$ is $\mathbb{N}$-graded, then it is called an $\mathbb{N}$-graded AS-Gorenstein (regular) algebra.
\end{definition}

An important property of noetherian commonly graded AS-Gorenstein algebras is the existence of balanced dualizing complexes. A balanced dualizing complex is defined as follows (see \cite{Ye} and \cite{V3}).

\begin{definition}
    Let $A$ be a noetherian commonly graded algebra. A complex $R\in \D^b(\Gr A^e)$ is called a {\it dualizing complex} of $A$, if it satisfies the following conditions:
\begin{itemize}
\item[(1)] $R$ has finite injective dimension over $A$ and $A^o$, respectively.
\item[(2)] The cohomologies of $R$ are finitely generated as $A$-modules and $A^o$-modules.
\item[(3)] The natural morphisms $ A\to R\gHom_A(R,R)$ and $A\to R\gHom_{A^o}(R,R)$ are isomorphisms in $\D(\Gr A^e)$.
\end{itemize}
If moreover, $R\Gamma_A(R)\cong D(A)$ and $R\Gamma_{A^o}(R)\cong D(A)$ in $\D(\Gr A^e)$, then $R$ is called a {\it balanced dualizing complex} of $A$.
\end{definition}

Local duality and an existence theorem for balanced dualizing complexes are proved in \cite{V3} for connected graded algebras. The results still hold for commonly graded algebras.

\begin{theorem}[Local Duality]\label{local duality}\cite[Theorem 5.1]{V3}
Suppose that $A$ is a noetherian commonly graded algebra 
with finite local cohomology dimension. Then
\begin{itemize}
\item[(1)] The injective dimension of $D(R\Gamma_A(A))$ is $0$ as an object in $\D(\Gr A)$.
\item[(2)] For any graded algebra $B$, and $M^\bullet\in \D(\Gr A\otimes B^o)$,
$$D(R\Gamma_A(M^\bullet))\cong R\gHom_A(M^\bullet,D(R\Gamma_A(A)))$$
in $\D(\Gr B\otimes A^o)$.
\end{itemize}
\end{theorem}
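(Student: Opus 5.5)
This is the Local Duality theorem, which the paper attributes to \cite[Theorem 5.1]{V3}. I would adapt Van den Bergh's argument from the connected graded case, replacing the augmentation ideal by the graded Jacobson radical $J$.

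The first step is to establish two facts about $R\Gamma_A$. First, since $A$ is noetherian, $R\Gamma_A$ commutes with direct limits and direct sums, because $R^n\Gamma_A(M)\cong\dlim\gExt^n_A(A/J^i,M)$ and each $A/J^i$ is finitely presented. Second, because $\lcd A<\infty$, $R\Gamma_A$ is way-out in both directions and is defined on unbounded complexes. I would then prove the key identity
$$R\Gamma_A(M^\bullet)\cong M^\bullet\otimes^L_A R\Gamma_A(A)$$
in $\D(\Gr B\otimes A^o)$. The plan for this is as follows. First, build the natural morphism using the bimodule structure of $R\Gamma_A(A)$; here the left $A$-action comes from $\Gamma_A$ being computed through ${}_AA_A$, for instance via a $\check{\text{C}}$ech-type or injective bimodule resolution. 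Next, check that the morphism is an isomorphism for $M^\bullet=A$. Both sides are way-out functors commuting with direct sums and finite local cohomology dimension bounds the amplitude, so the isomorphism extends by taking free resolutions, using the way-out lemma and homotopy colimits for unbounded complexes.

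With that identity in hand, (2) follows by adjunction and Matlis duality:
$$D(R\Gamma_A(M^\bullet))\cong \gHom_k(M^\bullet\otimes^L_A R\Gamma_A(A),k)\cong R\gHom_A(M^\bullet,D(R\Gamma_A(A))).$$
This holds compatibly with the $(B,A)$-structures, which gives (2) in $\D(\Gr B\otimes A^o)$.

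For (1), I would apply (2) with $B=k$. Since $D$ is exact, we get
$$\Hom_{\D(\Gr A)}(X,D(R\Gamma_A(A))[i])\cong H^i\big(D(R\Gamma_A(X))\big)=D\big(H^{-i}R\Gamma_A(X)\big).$$
Taking $X$ to be a module, this vanishes for $i>0$ because $R^{-i}\Gamma_A=0$ in negative degrees. So the injective dimension is at most $0$. For the reverse, the $i=0$ term is nonzero when $X$ is a graded simple module $S$, since $\Gamma_A(S)=S\neq0$. Hence the injective dimension is exactly $0$.

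The main obstacle is the tensor identity for unbounded complexes. This requires care in producing a genuine bimodule structure on $R\Gamma_A(A)$ in the commonly graded setting, where $A_0$ is no longer $k$. I would handle it by computing $\Gamma_A$ using graded-injective $A^e$-resolutions. Such resolutions remain $\Gamma_A$-acyclic over $A$ because, for $A$ noetherian, a graded injective $A^e$-module restricted to $A$ is a direct limit of injectives, and $R\Gamma_A$ commutes with direct limits.
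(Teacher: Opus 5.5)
Your proposal is correct and follows essentially the same route as the paper, which gives no independent argument but invokes Van den Bergh's proof of his Theorem 5.1. There, too, (2) comes from the identity $R\Gamma_A(M^\bullet)\cong M^\bullet\otimes^L_A R\Gamma_A(A)$, using $\lcd A<\infty$ and that $R\Gamma_A$ commutes with direct limits, together with Hom--tensor adjunction against $D$. Likewise (1) comes from (2) with $B=k$, and nothing depends on connectedness once the augmentation ideal is replaced by $J$.

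Some small remarks:
\begin{itemize}
\item A graded-injective $A^e$-module is already injective as a right $A$-module, since restriction has the exact left adjoint $-\otimes_A A^e$. So the direct-limit detour is unnecessary.
\item The tensor identity lives in $\D(\Gr A\otimes B^o)$; only after dualizing do you land in $\D(\Gr B\otimes A^o)$.
\item In (1), take a shift of $S$ with $S_0\neq 0$, so that the degree-zero Hom is nonzero.
\end{itemize}
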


\begin{theorem}\label{existence of balanced dualizing complex}\cite[Theorem 6.3]{V3}
Let $A$ be a noetherian commonly graded algebra. Then $A$ admits a balanced dualizing complex if and only if $A$ satisfies the following two conditions:
\begin{itemize}
\item[(1)] both $A$ and $A^o$ satisfy the $\chi$-condition;
\item[(2)] both $\lcd A$ and $\lcd A^o$ are finite.
\end{itemize}
If $A$ admits a balanced dualizing complex $R$, then $R \cong D(R\Gamma_A(A))\cong D(R\Gamma_{A^o}(A))$.
\end{theorem}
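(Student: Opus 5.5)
The plan is to follow Van den Bergh's proof of \cite[Theorem 6.3]{V3} and check that each step only uses features available for noetherian commonly graded algebras. These features are: local finiteness, so that Matlis duality $D$ is an involution on locally finite graded spaces; torsion detected by $J$; and, since $A$ is noetherian, $J/J^2$ finite-dimensional, so that by \cite[Lemma 2.6]{LSW} $J$-torsion coincides with $A_{\geqslant i}$-torsion. In all steps the candidate complex is $R:=D(R\Gamma_A(A))$.

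\emph{Necessity.} Assume $R$ is a balanced dualizing complex.
\begin{itemize}
\item[(a)] First show that $R\Gamma_A(M^\bullet)\cong M^\bullet\otimes^L_A R\Gamma_A(A)$ for bounded complexes with finitely generated cohomology. For a single finitely generated $M$ this follows from a finite free resolution in each degree, using that $R\Gamma_A$ commutes with direct limits since $A$ is right noetherian.
\item[(b)] Combining (a) with $R\Gamma_A(R)\cong D(A)$ and the adjunctions gives
$$R\Gamma_A(M)\cong D\big(R\gHom_A(M,R)\big).$$
\item[(c)] Each $\gExt^{-i}_A(M,R)$ is finitely generated over $A^o$, because $R$ has finitely generated cohomology. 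Hence its Matlis dual $R^i\Gamma_A(M)$ is bounded above, so $\chi$ holds by Proposition \ref{facts about chi condition}(1).
\item[(d)] $R$ has finite injective dimension, so the left side of (b) vanishes for $i$ beyond a uniform bound, giving $\lcd A<\infty$.
\item[(e)] The same argument on the other side handles $A^o$.
\end{itemize}

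\emph{Sufficiency.} Assume the two conditions hold.
\begin{itemize}
\item[(a)] By Local Duality (Theorem \ref{local duality}), $R$ has injective dimension $0$ in $\D(\Gr A)$ in the sense stated there, and $R$ is a bounded complex since $\lcd A<\infty$. This gives finite injective dimension over $A$, and symmetrically over $A^o$ once the next step is done.
\item[(b)] The key step is to identify $R\Gamma_A(A)\cong R\Gamma_{A^o}(A)$ in $\D(\Gr A^e)$. Following Van den Bergh, I would argue as follows.
\begin{itemize}
\item[(b1)] Using $\chi$ on both sides, show that for a bimodule $N$ finitely generated on both sides the modules $R^i\Gamma_A(N)$ are $J$-torsion on the left as well, i.e.\ $\Gamma_{A^o}\Gamma_A=\Gamma_A\Gamma_{A^o}$ on such bimodules.
\item[(b2)] Deduce from (b1) that the two composites $R\Gamma_{A^o}R\Gamma_A(A)$ and $R\Gamma_AR\Gamma_{A^o}(A)$ both agree with each of $R\Gamma_A(A)$ and $R\Gamma_{A^o}(A)$.
\end{itemize}
\item[(c)] Using (b), $H^{-i}(R)=D(R^i\Gamma_A(A))$ is Matlis dual to a module that is bounded above by $\chi$. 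It is then finitely generated on both sides by the standard argument, which combines $\chi$ with the noetherian property through the dual of the socle series.
\item[(d)] Balancedness: $R\Gamma_A(R)\cong D(R\gHom_A(R,R))$ by local duality. The natural map $A\to R\gHom_A(R,R)$ can be checked on $J$-torsion test objects, reducing via local duality to $DD\cong\id$ on locally finite modules. This gives both the isomorphism $A\cong R\gHom_A(R,R)$ and $R\Gamma_A(R)\cong D(A)$; symmetrically for $A^o$ using (b).
\end{itemize}
The final statement $R\cong D(R\Gamma_A(A))\cong D(R\Gamma_{A^o}(A))$ then follows from (b) together with the uniqueness of balanced dualizing complexes, proved as in \cite{Ye}.

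\emph{Main obstacle.} I expect step (b) of the sufficiency part, the bimodule comparison of left and right local cohomology, to be the hardest, since this is where both $\chi$ conditions are genuinely needed. In the commonly graded setting one must replace $A_{\geqslant n}$ by powers of $J$. The first thing to try is to invoke \cite[Lemma 2.6]{LSW}, so that $J$-adic torsion coincides with $A_{\geqslant i}$-torsion and Van den Bergh's degree arguments go through verbatim.
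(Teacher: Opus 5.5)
Your overall plan is the right one: the paper gives no argument beyond citing \cite[Theorem 6.3]{V3} and asserting that it carries over to commonly graded algebras, and your sufficiency half follows Van den Bergh's route. The necessity half, however, is circular as written.

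\textbf{The gap in the necessity direction.} In step (a) you derive $R\Gamma_A(M)\cong M\otimes^L_A R\Gamma_A(A)$ from a resolution $P\to M$ by finitely generated free modules, together with the fact that $R\Gamma_A$ commutes with direct limits. But $P=\varinjlim_n\sigma_{\geqslant -n}P$ is a limit of complexes that are not uniformly bounded below. $R\Gamma_A$ commutes with such limits only if it has bounded amplitude. Concretely, the triangle $\sigma_{\geqslant -n}P\to P\to\Omega^{n+1}M[n+1]$ shows that the error in degree $i$ is controlled by $R^{i+n}\Gamma_A$ and $R^{i+n+1}\Gamma_A$ of the syzygy $\Omega^{n+1}M$. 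These terms can be made to vanish for $n\gg0$ only if $\lcd A<\infty$. This is exactly why Van den Bergh's tensor formula, and Theorem \ref{local duality}, assume finite local cohomological dimension.

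You then derive $\lcd A<\infty$ in (d) from (b), and (b) rests on (a). So the argument assumes what it is trying to prove.

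Step (d) also uses the wrong property. Finite injective dimension of $R$ kills $\gExt^j_A(M,R)$ for $j\gg0$, that is, $R^i\Gamma_A(M)$ for $i\ll0$, which is vacuous. The vanishing for $i\gg0$ comes from $R$ being cohomologically bounded below: $R^i\Gamma_A(M)=0$ for $i>-\min\{j\mid H^j(R)\neq0\}$.

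\textbf{How to repair it.} Prove local duality directly from the dualizing and balanced properties, with no a priori bound on $\lcd A$.
For $M\in\gr A$, set $N:=R\gHom_A(M,R)$. It is bounded and has finitely generated cohomology over $A^o$, and $M\cong R\gHom_{A^o}(N,R)$. Choose a bounded above resolution $P\to N$ by finitely generated graded free left modules, and a bimodule-injective resolution $I$ of $R$; its terms are injective as right modules.
Then $\gHom_{A^o}(P,I)$ is bounded below. It has only finitely many nonzero terms in each total degree, each a finite sum of shifts of right-injective modules. Hence $\Gamma_A$ can be applied termwise:
\[
R\Gamma_A(M)\cong\gHom_{A^o}(P,\Gamma_A(I))\cong R\gHom_{A^o}(N,R\Gamma_A(R))\cong R\gHom_{A^o}(N,D(A))\cong D(N).
\]
This gives $R^i\Gamma_A(M)\cong D\gExt^{-i}_A(M,R)$. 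Your step (c) and the corrected step (d) then follow. Taking $M=A$ gives $R\Gamma_A(A)\cong D(R)$, hence $R\cong D(R\Gamma_A(A))$ directly, and symmetrically on the left. No appeal to uniqueness of balanced dualizing complexes is needed.

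\textbf{A weaker point in the sufficiency direction.} Step (d) there is too thin. After Matlis duality and your step (b), $R\gHom_A(R,R)\cong R\gHom_{A^o}(R\Gamma_{A^o}(A),A)$. So the real content is that $A\to R\gHom_{A^o}(R\Gamma_{A^o}(A),A)$ is an isomorphism, which is a graded completeness statement about $A$. One way to prove it uses that $A$ is, degreewise, the inverse limit of the finite-dimensional quotients $A/J^n$. The identity $DD\cong\mathrm{id}$ enters only in showing that $R\gHom_{A^o}(-,A/J^n)$ kills the cone; it does not suffice on its own.
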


We consider algebras whose balanced dualizing complexes are isomorphic to shifted stalk complexes.

\begin{definition}
    A noetherian commonly graded algebra $A$ is called a {\it balanced Cohen-Macaulay (CM for short) algebra} of dimension $d$ if it admits a balanced dualizing complex $R$ such that $R\cong D(R^d\Gamma_A(A))[d]$ in $\D(\Gr A^e)$. 
    The $(A,A)$-bimodule $D(R^d\Gamma_A(A))$ is called the \textit{canonical module} of $A$.
\end{definition}

Noetherian commonly graded AS-Gorenstein algebras are balanced CM. 

\begin{proposition}\label{AS-G has balanced dualizing complex and chi}
Let $A$ be a noetherian commonly graded algebra.
\begin{itemize}
    \item [(1)] If $A$ is a commonly graded AS-Gorenstein algebra of dimension $d$, then $A$ is a balanced CM algebra of dimension $d$ and $D(R^d\Gamma_A(A))$ is an invertible $(A,A)$-bimodule. 
    \item [(2)] If $A$ is a balanced CM algebra of dimension $d$, then both $A$ and $A^o$ satisfy the $\chi$-condition, $\lcd A=\lcd A^o=d$, and the graded injective dimension of $D(R^d\Gamma_A(A))$ is $d$ on both sides.
\end{itemize}
\end{proposition}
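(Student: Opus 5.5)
For (1), the plan is to show that the AS-Gorenstein conditions force $R^i\Gamma_A(A)$ to vanish for $i\neq d$. We compute $R\Gamma_A(A)$ with $R^i\Gamma_A(A)\cong \dlim\gExt^i_A(A/J^n,A)$. Each $A/J^n$ is finite-dimensional, hence has a finite filtration by graded simple modules. By condition (2) of Definition \ref{def-generalized-AS-Gorenstein}, $\gExt^i_A(S,A)=0$ for $i\neq d$ and every graded simple $S$. Induction on the length of the filtration then gives $\gExt^i_A(A/J^n,A)=0$ for $i\neq d$, so $R^i\Gamma_A(A)=0$ for $i\neq d$. The same argument on the left gives $R^i\Gamma_{A^o}(A)=0$ for $i\neq d$.

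Next we need a balanced dualizing complex. By Theorem \ref{existence of balanced dualizing complex}, it suffices to check the $\chi$-condition and finiteness of $\lcd$ on both sides. Finite injective dimension $d$ gives $\lcd A\leqslant d$. For $\chi$, the key point is that $\gExt^j_A(A/J,A)$ is finite-dimensional: $A/J$ is a finite direct sum of graded simples (by local finiteness and bounded below), and $\gExt^d_A(S,A)$ is a graded simple left module by condition (3). Then I would show $\gExt^j_A(A/J,M)$ is finite-dimensional for all $M\in\gr A$. To do this, resolve $M$ by finitely generated free modules $\oplus A(-n_i)$ and use finite injective dimension to reduce to a finite window of the resolution, by dimension shifting along syzygies and descending induction on $j$ starting above $d$, where everything vanishes. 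Alternatively one can use the local duality argument of \cite{V3}, that is, the standard Jørgensen/Van den Bergh argument transplanted to the commonly graded case.

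Granting the balanced dualizing complex $R\cong D(R\Gamma_A(A))$, the vanishing from the first step gives $R\cong D(R^d\Gamma_A(A))[d]$, so $A$ is balanced CM. To see that $\Omega=D(R^d\Gamma_A(A))$ is invertible, one route is to compute it via local duality (Theorem \ref{local duality}): $R\gHom_A(A/J,\Omega[d])\cong D(R\Gamma_A(A/J))=D(A/J)$. This says $\gExt^{i}_A(A/J,\Omega)$ is concentrated in degree $i=d$... A cleaner route is to show that $\Omega_A$ is projective and generates via condition (3), so that $\Omega\cong {}^{\sigma}P$ for a progenerator $P$, and then to use the isomorphism $A\to R\gHom_A(R,R)=\gEnd_A(\Omega)$ to get invertibility.

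I expect this invertibility step to be the main obstacle: checking that $\Omega_A$ is finitely generated projective. The idea for this is that $\Omega_A$ has injective dimension $0$ in $\D(\Gr A)$ relative to $A$... More concretely, I would use $\gExt^i_A(\Omega,A)\cong \gExt^i_{A^o}(D(A),D(\Omega))$-type dualities. Condition (3) matches simples, which should identify $\Omega_A$ as a direct sum of shifts of indecomposable projectives, one for each simple.

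For (2), $\chi$ on both sides and $\lcd=d$ have already been established above. For the injective dimension of $\Omega$ being $d$, local duality gives $\gExt^i_A(M,\Omega)\cong D(R^{d-i}\Gamma_A(M))$, which vanishes for $i>d$ and is nonzero for $i=d$ at $M=A/J$. The same argument applies on the left.
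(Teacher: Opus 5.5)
Your proof that $R^i\Gamma_A(A)=0$ for $i\neq d$ is correct, but part (1) has a genuine gap. You route (1) through Theorem~\ref{existence of balanced dualizing complex}, so you must first derive $\chi$ and $\lcd<\infty$ on both sides from the AS axioms. Neither argument works as written.

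\textbf{The bound on $\lcd$.} Injective dimension $d$ of $A_A$ controls $R^i\Gamma_A(A)$, and hence $R^i\Gamma_A$ of modules of finite projective dimension. It does not control $\lcd A$. For a general $M$, dimension shifting along $0\to K\to F\to M\to 0$ only gives $R^i\Gamma_A(M)\cong R^{i+1}\Gamma_A(K)$ for $i>d$. This climbs upward and never terminates.

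\textbf{The $\chi$ argument.} Your descending induction has no base. When $A$ is Gorenstein but not regular, $\gExt^j_A(A/J,M)$ does not vanish for $j>d$ (take $M=A/J$). Only $\gExt^j_A(A/J,F)$ with $F$ free vanishes there, and the long exact sequence again pushes the problem up to $\gExt^{j+1}_A(A/J,K)$.

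$\chi$ can be repaired. Since $A$ is noetherian of finite injective dimension on both sides, $R\gHom_A(-,A)$ is a duality on bounded complexes with finitely generated cohomology. Hence $R\gHom_A(A/J,M)\cong R\gHom_{A^o}(R\gHom_A(M,A),\gExt^d_A(A/J,A))[-d]$, and this has finite-dimensional cohomology because $\gExt^d_A(A/J,A)$ is finite-dimensional.

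Finiteness of $\lcd$ is not formal, however. The same duality gives $R^i\Gamma_A(M)\cong\gExt^{i-d}_{A^o}(R\gHom_A(M,A),R^d\Gamma_A(A))$. To bound this you need something like injectivity of ${}_AR^d\Gamma_A(A)$, i.e.\ projectivity of $\Omega_A$. That is exactly the invertibility of $\Omega$ which you name as the main obstacle and leave as a hope (``should identify $\Omega_A$ as a direct sum of shifts of indecomposable projectives'').

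There is also a circularity. The isomorphism $A\cong R\gHom_A(R,R)$ you plan to use for invertibility is only available after Theorem~\ref{existence of balanced dualizing complex} has been applied, which needs finite $\lcd$ first. The paper does not reprove (1); it imports the whole statement from \cite[Theorem 4.12]{LW2}.

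\textbf{A slip in (2).} There $A$ is an arbitrary balanced CM algebra, so $\chi$ and $\lcd$ have not been ``established above'': your part (1) only concerns AS-Gorenstein algebras. What is needed, and what the paper uses, is the converse direction of Theorem~\ref{existence of balanced dualizing complex}. A balanced dualizing complex forces $\chi$ and $\lcd<\infty$ on both sides, and gives $R\cong D(R\Gamma_A(A))\cong D(R\Gamma_{A^o}(A))$. Since $R\cong\Omega[d]$ is nonzero, the local cohomology of $A$ on each side is concentrated in degree $d$ and nonzero there. So $\lcd A=\lcd A^o=d$ by Lemma~\ref{local cohomological dimension and qgr}(2). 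With that in place, your local-duality computation $\gExt^i_A(M,\Omega)\cong D(R^{d-i}\Gamma_A(M))$ of the injective dimension (vanishing for $i>d$, nonzero at $i=d$ for $M=A/J$, and similarly on the left) is correct and coincides with the paper's argument.
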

\begin{proof}
    (1) follows from \cite[Theorem 4.12]{LW2}. (2) follows from Theorem \ref{existence of balanced dualizing complex} and Theorem \ref{local duality}.
\end{proof}

If $A$ is a balanced CM algebra of dimension $d\geqslant 2$, then $\depth_A(A)\geqslant 2$ by Lemma \ref{depth and local cohomology} and $A$ satisfies the $\chi$-condition by Proposition \ref{AS-G has balanced dualizing complex and chi}. So $A$ is a noncommutative projective coordinate ring.

Next, we introduce several well-known results concerning MCM modules.

\begin{proposition}\label{Hom(-,Omega) induce duality on MCM}
    Let $A$ be a balanced CM algebra of dimension $d$ and $\Omega=D(R^d\Gamma_A(A))$. 
    \begin{itemize}
        \item [(1)] Suppose that $M$ is a finitely generated graded $A$-module. Then $M$ is MCM if and only if $\gExt_A^i(M,\Omega)=0$ for all $i>0$.
        \item [(2)] There is a duality
    $$\gHom_A(-,\Omega):\MCM (A)\rightleftarrows \MCM (A^o):\gHom_{A^o}(-,\Omega).$$
        \item [(3)] For $X,Y\in\MCM A$, let $X^\dagger=\gHom_A(X,\Omega)$ and $Y^\dagger=\gHom_A(Y,\Omega)$. Then, for any $i\geqslant 0$, there are natural isomorphisms of graded vector spaces
$$\gExt_A^i(X,Y)\cong\gExt_{A^o}^i(Y^\dagger,X^\dagger).$$
        \item [(4)] If $A$ is commonly graded AS-Gorenstein, then there is a duality
        $$\gHom_A(-,A):\MCM (A)\rightleftarrows \MCM (A^o):\gHom_{A^o}(-,A).$$
    \end{itemize}
\end{proposition}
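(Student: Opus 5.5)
The plan is to derive everything from local duality (Theorem \ref{local duality}) applied to $R=\Omega[d]$, the balanced dualizing complex of $A$. By Proposition \ref{AS-G has balanced dualizing complex and chi}(2) we know $\lcd A=\lcd A^o=d$ and that $\Omega$ has graded injective dimension $d$ on both sides.

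\emph{Part (1).} Let $M$ be finitely generated. Local duality gives
$$D(R\Gamma_A(M))\cong R\gHom_A(M,\Omega[d]),$$
and taking cohomology in degree $-i$ yields
$$D(R^i\Gamma_A(M))\cong \gExt_A^{d-i}(M,\Omega).$$
By Lemma \ref{depth and local cohomology}, $\depth_A M=d$ holds exactly when $R^i\Gamma_A(M)=0$ for all $i<d$. Through the displayed isomorphism, this is equivalent to $\gExt_A^{j}(M,\Omega)=0$ for all $j>0$. (The case $\depth$ larger than $d$ does not arise for nonzero $M$, since $\lcd A=d$.)

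\emph{Part (2).} For $M\in\MCM(A)$, part (1) gives $R\gHom_A(M,\Omega)\cong \gHom_A(M,\Omega)=:M^\dagger$. Since $\Omega$ is finitely generated on both sides, $M^\dagger$ is a finitely generated $A^o$-module. The dualizing-complex axiom $A\to R\gHom_{A^o}(R,R)$ gives, for finitely generated $M$, the biduality isomorphism
$$M\cong R\gHom_{A^o}(R\gHom_A(M,R),R).$$
Since $R\gHom_A(M,R)=M^\dagger[0]$ after the shifts cancel, this reads $M\cong R\gHom_{A^o}(M^\dagger,\Omega)$. Hence $\gExt_{A^o}^j(M^\dagger,\Omega)=0$ for $j>0$, so $M^\dagger$ is MCM by part (1) applied to $A^o$, and $\gHom_{A^o}(M^\dagger,\Omega)\cong M$. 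The same argument works symmetrically, which gives the duality.

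\emph{Part (3).} For $X,Y\in\MCM(A)$,
$$R\gHom_A(X,Y)\cong R\gHom_{A^o}(R\gHom_A(Y,\Omega),R\gHom_A(X,\Omega))=R\gHom_{A^o}(Y^\dagger,X^\dagger).$$
This comes from the fully faithful contravariant functor $R\gHom_A(-,R)$ on $\D^b_{fg}$, which is a consequence of biduality. Taking cohomology gives the natural isomorphisms of Ext.

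\emph{Part (4).} By Proposition \ref{AS-G has balanced dualizing complex and chi}(1), $\Omega\cong{}^\sigma A(l)$ or, more generally, $\Omega$ is an invertible bimodule. So $\gHom_A(-,\Omega)\cong \gHom_A(-,A)\otimes_A\Omega$, and twisting by an invertible bimodule preserves the MCM property (depth is unchanged under tensoring with an invertible bimodule). Part (4) then follows from part (2).

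The main obstacle is the biduality and full-faithfulness step in part (2). One must verify that the derived functor $R\gHom_A(-,R)$ lands in complexes with finitely generated cohomology and is a duality in the commonly graded setting, as in Yekutieli's standard argument. If that verification proves delicate, I would reduce to finitely generated free resolutions and use the finite injective dimension of $\Omega$.
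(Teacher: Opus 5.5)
Your proposal is correct and follows the paper's route: the paper obtains (1) and (2) from local duality $D(R^i\Gamma_A(M))\cong\gExt_A^{d-i}(M,\Omega)$ and biduality for $R=\Omega[d]$ (deferring to \cite[Lemma 4.6]{Mor1}), and (3) from the derived duality. For (4) the paper only cites \cite[Proposition 2.25]{LSW}, whereas you twist (2) by the invertible bimodule $\Omega$, which also works.

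Two small repairs are needed. First, in (1) the bound $\lcd A=d$ alone does not exclude $\depth_AM=\infty$. You need $R\Gamma_A(M)\neq 0$ for $M\neq 0$, which follows from local duality together with your biduality isomorphism (or from \cite[Lemma 8.1]{LW2}). Second, in (4) the formula should read $\gHom_A(-,\Omega)\cong\Omega\otimes_A\gHom_A(-,A)$, because $\gHom_A(M,A)$ is a left $A$-module.
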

\begin{proof}
The proofs of (1) and (2) follow the argument of \cite[Lemma 4.6]{Mor1}, which also applies to commonly graded algebras. By (1), $R\gHom_A(X,\Omega)\cong X^\dagger$ for $X\in\MCM A$; hence, the derived duality induced by the dualizing complex $\Omega[d]$ gives (3). Statement (4) follows from \cite[Proposition 2.25]{LSW}.
\end{proof}

\section{Homological dimensions and noncommutative isolated singularities}\label{homological dimensions in qgr A}
In this section, we first study homological dimensions in the category $\qgr A$ for a right noetherian commonly graded algebra $A$. 
Then we introduce the notion of balanced CM isolated singularities and study their properties. 

\subsection{Homological dimensions in noncommutative quasi-projective spaces}Recall the definition of noncommutative isolated singularities. 

\begin{definition}
    Let $A$ be a right noetherian commonly graded algebra. Then $A$ is called a {\it noncommutative isolated singularity} if $\gldim(\qgr A)$ is finite.
\end{definition}




Here are some useful lemmas.

\begin{lemma}\label{lemma for qgr A}
    Let $A$ be a right noetherian commonly graded algebra.
    \begin{itemize}
        \item [(1)] For any $\cX^\bullet\in\D^+(\QGr A)$ and $n$, if $\gExt_{\cA}^n(\cA,\cX^\bullet)=0$, then $H^n(\cX^\bullet)=0$.
         \item [(2)] For any $\cX^\bullet\in \D^b(\qgr A)$ and $n\in\mathbb{Z}$, the functor $\Ext^n_{\cA}(\cX^\bullet,-)$ commutes with direct limits; that is, for every directed system $\{\cN_i\}_i$ in $\QGr A$,
    $$\Ext^n_{\cA}(\cX^\bullet,\dlim_i\cN_i)\cong \dlim_i\Ext^n_{\cA}(\cX^\bullet,\cN_i).$$
    \end{itemize}
\end{lemma}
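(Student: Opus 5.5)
For part (1), I would work with a K-injective (here injective) resolution in $\QGr A$ and use the adjunction between $\pi$ and $\omega$. First I will reduce to the statement that $\gExt^n_{\cA}(\cA,-)$ detects cohomology. Since $\cX^\bullet\in\D^+(\QGr A)$, I would replace it by a bounded below complex $\cI^\bullet$ of injective objects of $\QGr A$. Each injective object of $\QGr A$ has the form $\pi E$ with $E$ a torsion-free injective graded $A$-module, and $\omega\pi E\cong E$. Consequently $\gHom_{\cA}(\cA,\cI^j)\cong \omega\cI^j$. It then follows that $\gExt^n_{\cA}(\cA,\cX^\bullet)=H^n(\omega\cI^\bullet)=H^n(R\omega\,\cX^\bullet)$.

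Next I apply the exact functor $\pi$, which commutes with cohomology. Using $\pi\omega\cong\id$, this gives
$$\pi H^n(\omega\cI^\bullet)\cong H^n(\pi\omega\cI^\bullet)\cong H^n(\cI^\bullet)\cong H^n(\cX^\bullet).$$
So if $\gExt^n_{\cA}(\cA,\cX^\bullet)=0$, then $H^n(\cX^\bullet)=\pi(0)=0$. The one point needing care is that $\omega$ is applied termwise to the injective complex. This is exactly the computation of $R\omega$, and the graded Hom $\gHom_{\cA}(\cA,-)$ agrees with $\omega$ on objects. The identification $\omega(-)\cong\oplus_n\Hom_{\cA}(\cA,s^n-)$ is standard from the adjunction.

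For part (2), I would start from the abelian case and then pass to complexes by dévissage. For $\cX=\pi X$ with $X\in\gr A$, the claim that $\Ext^n_{\cA}(\cX,-)$ commutes with direct limits follows the argument of \cite[Proposition 7.2]{AZ}. By \cite[Lemma 4.1]{LW1}-type arguments, $\Ext^n_{\cA}(\pi X,\pi N)\cong\dlim_{m}\Ext^n_A(X_{\geqslant m},N)$. Since $A$ is right noetherian, each $X_{\geqslant m}$ has a resolution by finitely generated free modules, so $\gExt^n_A(X_{\geqslant m},-)$ commutes with direct limits. Direct limits also commute with each other. Finally, $\pi$ and $\omega$ together with filtered colimits behave well, because in a locally noetherian Grothendieck category, direct limits of injectives are injective and $\Tor A$ is closed under direct limits. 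Using this, every directed system $\{\cN_i\}$ in $\QGr A$ can be written as $\pi$ of a directed system $\{\omega\cN_i\}$, whose colimit maps to $\dlim\cN_i$ under $\pi$.

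For a bounded complex $\cX^\bullet\in\D^b(\qgr A)$, I would induct on the number of nonzero cohomologies. Using the truncation triangles $\tau^{\leqslant m}\cX^\bullet\to\cX^\bullet\to\tau^{>m}\cX^\bullet\to$, I get long exact sequences of $\Ext^n_{\cA}(-,\cN_i)$. Since direct limits are exact, the five lemma completes the induction. The base case is a shifted object of $\qgr A$, which is handled by the abelian case.

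I expect the main obstacle to be the abelian case of part (2), namely the compatibility of $\Ext$ in $\QGr A$ with direct limits. The cleanest route is to use that $\QGr A$ is locally noetherian, with noetherian objects being those of $\qgr A$. In such a category, $\Ext^n(\cX,-)$ commutes with direct limits for noetherian $\cX$, because direct limits of injectives are injective. I would first try to argue directly from this fact, computing $\Ext$ via injective resolutions chosen functorially (for example, the colimit of injective resolutions is an injective resolution of the colimit).
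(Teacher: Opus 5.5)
Your proposal is correct and follows the paper's proof. Part (1) is the same computation: $\gExt^n_{\cA}(\cA,\cX^\bullet)\cong H^n(\omega\cI^\bullet)$ on an injective resolution, followed by $\pi\omega\cong\id$. Part (2) is likewise the abelian case plus a d\'evissage over bounded complexes; the paper quotes the abelian case from Bondal--Van den Bergh (essentially your locally noetherian argument) and inducts on the width via brutal truncations rather than your canonical truncations, which makes no difference.

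One small slip: over a commonly graded algebra with nonzero negative-degree part, $X_{\geqslant m}$ need not be a submodule, so in your AZ-style formula you should use the cofinal family $XJ^m$ instead.
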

\begin{proof}
(1) Let $\cX^\bullet\to \cI^\bullet$ be a minimal injective resolution of $\cX^\bullet \in\D^+(\QGr A)$. Then 
$$\gExt_{\cA}^n(\cA,\cX^\bullet)=H^n(\gHom_{\cA}(\cA,\cI^\bullet))\cong H^n(\omega\cI^\bullet)=0.$$ 
Hence, $H^n(\cX^\bullet)\cong H^n(\cI^\bullet) \cong H^n(\pi\omega\cI^\bullet)\cong \pi H^n(\omega\cI^\bullet) =0$.

(2)  If $\cX\in \qgr A$, $\Ext^n_{\cA}(\cX,-)$ commutes with direct limits by \cite[Lemma 4.3.1]{BV}, that is, 
    $\Ext^n_{\cA}(\cX,\dlim_i\cN_i)\cong \dlim_i\Ext^n_{\cA}(\cX,\cN_i)$.

Suppose 
$\cX^\bullet=\cdots \to 0\to \cX^t\to\cX^{t+1}\to \cdots\to\cX^{s-1}\to \cX^s\to 0\to \cdots.$ 
The conclusion follows from an induction on $s-t$.
\end{proof}

The following lemma shows that, for any $\cX\in\qgr A$, the injective and projective dimensions of $\cX$ in $\qgr A$ coincide with those in $\QGr A$.

\begin{lemma}\label{idim cX and pdim cX}
Let $A$ be a right noetherian commonly graded algebra. 
\begin{itemize}
    \item [(1)] If $\cX^\bullet \in \D^+(\QGr A)$, then
    \begin{align*}
        \idim_{\QGr A}\cX^\bullet &=\sup\{i\mid \Ext_{\cA}^i(\cM,\cX^\bullet)\neq 0 \textrm{ for some } \cM\in \qgr A\}.
      \end{align*}

    \item [(2)] If $\cX^\bullet\in \D^b(\qgr A)$, then
     $$\pdim_{\QGr A}\cX^\bullet=\sup\{i\mid \Ext_{\cA}^i(\cX^\bullet,\cN)\neq 0 \textrm{ for some } \cN\in \qgr A\}.$$

     \item [(3)] If $\cX^\bullet\in \D^b(\qgr A)$ has finite projective dimension, then
     $$\pdim_{\QGr A}\cX^\bullet=\sup\{i\mid \gExt_{\cA}^i(\cX^\bullet,\cA)\neq 0\}.$$

    \item [(4)] $\gldim(\qgr A)=\sup\{\idim_{\QGr A}\cN\mid\cN\in\qgr A\}.$

    \item [(5)] If $A$ is a noncommutative isolated singularity, then $$\gldim(\qgr A)=\idim_{\QGr A}\cA.$$
\end{itemize}
\end{lemma}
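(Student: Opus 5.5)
We treat the five parts in order; each later part uses the earlier ones. Throughout we use that $\QGr A$ is locally noetherian with noetherian objects $\qgr A$. We also use that $\QGr A$ has enough injectives, namely the images of torsion-free injective graded modules.

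For (1), suppose $\Ext^i_{\cA}(\cM,\cX^\bullet)=0$ for all $\cM\in\qgr A$ and all $i>n$. We must show the same vanishing for every $\cM\in\QGr A$. Write such an $\cM$ as the direct limit of its noetherian subobjects $\cM_\alpha$. We would use a Baer-type criterion in the locally noetherian category $\QGr A$. Take a minimal injective resolution $\cX^\bullet\to\cI^\bullet$ and set $Z=\ker(\cI^{n}\to\cI^{n+1})$. The vanishing for $i>n$ against noetherian objects gives $\Ext^1_{\cA}(\cM',Z)=0$ for every noetherian $\cM'$, after dimension shifting along the bounded-below injective complex. By the Baer criterion in a locally noetherian Grothendieck category (test injectivity on subobjects of the noetherian generators $s^{-r}\cA$), $Z$ is injective. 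Hence $\cX^\bullet$ is quasi-isomorphic to an injective complex concentrated in degrees $\leqslant n$, so $\idim\leqslant n$. The reverse inequality is trivial.

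For (2), we use Lemma \ref{lemma for qgr A}(2). Every $\cN\in\QGr A$ is a direct limit of noetherian subobjects, and $\Ext^i_{\cA}(\cX^\bullet,-)$ commutes with direct limits when $\cX^\bullet\in\D^b(\qgr A)$. So vanishing against all $\cN\in\qgr A$ implies vanishing against all $\cN\in\QGr A$, and the two suprema agree. For (3), let $p=\pdim\cX^\bullet<\infty$. One inequality is immediate, since each $s^n\cA$ lies in $\qgr A$. For the other, take $\cN\in\qgr A$ with $\Ext^p_{\cA}(\cX^\bullet,\cN)\neq0$, which exists by (2). By ampleness (Definition \ref{conditions of ample}(1)) there is an epimorphism $\oplus s^{-r_j}\cA\to\cN$. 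Since $\Ext^{p+1}_{\cA}(\cX^\bullet,-)=0$, the functor $\Ext^p_{\cA}(\cX^\bullet,-)$ is right exact. Hence $\Ext^p_{\cA}(\cX^\bullet,\oplus s^{-r_j}\cA)\to\Ext^p_{\cA}(\cX^\bullet,\cN)$ is surjective, which forces $\gExt^p_{\cA}(\cX^\bullet,\cA)\neq0$.

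For (4), $\gldim(\qgr A)$ is the supremum over pairs $\cM,\cN\in\qgr A$ of the nonvanishing degrees of $\Ext^i_{\cA}(\cM,\cN)$, using the identification of Ext groups in $\qgr A$ and $\QGr A$ recalled in the preliminaries. By (1), for fixed $\cN$ the supremum over $\cM$ is exactly $\idim_{\QGr A}\cN$, and taking the supremum over $\cN$ gives (4).

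For (5), set $g=\gldim(\qgr A)<\infty$. By (4), $\idim\cA\leqslant g$. For the converse, choose $\cM,\cN\in\qgr A$ with $\Ext^g_{\cA}(\cM,\cN)\neq0$. Then $\pdim\cM\leqslant g$ by (2) and (4), so $\Ext^g_{\cA}(\cM,-)$ is right exact on $\qgr A$. The ampleness surjection argument from (3) then gives $\Ext^g_{\cA}(\cM,s^{n}\cA)\neq0$ for some $n$. Therefore $\idim\cA\geqslant g$, since $s$ is an auto-equivalence and $\idim s^n\cA=\idim\cA$.

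The main obstacle is the Baer-criterion step in (1). One must check that testing $\Ext^1$ on noetherian objects suffices in $\QGr A$ for the cocycle object $Z$. This needs $\QGr A$ to be a locally noetherian Grothendieck category, as follows from \cite[Chapter 4]{Po} since $\Tor A$ is a localizing subcategory. It also needs the dimension shift to be valid for complexes that are only bounded below.
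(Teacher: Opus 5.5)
Parts (2)--(5) of your proposal follow the paper's proof. Part (2) uses Lemma \ref{lemma for qgr A}(2). Part (3) uses right exactness of $\Ext^p_{\cA}(\cX^\bullet,-)$ applied to an epimorphism $\oplus_j s^{-r_j}\cA\to\cN$; this epimorphism exists simply because $\cN=\pi N$ with $N$ finitely generated, so you do not need to invoke ampleness. Parts (4) and (5) are consequences.

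In (1) you replace the paper's case split ($\idim=\infty$ versus $\idim<\infty$, each handled with \cite[Lemma 4.2]{LW1}) by a single argument on $Z=\Ker(\cI^n\to\cI^{n+1})$. Your claim that $\Ext^1_{\cA}(\cM',Z)=0$ for noetherian $\cM'$ is correct. The sequence $0\to Z\to\cI^n\to\image(\cI^n\to\cI^{n+1})\to 0$ gives an injection $\Ext^1_{\cA}(\cM',Z)\hookrightarrow\Ext^{n+1}_{\cA}(\cM',\cX^\bullet)$. The Baer criterion for the noetherian generating family $\{s^j\cA\}$ then makes $Z$ injective.

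The gap is the next step, ``Hence $\cX^\bullet$ is quasi-isomorphic to an injective complex concentrated in degrees $\leqslant n$.'' Injectivity of $Z$ only splits $\cI^\bullet$ as $(\cdots\to\cI^{n-1}\to Z\to 0)\oplus(0\to\image(\cI^n\to\cI^{n+1})\to\cI^{n+1}\to\cdots)$. The second summand has cohomology $H^i(\cX^\bullet)$ for $i>n$. Nothing in your argument shows these groups vanish, and without that you cannot conclude $\idim_{\QGr A}\cX^\bullet\leqslant n$.

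The missing input is Lemma \ref{lemma for qgr A}(1), which the paper uses at exactly this point. Taking $\cM=s^j\cA$ in the hypothesis gives $\gExt^i_{\cA}(\cA,\cX^\bullet)=0$, hence $H^i(\cX^\bullet)=0$, for all $i>n$.

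Alternatively, run your argument for every $m\geqslant n$. By minimality of $\cI^\bullet$, each $\Ker(\cI^m\to\cI^{m+1})$ is injective and essential in $\cI^m$, so it equals $\cI^m$. The differentials therefore vanish from degree $n$ on. Then $\Hom_{\cA}(\cM,\cI^m)\cong\Ext^m_{\cA}(\cM,\cX^\bullet)=0$ for all noetherian $\cM$ forces $\cI^m=0$ for $m>n$.

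With this line added, your proof of (1) is complete and somewhat more economical than the paper's.
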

\begin{proof}

(1) 
Let 
$\cI^\bullet$ be a minimal injective resolution of $\cX^\bullet$.
Suppose $\idim_{\QGr A}\cX^\bullet = \infty$.
If there is an integer $n$ such that, for any $i>n$ and any $\cM\in\qgr A$, $\Ext_{\cA}^i(\cM,\cX^\bullet)=0$,
then $\cX^\bullet$ is exact at the $i$-th place whenever $i>n$ by Lemma \ref{lemma for qgr A}.
Let $\cL=\im(\cI^n \to \cI^{n+1})$. 
Then $0\to \cL\to \cI^{n+1}\to \cI^{n+2}\to \cdots$ is a minimal injective resolution  of $\cL$. Hence, 
$$\Ext_{\cA}^j(\cM,\cL)=\Ext_{\cA}^{j+n+1}(\cM,\cX^\bullet)=0$$ for any $j\geqslant 1$ and any $\cM\in\qgr A$. 
By \cite[Lemma 4.2]{LW1}, $\idim_{\QGr A}\cL= 0$.
Thus $\cL$ is injective, contradicting $\idim_{\QGr A}\cX^\bullet = \infty$.
It follows that
\[\sup\{i\mid \Ext_{\cA}^i(\cM,\cX^\bullet)\neq 0 \text{ for some } \cM\in \qgr A\}=\infty.\]

Now suppose $\idim_{\QGr A}\cX^\bullet=n < \infty$. 
Then  $\idim_{\QGr A}\cX^\bullet=\sup\{i\mid \cI^i\neq 0 \}=n$.
If $\Ext_{\cA}^n(\cA,\cX^\bullet)\neq 0$, then the proof is finished. 
If $\Ext_{\cA}^n(\cA,\cX^\bullet)=0$, then  $H^n(\cI^\bullet)=0$ by Lemma \ref{lemma for qgr A}. 
Let $\cK=\Ker(\cI^{n-1}\to \cI^n)$. Then $0\to \cK\to \cI^{n-1}\to \cI^n\to 0$ is a minimal injective resolution of $\cK$. Hence, by \cite[Lemma 4.2]{LW1}, there exists $\cM\in \qgr A$ such that $\Ext_{\cA}^1(\cM,\cK)\neq 0$. 
It follows that $\Ext_{\cA}^n(\cM,\cX^\bullet) \cong \Ext_{\cA}^1(\cM,\cK) \neq 0$.
Thus,
$$\idim_{\QGr A}\cX^\bullet=\max\{i\mid \Ext_{\cA}^i(\cM,\cX^\bullet)\neq 0 \text{ for some } \cM\in \qgr A\}=n.$$ 

(2) For any $\cN\in \QGr A$, suppose that $\omega\cN=\dlim_iN_i$ where $N_i$ runs over all finitely generated graded submodules of $N$. Then $\cN\cong \dlim_i\pi N_i$. 
Since $\pi N_i\in\qgr A$,
(2) follows from Lemma \ref{lemma for qgr A} (2).

(3) Suppose that $\pdim_{\QGr A}\cX^\bullet=n < \infty$. By (2), there exists some $\cN\in \qgr A$ such that $\gExt_{\cA}^n(\cX^\bullet,\cN)\neq 0$. 
Then there is an exact sequence in $\qgr A$:
    $$0\to \cL\to \oplus_{i=1}^m \cA(r_i)\to \cN\to 0.$$
    It follows from the long exact sequence
    $$\cdots\to \gExt_{\cA}^n(\cX^\bullet,\cL)\to\gExt_{\cA}^n(\cX^\bullet, \oplus_{i=1}^m \cA(r_i))\to \gExt_{\cA}^n(\cX^\bullet,\cN)\to 0$$
    that 
    $\gExt_{\cA}^n(\cX^\bullet,\cA)\neq 0$.

    (4) This follows from (1); alternatively, see \cite[Lemma 4.2]{LW1}.

    (5) This follows from (3).
\end{proof}

\begin{corollary}\label{homo. dim. in qgr and QGr}
    If $\cX^\bullet\in \D^b(\qgr A)$, then
    $$\idim_{\QGr A}\cX^\bullet=\idim_{\qgr A}\cX^\bullet \text{ and } \pdim_{\QGr A}\cX^\bullet=\pdim_{\qgr A}\cX^\bullet.$$
\end{corollary}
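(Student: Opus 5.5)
The corollary follows directly from Lemma \ref{idim cX and pdim cX}(1) and (2), once we identify the Ext groups computed in $\D(\qgr A)$ with those computed in $\D(\QGr A)$.

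First, by definition,
$$\idim_{\qgr A}\cX^\bullet=\sup\{i\mid \Hom_{\D(\qgr A)}(\cM,\cX^\bullet[i])\neq 0 \text{ for some } \cM\in\qgr A\}.$$
For $\cX^\bullet\in\D^b(\qgr A)\subseteq \D^-(\qgr A)$ and $\cM\in\qgr A$, we have $\Hom_{\D(\qgr A)}(\cM,\cX^\bullet[i])\cong \Ext^i_{\cA}(\cM,\cX^\bullet)$ by \cite[Lemma 4.1]{LW1}, as recalled in the preliminaries. Hence
$$\idim_{\qgr A}\cX^\bullet=\sup\{i\mid \Ext_{\cA}^i(\cM,\cX^\bullet)\neq 0 \text{ for some } \cM\in\qgr A\}.$$
Since $\cX^\bullet$ is bounded, it lies in $\D^+(\QGr A)$, so Lemma \ref{idim cX and pdim cX}(1) applies and shows that the right-hand side equals $\idim_{\QGr A}\cX^\bullet$.

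The projective dimension is handled the same way. By definition, $\pdim_{\qgr A}\cX^\bullet$ is the supremum of those $i$ with $\Hom_{\D(\qgr A)}(\cX^\bullet,\cN[i])\neq 0$ for some $\cN\in\qgr A$. By \cite[Lemma 4.1]{LW1} again, this Hom group equals $\Ext^i_{\cA}(\cX^\bullet,\cN)$. Lemma \ref{idim cX and pdim cX}(2), which applies because $\cX^\bullet\in\D^b(\qgr A)$, identifies the resulting supremum with $\pdim_{\QGr A}\cX^\bullet$.

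The only point needing care is the comparison of Hom groups between the two derived categories. This is exactly \cite[Lemma 4.1]{LW1}, whose hypothesis $\cM^\bullet,\cN^\bullet\in\D^-(\qgr A)$ holds here because both $\cX^\bullet$ and the stalk complexes $\cM,\cN$ are bounded. So no further obstacle is expected.
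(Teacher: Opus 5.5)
Your proof is correct and follows the paper's intended argument. The paper states this corollary with no separate proof, right after Lemma~\ref{idim cX and pdim cX}. It follows from parts (1) and (2) of that lemma together with the identification $\Hom_{\D(\qgr A)}(\cM^\bullet,\cN^\bullet[i])\cong\Ext^i_{\cA}(\cM^\bullet,\cN^\bullet)$ on $\D^-(\qgr A)$ from \cite[Lemma 4.1]{LW1}, which is exactly what you do.
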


By Corollary \ref{homo. dim. in qgr and QGr}, for any $\cX^\bullet\in\D^b(\qgr A)$, the injective and projective dimensions of $\cX^\bullet$ in $\D^b(\qgr A)$ coincide with those of $\cX^\bullet$ in $\D^b(\QGr A)$. Hence, for convenience, we denote them by $\idim_{\cA}\cX^\bullet$ and $\pdim_{\cA}\cX^\bullet$, respectively.

The following lemma describes the local cohomological dimension of $A$.

\begin{lemma}\label{local cohomological dimension and qgr}
Let $A$ be a right noetherian commonly graded algebra.
\begin{itemize}
    \item [(1)] $\lcd A=\pdim_{\cA}\cA+1$.
    \item [(2)] If $\lcd A$ is finite, then $\lcd A=\max\{i\mid R^i\Gamma_A(A)\neq 0\}$.
    \item [(3)] If $A$ is a noncommutative isolated singularity, then $\lcd A$ is finite.
\end{itemize}
\end{lemma}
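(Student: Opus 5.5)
For (1), I will relate local cohomology of an arbitrary graded module to Ext in $\QGr A$ and then reduce to $\cA$. By Lemma \ref{cohomology in tails}, $R^{i+1}\Gamma_A(M)\cong\gExt^i_{\cA}(\cA,\cM)$ for $i\geqslant 1$. Also, $\Gamma_A$ and $R^1\Gamma_A$ fit into the exact sequence $0\to\Gamma_A(M)\to M\to\omega\pi M\to R^1\Gamma_A(M)\to 0$. So for $i\geqslant 2$, $R^i\Gamma_A$ vanishes on all of $\Gr A$ exactly when $\gExt^{i-1}_{\cA}(\cA,\cN)=0$ for all $\cN\in\QGr A$. Every object is $\pi\omega\cN$, and $s$ is an autoequivalence, so this says $\pdim_{\QGr A}\cA\leqslant i-2$. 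Combining with Lemma \ref{idim cX and pdim cX}(2) and Corollary \ref{homo. dim. in qgr and QGr}, which let me test only against $\cN\in\qgr A$, I get $\lcd A=\pdim_{\cA}\cA+1$, provided the low-degree cases are handled.

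The edge cases need care. If $\pdim_{\cA}\cA=0$, I must show $\lcd A\leqslant 1$ and that $\lcd A\geqslant1$ unless $\qgr A=0$. The lower bound follows if $A$ is infinite-dimensional: then $R^1\Gamma_A$ of a suitable torsion-free injective, or the nonvanishing of $\omega\pi M/M$ for some module, forces $\lcd A\geqslant 1$. If $A$ is finite-dimensional, $\qgr A=0$, and I adopt the convention $\pdim(0)=-\infty$ (or $-1$), so the formula reads $\lcd A=0$. I expect this bookkeeping of degenerate cases to be the main nuisance rather than any genuine obstacle. It is handled by checking directly that for $\pdim_{\cA}\cA=0$ one has $\gExt^j_{\cA}(\cA,-)=0$ for $j\geqslant1$, hence $R^{j}\Gamma_A=0$ for $j\geqslant 2$.

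For (2), suppose $d=\lcd A<\infty$. I pick $M\in\gr A$ with $R^d\Gamma_A(M)\neq0$, which is possible by the noetherian reduction already noted in the preliminaries. I then take a surjection $F=\oplus A(r_i)\to M$ with kernel $K$. The long exact sequence gives $R^d\Gamma_A(F)\to R^d\Gamma_A(M)\to R^{d+1}\Gamma_A(K)=0$, so $R^d\Gamma_A(F)\neq0$. Since $R^d\Gamma_A$ commutes with finite sums and shifts, $R^d\Gamma_A(A)\neq 0$, which proves (2).

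For (3), if $\gldim(\qgr A)=n<\infty$, then $\pdim_{\cA}\cA\leqslant n$ by the definition of global dimension and Corollary \ref{homo. dim. in qgr and QGr}. Part (1) then gives $\lcd A\leqslant n+1<\infty$.
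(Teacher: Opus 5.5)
Your argument is correct and follows the paper's route: (1) and (3) are read off from Lemma \ref{cohomology in tails} just as in the paper, which skips the degenerate low-degree cases you flag. For (2) the paper simply cites \cite[Proposition 7.10]{AZ}, and your finite-free-cover argument with $R^{d+1}\Gamma_A(K)=0$ is a correct self-contained substitute.

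There is one small slip in your edge-case remark. $R^1\Gamma_A$ vanishes on injective modules, so the ``torsion-free injective'' option cannot give $\lcd A\geqslant 1$. Your other option does work: a module $M$ for which $M\to\omega\pi M$ is not surjective, e.g.\ a high truncation $(\omega\cA)_{\geqslant n}$.
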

\begin{proof}
    (1) For any $M\in \Gr A$ and $i>1$, by Lemma \ref{cohomology in tails},
    $$R^i\Gamma_A(M)\cong \gExt_{\cA}^{i-1}(\cA,\cM).$$
    It follows that $\lcd A=\pdim_{\cA}\cA+1$.

    (2) By \cite[Proposition 7.10]{AZ}.
    
    (3) It follows from (1). 
\end{proof}

Since $\pi:\Gr A\to \QGr A$ is exact, it induces a functor $\pi:\D(\gr A)\to \D(\qgr A)$ between the derived categories. 
If $A$ is noetherian and admits a balanced dualizing complex $R$, then $R\gHom_A(-,R)$ induces a functor $R\gHom_{\cA}(-,\pi R): \D^b(\qgr A) \to \D^b(\qgr A^o)$ and so does $R\gHom_{A^o}(-,R)$. In particular, 
$$R\gHom_{\cA}(-,\pi R):\D^b(\qgr A)\rightleftarrows \D^b(\qgr A^o):R\gHom_{\cA^o}(-,\pi R)$$ 
is a duality between $\D^b(\qgr A)$ and $\D^b(\qgr A^o)$.
Moreover, if $A$ is a noncommutative isolated singularity, then $\D^b(\qgr A)$ admits Serre duality, with Serre functor $-\otimes_{\cA}^L \pi R[-1]$ induced by $-\otimes_A^L R[-1]$. 
These facts are proved in \cite[Appendix]{NV} for connected graded algebras. Both the results and the proofs remain valid for commonly graded algebras. 

\begin{theorem}\cite[Proposition A.3, Theorem A.4]{NV}.\label{Serre duality in D(qgr A)}
    Let $A$ be a noetherian commonly graded algebra with a balanced dualizing complex $R$. For any $\cM^\bullet,\cN^\bullet\in\D^b(\qgr A)$, if $\cM^\bullet$ has finite projective dimension, there is a natural isomorphism
    $$\Hom_{\D^b(\qgr A)}(\cM^\bullet,\cN^\bullet)\cong D(\Hom_{\D^b(\qgr A)}(\cN^\bullet,\cM^\bullet\otimes_{\cA}^L\pi R[-1])).$$
    In particular, if $A$ is a noncommutative isolated singularity, then $-\otimes_{\cA}^L\pi R[-1]$ is a Serre functor on $\D^b(\qgr A)$ and is an auto-equivalence.
\end{theorem}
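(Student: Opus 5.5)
The plan is to reduce to the case $\cM^\bullet=s^n\cA$, prove the duality there from local duality (Theorem \ref{local duality}) and the triangle relating $\Gamma_A$ to $\omega\pi$, and then extend to all $\cM^\bullet$ of finite projective dimension by a thick-subcategory argument. Throughout, $R\cong D(R\Gamma_A(A))\cong D(R\Gamma_{A^o}(A))$ by Theorem \ref{existence of balanced dualizing complex}. By Proposition \ref{facts about chi condition} and the $\chi$-condition, all the graded Hom spaces involved are finite-dimensional in each degree, so the Matlis dual $D$ behaves well. The functor $-\otimes^L_{\cA}\pi R$ is induced from $-\otimes_A^L R$; this is legitimate because $\Coker$ and $\Ker$ of maps between torsion complexes are torsion.

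\emph{Step 1: the case $\cM^\bullet=\cA$.} For $N^\bullet\in\D^b(\gr A)$, adjunction gives
$$\Hom_{\D(\QGr A)}(\cA,\pi N^\bullet[i])=H^i(R\omega\pi N^\bullet)_0.$$
There is the standard triangle $R\Gamma_A(N^\bullet)\to N^\bullet\to R\omega\pi N^\bullet\to$ in $\D(\Gr A)$; it is Lemma \ref{cohomology in tails} in derived form. On the dual side, $\cA\otimes^L_{\cA}\pi R=\pi R$ and
$$\Hom_{\D(\qgr A)}(\pi N^\bullet,\pi R[j])=H^j\big(R\gHom_A(N^\bullet,R\omega\pi R)\big)_0.$$
Apply $R\gHom_A(N^\bullet,-)$ to the triangle $R\Gamma_A(R)\to R\to R\omega\pi R\to$. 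Since $R\Gamma_A(R)\cong D(A)$ and $R\gHom_A(N^\bullet,D(A))\cong D(N^\bullet)$, this yields
$$D(N^\bullet)\to R\gHom_A(N^\bullet,R)\to R\gHom_A(N^\bullet,R\omega\pi R)\to .$$
Applying $D$ to the first triangle and using local duality $D(R\Gamma_A N^\bullet)\cong R\gHom_A(N^\bullet,R)$ gives
$$D(R\omega\pi N^\bullet)\to D(N^\bullet)\to R\gHom_A(N^\bullet,R)\to .$$
I would check that the middle maps in the two triangles agree, i.e.\ both are induced by $R\Gamma_A(R)=D(A)\to R$ and its local-duality counterpart. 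Comparing the triangles then yields $R\gHom_A(N^\bullet,R\omega\pi R)\cong D(R\omega\pi N^\bullet)[1]$, naturally in $N^\bullet$. Taking degree-$0$ cohomology gives the isomorphism for $\cM^\bullet=\cA$, and shifting degrees gives it for all $s^n\cA$.

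\emph{Step 2: reduce to shifts of $\cA$.} Let $\mathcal T$ be the full subcategory of those $\cM^\bullet$ for which the natural map is an isomorphism for all $\cN^\bullet$. Both sides are cohomological functors in $\cM^\bullet$ and commute with finite direct sums and summands, so $\mathcal T$ is thick. It therefore suffices to show that every $\cM^\bullet$ of finite projective dimension $p$ lies in the thick subcategory generated by the $s^{-r}\cA$. Using ampleness (Theorem \ref{noncommutative Serre theorem for commonly graded algebra}) and boundedness of $\cM^\bullet$, build a complex $\cP^\bullet$ of finite sums of $s^{-r}\cA$ with a quasi-isomorphism $\cP^\bullet\to\cM^\bullet$, bounded above. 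Brutally truncate $\cP^\bullet$ far enough to the left, beyond $p$ plus the amplitude of $\cM^\bullet$. This gives a triangle $\cK[m]\to\sigma\cP^\bullet\to\cM^\bullet\to$ with $\cK\in\qgr A$. The connecting morphism $\cM^\bullet\to\cK[m+1]$ vanishes because $m+1>p$, so $\cM^\bullet$ is a direct summand of the bounded complex $\sigma\cP^\bullet$, which lies in the thick closure.

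\emph{Step 3: the Serre functor.} If $\gldim(\qgr A)<\infty$, every object of $\D^b(\qgr A)$ has finite projective dimension, so the isomorphism holds for all pairs. Hence $-\otimes^L_{\cA}\pi R[-1]$ is a Serre functor on $\D^b(\qgr A)$, which has finite-dimensional Hom spaces. Serre functors are automatically fully faithful. Essential surjectivity follows by building the quasi-inverse from $R\gHom_{\cA}(\pi R,-)$, using that $R$ is invertible in the derived sense: $R\gHom_A(R,R)\cong A$.

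The main obstacle I expect is the naturality bookkeeping in Step 1. One has to verify that the map $D(N^\bullet)\to R\gHom_A(N^\bullet,R)$ obtained from $R\Gamma_A(R)\to R$ coincides with the dual of $R\Gamma_A N^\bullet\to N^\bullet$ under local duality. This compatibility is needed so that the third terms of the two triangles are identified functorially rather than just abstractly, which is essential for Step 2 to go through.
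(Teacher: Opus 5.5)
The paper gives no proof of this statement; it imports it from \cite[Appendix]{NV}. Your Step 2 reduction is sound: for $m+1>\pdim_{\cA}\cM^\bullet$ the connecting map $\cM^\bullet\to\cK[m+1]$ vanishes, so $\cM^\bullet$ is a summand of a bounded complex of finite sums of $s^{-r}\cA$. The argument built on it has a genuine gap, however. A thick-subcategory argument is a five-lemma argument, so it needs a transformation
\[
\eta_{\cM,\cN}\colon \Hom_{\D^b(\qgr A)}(\cM^\bullet,\cN^\bullet)\to D\Hom_{\D^b(\qgr A)}(\cN^\bullet,\cM^\bullet\otimes^L_{\cA}\pi R[-1]).
\]
It must be defined for \emph{every} $\cM^\bullet$, including the cones whose membership you want to prove. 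It must also be natural in $\cM^\bullet$ for all morphisms of $\D^b(\qgr A)$ and compatible with $[1]$. You never define such an $\eta$, so ``the subcategory on which the natural map is an isomorphism'' has no content yet. You flag naturality in $N^\bullet$ as the obstacle, but Step 2 needs naturality in the \emph{first} variable.

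Step 1 does not even give a canonical map for $\cM^\bullet=\cA$. Two triangles with the same first morphism have isomorphic third vertices, but the TR3 isomorphism is neither unique nor functorial, so you only get an abstract isomorphism for each $\cN^\bullet$ separately. By Yoneda, what is needed is a trace $\mathrm{tr}_{\cM}\colon\Hom(\cM^\bullet,\cM^\bullet\otimes^L_{\cA}\pi R[-1])\to k$ for each $\cM^\bullet$ of finite projective dimension. It must satisfy $\mathrm{tr}_{\cM_2}((f\otimes\pi R[-1])\circ g)=\mathrm{tr}_{\cM_1}(g\circ f)$ for all $f\colon\cM_1\to\cM_2$ and $g\colon\cM_2\to\cM_1\otimes^L_{\cA}\pi R[-1]$.

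This is the substance of Serre duality, not bookkeeping. Morphisms among your generators are $\Hom_{\cA}(s^a\cA,s^b\cA)\cong(\omega\pi A)_{b-a}$, which need not come from $A$. There are also $\Ext^i_{\cA}(s^a\cA,s^b\cA)\cong R^{i+1}\Gamma_A(A)_{b-a}$ for $i\geqslant 1$ (Lemma \ref{cohomology in tails}), nonzero in some degree for $i=\lcd A-1\geqslant1$ by Lemma \ref{local cohomological dimension and qgr}(2). The cones in the thick closure are formed along such morphisms, and compatibility of your isomorphisms with them is exactly compatibility of the trace with the bimodule structure of $R$.

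To repair Step 1, first write the map down, for example
\[
R\gHom_A(N^\bullet,R\omega\pi R)\cong R\gHom_A(R\omega\pi N^\bullet,R\omega\pi R)\xrightarrow{\ \delta_*\ }R\gHom_A(R\omega\pi N^\bullet,D(A))[1]=D(R\omega\pi N^\bullet)[1].
\]
Here $\delta\colon R\omega\pi R\to R\Gamma_A(R)[1]\cong D(A)[1]$ is the connecting morphism, a morphism of bimodule complexes and hence left $A$-linear. Then check that this map gives a morphism of triangles with your local-duality triangle. Even so, you must still construct $\mathrm{tr}_{\cM}$ for general $\cM^\bullet$ with the property above before Step 2 can run.

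Step 3 is also unjustified. $R\gHom_A(R,R)\cong A$ holds for every dualizing complex and does not make $-\otimes^L_AR$ an equivalence; for a commutative non-Gorenstein Cohen--Macaulay algebra the canonical module is not tensor-invertible. It yields at most that the unit $\cM^\bullet\to R\gHom_{\cA}(\pi R,\cM^\bullet\otimes^L_{\cA}\pi R)$ is an isomorphism on perfect objects. That is full faithfulness again, not essential surjectivity. One way to get essential surjectivity is to show $D\Hom(-,\cX)$ is also representable. You can do this by transporting Serre duality for $A^o$ through the duality $\D^b(\qgr A)^{op}\simeq\D^b(\qgr A^o)$. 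Note that $\gldim(\qgr A^o)<\infty$ by Theorem \ref{A and A^o are noncommutative iso. sing.}(1), whose proof uses only the duality $R\gHom_{\cA}(-,\pi R)$.

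Finally, $-\otimes^L_AR$ descends to $\D(\QGr A)$ because it sends complexes with torsion cohomology to complexes with torsion cohomology, the $H^q(R)$ being finitely generated on the left. It is not because kernels and cokernels of maps between torsion complexes are torsion.
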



\begin{lemma}\label{homo. dim. in qgr A with b.d.c.}
    Let $A$ be a noetherian commonly graded algebra with a balanced dualizing complex $R$. Then
    \begin{itemize}
        \item [(1)] $\idim_{\cA}(\pi R)=\idim_{\cA^o}(\pi R)=-1$.
        \item [(2)] 
        $\lcd A=\lcd A^o=\pdim_{\cA}\cA+1=\pdim_{\cA^o}\cA+1\\
        \text{} \quad\quad\,=-\min\{i\mid H^i(\pi_A R)\neq 0\}=-\min\{i\mid H^i(\pi_{A^o} R)\neq 0\}.$
    \end{itemize}
\end{lemma}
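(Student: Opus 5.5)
We prove (1) first and then use it, together with Lemma \ref{local cohomological dimension and qgr}, to obtain (2).

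For (1), the plan is to compute $\Ext_{\cA}^i(\cM,\pi R)$ for $\cM\in\qgr A$ via local duality. By Theorem \ref{existence of balanced dualizing complex}, $R\cong D(R\Gamma_A(A))$, and $\lcd A<\infty$. Take a finitely generated $M$ with $\pi M=\cM$. Since $R$ is a complex of torsion-free-modulo-torsion objects, we want
$$R\gHom_{\cA}(\cM,\pi R)\cong R\gHom_A(M,R)\quad\text{modulo the torsion correction}.$$
More concretely, the triangle $R\Gamma_A(R)\to R\to R\omega\pi R\to$ holds, and Lemma \ref{cohomology in tails} (in its derived form) gives $R\gHom_{\cA}(\cM,\pi R)\cong R\gHom_A(M,R\omega\pi R)$. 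Here $R\Gamma_A(R)\cong D(A)$.

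Apply $R\gHom_A(M,-)$ to the triangle. The duality property gives $R\gHom_A(M,R)$ with cohomology in degrees bounded by the finite injective dimension of $R$. For the torsion term, use the adjunction $R\gHom_A(M,D(A))\cong D(M)$, which is concentrated in degree $0$. Then $R\gHom_A(M,R\omega\pi R)$ is the cone of $D(M)\to R\gHom_A(M,R)$.

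The key computation is that the map $D(M)\to R\gHom_A(M,R)$ is identified, through local duality (Theorem \ref{local duality}), with the dual of $R\Gamma_A(M)\to M$. Its cone is therefore $D$ of the fibre, which is $D(R\omega\pi M)[-1]$-type. We then read off $\Ext^i_{\cA}(\cM,\pi R)\cong D(H^{-i-1}(R\omega\pi M))$ roughly. Since $R^j\omega\pi M$ vanishes for $j<0$ and $\omega\pi M\neq0$ for $\cM\ne 0$, the top nonvanishing Ext is at $i=-1$. Lemma \ref{idim cX and pdim cX}(1) then gives $\idim_{\cA}\pi R=-1$. The $A^o$ side is symmetric, since $R$ is balanced on both sides. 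This identification of cohomological degrees is the main obstacle; we will check it carefully, using the $\chi$-condition so that everything is finite-dimensional and $D$ is involutive.

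For (2), Lemma \ref{local cohomological dimension and qgr}(1) gives $\lcd A=\pdim_{\cA}\cA+1$ and likewise for $A^o$. To relate $\lcd$ to $\pi R$, use $R\cong D(R\Gamma_A(A))$. By Lemma \ref{local cohomological dimension and qgr}(2), $\min\{i\mid H^i(R)\neq0\}=-\lcd A$, and by symmetry $=-\lcd A^o$ using $R\cong D(R\Gamma_{A^o}(A))$; hence $\lcd A=\lcd A^o$. To pass from $R$ to $\pi R$, we note that $H^i(R)=D(R^{-i}\Gamma_A(A))$. For $-i=\lcd A=:d$ this is not finite-dimensional (it is not torsion), because otherwise $R^d\Gamma_A(A)$ would be finite-dimensional. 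By Lemma \ref{cohomology in tails} this would make $\gExt^{d-1}_{\cA}(\cA,\cA)$ bounded, which we rule out using the $\chi$-condition and the characterization of $\lcd$. So $\pi H^{-d}(R)\neq 0$, while $H^i=0$ for $i<-d$.

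Alternatively, and more cleanly, duality gives $R\gHom_{\cA}(\cA,\pi R)\cong R\gHom_{\cA^o}(\cA,\pi R)$-type symmetry. Combined with Theorem \ref{Serre duality in D(qgr A)} in the form $\gExt^i_{\cA}(\cA,\cA)^*\leftrightarrow \gExt^{-1-i}_{\cA}(\cA,\pi R)$, this shows that the lowest cohomology of $\pi R$ sits at $-(\pdim_{\cA}\cA+1)$.
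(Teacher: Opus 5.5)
Your argument for (1) is correct. It reaches exactly the formula the paper uses, but by a different road. The paper applies Theorem \ref{Serre duality in D(qgr A)} with $\cM^\bullet=\cA$ (which has finite projective dimension because $\lcd A<\infty$) to get $\Ext^i_{\cA}(\cA,\cX)\cong D\Ext^{-i-1}_{\cA}(\cX,\pi R)$. It then uses $\Ext^{i}_{\cA}(\cA,-)=0$ for $i<0$ and $\Hom_{\cA}(\cA,\cX)\cong(\omega\cX)_0$.

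You derive the same instance, $\gExt^i_{\cA}(\cM,\pi R)\cong D(R^{-i-1}\omega\pi M)$, by hand. Your ingredients are the derived adjunction $\pi\dashv R\omega$, the triangle $R\Gamma_A(R)\to R\to R\omega\pi R\to R\Gamma_A(R)[1]$ with $R\Gamma_A(R)\cong D(A)$, and local duality. It is the adjunction, not Lemma \ref{cohomology in tails}, that identifies $R\gHom_{\cA}(\cM,\pi R)$ with $R\gHom_A(M,R\omega\pi R)$. This avoids Serre duality on $\D^b(\qgr A)$.

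The price is that you must identify $D(M)\to R\gHom_A(M,R)$ with $D$ of $R\Gamma_A(M)\to M$. That requires checking that the balanced isomorphism $R\Gamma_A(R)\cong D(A)$ is compatible with the counit. You can skip this check entirely by representing $\cM$ by a torsion-free $M$. Then $D(R\Gamma_A(M))$ has cohomology only in degrees $\leqslant -1$. So for \emph{any} map $D(M)\to D(R\Gamma_A(M))$, the long exact sequence of its cone gives vanishing in degrees $\geqslant 0$ and a surjection onto $D(M)\neq 0$ in degree $-1$. Also, the cone is $D(R\omega\pi M)[1]$, not $[-1]$; your final formula is the one consistent with $[1]$.

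In (2), deducing $\lcd A=\lcd A^o$ from $H^i(R)\cong D(R^{-i}\Gamma_A(A))\cong D(R^{-i}\Gamma_{A^o}(A))$ is fine. Your first argument for passing from $R$ to $\pi R$, however, has a gap. If $R^d\Gamma_A(A)$ were finite-dimensional, that would only make $\gExt^{d-1}_{\cA}(\cA,\cA)$ finite-dimensional, and nothing you cite rules this out. The $\chi$-condition gives only boundedness above, and $\lcd A=\pdim_{\cA}\cA+1$ together with Lemma \ref{idim cX and pdim cX}(3) gives only nonvanishing.

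The missing input is balancedness itself. Since $H^j(R)=0$ for $j<-d$, we have $\Gamma_A(H^{-d}(R))=H^{-d}(R\Gamma_A(R))=H^{-d}(D(A))=0$ for $d\geqslant 1$. Hence $H^{-d}(R)\neq 0$ is torsion-free, and $H^{-d}(\pi_A R)=\pi_A H^{-d}(R)\neq 0$. The same argument works on the left.

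Your ``alternative'' route is also valid. It needs neither Theorem \ref{Serre duality in D(qgr A)} nor the unjustified symmetry $R\gHom_{\cA}(\cA,\pi R)\cong R\gHom_{\cA^o}(\cA,\pi R)$. Your formula from (1) with $\cM=\cA$ gives $\gExt^{j}_{\cA}(\cA,\pi R)\cong D\gExt^{-j-1}_{\cA}(\cA,\cA)$. The left side first becomes nonzero exactly at the lowest degree where $\pi R$ has nonzero cohomology, since $\omega$ kills no nonzero object. By Lemma \ref{idim cX and pdim cX}(3), the smallest $j$ with $\gExt^{-j-1}_{\cA}(\cA,\cA)\neq 0$ is $-1-\pdim_{\cA}\cA$. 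Then run the same argument for $A^o$ and use $\lcd A=\lcd A^o$.

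The paper only refers (2) to the connected graded argument of Yekutieli--Zhang, so either fix gives a complete proof where the paper gives none. Finally, like the paper, you tacitly need $\qgr A\neq 0$, i.e.\ $\lcd A\geqslant 1$; for finite-dimensional $A$ the statement degenerates.
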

\begin{proof}
(1)  For any $\cX\in\qgr A$, by Theorem \ref{Serre duality in D(qgr A)},
$$\Ext_{\cA}^i(\cA,\cX)\cong D\Ext^{-i}_{\cA}(\cX,\cA\otimes_{\cA}^L\pi R[-1])\cong D\Ext_{\cA}^{-i-1}(\cX,\pi R).$$

Since $\Ext_{\cA}^i(\cA,\cX)=0$ for all $\cX\in\qgr A$ and $i<0$, it follows that $\Ext_{\cA}^n(\cX,\pi R)=0$ for all $n\geqslant 0$.
On the other hand, for any $\cX\in\qgr A$, we have $\Ext_{\cA}^0(\cA,\cX)=\Hom_{\cA}(\cA,\cX)\cong (\omega\cX)_0$,
which implies that
$\Ext_{\cA}^{-1}(\cX',\pi R)\neq 0$ for some $\cX'$. Therefore, $\idim_{\cA}(\pi R)=-1$. 
By a dual argument, $\idim_{\cA^o}(\pi R)=-1$.

(2) The proof is similar to the connected graded case \cite[Theorem 4.2]{YZ}.
%
\end{proof}

\begin{corollary}
    Let $A$ be a balanced CM algebra of dimension $d$, and let $\Omega=D(R^d\Gamma_A(A))$. Then $E^d(\Omega)$ is torsion, where $E^d(\Omega)$ is the $d$-th term of the minimal graded injective resolution of $\Omega_A$.

    Moreover, if $A$ is commonly graded AS-Gorenstein, then $E^d(A)$ is torsion, where $E^d(A)$ denotes the $d$-th term in the minimal graded injective resolution of $A_A$.
\end{corollary}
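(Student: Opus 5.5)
The plan is to show that the torsion-free part of $E^d(\Omega)$ vanishes by passing to $\QGr A$. For a balanced CM algebra of dimension $d$ the balanced dualizing complex is $R\cong\Omega[d]$, so $\pi R\cong \pi\Omega[d]$. Lemma \ref{homo. dim. in qgr A with b.d.c.}(1) then gives
$$\idim_{\cA}(\pi\Omega)=\idim_{\cA}(\pi R)+d=d-1.$$
By Proposition \ref{AS-G has balanced dualizing complex and chi}(2), the graded injective dimension of $\Omega_A$ is $d$. Hence its minimal graded injective resolution has the form
$$0\to\Omega\to E^0\to E^1\to\cdots\to E^d\to 0.$$

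Each graded injective $A$-module decomposes as $E=\Gamma_A(E)\oplus E'$, where $\Gamma_A(E)$ is a torsion injective and $E'$ is a torsion-free injective. This holds because $A$ is right noetherian: $\Gamma_A(E)$ is injective and splits off, and the complement is torsion-free. Applying $\pi$ kills the torsion summands, so $\pi E^\bullet$ is an injective resolution of $\pi\Omega$ in $\QGr A$. Since $\pi E'$ is injective in $\QGr A$ for $E'$ torsion-free injective, the resolution has terms $\pi(E^i)'$.

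The key step, and the main obstacle, is minimality: I need to know that $\pi E^\bullet$ is a minimal injective resolution of $\pi\Omega$. The point is that $\omega\pi E'\cong E'$ for torsion-free injective $E'$, so $\pi$ restricts to an equivalence between torsion-free injectives in $\Gr A$ and injectives in $\QGr A$, and it preserves essential extensions among torsion-free modules. I expect $\pi$ of a minimal resolution to be minimal once torsion parts are discarded. To justify this, I would argue as in the standard commutative picture, using the Bass-type description: the torsion-free part of $E^i$ equals the $(i)$-th term of the minimal injective resolution of $\pi\Omega$, which is the one computed in \cite[Lemma 4.2]{LW1}.

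Alternatively, I can avoid minimality and work with Ext directly. The torsion-free part $E^d{}'$ is nonzero iff some $\gHom_A(M,E^d)$ with $M$ torsion-free finitely generated detects it. It is cleaner to use $\Ext_{\cA}^{d}(\cM,\pi\Omega)$: since $\idim_{\cA}\pi\Omega=d-1$, this vanishes for all $\cM\in\qgr A$. Combined with minimality, this forces $\pi E^d=0$, i.e. $E^d$ is torsion.

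For the AS-Gorenstein case, Proposition \ref{AS-G has balanced dualizing complex and chi}(1) shows that $\Omega$ is an invertible bimodule, so $-\otimes_A\Omega$ is an autoequivalence of $\Gr A$. This autoequivalence preserves torsion modules (it permutes the simples up to shift) and sends minimal injective resolutions to minimal injective resolutions. Alternatively, one can argue directly: $A$ has injective dimension $d$, $\pi A$ has injective dimension $d-1$ by Lemma \ref{homo. dim. in qgr A with b.d.c.}(1) applied via $\pi R\cong\pi\Omega[d]$ and $\Omega$ invertible, and the same minimality argument then gives that $E^d(A)$ is torsion.
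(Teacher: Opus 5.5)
Your proposal is correct and follows essentially the same route as the paper: deduce $\idim_{\cA}\pi\Omega=d-1$ from $\idim_{\cA}(\pi R)=-1$ and $R\cong\Omega[d]$, note that $\pi$ carries the minimal graded injective resolution of $\Omega$ to a minimal injective resolution of $\pi\Omega$ in $\QGr A$, conclude $\pi E^d(\Omega)=0$, and transfer the result to $E^d(A)$ using invertibility of $\Omega$; for the minimality step the paper simply cites \cite[Lemma 5.4]{LSW}, and your essential-extension argument via the torsion/torsion-free splitting is a valid proof of it. Two small points: your ``alternative'' paragraph does not actually avoid minimality, and \cite[Lemma 4.2]{LW1} is the wrong reference for the Bass-type statement, since that lemma detects injective dimension by noetherian objects and says nothing about minimal resolutions.
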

\begin{proof}
    By Lemma \ref{homo. dim. in qgr A with b.d.c.}, the injective dimension of $\pi \Omega[d]$ is $-1$. Hence, $\idim_{\cA}\pi\Omega=d-1$. Since $\pi$ preserves minimal injective resolutions by \cite[Lemma 5.4]{LSW}, it follows that $\pi E^d(\Omega)=0$. Therefore, $E^d(\Omega)$ is torsion. 
        
    If $A$ is commonly graded AS-Gorenstein, then $\Omega$ is an invertible $(A,A)$-bimodule by Proposition \ref{AS-G has balanced dualizing complex and chi}. 
    Since $E^d(\Omega)$ is torsion, it follows that $E^d(A)$ is torsion.
\end{proof}

The next proposition characterizes the projective dimension of objects in $\qgr A$ for commonly graded AS-Gorenstein algebras $A$.

\begin{proposition}\label{homological dimension of qgr A 2:ASG iso. sing.}
    Let $A$ be a noetherian commonly graded AS-Gorenstein algebra of dimension $d$. 
  For any non-zero $\cM\in \qgr A$ with finite projective dimension, 
        $$\gExt_{\cA}^{d-1}(\cM,\cA)\neq 0 \text{ and } \pdim_{\cA}\cM=d-1.$$
\end{proposition}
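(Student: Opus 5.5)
The plan is to combine Serre duality on $\D^b(\qgr A)$ (Theorem \ref{Serre duality in D(qgr A)}) with the fact that for an AS-Gorenstein algebra the balanced dualizing complex is an invertible bimodule shifted by $d$. By Proposition \ref{AS-G has balanced dualizing complex and chi}(1), $R\cong\Omega[d]$ with $\Omega=D(R^d\Gamma_A(A))$ an invertible $(A,A)$-bimodule. The functor $-\otimes_{\cA}\pi\Omega$ is then an autoequivalence of $\qgr A$, exact and sending $\cA$ to $\pi\Omega$. The key point is that $\pi\Omega$ is, up to this autoequivalence, just $\cA$: there is some graded algebra automorphism twist $\sigma$ and shift with $\Omega\cong {}^{1}A^{\sigma}(l)$ as a right module, or at least $\pi\Omega\otimes_{\cA}-$ preserves everything.

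\textbf{Step 1: lower bound.} Let $\cM\neq 0$ have finite projective dimension. Apply Theorem \ref{Serre duality in D(qgr A)} with $\cN^\bullet=\cM\otimes_{\cA}\pi\Omega[d-1]$. Since $\cM\otimes^L_{\cA}\pi R[-1]=\cM\otimes_{\cA}\pi\Omega[d-1]$ (flatness of the invertible $\Omega$), we get
$$\Hom(\cM,\cM\otimes_{\cA}\pi\Omega[d-1])\cong D\Hom(\cM\otimes_{\cA}\pi\Omega[d-1],\cM\otimes_{\cA}\pi\Omega[d-1])\neq0,$$
since the identity is nonzero. Hence $\Ext^{d-1}_{\cA}(\cM,\cX)\ne0$ for $\cX=\cM\otimes_{\cA}\pi\Omega\in\qgr A$, so $\pdim_{\cA}\cM\geqslant d-1$.

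\textbf{Step 2: upper bound.} By Lemma \ref{homo. dim. in qgr A with b.d.c.}(1), $\idim_{\cA}\pi R=-1$, so $\Ext^{n}_{\cA}(\cX,\pi\Omega)=0$ for $n\geqslant d$ and all $\cX\in\qgr A$. By Serre duality, for $\cN\in\qgr A$ and $i\geqslant d$,
$$\Ext^i_{\cA}(\cM,\cN)\cong D\Ext^{-i}_{\cA}(\cN,\cM\otimes\pi\Omega[d-1])=D\Ext^{d-1-i}_{\cA}(\cN,\cM\otimes\pi\Omega)=0,$$
because negative Ext groups between objects of the abelian category vanish. Thus $\pdim_{\cA}\cM\leqslant d-1$ by Lemma \ref{idim cX and pdim cX}(2), and so $\pdim_{\cA}\cM=d-1$.

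\textbf{Step 3: $\gExt^{d-1}_{\cA}(\cM,\cA)\neq0$.} Lemma \ref{idim cX and pdim cX}(3) gives $\sup\{i\mid\gExt^i_{\cA}(\cM,\cA)\neq0\}=\pdim_{\cA}\cM=d-1$, which is exactly the claim. The main obstacle is making sure the derived tensor with $\pi R$ behaves as claimed: that $-\otimes_A\Omega$ is exact, preserves torsion (so it descends to $\qgr A$), and is an autoequivalence. Each of these follows from invertibility of $\Omega$, since $\Omega$ is finitely generated projective on both sides and the functor has an inverse. Once these are in place, both bounds are formal consequences of Serre duality.
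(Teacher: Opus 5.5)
Your proof is correct and is essentially the paper's argument: Serre duality with $\pi R\cong\pi\Omega[d]$ and $\Omega$ invertible bounds $\pdim_{\cA}\cM$ by $d-1$, and the nonvanishing of $\cM\otimes_{\cA}\pi\Omega$ shows the bound is attained. The only difference is in the test object: the paper plugs $\cX=\cA$ into the duality and gets $\gExt^{d-1}_{\cA}(\cM,\cA)\cong D(\omega\pi(M\otimes_A\Omega))\neq0$ in one step, whereas you test against $\cM\otimes_{\cA}\pi\Omega$ using the identity morphism and then need Lemma \ref{idim cX and pdim cX}(3). Your aside that $\Omega\cong{}^{1}A^{\sigma}(l)$ is not true in general for commonly graded algebras, but nothing in your proof depends on it.
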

\begin{proof}
    Let $U=D(R^d\Gamma_A(A))$. By Proposition \ref{AS-G has balanced dualizing complex and chi},  $U$ is an invertible $(A,A)$-bimodule.  
    For any $M,X\in \gr A$ with $\cM\neq 0$, if $\pdim_{\cA}\cM$ is finite, then Theorem \ref{Serre duality in D(qgr A)} yields an isomorphism
    $$\gExt_{\cA}^i(\cM,\cX)\cong D(\gExt_{\cA}^{d-i-1}(\cX,\pi(M\otimes_A U))).$$
    Therefore, $\pdim_{\cA}\cM\leqslant d-1$.
    
    Taking $\cX=\cA$, we obtain
    $$\gExt_{\cA}^{d-1}(\cM,\cA)\cong D(\gHom_{\cA}(\cA,\pi(M\otimes_A U))\cong D(\omega\pi (M\otimes_AU))).$$
    Since $U$ is invertible, $\pi(M\otimes_A U) \neq 0$. 
    Therefore, for any non-zero $\cM\in \qgr A$ with finite projective dimension, $\gExt_{\cA}^{d-1}(\cM,\cA)\neq 0$, and $\pdim_{\cA}\cM=d-1$.
\end{proof}

In the final part of this section, we show that if $A$ has a balanced dualizing complex, then 
$\gldim(\qgr A)=\gldim(\qgr A^o)$.

\begin{lemma}\label{bound of Ext for bounded complex}
    Let $\cC$ be an abelian category. Suppose that
    $$M^\bullet:\cdots \to 0\to M^t\to M^{t+1}\to \cdots \to M^{s-1}\to M^s\to 0\to \cdots \textrm{ and }$$
    $$N^\bullet:\cdots \to 0\to N^{t'}\to N^{t'+1}\to \cdots \to N^{s'-1}\to N^{s'}\to 0\to \cdots$$ are two complexes in $\D^b(\cC)$.
    If $\gldim \cC=n$, then $\Hom_{\D(\cC)}(M^\bullet,N^\bullet[i])=0$ for all $i>n+s'-t$.
\end{lemma}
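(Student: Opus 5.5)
We argue by induction on the total length $(s-t)+(s'-t')$ of the two complexes, using the truncation triangles of $M^\bullet$ and $N^\bullet$ and the long exact $\Hom$ sequences in $\D(\cC)$.

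\emph{Base case.} Both complexes are stalk complexes, $M^\bullet=M^t[-t]$ and $N^\bullet=N^{s'}[-s']$, so $t=s$ and $t'=s'$. Then
$$\Hom_{\D(\cC)}(M^\bullet,N^\bullet[i])\cong\Hom_{\D(\cC)}(M^t,N^{s'}[i+t-s']).$$
This group vanishes whenever $i+t-s'>n=\gldim\cC$, by the definition of global dimension. The condition is exactly $i>n+s'-t$, as required.

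\emph{Inductive step in $N^\bullet$.} Suppose $t'<s'$. Use the brutal truncation triangle
$$N^{s'}[-s']\to N^\bullet\to \sigma_{\leqslant s'-1}N^\bullet\to N^{s'}[-s'+1].$$
Apply $\Hom_{\D(\cC)}(M^\bullet,-[i])$ to get an exact sequence
$$\Hom(M^\bullet,N^{s'}[-s'][i])\to\Hom(M^\bullet,N^\bullet[i])\to\Hom(M^\bullet,\sigma_{\leqslant s'-1}N^\bullet[i]).$$
The left term vanishes for $i>n+s'-t$ by induction, or by the base case after reducing $M^\bullet$. The right term vanishes for $i>n+(s'-1)-t$, hence also for $i>n+s'-t$. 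So the middle term vanishes.

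\emph{Inductive step in $M^\bullet$.} Similarly, use the triangle
$$\sigma_{\geqslant t+1}M^\bullet\to M^\bullet\to M^t[-t]\to \sigma_{\geqslant t+1}M^\bullet[1],$$
which exhibits $M^\bullet$ as an extension of $M^t[-t]$ and $\sigma_{\geqslant t+1}M^\bullet$. Applying $\Hom(-,N^\bullet[i])$ gives
$$\Hom(M^t[-t],N^\bullet[i])\to\Hom(M^\bullet,N^\bullet[i])\to\Hom(\sigma_{\geqslant t+1}M^\bullet,N^\bullet[i]).$$
The left term vanishes for $i>n+s'-t$. The right term vanishes for $i>n+s'-(t+1)$, a weaker threshold. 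So the middle term vanishes.

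The only point needing care is checking that the index shifts go in the favourable direction. Truncating the top of $N^\bullet$ lowers $s'$, and truncating the bottom of $M^\bullet$ raises $t$, so each inductive bound is at least as strong as the one needed. There is no real obstacle; this is a routine dévissage.
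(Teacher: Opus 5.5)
Your proof is correct and follows essentially the same route as the paper, which also peels off the top term of $N^\bullet$ via the triangle $N^{s'}[-s']\to N^\bullet\to\tilde N^\bullet$ and inducts on the width of $N^\bullet$, after first handling a stalk $N^\bullet$ by an induction on $s-t$ that it leaves unwritten and that your triangle for $\sigma_{\geqslant t+1}M^\bullet$ makes explicit. Running a single induction on $(s-t)+(s'-t')$ instead of two nested inductions is only a cosmetic difference.
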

\begin{proof}
   If $s'-t'=0$, we may assume $N^\bullet=\cdots\to 0\to N^0\to 0\to \cdots$. It follows from an induction  on $s-t$ that $\Hom_{\D(\cC)}(M^\bullet,N^\bullet[i])=0$ for all $i>n-t$.

   Now we argue by induction on the width of the bounded complex $N^\bullet$; we have proved the assertion when the width is $0$.
    Let $\tilde{N}^\bullet$ be the complex 
    $$\cdots\to 0\to N^{t'}\to \cdots\to N^{s'-1}\to 0\to \cdots$$ 
    and regard $N^{s'}$ as a stalk complex. Then there is a triangle in $\D(\cC)$:
    $$N^{s'}[-s']\to N^\bullet\to \tilde{N}^\bullet$$
    which induces the following exact sequence
    $$\Hom_{\D(\cC)}(M^\bullet,N^{s'}[-s'][i])\to \Hom_{\D(\cC)}(M^\bullet,N^\bullet[i])\to \Hom_{\D(\cC)}(M^\bullet,\tilde{N}^\bullet[i]).$$
    If $i>n+s'-t$, then $\Hom_{\D(\cC)}(M^\bullet,N^{s'}[-s'][i])=0$, and if $i>n+s'-1-t$, $\Hom_{\D(\cC)}(M^\bullet,\tilde{N}^\bullet[i])=0$ by the induction hypothesis.
    It follows that for all $i>n+s'-t$, $\Hom_{\D(\cC)}(M^\bullet,N^\bullet[i])=0$.
\end{proof}

\begin{theorem}\label{A and A^o are noncommutative iso. sing.}
    Let $A$ be a noetherian commonly graded algebra with a balanced dualizing complex. Then
    \begin{itemize}
        \item [(1)] $A$ is a noncommutative isolated singularity if and only if its opposite algebra $A^o$ is a noncommutative isolated singularity.
        \item [(2)] $\gldim(\qgr A)=\gldim(\qgr A^o)$.
    \end{itemize}
\end{theorem}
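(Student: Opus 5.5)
The plan is to use the duality
$$R\gHom_{\cA}(-,\pi R):\D^b(\qgr A)\rightleftarrows \D^b(\qgr A^o):R\gHom_{\cA^o}(-,\pi R)$$
recalled before Theorem \ref{Serre duality in D(qgr A)}. Since $A^o$ also has the balanced dualizing complex $R$, it suffices to prove one inequality: if $\gldim(\qgr A)=n<\infty$, then $\gldim(\qgr A^o)\leqslant n+c$ for some finite $c$. This gives (1). For (2) we then need the sharper bound $\gldim(\qgr A^o)\leqslant \gldim(\qgr A)$, and symmetry gives equality.

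First I would locate the cohomology of $\pi R$. By Lemma \ref{homo. dim. in qgr A with b.d.c.}(2), writing $\ell=\lcd A=\lcd A^o$, we have $H^i(\pi R)=0$ for $i<-\ell$. Also $R\cong D(R\Gamma_A(A))$ has $H^i(R)=0$ for $i>0$, so $\pi R$ is concentrated in degrees $[-\ell,0]$ on both sides. Next take $\cX,\cY\in\qgr A^o$ with $\Ext^i_{\cA^o}(\cX,\cY)\neq 0$, and set $\cX^*=R\gHom_{\cA^o}(\cX,\pi R)$, $\cY^*=R\gHom_{\cA^o}(\cY,\pi R)$. By the duality,
$$\Ext^i_{\cA^o}(\cX,\cY)\cong\Hom_{\D(\qgr A)}(\cY^*,\cX^*[i]).$$
By Lemma \ref{homo. dim. in qgr A with b.d.c.}(1), $\idim_{\cA^o}\pi R=-1$, so $\cX^*$ and $\cY^*$ have cohomology only in degrees $\leqslant -1$. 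They are also bounded below, since each lies in $\D^b(\qgr A)$; concretely, $\Ext^j_{\cA^o}(\cX,\pi R)$ vanishes for $j<-\ell-\pdim\cX$. I will need a lower bound on this range that is independent of the objects.

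With these bounds, Lemma \ref{bound of Ext for bounded complex} (applied in $\qgr A$, which has global dimension $n$) makes the Hom vanish for $i>n+(-1)-t$, where $t$ is the lowest degree of $\cY^*$. This already shows that $\gldim(\qgr A^o)$ is finite provided $t$ is bounded below uniformly. That uniformity is the main obstacle: the lowest degree of $\cY^*$ is governed by how negative $\Ext^{j}_{\cA^o}(\cY,\pi R)$ can be, so I must bound $\sup\{j\mid \Ext^{-j}_{\cA^o}(\cY,\pi R)\neq0\}$ by a constant such as $\ell$.

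To get this bound, I would use Serre duality on $A$ and its analogue for $A^o$. Alternatively, I would use local duality (Theorem \ref{local duality}), which identifies $R\gHom(\cY,\pi R)$ with $D$ of the local cohomology $R\Gamma$, truncated by $\pi$. This gives $\Ext^{-j}_{\cA^o}(\cY,\pi R)\cong D\,R^{j}\Gamma$-type terms, which vanish for $j>\ell$. Hence $t\geqslant -\ell$, and $\gldim(\qgr A^o)\leqslant n+\ell-1<\infty$. This proves (1).

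For (2), once both sides are finite, I would invoke the Serre functor. By Lemma \ref{idim cX and pdim cX}(5), $\gldim(\qgr A^o)=\idim_{\cA^o}\cA$. Serre duality on $\D^b(\qgr A)$, which holds by Theorem \ref{Serre duality in D(qgr A)} because $A$ is now a noncommutative isolated singularity, converts injective-dimension data of $\cA$ on one side into projective-dimension data of $\pi R$ on the other. Concretely, I would show $\gldim(\qgr A)=\max\{i\mid \Ext^i(\cM,\cN)\neq0\}$ equals $\sup$ of amplitudes measured through $\pi R$, an invariant expressed symmetrically in $R$ as a bimodule complex. The duality then transports it exactly, avoiding the loss of $\ell$ in the crude bound above. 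I expect the bookkeeping in this exact comparison, namely matching Serre-dual degrees $-i-1$ against the amplitude of $\pi R$, to be the delicate step.
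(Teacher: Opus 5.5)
Your part (1) is correct and is essentially the paper's argument: dualize into $\D^b(\qgr A)$ and apply Lemma~\ref{bound of Ext for bounded complex}. The paper gets its uniform window from a bounded injective resolution of $R$ over $A^o$. Your window $[-\ell,-1]$ works equally well: the top comes from $\idim_{\cA^o}\pi R=-1$, and the bottom from Serre duality for $A^o$ with $\cM=\cA$, i.e.\ $\Ext^{-j}_{\cA^o}(\cY,\pi R)\cong D\Ext^{j-1}_{\cA^o}(\cA,\cY)$.

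Part (2), however, contains no argument, and what is missing is not bookkeeping. The duality $R\gHom_{\cA}(-,\pi R)$ identifies $\gldim(\qgr A)$ with $\sup\{i\mid \Hom_{\D^b(\qgr A^o)}(\cN^*,\cM^*[i])\neq 0\}$ for $\cM^*,\cN^*$ in the image of $\qgr A$. That image is a different heart of $\D^b(\qgr A^o)$, sitting in degrees $[-\ell,-1]$, not $\qgr A^o$. This is exactly where your loss of $\ell$ comes from, and one-sided Serre duality cannot recover it.

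Nor can any invariant read off from the cohomological range of $\pi R$ work. Both $k[x_0,\dots,x_{d-1}]$ and the algebra of Example~\ref{triangular balanced CM example} have $\pi R$ a stalk complex in degree $-d$, yet their $\gldim(\qgr)$ are $d-1$ and $d$. When $\gldim(\qgr A)<\infty$, Serre duality on $A$ actually gives
\[
\gldim(\qgr A)=-1-\min\{j\mid H^j(\cX\otimes^L_{\cA}\pi R)\neq 0 \text{ for some } \cX\in\qgr A\}.
\]
This is an invariant of $R$ as a left $A$-module: it measures how far $-\otimes^L_{\cA}\pi R$ spreads $\qgr A$ into negative degrees. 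Likewise $\gldim(\qgr A^o)$ is the analogous invariant of $R$ as a right module. Their equality is the whole content of (2), and it needs a genuinely two-sided input, which your sketch does not identify.

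The paper supplies that input as follows. Chaining the duality, Serre duality on $\D^b(\qgr A^o)$ (available by (1)), and the duality again gives
\[
\Ext^i_{\cA}(\cX,\cA)\cong D\Ext^{-i}_{\cA}\bigl(R\gHom_{\cA^o}(\pi_{A^o}M^\bullet,\pi R),\cX\bigr).
\]
Here $M^\bullet$ is a bounded truncation of $(R\otimes^L_A R)[-1]$ whose cohomology is finitely generated on both sides; proving that needs a spectral-sequence argument. Consequently $\idim_{\cA}\cA$ is the top nonvanishing degree of $R\gHom_{\cA^o}(\pi_{A^o}M^\bullet,\pi R)$, and symmetrically $\idim_{\cA^o}\cA$ is that of $R\gHom_{\cA}(\pi_A M^\bullet,\pi R)$.

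The two-sided fact is $R\Gamma_A(M^\bullet)\cong R\Gamma_{A^o}(M^\bullet)$ for such bimodule complexes (as in \cite[Corollary 4.8]{V3}). Combined with local duality (Theorem~\ref{local duality}), it shows that $R\gHom_A(M^\bullet,R)$ and $R\gHom_{A^o}(M^\bullet,R)$ have isomorphic cohomology. Hence the two top degrees coincide, and Lemma~\ref{idim cX and pdim cX}(5) turns this into $\gldim(\qgr A)=\gldim(\qgr A^o)$.
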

\begin{proof}
    (1) Let $R$ be the balanced dualizing complex of $A$. 
    Suppose $\gldim(\qgr A)=n<\infty$. It suffices to show that $\gldim(\qgr A^o)$ is finite.
    Since $R$ has finite injective dimension over $A^o$, it has a bounded graded injective resolution. 
    Thus there are integers $t\leqslant s$, such that for any $\cX\in \qgr A^o$ and $j\notin[t,s]$,
    $$H^j(R\gHom_{\cA^o}(\cX,\pi R))=0.$$
    It follows from the comments before Theorem \ref{Serre duality in D(qgr A)} that $R\gHom_{\cA^o}(\cX,\pi R)$ belongs to $\D^b(\qgr A)$.
    By truncation, it can be represented by a complex in $\qgr A$ whose terms are concentrated in degrees $t,\cdots,s$.
    For any $\cX,\cY\in\qgr A^o$, the duality preceding Theorem \ref{Serre duality in D(qgr A)} gives
    $$\Ext^i_{\cA^o}(\cX,\cY)\cong
    \Ext^i_{\cA}(R\gHom_{\cA^o}(\cY,\pi R),R\gHom_{\cA^o}(\cX,\pi R)).$$
    By Lemma \ref{bound of Ext for bounded complex}, these groups vanish for all $i>n+s-t$. Hence $\gldim(\qgr A^o)$ is finite. The converse follows by symmetry.

    (2) By (1), it suffices to consider the case in which both global dimensions are finite.
We first show that $H^i(R\otimes_A^L R)$ is finitely generated on both sides for every $i$.
Since $A$ is noetherian, if the $(A,A)$-bimodules $M$ and $N$ are finitely generated on both sides, then $\Tor_s^A(M,N)$ is finitely generated on both sides for every $s\in\mathbb N$. 

Let $N$ be a graded $(A,A)$-bimodule that is finitely generated on both sides.
Then there is a convergent spectral sequence
\[
E_2^{-s,q}=\Tor_s^A(H^q(R),N)
\quad\Longrightarrow\quad H^{q-s}(R\otimes_A^L N)
\]
By the preceding observation, each term on the $E_2$-page is finitely generated on both sides. 
The same holds for all terms in subsequent pages, as they are subquotients of those on the preceding page.
Since $R$ is bounded, for a fixed $n$, there are only finitely many pairs $(s,q)$ satisfying $q-s=n$ and $E_2^{-s,q}\neq 0$. 
Thus the spectral sequence gives a finite filtration of $H^n(R\otimes_A^L N)$ whose successive quotients are the corresponding $E_\infty$-terms.
Since finite extensions of finitely generated modules are still finitely generated, $H^n(R\otimes_A^L N)$ is finitely generated on both sides for every $n$.

Next, apply $R\otimes_A^L-$ to the second factor $R$. This gives a convergent spectral sequence
\[
E_2^{p,q}=H^p\bigl(R\otimes_A^L H^q(R)\bigr)
\quad\Longrightarrow\quad
H^{p+q}(R\otimes_A^L R).
\]
Each $H^q(R)$ is finitely generated on both sides, so the preceding result shows that every $E_2^{p,q}$ is finitely generated on both sides, and the same is true for the terms in all subsequent pages.
Again by the fact that $R$ is bounded,
a similar argument as before shows that $H^n(R\otimes_A^L R)$ is finitely generated on both sides for every $n$.



    By Theorem \ref{Serre duality in D(qgr A)}, the images of $(R\otimes_A^L R)[-1]$ under $\pi_A$ and $\pi_{A^o}$ belong to $\D^b(\qgr A)$ and $\D^b(\qgr A^o)$, respectively.
    We may therefore choose an integer $m$ sufficiently small and take the mild truncation
    $$M^\bullet=\tau_{\geqslant m}((R\otimes_A^L R)[-1])$$
    such that $\pi_AM^\bullet\cong \pi_A (R\otimes_A^L R)[-1]$ and $\pi_{A^o}M^\bullet\cong \pi_{A^o} (R\otimes_A^L R)[-1]$.
    Then the cohomologies of $M^\bullet$ are finitely generated on both sides. Moreover $M^\bullet$ is bounded.

    By the proof of \cite[Corollary 4.8]{V3}, which also applies to commonly graded algebras, we have
    $R\Gamma_A(M^\bullet)\cong R\Gamma_{A^o}(M^\bullet).$
    By Theorem \ref{local duality}, the complexes $R\gHom_A(M^\bullet,R)$ and $R\gHom_{A^o}(M^\bullet,R)$ consequently have isomorphic cohomologies.
    It follows that $H^i(R\gHom_A(M^\bullet,R))$ is finite-dimensional if and only if so is $H^i(R\gHom_{A^o}(M^\bullet,R))$.
As a result, $R\gHom_{\cA}(\pi_A M^\bullet,\pi R)$ and $R\gHom_{\cA^o}(\pi_{A^o}M^\bullet,\pi R)$ have nonzero cohomologies in the same degrees.

    For any $\cX\in\qgr A$, by the duality before Theorem \ref{Serre duality in D(qgr A)} and the Serre duality formula for $A^o$, we have
    \begin{align*}
        \Ext^i_{\cA}(\cX,\cA)
        &\cong \Ext^i_{\cA^o}(\pi R,R\gHom_{\cA}(\cX,\pi R))\\
        &\cong D\Ext^{-i}_{\cA^o}(R\gHom_{\cA}(\cX,\pi R),\pi_{A^o}M^\bullet)\\
        &\cong D\Ext^{-i}_{\cA}(R\gHom_{\cA^o}(\pi_{A^o}M^\bullet,\pi R),\cX).
    \end{align*}
    Here the choice of $M^\bullet$ is used in the second isomorphism.
    For any nonzero $\cY^\bullet\in\D^b(\qgr A)$, put $b=\max\{j\mid H^j(\cY^\bullet)\neq0\}$. 
   Then $\Ext^{-i}_{\cA}(\cY^\bullet,\cX)=0$ for all $i>b$ and $\cX\in\qgr A$.
   On the other hand, the canonical morphism
    $\cY^\bullet\to H^b(\cY^\bullet)[-b]$ is nonzero. 
    Taking $\cY^\bullet =R\gHom_{\cA^o}(\pi_{A^o}M^\bullet,\pi R)$ and applying this observation to the last isomorphism shows
    $$\idim_{\cA}\cA=
    \max\{i\mid H^i(R\gHom_{\cA^o}(\pi_{A^o}M^\bullet,\pi R))\neq0\}.$$
    Interchanging $A$ and $A^o$ gives the analogous formula for $\idim_{\cA^o}\cA$.
    Since 
    \[H^i(R\gHom_{\cA}(\pi_A M^\bullet,\pi R))\neq 0 \Leftrightarrow H^i(R\gHom_{\cA^o}(\pi_{A^o}M^\bullet,\pi R))\neq 0,\] 
    it follows that $\idim_{\cA}\cA=\idim_{\cA^o}\cA$.
    Finally, Lemma \ref{idim cX and pdim cX} (5) gives
    \[\gldim(\qgr A)=\idim_{\cA}\cA=\idim_{\cA^o}\cA=\gldim(\qgr A^o).
    \qedhere \]
\end{proof}

\subsection{Balanced CM isolated singularities}
The following are the definitions of balanced CM isolated singularities and commonly graded AS-Gorenstein isolated singularities.

\begin{definition}\label{balanced CM isolated singularity}
   Let $A$ be a noetherian commonly graded algebra.
    \begin{itemize}
        \item [(1)] If $A$ is a balanced CM algebra of dimension $d$ such that
        $\gldim(\qgr A)=\gldim(\qgr A^o)=d-1$,
        then $A$ is called {\it a balanced CM isolated singularity}.

        \item [(2)] If $A$ is a noetherian commonly graded AS-Gorenstein algebra and a noncommutative isolated singularity, then $A$ is called a {\it commonly graded AS-Gorenstein isolated singularity}.
    \end{itemize}
\end{definition}

We recall the Morita-type theory for noncommutative quasi-projective spaces developed in \cite[Theorems 4.5 and 4.10]{LSW}. 
Let $(\qgr A,s)$ and $(\qgr B,s)$ be noncommutative quasi-projective spaces. 
An equivalence $F:\qgr A\to \qgr B$ commuting with $s$
is called {\it an equivalence of noncommutative quasi-projective spaces}.

\begin{theorem}\label{Morita for nc quasi-proj space}
    Let $A$ and $B$ be noetherian noncommutative projective coordinate rings. Equivalences between $(\qgr A,s)$ and $(\qgr B,s)$ are determined uniquely up to isomorphism by
    modulo-torsion-invertible $(A,B)$-bimodules $M'$ satisfying $\depth_BM'\geqslant 2$ (or $(B,A)$-bimodules $M$ satisfying $\depth_AM\geqslant 2$).

    More precisely, suppose that
$$F:(\qgr A,s)\rightleftarrows (\qgr B,s): G$$
is an equivalence of categories of noncommutative quasi-projective spaces. Let $M=\omega G\cB$ and $M'=\omega F\cA$. Then
\begin{itemize}
    \item [(1)] ${}_BM_A$ and ${}_AM'_B$ are modulo-torsion-invertible bimodules.
    \item [(2)] $\depth_AM\geqslant 2$ and $\depth_BM'\geqslant 2$.
    \item [(3)] $B\cong \gEnd_A(M)\cong \gEnd_{\cA}(\cM)$ and $A\cong \gEnd_B(M')\cong \gEnd_{\cB}(\cM')$.
    \item [(4)] $M\cong\gHom_B(M',B)$ and $M'\cong \gHom_A(M,A)$ as graded bimodules. 
    \item [(5)] $F\cong \pi_B\gHom_{\cA}(\cM,-)\cong -\otimes_{\cA}\cM'$ and $G\cong\pi_A\gHom_{\cB}(\cM',-)\cong -\otimes_{\cB}\cM$.
    \item [(6)] There is a graded Morita context $(A,B,M,M',\tau,\mu)$ being isomorphic to the graded Morita context defined by $M_A$ or $M'_B$ such that it induces the equivalences $(\qgr A,s)\cong (\qgr B,s)$ and $(\qgr A^o,s)\cong (\qgr B^o,s)$.
\end{itemize}
\end{theorem}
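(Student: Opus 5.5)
The plan starts from the given equivalence
$$F:(\qgr A,s)\rightleftarrows(\qgr B,s):G.$$
Put $M=\omega G\cB$ and $M'=\omega F\cA$, and let $\cM=G\cB$, $\cM'=F\cA$.

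First, (2) is immediate. Since $M=\omega\pi M$ (note $\pi\omega\cong\id$), Lemma \ref{omega pi and depth 2} gives $\depth_AM\geqslant 2$; the same holds for $M'$.

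Next comes (3). Because $G$ is a fully faithful functor commuting with $s$,
$$B\cong B(\qgr B,\cB,s)\cong\oplus_i\Hom_{\cB}(\cB,s^i\cB)\cong\oplus_i\Hom_{\cA}(\cM,s^i\cM)=\gEnd_{\cA}(\cM).$$
The first isomorphism holds because $B$ is a noetherian projective coordinate ring, as noted after Definition \ref{def-noncom-proj-scheme}. To identify $\gEnd_{\cA}(\cM)$ with $\gEnd_A(M)$, use the adjunction $\Hom_{\cA}(\pi M,\pi M(i))\cong\Hom_A(M,\omega\pi M(i))=\Hom_A(M,M(i))$. This shows that $M$ is a graded $(B,A)$-bimodule, with the left $B$-action coming from $\gEnd$. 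Symmetrically, $A\cong\gEnd_B(M')$.

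For (5), define $F'=\pi_B\gHom_{\cA}(\cM,-)$. For any $\cX\in\qgr A$,
$$\omega F\cX=\oplus_i\Hom_{\cB}(\cB,s^iF\cX)\cong\oplus_i\Hom_{\cA}(\cM,s^i\cX)=\gHom_{\cA}(\cM,\cX)$$
as right $B$-modules. Hence $F\cong F'$. Taking $\cX=\cA$ yields $M'=\omega F\cA\cong\gHom_{\cA}(\cM,\cA)\cong\gHom_A(M,A)$, the last step again by adjunction and depth of $A$; this is (4). Symmetrically, $M\cong\gHom_B(M',B)$.

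Then we build the Morita context. Let $\tau:M\otimes_AM'\to B$ and $\mu:M'\otimes_BM\to A$ be the evaluation maps; associativity holds because both come from composition of homomorphisms. To show $\Coker\tau$ and $\Coker\mu$ are finite-dimensional, compare $F$ with $-\otimes_{\cA}\cM'$. The functor $F$ is exact, and on $\cA$ there is a natural map $\cX\otimes_{\cA}\cM'\to F\cX$, defined first on finite frees $\oplus\cA(r_i)$ and extended by right exactness using a presentation. This map is an isomorphism on frees, hence on all of $\qgr A$ by the five lemma, so $F\cong-\otimes_{\cA}\cM'$. Likewise $G\cong -\otimes_{\cB}\cM$, which completes (5).

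Now $GF\cong\id$ identifies $\pi(\mu)$, the map $\cM'\otimes_{\cB}\cM\to\cA$ (the counit at $\cA$), with an isomorphism. So $\pi(\Coker\mu)=0$, and since $\Coker\mu$ is finitely generated and torsion, it is finite-dimensional. The same argument applies to $\tau$. Theorem \ref{equ. of quot. cat. induced by Morita context for commonly graded} then confirms that the context induces the given equivalence. Combined with finite generation on both sides, this gives (1); for ${}_BM$ this comes from $M\cong\gHom_B(M',B)$ with $M'_B$ finitely generated and $B$ noetherian, and similarly for the other sides.

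Finally, for (6), the Morita context is isomorphic to the one defined by $M_A$, namely $(A,\gEnd_A(M),M,\gHom_A(M,A))$. The left-hand version $(\qgr A^o,s)\cong(\qgr B^o,s)$ follows by applying Theorem \ref{equ. of quot. cat. induced by Morita context for commonly graded} to the opposite context, since the cokernel conditions are left-right symmetric.

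The uniqueness statement is handled as follows. Given a modulo-torsion-invertible $M'$ with $\depth_BM'\geqslant 2$, it yields the equivalence $-\otimes_{\cA}\cM'$, and $\omega(\cA\otimes_{\cA}\cM')=\omega\pi M'\cong M'$ recovers it. Conversely, any equivalence $F$ is determined by $\omega F\cA$ via (5).

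I expect the main obstacle to be making the natural transformation $-\otimes_{\cA}\cM'\to F$ precise and compatible with the bimodule structures. That is where $\chi_1$ and $\depth\geqslant 2$ enter, to identify $\gHom_{\cA}$ with $\gHom_A$.
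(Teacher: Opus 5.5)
The paper does not prove this statement. It recalls it from \cite[Theorems 4.5 and 4.10]{LSW}, so there is no in-paper argument to compare with. Your outline is a sensible reconstruction: identify $B$ with $\gEnd_{\cA}(\cM)$ through $G$, write $\omega F\cX\cong\gHom_{\cA}(\cM,\cX)$, compare $F$ with the tensor functor on free presentations, and invoke Theorem \ref{equ. of quot. cat. induced by Morita context for commonly graded}.

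There is, however, a genuine gap: you never show that $M_A=\omega G\cB$ and $M'_B=\omega F\cA$ are finitely generated, yet you rely on it in three places:
\begin{itemize}
\item to get ${}_BM$ and ${}_AM'$ finitely generated;
\item for the hypotheses of Theorem \ref{equ. of quot. cat. induced by Morita context for commonly graded}, both for the context itself and for the opposite context in (6);
\item in the very definition of a modulo-torsion-invertible bimodule in (1).
\end{itemize}
This is not automatic, because $\omega$ does not preserve finite generation even over a projective coordinate ring. For $A=k[x,y]$ and $\cX=\pi(A/yA)$ one gets $\omega\cX\cong k[x,x^{-1}]$, which is why the paper distinguishes $\cC_{\cA}$ from $\qgr A$.

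The missing step uses H2, H3 and boundedness below.
\begin{itemize}
\item Choose an epimorphism $\oplus_j\cB(-r_j)\to F\cA$. By ampleness condition (2) for $(\cB,s)$, the induced map $\oplus_jB(-r_j)=\omega(\oplus_j\cB(-r_j))\to M'$ is surjective in all degrees $\geqslant n_0$.
\item By H2, $M'$ is locally finite.
\item $M'\cong\gHom_{\cA}(\cM,\cA)$ embeds, via an epimorphism $\oplus_j\cA(-t_j)\to\cM$, into $\oplus_j\gHom_{\cA}(\cA(-t_j),\cA)\cong\oplus_jA(t_j)$. Hence $M'$ is bounded below.
\end{itemize}
So the cokernel of $\oplus_jB(-r_j)\to M'$ is locally finite and bounded on both sides, hence finite-dimensional, and $M'_B$ is finitely generated. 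The same argument applies to $M_A$, using $M\cong\gHom_{\cB}(\cM',\cB)$.

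Two further steps need care.

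First, the claim that ``$GF\cong\id$ identifies $\pi(\mu)$ with an isomorphism'' needs the composite $\pi(M'\otimes_BM)\cong GF\cA\cong\cA$ to be \emph{equal} to $\pi(\mu)$. An abstract isomorphism would not show that $\Coker\mu$ is torsion. A trace argument avoids this check:
\begin{itemize}
\item $\cA\cong GF\cA$ is a quotient of some $\oplus_j\cM(-r_j)$.
\item Each component lies in $\Hom_{\cA}(\cM(-r_j),\cA)=\Hom_A(M(-r_j),A)=M'_{r_j}$, because $\depth_AA\geqslant2$.
\item Hence $\image\mu$ contains a right ideal $I$ with $\pi I=\cA$.
\end{itemize}
The same argument works for $\tau$ on the $B$ side.

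Second, the isomorphism $-\otimes_{\cA}\cM'\cong F$ must be constructed on $\gr A$ and then descended through $\pi$. In $\gr A$ free modules are projective and morphisms lift to presentations. In $\qgr A$ the object $\cA$ is not projective, so ``extending by right exactness using a presentation'' is not well defined there.

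With these repairs the outline goes through.
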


The following theorem characterizes balanced CM isolated singularities in terms of the balanced dualizing bimodule $\Omega$.

\begin{theorem}\label{canonical bimodule and isolated singularity}
    Let $A$ be a balanced CM algebra of dimension $d\geqslant 2$, and let $\Omega=D(R^d\Gamma_A(A))$. Suppose that at least one of $\gldim(\qgr A)$ and $\gldim(\qgr A^o)$ is finite. Then the following conditions are equivalent.
    \begin{itemize}
        \item [(1)] $\gldim(\qgr A)=d-1$;
        \item [(2)] $\gldim(\qgr A^o)=d-1$;
        \item [(3)] $\Omega$ is a modulo-torsion-invertible $(A,A)$-bimodule.
    \end{itemize}
    Under these conditions, for every nonzero $\cX\in\qgr A$ and $\cX'\in\qgr A^o$,
    \[
    \pdim_{\cA}\cX=\idim_{\cA}\cX
    =\pdim_{\cA^o}\cX'=\idim_{\cA^o}\cX'=d-1.
    \]
\end{theorem}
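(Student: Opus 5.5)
The plan is to use Serre duality (Theorem \ref{Serre duality in D(qgr A)}) with $R=\Omega[d]$. In this case the Serre functor is $-\otimes^L_{\cA}\pi\Omega[d-1]$. The point is that $\gldim(\qgr A)=d-1$ says exactly that this functor sends $\qgr A$ into $\qgr A[d-1]$, and this happens if and only if $\pi\Omega$ acts invertibly.

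By Theorem \ref{A and A^o are noncommutative iso. sing.}, if one of the two global dimensions is finite then both are, and they are equal. Hence (1)$\Leftrightarrow$(2) is immediate, and throughout both $A$ and $A^o$ are noncommutative isolated singularities.

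\textbf{(3)$\Rightarrow$(1).} Suppose $\Omega$ is modulo-torsion-invertible with inverse $\Omega'$. By Theorem \ref{equ. of quot. cat. induced by Morita context for commonly graded}, $-\otimes_{\cA}\pi\Omega$ is an autoequivalence of $\qgr A$. It is exact on $\qgr A$, since it is an equivalence of abelian categories, so $-\otimes^L_{\cA}\pi\Omega\cong-\otimes_{\cA}\pi\Omega$ there. Serre duality then gives
\[\Ext^i_{\cA}(\cX,\cY)\cong D\Ext^{d-1-i}_{\cA}(\cY,\cX\otimes_{\cA}\pi\Omega).\]
This vanishes for $i>d-1$, so $\gldim(\qgr A)\le d-1$. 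For equality I take $\cY=\cX$ and $i=d-1$: the right side is $D\Hom_{\cA}(\cX,\cX\otimes\pi\Omega)$, so I need this Hom to be nonzero. Rather than $\cY=\cX$, I would take $\cY=\cX\otimes\pi\Omega$, so that $\Ext^{d-1}(\cX,\cX\otimes\pi\Omega)\cong D\Hom(\cX\otimes\pi\Omega,\cX\otimes\pi\Omega)\neq0$. The same computation gives $\pdim_{\cA}\cX=\idim_{\cA}\cX=d-1$ for every nonzero $\cX$, and the $A^o$ side follows symmetrically.

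\textbf{(1)$\Rightarrow$(3).} This is the main step. Let $S=-\otimes^L_{\cA}\pi\Omega[d-1]$, which is an autoequivalence of $\D^b(\qgr A)$ by Theorem \ref{Serre duality in D(qgr A)}. First, for $\cX\in\qgr A$ with $\gldim=d-1$, I claim $S\cX$ is concentrated in cohomological degree $-(d-1)$. Indeed, $\Hom(\cY,S\cX[j])\cong D\Ext^{-j}(\cX,\cY)$ vanishes unless $0\le -j\le d-1$ after reindexing. Combining this with Lemma \ref{lemma for qgr A}(1), applied to $\cY=\cA(n)$ so that it detects cohomology, bounds the cohomology of $S\cX$ above by degree $-(d-1)$. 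Below, $\idim\le d-1$ from the other side of the duality forces the complex to sit in a single degree. So $F:=-\otimes_{\cA}\pi\Omega$ (that is, $H^0$ of the derived tensor) is an exact autoequivalence of $\qgr A$; exactness holds because the higher Tor's vanish after $\pi$. $F$ clearly commutes with $s$. By Theorem \ref{Morita for nc quasi-proj space}, $F\cong-\otimes_{\cA}\cM'$ with $M'=\omega F\cA=\omega\pi\Omega$. Since $\Omega$ is MCM of depth $d\ge2$, Lemma \ref{omega pi and depth 2} gives $M'\cong\Omega$, and so $\Omega$ is modulo-torsion-invertible.

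The obstacle I expect is making the concentration argument rigorous, namely passing from vanishing of Homs to a statement about the cohomology of $S\cX$ in both directions. I would use the $t$-structure: test against $\cA(n)$ on one side, and use the injective-dimension characterization in Lemma \ref{idim cX and pdim cX}(1) on the other. A second point needing care is identifying the $\qgr$-level autoequivalence with a bimodule Morita context so that Theorem \ref{Morita for nc quasi-proj space} applies. For that I would simply note that $F$ is induced by the bimodule ${}_A\Omega_A$ and take its inverse context from Theorem \ref{Morita for nc quasi-proj space}(6).
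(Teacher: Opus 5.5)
Most of your outline follows the paper: (1)$\Leftrightarrow$(2) via Theorem \ref{A and A^o are noncommutative iso. sing.}, and (1)$\Rightarrow$(3) by showing the Serre functor sends $\qgr A$ into $\qgr A[d-1]$ and then reading off $\omega\pi\Omega\cong\Omega$ from Theorem \ref{Morita for nc quasi-proj space}. The final dimensions come from $\Ext^{d-1}_{\cA}(\cX,F\cX)\cong D\Hom_{\cA}(F\cX,F\cX)$ and $\Ext^{d-1}_{\cA}(F^{-1}\cX,\cX)\cong D\Hom_{\cA}(\cX,\cX)$. Your (3)$\Rightarrow$(1) by direct Serre duality is a valid alternative to the paper's $\gldim(\qgr A)=\idim_{\cA}\cA=\idim_{\cA}\pi\Omega=d-1$. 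Replacing $-\otimes^L_{\cA}\pi\Omega$ by $-\otimes_{\cA}\pi\Omega$ there is legitimate: exactness of $X\mapsto\pi(X\otimes_A\Omega)$ on $\gr A$ forces $\pi\Tor^A_i(X,\Omega)=0$ for $i>0$ by dimension shifting.

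The gap is the concentration step in (1)$\Rightarrow$(3). Write $F=-\otimes^L_{\cA}\pi\Omega$, so $S=F[d-1]$. Your vanishing $\Hom(\cY,S\cX[j])\cong D\Ext^{-j}_{\cA}(\cX,\cY)=0$ for $j\notin[-(d-1),0]$ is correct. Fed into Lemma \ref{lemma for qgr A}(1) with shifts of $\cA$, it gives $H^j(S\cX)=0$ for $j<-(d-1)$ and for $j>0$. That is a bound from \emph{below} by $-(d-1)$, and from above only by $0$, not by $-(d-1)$ as you state. The degrees $-(d-2),\dots,0$ are still unaccounted for. Your proposed source for them (injective dimension via Lemma \ref{idim cX and pdim cX}(1), ``the other side of the duality'') cannot work, because Serre duality plus $\gldim(\qgr A)=d-1$ only controls morphisms \emph{into} $S\cX$, and these bound its cohomology from below.

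A concrete illustration: the Serre functor $\nu$ of a non-semisimple finite-dimensional hereditary algebra satisfies exactly the same vanishing with $d-1$ replaced by $1$. Yet $\nu[-1]$ sends projectives to injectives sitting in degree $1$, and non-projective indecomposables $X$ to $\tau X$ in degree $0$.

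The missing input is right $t$-exactness of the tensor product. For $\cX=\pi X$ one has $H^j(F\cX)=\pi\Tor^A_{-j}(X,\Omega)$, so $F\cX\in\D^{\leqslant 0}(\qgr A)$ automatically; this is the first line of the paper's argument. What makes concentration possible is that the shift $d-1$ in the Serre functor $-\otimes^L_{\cA}\pi\Omega[d-1]$ coincides with $\gldim(\qgr A)$. With this in place, your Lemma \ref{lemma for qgr A}(1) argument, or the paper's lowest-cohomology argument, puts $F\cX$ in $\qgr A$.

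You also still need essential surjectivity of the restricted $F$ onto $\qgr A$ before invoking Theorem \ref{Morita for nc quasi-proj space}. The paper gets this from $t$-exactness of the equivalence $F$ with respect to the bounded standard $t$-structure.
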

\begin{proof}
    Take $n=d-1$. By Theorem \ref{A and A^o are noncommutative iso. sing.}, both global dimensions of $\qgr A$ and $\qgr A^o$ are finite and equal. 
    Hence, (1) and (2) are equivalent. 
    It follows from Theorem \ref{Serre duality in D(qgr A)} that the functor $F=-\otimes_{\cA}^{L}\pi\Omega$ is an auto-equivalence of $\D^b(\qgr A)$. 
    Denote a quasi-inverse of $F$ by $F^{-1}$.

    (1) $\Rightarrow$ (3). For $\cX\in\qgr A$, we have $F\cX\in\D^{\leqslant 0}(\qgr A)$. 
    For any $\cY\in\qgr A$ and $j>0$, Serre duality gives
    \[
    \Hom_{\D^b(\qgr A)}(\cY,F\cX[-j])=\Ext^{-j}_{\cA}(\cY,F\cX)
    \cong D\Ext_{\cA}^{n+j}(\cX,\cY)=0.
    \]
    It follows that $H^j(F\cX)=0$ whenever $j\neq 0$.
    Thus $F\cX$ belongs to the standard heart $\qgr A$ of $\D^b(\qgr A)$.
    Since $F$ is triangulated and the standard t-structure is bounded, it is t-exact. 
    For any $\cZ\in \qgr A\subseteq \D^b(\qgr A)$, there is some $\cY\in \D^b(\qgr A)$ such that $F\cY\cong \cZ$, as $F$ is an equivalence.
    Since $F$ is t-exact, it follows that $H^i\cZ\cong H^i(F\cY)\cong F(H^i\cY)=0$ for $i\neq 0$.
    Hence, $H^i\cY=0$ for $i\neq 0$, which implies that $\cY\in \qgr A\subseteq \D^b(\qgr A)$.
    Therefore, $F$ restricts to an auto-equivalence of $\qgr A$ commuting with $s$.

    Both $A$ and $A^o$ are noncommutative projective coordinate rings. By Proposition \ref{Hom(-,Omega) induce duality on MCM}, $\Omega$ is MCM on both sides, and hence $\omega\pi\Omega\cong\Omega$ by Lemma \ref{omega pi and depth 2}. The bimodule corresponding to $F$ in Theorem \ref{Morita for nc quasi-proj space} is therefore
    $\omega F\cA\cong\omega\pi\Omega\cong\Omega$.
Note that the action obtained by applying $F$ to graded left multiplication on $A$ is the original left $A$-action on $\Omega$.
    Hence, the identification $\omega F\cA\cong\Omega$ preserves the left $A$-action.
    It follows that $\Omega$ is a modulo-torsion-invertible $(A,A)$-bimodule.

    (3) $\Rightarrow$ (1). The induced auto-equivalence $-\otimes_{\cA}\pi\Omega$ sends $\cA$ to $\pi\Omega$ and preserves injective dimension. It follows from Lemma \ref{homo. dim. in qgr A with b.d.c.} (1) and Lemma \ref{idim cX and pdim cX} (5) that
    \[
    \gldim(\qgr A)=\idim_{\cA}\cA
    =\idim_{\cA}(\pi\Omega)=d-1.
    \]
    Thus (1) holds. 
    

    Finally, let $0\neq\cX\in\qgr A$. The global dimension gives upper bounds $n$ for its projective and injective dimensions. Serre duality gives
\[    \Ext_{\cA}^{n}(\cX,F\cX)
    \cong D\Hom_{\D^b(\qgr A)}(F\cX,F\cX)=D\Hom_{\cA}(\cX\otimes_{\cA}\pi \Omega,\cX\otimes_{\cA}\pi \Omega),\]
    which implies that $\pdim_{\cA}\cX=n$. Similarly,
    \[
    \Ext_{\cA}^{n}(F^{-1}\cX,\cX)
    \cong D\Hom_{\D^b(\qgr A)}(\cX,\cX) \cong  D\Hom_{\cA}(\cX,\cX),\]
    which implies that $\idim_{\cA}\cX=n$.
    The left module argument follows from a similar proof.
\end{proof}

\begin{corollary}\label{ASG+iso. is ASG iso.}
    A commonly graded AS-Gorenstein isolated singularity of dimension $d\geqslant2$ is a balanced CM isolated singularity.
\end{corollary}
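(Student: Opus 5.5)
Let $A$ be a commonly graded AS-Gorenstein isolated singularity of dimension $d\geqslant 2$. By Proposition \ref{AS-G has balanced dualizing complex and chi}(1), $A$ is a balanced CM algebra of dimension $d$. Moreover, its canonical module $\Omega=D(R^d\Gamma_A(A))$ is an invertible $(A,A)$-bimodule. By hypothesis $\gldim(\qgr A)<\infty$, so the standing assumption of Theorem \ref{canonical bimodule and isolated singularity} holds. Theorem \ref{A and A^o are noncommutative iso. sing.} also shows that $\gldim(\qgr A^o)$ is finite and equal to $\gldim(\qgr A)$.

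We then verify condition (3) of Theorem \ref{canonical bimodule and isolated singularity}, namely that $\Omega$ is modulo-torsion-invertible. Let $\Omega^{-1}$ denote the inverse bimodule of $\Omega$. There is a graded Morita context $(A,A,\Omega,\Omega^{-1},\tau,\mu)$ in which $\tau:\Omega\otimes_A\Omega^{-1}\to A$ and $\mu:\Omega^{-1}\otimes_A\Omega\to A$ are isomorphisms. Both bimodules are finitely generated on each side, since an invertible bimodule is a finitely generated projective generator on each side. The cokernels of $\tau$ and $\mu$ are zero, hence finite-dimensional. Therefore $\Omega$ is modulo-torsion-invertible in the sense of Definition \ref{definition of modulo-torsion-invertible}.

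By Theorem \ref{canonical bimodule and isolated singularity}, this gives $\gldim(\qgr A)=\gldim(\qgr A^o)=d-1$. Hence $A$ is a balanced CM isolated singularity in the sense of Definition \ref{balanced CM isolated singularity}(1).

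A second route bypasses the bimodule argument. Since $\gldim(\qgr A)<\infty$, every object $\cM\in\qgr A$ has finite projective dimension. Proposition \ref{homological dimension of qgr A 2:ASG iso. sing.} then gives $\pdim_{\cA}\cM=d-1$ for every nonzero $\cM$, so $\gldim(\qgr A)=d-1$ by Lemma \ref{idim cX and pdim cX}(2). Theorem \ref{A and A^o are noncommutative iso. sing.} gives the same value for $A^o$.

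The only step that needs any care is confirming that the invertible bimodule satisfies the finiteness requirements in the definition of a Morita context. This is standard, and the second route avoids it altogether.
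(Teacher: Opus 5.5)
Your proposal is correct. Your first route is exactly the paper's proof: Proposition~\ref{AS-G has balanced dualizing complex and chi}(1) makes the canonical module invertible, hence modulo-torsion-invertible. Theorem~\ref{canonical bimodule and isolated singularity}, (3)$\Rightarrow$(1),(2), then gives $\gldim(\qgr A)=\gldim(\qgr A^o)=d-1$. The only thing you add is the explicit check that an invertible bimodule yields a graded Morita context with zero cokernels, which the paper leaves implicit.

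Your second route is also valid and genuinely different. You get $\gldim(\qgr A)=d-1$ directly from Serre duality, via Proposition~\ref{homological dimension of qgr A 2:ASG iso. sing.}: every nonzero object has finite projective dimension because $\gldim(\qgr A)<\infty$, so each has projective dimension exactly $d-1$. For equality rather than just an upper bound you need one nonzero object; $\cA\neq 0$ works since $\depth_A A=d\geqslant 2$. Strictly speaking the global dimension is the supremum of projective dimensions by definition, so Lemma~\ref{idim cX and pdim cX}(2) is not needed. You then transfer the value to $A^o$ by Theorem~\ref{A and A^o are noncommutative iso. sing.}(2).

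What each buys: your second route avoids the t-exactness argument and the Morita-type theory of Theorem~\ref{Morita for nc quasi-proj space} that underlie Theorem~\ref{canonical bimodule and isolated singularity}, and is closer in spirit to the alternative reference the paper mentions. The paper's route instead places the corollary inside the general characterization of balanced CM isolated singularities. It also immediately gives that all nonzero objects of $\qgr A$ and $\qgr A^o$ have both projective and injective dimension $d-1$, whereas your second route directly yields only the projective dimensions.
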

\begin{proof}
    If $A$ is a commonly graded AS-Gorenstein isolated singularity of dimension $d\geqslant2$, then its canonical bimodule is invertible by Proposition \ref{AS-G has balanced dualizing complex and chi} (1).
    So Theorem \ref{canonical bimodule and isolated singularity} applies. 

    It also follows from \cite[Lemma 5.9]{LSW}.
\end{proof}

Next we give an example of a triangular matrix algebra $A$ that is a balanced CM algebra of dimension $d$ and is a noncommutative isolated singularity, but $\gldim(\qgr A)=d\neq d-1$.
Hence, it is not a balanced CM isolated singularity. This shows the necessity of the equation $\gldim(\qgr A)=\gldim(\qgr A^o)=d-1$ in Definition \ref{balanced CM isolated singularity}.

\begin{example}\label{triangular balanced CM example}
Let $\Lambda$ be the path algebra of the quiver
$1\longrightarrow 2$.
Let $S=k[x_0,\ldots,x_{d-1}]$ and $A=\Lambda\otimes_k S$ where $d\geqslant 2$.
Set $\deg\Lambda=0$ and $\deg x_i=1$.

We identify $\Lambda$ with the algebra of upper triangular $2\times2$ matrices over $k$, with the arrow represented by the matrix unit $e_{12}$ and the vertices represented by the matrix units $e_{11},e_{22}$. Thus
\[
A\cong\begin{pmatrix}S&S\\0&S\end{pmatrix}.
\]
We will show that $A$ is balanced CM of dimension $d$ and
\[
\gldim(\qgr A)=\gldim(\qgr A^o)=d.
\]

The algebra $A$ is finitely generated and free as a module over the central subalgebra $S$, so it is noetherian and it admits a balanced dualizing complex $R_A$.
The balanced dualizing complex $R_S$ of $S$ is of the form $S(-d)[d]$. 
By \cite[Proposition 2.8]{LSW} and Theorem \ref{local duality}, one obtains isomorphisms in $\D(\Gr A^e)$:
\begin{align*}
R_A
&\cong D(R\Gamma_A(A))
 \cong D(R\Gamma_S(A))\\
&\cong R\gHom_S(A,R_S)
 \cong\bigl(D\Lambda\otimes_k S(-d)\bigr)[d].
\end{align*}
Thus $A$ is balanced CM of dimension $d$, with the canonical module
\[
\Omega_A=D\Lambda\otimes_k S(-d).
\]

We now compute the global dimension of $\qgr A$. The polynomial ring $S$ has global dimension $d$, and its noncommutative quasi-projective space has global dimension $\gldim(\qgr S)=d-1$.
Let $\Rep A$ be the category of homomorphisms $U\xrightarrow{f}V$ of graded $S$-modules. 
For $U\xrightarrow[]{f} V$ and $U'\xrightarrow[]{f'}V'$ in $\Rep A$, a morphism between them is a pair $(g_U,g_V)$ such that $g_Vf=f'g_U$ where $g_U:U\to U'$ and $g_V:V\to V'$ are homomorphisms of graded $S$-modules.
Define
\[F:\Gr A\to \Rep A, \qquad M\mapsto (Me_{11} \xrightarrow[]{(e_{12})_r} Me_{22}),\]
where $(e_{12})_r$ is the right multiplication by $e_{12}$. Then $F$ is an equivalence. 
From this point,
a graded $A$-module is a morphism $U\xrightarrow{f}V$ of graded right $S$-modules. 

Define
\[
L_1: \Gr S\to \Gr A,\qquad
U\mapsto (U\xrightarrow{1}U)
\]
and
\[
L_2:\Gr S\to \Gr A,
\qquad U\mapsto(0\to U).
\]
The functors $L_1,L_2$ are exact and are left adjoints to the exact functors $R_1:\Gr A\to \Gr S, M\mapsto Me_{11}$ and $R_2:\Gr A\to \Gr S, M\to Me_{22}$, respectively.
All four functors preserve torsion modules, since torsion is determined component-wise. 
Therefore, they induce exact functors between the quotient categories. Using the same notation for these induced functors, we have
\[
\Ext_{\cA}^{j}(L_i\cU,\cN)
\cong\Ext_{\cS}^{j}(\cU,R_i\cN), \quad \text{ for } i=1,2.
\]
Since $\gldim(\qgr S)=d-1$, it follows that for any $\cU\in \qgr A$, the projective dimension of $L_i(\cU)$ is at most $d-1$.

Every finitely generated graded $A$-module $M=(U\xrightarrow{f}V)$ fits into an exact sequence
\[
0\to L_2(U)\to L_1(U)\oplus L_2(V)\to M\to 0.
\]
At the second component, the maps are $u\mapsto(u,-f(u))$ and $(u,v)\mapsto f(u)+v$, while the last map at the first component is $1_U$. 
After applying $\pi_A$, the exact sequence
\[
0\to L_2(\cU)\to L_1(\cU)\oplus L_2(\cV)\to \cM\to 0
\] 
shows that every object of $\qgr A$ has projective dimension at most $d$.

For the reverse inequality, take
$M=(S\to 0),N=(0\to S(-d))$.
Observe that $\Ext_{\cA}^{j}(L_1\cS,\cN)
\cong\Ext_{\cS}^{j}(\cS,0)=0$.
Then the exact sequence $0\to L_2(S)\to L_1(S)\to M\to0$ yields
\begin{align*}
\Ext_{\cA}^{d}(\cM,\cN)
&\cong\Ext_{\cA}^{d-1}(L_2\cS,\cN)\\
&\cong\Ext_{\cS}^{d-1}(\cS,\cS(-d))\\
&\cong R^d\Gamma_S(S(-d))_0
 \cong k.
\end{align*}
The third isomorphism follows from Lemma \ref{cohomology in tails}. Hence $\gldim(\qgr A)=d$. 

Thus $A$ is a balanced CM algebra and a noncommutative isolated singularity, but it is not a balanced CM isolated singularity. By Theorem \ref{canonical bimodule and isolated singularity}, its canonical bimodule $\Omega_A$ is not modulo-torsion-invertible.
\end{example}

\section{Equivalence of noncommutative quasi-projective spaces}\label{equivalent noncommutative quasi-projective spaces}
In this section, we investigate the equivalences $(\qgr A,s)\cong(\qgr B,s)$ between the noncommutative quasi-projective spaces associated with commonly graded noetherian algebras $A$ and $B$. 
In particular, we identify the properties in Diagram 1 that are transferred from $B$ to $A$ under such an equivalence. 

The following proposition follows immediately from the equivalence between noncommutative quasi-projective spaces and Lemmas \ref{idim cX and pdim cX} and \ref{local cohomological dimension and qgr}.

\begin{proposition}\label{dimension between qgrs}
    Let $A$ and $B$ be right noetherian commonly graded algebras, and let $M'$ be a modulo-torsion-invertible $(A,B)$-bimodule with inverse ${}_BM_A$.
    Suppose that $A$ is a noncommutative isolated singularity.
    Then
$$\gldim(\qgr A)=\gldim(\qgr B)=\idim_{\cA}\cA=\idim_{\cA}\cM=\idim_{\cB}\cB=\idim_{\cB}\cM'.$$
Moreover, $\lcd A=\pdim_{\cB}\cM'+1$ and $\lcd B=\pdim_{\cA}\cM+1.$
\end{proposition}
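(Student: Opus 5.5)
The plan is to transport everything through the equivalence induced by the Morita context. By Theorem \ref{equ. of quot. cat. induced by Morita context for commonly graded} and Definition \ref{definition of modulo-torsion-invertible}, the functors $-\otimes_{\cA}\cM':\qgr A\rightleftarrows\qgr B:-\otimes_{\cB}\cM$ are mutually quasi-inverse equivalences. They are exact and commute with $s$. In particular $\cA\otimes_{\cA}\cM'\cong\cM'$ and $\cM\otimes_{\cA}\cM'\cong\cB$. Symmetrically, $\cB\otimes_{\cB}\cM\cong\cM$ and $\cM'\otimes_{\cB}\cM\cong\cA$.

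First, the global dimensions agree. An exact equivalence of abelian categories induces an equivalence of derived categories, so it preserves all $\Ext$ groups. Hence $\gldim(\qgr A)=\gldim(\qgr B)$, and the common value $n$ is finite. Thus $B$ is also a noncommutative isolated singularity, and Lemma \ref{idim cX and pdim cX} (5) applied to both algebras gives $n=\idim_{\cA}\cA=\idim_{\cB}\cB$.

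Next, the equivalence preserves injective dimension computed in $\qgr$. By Corollary \ref{homo. dim. in qgr and QGr}, this is the same as injective dimension in $\QGr$. Since the equivalence sends $\cB$ to $\cM$, we get $\idim_{\cA}\cM=\idim_{\cB}\cB$. Likewise it sends $\cA$ to $\cM'$, so $\idim_{\cB}\cM'=\idim_{\cA}\cA$. This yields the chain of equalities. For the moreover part, Lemma \ref{local cohomological dimension and qgr} (1) gives $\lcd A=\pdim_{\cA}\cA+1$. Projective dimension, again computed in $\qgr$ by Corollary \ref{homo. dim. in qgr and QGr}, is preserved by the equivalence, so $\pdim_{\cA}\cA=\pdim_{\cB}\cM'$. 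Similarly $\lcd B=\pdim_{\cB}\cB+1=\pdim_{\cA}\cM+1$.

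The only point needing care is the justification that dimensions can be computed inside $\qgr$, where the equivalence lives, rather than in $\QGr$. This is exactly Corollary \ref{homo. dim. in qgr and QGr}, so I expect no real obstacle. One should also check that the hypothesis that $A$ and $B$ are noetherian on both sides is not needed beyond right noetherianity for these statements; it is used only to invoke Theorem \ref{equ. of quot. cat. induced by Morita context for commonly graded}, where finite presentation of ${}_BM$ and ${}_AM'$ follows from finite generation on both sides over noetherian rings.
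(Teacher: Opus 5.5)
Your proposal is correct and follows the paper's route: the paper says the statement is immediate from the equivalence $\qgr A\simeq\qgr B$ together with Lemma \ref{idim cX and pdim cX} and Lemma \ref{local cohomological dimension and qgr}, and these are exactly what you use. You transport $\cA\mapsto\cM'$ and $\cB\mapsto\cM$ through the equivalence, invoke Corollary \ref{homo. dim. in qgr and QGr} to work inside $\qgr$, and apply part (5) of the first lemma to both algebras and part (1) of the second, which is the intended argument.
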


Suppose that $A$ is a right noetherian commonly graded algebra. Let
$$C_A=\{X\in\gr A\mid \depth_AX\geqslant 2\}, \textrm{ and }$$
$$\cC_{\cA}=\{\cX\in\qgr A\mid \cX\cong \pi \tilde{X} \text{ for some }\tilde{X}\in C_A\}.$$
By Lemma \ref{omega pi and depth 2}, $\cC_{\cA}=\{\cX\in \qgr A\mid \omega\cX \text{ is a finitely generated $A$-module}\}$. Hence, $\pi C_A\subseteq \cC_{\cA}$ and $\omega \cC_{\cA}\subseteq C_A$. The following result is immediate.

\begin{lemma}
    The restriction functors $\pi:C_A\to \cC_{\cA}$ and $\omega:\cC_{\cA}\to C_A$ give an equivalence between $C_A$ and $\cC_{\cA}$.
\end{lemma}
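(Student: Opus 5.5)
The plan is to check directly that the two restricted functors are well defined and that both composites are naturally isomorphic to the identity functors. For $\pi:C_A\to\cC_{\cA}$ there is nothing to check: by the definition of $\cC_{\cA}$, any $X\in C_A$ has $\pi X\in\cC_{\cA}$.

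For $\omega:\cC_{\cA}\to C_A$, let $\cX\in\cC_{\cA}$, so $\cX\cong\pi\tilde X$ for some $\tilde X\in C_A$. Then $\omega\cX\cong\omega\pi\tilde X\cong\tilde X$, where the second isomorphism is the canonical one given by Lemma \ref{omega pi and depth 2}, since $\depth_A\tilde X\geqslant 2$. Consequently $\omega\cX$ is finitely generated and has depth at least $2$, i.e. $\omega\cX\in C_A$. This is the characterisation of $\cC_{\cA}$ recorded just before the statement.

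The composites are handled as follows. On $\cC_{\cA}$ we have $\pi\omega\cong\id_{\QGr A}$, and this restricts to a natural isomorphism on the full subcategory $\cC_{\cA}$. On $C_A$, Lemma \ref{omega pi and depth 2} says that the unit $X\to\omega\pi X$ of the adjunction $\pi\dashv\omega$ is an isomorphism for every $X$ with $\depth_AX\geqslant 2$. Since the unit is a natural transformation, it restricts to a natural isomorphism $\id_{C_A}\cong\omega\pi$ on $C_A$.

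Hence $\pi$ and $\omega$ are mutually quasi-inverse equivalences between $C_A$ and $\cC_{\cA}$. No step is a real obstacle. The only point needing care is that the isomorphism in Lemma \ref{omega pi and depth 2} is the canonical unit map, which is what makes it natural. Alternatively, one could invoke full faithfulness of $\omega$ (from $\pi\omega\cong\id$) together with essential surjectivity onto $C_A$, which again uses Lemma \ref{omega pi and depth 2}.
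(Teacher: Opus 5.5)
Your proposal is correct and matches the paper's proof, which simply cites Lemma \ref{omega pi and depth 2} (the unit $X\to\omega\pi X$ is an isomorphism when $\depth_A X\geqslant 2$) together with $\pi\omega\cong\id_{\QGr A}$. You additionally spell out the well-definedness of $\omega$ on $\cC_{\cA}$ and the naturality of the unit, which the paper leaves implicit.
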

\begin{proof}
It follows from Lemma \ref{omega pi and depth 2} and $\pi\omega\cong \id_{\QGr A}$.
\end{proof}


Next, we show that any equivalence between the noncommutative quasi-projective spaces associated with $A$ and $B$ induces equivalences $C_A\cong C_B$ and $\cC_{\cA}\cong \cC_{\cB}$.

\begin{proposition}\label{C_A and C_B}
     Suppose that $A$ and $B$ are noetherian noncommutative projective coordinate rings. Let $M'$ be a modulo-torsion-invertible $(A,B)$-bimodule satisfying $\depth_B M'\geqslant 2$, and set $M=\gHom_B(M',B)$. Then
     \begin{itemize}
         \item [(1)] 
         $\gHom_A(M,-):C_A\rightleftarrows C_B:\gHom_B(M',-)$ defines an equivalence.
         \item [(2)] 
         $\pi_B\gHom_{\cA}(\cM,-):\cC_{\cA}\rightleftarrows \cC_{\cB}:\pi_A\gHom_{\cB}(\cM',-)$ defines an equivalence.
     \end{itemize}
\end{proposition}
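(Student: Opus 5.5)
We plan to deduce both statements from the Morita theory of Theorem \ref{Morita for nc quasi-proj space} together with the equivalence $C_A\simeq\cC_{\cA}$ given by $\pi$ and $\omega$ (the lemma just above). Since $M'$ is modulo-torsion-invertible with $\depth_BM'\geqslant 2$, Theorem \ref{Morita for nc quasi-proj space} provides an equivalence of noncommutative quasi-projective spaces $F=-\otimes_{\cA}\cM'\cong\pi_B\gHom_{\cA}(\cM,-)$ with quasi-inverse $G=-\otimes_{\cB}\cM\cong\pi_A\gHom_{\cB}(\cM',-)$. It also gives $M\cong\gHom_B(M',B)$ with $\depth_AM\geqslant 2$, $M=\omega G\cB$ and $M'=\omega F\cA$. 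We will prove (2) first and then transport it to (1).

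For (2), we use the characterization $\cC_{\cA}=\{\cX\in\qgr A\mid \omega_A\cX \text{ is finitely generated}\}$. For $\cX\in\qgr A$ we have
\[\omega_B F\cX=\oplus_n\Hom_{\cB}(\cB,s^nF\cX)\cong\oplus_n\Hom_{\cA}(G\cB,s^n\cX)=\gHom_{\cA}(\cM,\cX),\]
because $F$ is an equivalence commuting with $s$ and $G\cB\cong\cM$. The step we expect to be the main obstacle is the following claim: if $\omega_A\cX=X$ is finitely generated, then $\gHom_{\cA}(\cM,\cX)$ is a finitely generated $B$-module. To prove it, we would write $\cM$ as a quotient of a finite sum $\oplus\cA(r_i)$, using that $M_A$ is finitely generated. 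This gives an injection $\gHom_{\cA}(\cM,\cX)\hookrightarrow\oplus\gHom_{\cA}(\cA(r_i),\cX)=\oplus X(-r_i)$. However, this only shows finite generation over $A$ on the wrong side, so the argument has to be arranged differently.

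We would instead use the identification $\gHom_{\cA}(\cM,\cX)\cong\gHom_A(M,\omega\cX)=\gHom_A(M,X)$, which holds by adjunction and the fact that $\pi M\to$ anything factors through $\omega$. This module is finitely generated over $B=\gEnd_A(M)$, but a direct argument for that is not immediate. We would rather argue via symmetry. It is cleaner to note that $\gHom_A(M,X)$ has depth $\geqslant 2$ over $B$ because it equals $\omega_BF\cX$. By Theorem \ref{noncommutative Serre theorem for commonly graded algebra}, $\chi_1$ for $B$ implies that $\omega_B\pi_B N$ differs from $N$ by a bounded-above module for finitely generated $N$. Writing $F\cX=\pi_B N$ with $N\in\gr B$, the module $\omega_BF\cX$ is therefore a finitely generated module modulo a right-bounded piece. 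It is also locally finite and bounded below, since it is a submodule of the finite sum $\oplus X(-r_i)$ of finitely generated modules over a commonly graded algebra, so the bounded-above piece is finite dimensional. Hence $\omega_BF\cX$ is finitely generated, and $F\cX\in\cC_{\cB}$. By symmetry $G$ maps $\cC_{\cB}$ into $\cC_{\cA}$. Since $F$ and $G$ are mutually quasi-inverse, their restrictions give the equivalence in (2).

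For (1), we compose the equivalence in (2) with $\pi_A:C_A\to\cC_{\cA}$ and $\omega_B:\cC_{\cB}\to C_B$. For $X\in C_A$, the composite sends $X$ to $\omega_BF\pi X\cong\gHom_{\cA}(\cM,\pi X)\cong\gHom_A(M,\omega\pi X)\cong\gHom_A(M,X)$, where the last isomorphism is Lemma \ref{omega pi and depth 2}. Symmetrically, the reverse composite is $\gHom_B(M',-)$ on $C_B$. Naturality of these isomorphisms then shows that $\gHom_A(M,-)$ and $\gHom_B(M',-)$ are mutually quasi-inverse equivalences $C_A\rightleftarrows C_B$.
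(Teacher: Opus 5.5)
Your argument is correct, but it runs in the opposite direction from the paper's proof.

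The paper takes (1) as known, citing \cite[Theorem 4.12]{LSW}, and obtains (2) formally. For $X\in C_A$ it uses $\pi_B\gHom_{\cA}(\cM,\pi X)\cong\pi_B\gHom_A(M,X)$ with $\gHom_A(M,X)\in C_B$ by (1). So the equivalence of Theorem \ref{Morita for nc quasi-proj space} sends $\cC_{\cA}$ into $\cC_{\cB}$, symmetrically in the other direction, and therefore restricts.

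You instead prove (2) directly, and your only non-formal input is the finiteness step. Write $F\cX=\pi_BN$ with $N\in\gr B$. The $\chi_1$-condition on $B$ makes $R^1\Gamma_B(N)=\Coker(N\to\omega_B\pi_BN)$ bounded above. The embedding $\gHom_{\cA}(\cM,\cX)\hookrightarrow\bigoplus_i(\omega_A\cX)(-r_i)$ makes it locally finite and bounded below; this is where finite generation of $M_A$ and of $\omega_A\cX$ is genuinely needed, since $R^1\Gamma$ of a finitely generated module need not be bounded below. Hence $R^1\Gamma_B(N)$ is finite-dimensional and $\omega_BF\cX$ is finitely generated. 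The reverse direction uses $\chi_1$ for $A$ and $F\cA\cong\cM'$ in the same way. Part (1) then follows by conjugating with $\pi$ and $\omega$, via $\gHom_{\cA}(\cM,\pi X)\cong\gHom_A(M,\omega\pi X)\cong\gHom_A(M,X)$ for $X\in C_A$.

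Comparing the two: the paper's route is shorter because it outsources exactly the finite generation of $\gHom_A(M,X)$ over $B$. Yours is self-contained, and it makes visible that the coordinate-ring hypotheses enter only through the Morita theorem and the $\chi_1$-condition on both rings.

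When you write it up, fix the following:
\begin{itemize}
\item Remove the false starts.
\item For the comparison of $N$ with $\omega_B\pi_BN$, cite Proposition \ref{facts about chi condition}(1) together with the exact sequence $0\to\Gamma_B(N)\to N\to\omega_B\pi_BN\to R^1\Gamma_B(N)\to0$. Theorem \ref{noncommutative Serre theorem for commonly graded algebra} only treats $N=B$.
\item Call the image in $\bigoplus_i(\omega_A\cX)(-r_i)$ a graded subspace, not a submodule.
\item Note that the identification $\omega_BF\cX\cong\gHom_{\cA}(\cM,\cX)$ is $B$-linear through $B\cong\gEnd_{\cB}(\cB)\cong\gEnd_{\cA}(G\cB)$.
\end{itemize}
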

\begin{proof}
    (1) This is proved in \cite[Theorem 4.12]{LSW}.

    (2) For any $\cX\in \cC_{\cA}$, we may assume $X\in C_A$. Then 
    $$\pi_B\gHom_{\cA}(\cM,\cX)\cong\pi_B\gHom_A(M,\omega\pi X)\cong \pi_B\gHom_A(M,X).$$ 
    By (1), $\gHom_A(M,X) \in C_B$. Hence $\pi_B\gHom_{\cA}(\cM,\cX)\cong\pi_B\gHom_A(M,X)\in \cC_{\cB}$. 
    Similarly, for any $\cY\in \cC_{\cB}$, $\pi_A\gHom_{\cB}(\cM',\cY)\in \cC_{\cA}$.
    By Theorem \ref{Morita for nc quasi-proj space}, there is an equivalence of noncommutative quasi-projective spaces
    $$\pi_B\gHom_{\cA}(\cM,-):(\qgr A,s)\rightleftarrows (\qgr B,s):\pi_A\gHom_{\cB}(\cM',-).$$
    So this equivalence restricts to an equivalence
    \[
    \pi_B\gHom_{\cA}(\cM,-):\cC_{\cA}\rightleftarrows \cC_{\cB}:\pi_A\gHom_{\cB}(\cM',-).
    \qedhere
    \]
\end{proof}

The following lemma and proposition characterize the depth of a $B$-module in $C_B$ and characterize MCM $B$-modules in terms of modulo-torsion-invertible bimodules, respectively.

\begin{lemma}\label{depth after equi. of qgr}
     Let $A$ and $B$ be noetherian noncommutative projective coordinate rings, and let $M'$ be a modulo-torsion-invertible $(A,B)$-bimodule with $\depth_BM'\geqslant 2$. Let $M=\gHom_B(M',B)$. Then, for any $N\in C_B$, 
     $$\depth_BN=\min\{i>0\mid \gExt_{\cA}^i(\cM,\cN\otimes_{\cB}\cM)\neq 0\}+1.$$
\end{lemma}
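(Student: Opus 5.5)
The plan is to reduce the depth of $N$ to local cohomology, then to Ext groups in $\qgr B$ against $\cB$, and finally to transport these along the equivalence of Theorem \ref{Morita for nc quasi-proj space}.

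\emph{Step 1: depth as vanishing of $\gExt_{\cB}(\cB,\cN)$.} Since $N\in C_B$, we have $\depth_BN\geqslant 2$. By Lemma \ref{depth and local cohomology}, $R^0\Gamma_B(N)=R^1\Gamma_B(N)=0$ and
\[\depth_BN=\min\{j\mid R^j\Gamma_B(N)\neq 0\}.\]
By Lemma \ref{cohomology in tails}, $R^{i+1}\Gamma_B(N)\cong \gExt^i_{\cB}(\cB,\cN)$ for $i\geqslant 1$. Hence
\[\depth_BN=\min\{i\geqslant 1\mid \gExt^i_{\cB}(\cB,\cN)\neq 0\}+1,\]
with the convention that both sides are $\infty$ when all these groups vanish.

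\emph{Step 2: transport to $\qgr A$.} By Theorem \ref{Morita for nc quasi-proj space}, the functor $G=-\otimes_{\cB}\cM\cong \pi_A\gHom_{\cB}(\cM',-)$ is an equivalence $(\qgr B,s)\to(\qgr A,s)$ commuting with $s$. Two facts are needed.
\begin{itemize}
\item[(a)] $G\cB\cong \cM$. This holds because $\cB\otimes_{\cB}\cM\cong \pi M$; alternatively, $M=\omega G\cB$ by Theorem \ref{Morita for nc quasi-proj space} together with (4) there, since $M\cong\gHom_B(M',B)$.
\item[(b)] $G$ preserves Ext. An equivalence of abelian categories $\qgr B\simeq \qgr A$ induces an equivalence $\D^b(\qgr B)\simeq\D^b(\qgr A)$. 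Ext in $\qgr$ is computed in $\D(\qgr)$, and this agrees with the Ext computed in $\QGr$ by \cite[Lemma 4.1]{LW1}. Since $G$ commutes with $s$, it preserves the graded Ext groups $\gExt^i=\oplus_n\Ext^i(-,s^n-)$.
\end{itemize}
From (a) and (b),
\[\gExt^i_{\cB}(\cB,\cN)\cong \gExt^i_{\cA}(G\cB,G\cN)\cong\gExt^i_{\cA}(\cM,\cN\otimes_{\cB}\cM).\]
Substituting this into Step 1 gives the formula in the statement.

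\emph{Main obstacle.} The delicate point is (b). One must check that Ext in $\qgr$, defined through $\D(\qgr)$, is intrinsic to the abelian category, so that it is preserved by the equivalence. This holds because derived categories are functorial under exact equivalences, and the identification with Ext in $\QGr$ was recalled earlier. One must also check that the isomorphisms are compatible with $s$, so that the full graded Ext groups match rather than just the degree-zero parts. The identification $G\cB\cong\cM$ needs a brief check via $\omega\pi M\cong M$, using $\depth_AM\geqslant 2$. Everything else is a direct citation of the lemmas above.
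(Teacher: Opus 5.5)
Your proposal is correct and follows the paper's proof: you use Lemma~\ref{depth and local cohomology} and Lemma~\ref{cohomology in tails} to write $\depth_BN=\min\{i>0\mid \gExt_{\cB}^i(\cB,\cN)\neq 0\}+1$, and then transport along the equivalence $-\otimes_{\cB}\cM$, which sends $\cB$ to $\cM$. One small point: the extra check via $\omega\pi M\cong M$ is unnecessary, because $\cB\otimes_{\cB}\cM\cong\pi_A(B\otimes_BM)=\cM$ already holds by the definition of the induced functor.
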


\begin{proof}
    By Lemma \ref{depth and local cohomology}, $\depth_BN=\min\{i\mid R^i\Gamma_B(N)\neq 0\}$. Moreover, Lemma \ref{cohomology in tails} implies that, for $i\geqslant 1$,
    $R^{i+1}\Gamma_B(N)\cong \gExt_{\cB}^i(\cB,\cN).$
    Since $\depth_BN\geqslant 2$ and $-\otimes_{\cB}\cM$ induces an equivalence between $(\qgr A,s)$ and $(\qgr B,s)$, 
    \begin{align*}
    \depth_BN&=\min\{i>0\mid \gExt_{\cB}^i(\cB,\cN)\neq 0\}+1 \\
             &=\min\{i>0\mid \gExt_{\cA}^i(\cM,\cN\otimes_{\cB}\cM)\neq 0\}+1.
    \qedhere
    \end{align*}
\end{proof}

\begin{proposition}\label{MCM B and Ext in qgr A}
    Let $A$ and $B$ be noetherian noncommutative projective coordinate rings, and let $M'$ be a modulo-torsion-invertible $(A,B)$-bimodule with $\depth_B M'\geqslant 2$. Let $M=\gHom_B(M',B)$. If $B$ has a balanced dualizing complex and $\lcd B=d$, then, up to isomorphism,
    $$\MCM(B)=\{\gHom_A(M,X)\mid X\in C_A \text{ and } \gExt_{\cA}^i(\cM,\cX)=0 \textrm{ for all } 0<i<d-1\}.$$
\end{proposition}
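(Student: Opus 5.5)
The plan is to reduce the statement to Lemma \ref{depth after equi. of qgr}, using Proposition \ref{C_A and C_B} to pass between $C_A$ and $C_B$.

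\emph{Step 1: every MCM $B$-module lies in $C_B$.} Let $N\in\MCM(B)$. Since $B$ has a balanced dualizing complex, Theorem \ref{existence of balanced dualizing complex} gives that $B$ satisfies $\chi$ with $\lcd B=d$. The definition of MCM gives $\depth_BN=d$. If $d\geqslant 2$, then $N\in C_B$. If $d\leqslant 1$, the condition $0<i<d-1$ on the right-hand side is vacuous; in that case one checks directly that both sides reduce to $C_B$, respectively $\gHom_A(M,C_A)$. (Note that $\lcd B\geqslant 2$ is forced here, since $\depth_B B\geqslant 2$ for a projective coordinate ring.) So we may assume $d\geqslant 2$.

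\emph{Step 2: write $N$ as a Hom from $C_A$.} By Proposition \ref{C_A and C_B}(1), every $N\in C_B$ is isomorphic to $\gHom_A(M,X)$ with $X=\gHom_B(M',N)\in C_A$. Conversely, $\gHom_A(M,X)\in C_B$ for every $X\in C_A$. The functors are mutually inverse, so $X\cong \gHom_B(M',N)$ is determined by $N$.

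\emph{Step 3: compute $\cN\otimes_{\cB}\cM$.} By Theorem \ref{Morita for nc quasi-proj space}(5), the quasi-inverse satisfies
\[
-\otimes_{\cB}\cM\cong\pi_A\gHom_{\cB}(\cM',-),
\]
and the equivalence is induced by the context of Proposition \ref{C_A and C_B}(2). Since $N\in C_B$, we have $\omega\pi N\cong N$. Hence
\[
\cN\otimes_{\cB}\cM\cong \pi_A\gHom_B(M',N)\cong \cX .
\]

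\emph{Step 4: conclude with the depth formula.} Lemma \ref{depth after equi. of qgr} gives
\[
\depth_BN=\min\{i>0\mid \gExt^i_{\cA}(\cM,\cX)\neq 0\}+1 .
\]
Depth is always at most $\lcd B=d$, by Lemma \ref{depth and local cohomology} and Lemma \ref{local cohomological dimension and qgr}(2), because a nonzero finitely generated module has some nonvanishing local cohomology. Therefore $\depth_BN=d$ holds if and only if $\gExt^i_{\cA}(\cM,\cX)=0$ for all $0<i<d-1$. (If all these Ext groups vanished for all $i>0$, the depth would be infinite, which contradicts the bound $d$ unless $N=0$; the zero module belongs trivially to both sides.) Combined with Steps 1 and 2, this yields both inclusions up to isomorphism.

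The main obstacle I expect is Step 3: confirming that the identification $\cN\otimes_{\cB}\cM\cong\pi X$ is compatible with the specific functors $\gHom_A(M,-)$ and $\gHom_B(M',-)$ of Proposition \ref{C_A and C_B}. This needs the isomorphism $\gHom_B(M',-)\cong\omega\pi_A\gHom_{\cB}(\cM',\pi-)$ on $C_B$, which uses $\depth_B N\geqslant2$ and Lemma \ref{omega pi and depth 2}.
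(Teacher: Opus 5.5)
Your proposal is correct and is essentially the paper's proof. Like the paper, you pass between $C_A$ and $C_B$ via Proposition \ref{C_A and C_B}, identify $\cN\otimes_{\cB}\cM\cong\cX$ using $\omega\pi N\cong N$ and Theorem \ref{Morita for nc quasi-proj space}(5), and then combine Lemma \ref{depth after equi. of qgr} with the bound $\depth_BN\leqslant d$. For the reverse inclusion the paper reads the vanishing off $R^{i+1}\Gamma_B(N)\cong\gExt^i_{\cB}(\cB,\cN)$ directly, which is the same computation. One point should be made explicit: finiteness of $\depth_BN$ for nonzero $N$ is exactly where the balanced dualizing complex enters, via local duality (the paper cites \cite[Lemma 8.1]{LW2}); it does not follow from Lemmas \ref{depth and local cohomology} and \ref{local cohomological dimension and qgr} alone.
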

\begin{proof}
    Suppose that $X\in C_A$ satisfies $\gExt_{\cA}^i(\cM,\cX)=0$ for all $0<i<d-1$. Let $N=\gHom_A(M,X)$. Then
    $$\cN=\pi_B\gHom_A(M,X)\cong \pi_B\gHom_A(M,\omega\pi_AX)\cong \pi_B\gHom_{\cA}(\cM,\cX).$$
    By Theorem \ref{Morita for nc quasi-proj space}, $-\otimes_{\cB}\cM$ is a quasi-inverse of $\pi_B\gHom_{\cA}(\cM,-)$. So $\cN\otimes_{\cB}\cM\cong \cX$.
    It follows from Lemma \ref{depth after equi. of qgr} that $\depth_BN\geqslant d$.

    Since $B$ has a balanced dualizing complex, $\depth_BN$ is finite by \cite[Lemma 8.1]{LW2}. By Lemma \ref{depth and local cohomology}, $\depth_BN=\min\{i\mid R^i\Gamma_B(N)\neq 0\}\leqslant d$. Therefore, $\depth_BN=d$, and $N$ is an MCM $B$-module.

    Conversely, suppose that $N\in\MCM(B)$, and let $X=\gHom_B(M',N)$.
    By Proposition \ref{C_A and C_B}, $X\in C_A$ and $N\cong \gHom_A(M,X)$.
    Moreover,
    $$\cX=\pi_A\gHom_B(M',N)\cong \pi_A\gHom_B(M',\omega \pi_BN)\cong \pi_A\gHom_{\cB}(\cM',\cN).$$ 
    By Theorem \ref{Morita for nc quasi-proj space}, $\pi_A\gHom_{\cB}(\cM',-)$ is naturally isomorphic to $-\otimes_{\cB}\cM$. Hence, $\cX\cong \cN\otimes_{\cB}\cM$. Then, for any $0<i<d-1$, by Lemma \ref{cohomology in tails}
    \begin{align*}
        0=R^{i+1}\Gamma_B(N)&\cong \gExt_{\cB}^i(\cB,\cN)\\
        &\cong \gExt_{\cA}^i(\cM,\cN\otimes_{\cB}\cM)\\
        &\cong \gExt_{\cA}^i(\cM,\cX).
    \qedhere 
    \end{align*}
\end{proof}


Next, we discuss the $\chi$-conditions for two noetherian noncommutative projective coordinate rings under equivalences of their associated noncommutative quasi-projective spaces.

\begin{lemma}\label{chi between equi. of qgr}
    Let $A$ and $B$ be noetherian noncommutative projective coordinate rings, and let $M'$ be a modulo-torsion-invertible $(A,B)$-bimodule with $\depth_B M'\geqslant 2$. Suppose that $B$ satisfies the $\chi_n$-condition for some $n\geqslant 2$. Then $A$ satisfies the $\chi_n$-condition if and only if $\gExt^i_B(M',Y)$ is finite-dimensional for all $Y\in \gr B$ and $1\leqslant i\leqslant n-1$.
\end{lemma}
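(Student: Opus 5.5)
We compare $\chi$-conditions through the equivalence of Theorem \ref{Morita for nc quasi-proj space}, using Proposition \ref{facts about chi condition}(1) to restate $\chi_n$ as bounded-aboveness of local cohomology. Lemma \ref{cohomology in tails} identifies local cohomology with Ext from the structure sheaf in degrees $\geqslant 2$. So $A$ satisfies $\chi_n$ if and only if $\gExt_{\cA}^{j}(\cA,\cX)$ is bounded above for all $\cX\in\qgr A$ and $1\leqslant j\leqslant n-1$. Here we also use that $R^0\Gamma_A$ and $R^1\Gamma_A$ of a finitely generated module are bounded above once $\chi_1$ holds, and $\chi_1$ holds because $A$ is a noncommutative projective coordinate ring (Theorem \ref{noncommutative Serre theorem for commonly graded algebra}). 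Every object of $\qgr A$ has the form $\cY\otimes_{\cB}\cM$, and the functor $-\otimes_{\cB}\cM$ sends $\cM'$ to $\cA$. Transporting through the equivalence, the condition becomes: $\gExt_{\cB}^{j}(\cM',\cY)$ is bounded above for all $\cY\in\qgr B$ and $1\leqslant j\leqslant n-1$.

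Next we compare $\gExt_{\cB}^j(\cM',\cY)$ with $\gExt_B^j(M',Y)$. Since $B$ satisfies $\chi_n$, Proposition \ref{facts about chi condition}(2) gives that the canonical map $\gExt_B^j(M',Y)\to\gExt_{\cB}^j(\cM',\cY)$ has bounded above kernel and cokernel for $j<n$. Hence, for $1\leqslant j\leqslant n-1$, $\chi_n$ for $A$ is equivalent to: $\gExt_B^j(M',Y)$ is bounded above for all $Y\in\gr B$.

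It remains to replace ``bounded above'' by ``finite-dimensional''. One direction is trivial. For the converse, note that $\gExt_B^j(M',Y)$ is bounded below: $M'$ is finitely generated, so it has a resolution by finitely generated graded free modules whose generator degrees are bounded in each homological degree, and $Y$ is bounded below. Each graded piece is also finite-dimensional, since $B$ is noetherian and locally finite. A bounded above, bounded below, locally finite graded space is finite-dimensional. This step is automatic whenever the resolution terms are finitely generated, which holds because $B$ is right noetherian.

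The main obstacle is making the translation in the first paragraph precise. Specifically, we must check that the isomorphism $\gExt_{\cA}^j(\cA,\cY\otimes_{\cB}\cM)\cong\gExt_{\cB}^j(\cM',\cY)$ is compatible with gradings. This should follow because the equivalence commutes with $s$ (Theorem \ref{Morita for nc quasi-proj space}(5)) and is exact, so it preserves Ext in $\QGr$, computed in $\D(\qgr)$ as in Lemma \ref{lemma for qgr A}. We must also handle the hypothesis $n\geqslant 2$: it ensures that the range $1\leqslant j\leqslant n-1$ is nonempty and matches $R^{j+1}\Gamma$ for $j+1\leqslant n$.
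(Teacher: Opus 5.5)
Your proposal is correct and follows the paper's proof essentially step for step. You restate $\chi_n$ for $A$, via $\chi_1$, Proposition~\ref{facts about chi condition}(1) and Lemma~\ref{cohomology in tails}, as bounded-aboveness of $\gExt_{\cA}^i(\cA,\cX)$ for $1\leqslant i\leqslant n-1$; you then transport this through the equivalence to $\gExt_{\cB}^i(\cM',\cY)$ and compare with $\gExt_B^i(M',Y)$ using $\chi_n$ for $B$ and Proposition~\ref{facts about chi condition}(2), with your explicit ``bounded above, bounded below and locally finite implies finite-dimensional'' argument merely spelling out a step the paper leaves implicit.
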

\begin{proof}
    For convenience, let (X) denote the statement that ``$A$ satisfies the $\chi_n$-condition." 
    
    Since $A$ is a noncommutative projective coordinate ring, it satisfies the $\chi_1$-condition. 
    Thus, by Lemma \ref{cohomology in tails} and Proposition \ref{facts about chi condition}, (X) holds if and only if, for any $1\leqslant i\leqslant n-1$ and $X\in\gr A$, $\gExt_{\cA}^i(\cA,\cX)$ is bounded above.
    
    The equivalence $-\otimes_{\cA}\cM':\qgr A\to \qgr B$ implies that
    (X) holds if and only if, for any $1\leqslant i\leqslant n-1$ and $X\in \gr A$, the module $\gExt_{\cB}^i(\cM',\cX\otimes_{\cA}\cM')$ is bounded above.
    Thus, (X) holds if and only if, for any $1\leqslant i\leqslant n-1$ and $Y\in \gr B$, the module $\gExt_{\cB}^i(\cM',\cY)$ is bounded above.
    
    Since $B$ satisfies the $\chi_n$-condition, Proposition \ref{facts about chi condition} implies that, for $1\leqslant i\leqslant n-1$, $\gExt_{\cB}^i(\cM',\cY)$ is bounded above if and only if $\gExt_B^i(M',Y)$ is finite-dimensional. 
    
    In conclusion, (X) holds if and only if $\gExt_B^i(M',Y)$ is finite-dimensional for any $1\leqslant i\leqslant n-1$ and $Y\in \gr B$.
\end{proof}

For objects in noncommutative quasi-projective spaces associated with balanced CM algebras, condition (2) of ampleness in Definition \ref{conditions of ample} is characterized as follows.

\begin{lemma}\label{equi. cond. for ample (2) for MCM}
    Suppose that $B$ is a balanced CM algebra of dimension $d\geqslant 2$, and let $N_B$ be an MCM $B$-module. Then the following conditions are equivalent.
    \begin{itemize}
        \item [(1)] For any epimorphism $\cY_1\to \cY_2$ in $\qgr B$, there exists an integer $n$ such that the induced morphism $\gHom_{\cB}(\cN,\cY_1)_{\geqslant n}\to \gHom_{\cB}(\cN,\cY_2)_{\geqslant n}$ is surjective.
        \item [(2)] For any $Y\in \MCM (B)$, $\gExt_B^1(N,Y)$ is finite-dimensional.
        \item [(3)] For any $Y\in \gr B$ and $i\geqslant 1$, $\gExt_B^i(N,Y)$ is finite-dimensional.
    \end{itemize}
    Moreover, if $B$ is a balanced CM isolated singularity, then every MCM $B$-module satisfies these conditions. In this setting, $(\cN,s)$ is ample whenever $N$ is an MCM generator.
\end{lemma}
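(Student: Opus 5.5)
We prove $(3)\Rightarrow(2)$ (trivial), then $(2)\Rightarrow(3)$, then $(1)\Leftrightarrow(3)$, and finish with the isolated-singularity statements.

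\emph{$(2)\Rightarrow(3)$.} For $Y\in\gr B$ take a syzygy sequence $0\to\Omega^{d}Y\to P_{d-1}\to\cdots\to P_0\to Y\to 0$ with $P_j$ finitely generated graded free. Since $\lcd B=d$ and $B$ is MCM (balanced CM), the depth lemma shows that $\Omega^dY$ is MCM. Dimension shifting gives $\gExt^i_B(N,Y)$ in terms of $\gExt^{i+j}_B(N,\Omega^jY)$ for $i\geqslant 1$, but this requires $\gExt^{\geqslant 1}_B(N,P)=0$, which fails in general. So we go the other way and use the duality of Proposition \ref{Hom(-,Omega) induce duality on MCM}. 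Take a resolution of $Y$ by objects of $\add\Omega$. Since $\Omega$ has injective dimension $d$ and every finitely generated module has an MCM approximation by Auslander--Buchweitz theory in the balanced CM setting, there are exact sequences $0\to K\to X\to Y\to 0$ with $X$ MCM and $K$ of finite $\add\Omega$-coresolution length. We have $\gExt^{\geqslant1}_B(N,\Omega)=0$ by Proposition \ref{Hom(-,Omega) induce duality on MCM}(1). Hence $\gExt^{i}_B(N,K)=0$ for $i\geqslant 1$, and so $\gExt^i_B(N,Y)\cong\gExt^i_B(N,X)$ for $i\geqslant 1$. It remains to treat MCM $Y$ with $i\geqslant 2$. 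Embed $Y\hookrightarrow \Omega^{\oplus}$ using the dual of a free cover of $Y^\dagger$; the cokernel $Y_1$ is again MCM because the sequence is the dual of a sequence in $\MCM(B^o)$. Then $\gExt^{i}_B(N,Y)\cong\gExt^{i-1}_B(N,Y_1)$ for $i\geqslant2$, and induction reduces everything to $\gExt^1$, which is (2).

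\emph{$(1)\Leftrightarrow(3)$.} By Proposition \ref{facts about chi condition}(2), $\chi$ holds for $B$ (Proposition \ref{AS-G has balanced dualizing complex and chi}), and $\depth N\geqslant 2$ gives $\gHom_B(N,Y)\to\gHom_{\cB}(\cN,\cY)$ with finite-dimensional kernel and cokernel in high degrees, with bounded-above discrepancy for every $\gExt^i$. Given an epimorphism $\cY_1\to\cY_2$, lift it to a surjection $Y_1\to Y_2$ in $\gr B$ and let $Y_0$ be its kernel. Surjectivity of $\gHom_{\cB}(\cN,\cY_1)_{\geqslant n}\to\gHom_{\cB}(\cN,\cY_2)_{\geqslant n}$ is then governed by $\gExt^1_{\cB}(\cN,\cY_0)$ being bounded above. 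This in turn is equivalent to $\gExt^1_B(N,Y_0)$ being bounded above, hence finite-dimensional, since it is a finitely generated module over the noetherian algebra $B_0$-part and is bounded below. Every $Y_0\in\gr B$ arises as such a kernel (take $Y_1$ free), so (1) is equivalent to (3) with $i=1$ for all $Y$. Passing to syzygies, which are MCM after $d$ steps, gives (2), which we have already shown implies (3). The main obstacle is the bookkeeping: checking that boundedness in high degrees exactly matches finite-dimensionality using local finiteness and the bounded-below property.

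\emph{Isolated singularity case.} If $\gldim(\qgr B)=d-1$, then for MCM $N,Y$ the comparison of Proposition \ref{facts about chi condition}(2) combined with $\gExt^1_{\cB}(\cN,\cY)$ shows that $\gExt^1_{\cB}(\cN,\cY)$ is Serre dual to $\gExt^{d-2}_{\cB}(\cY,\cN\otimes\pi\Omega)$ (Theorem \ref{Serre duality in D(qgr A)}). Since $\omega\pi\Omega\otimes N$-type modules have finite local cohomology in degrees $<d$ by $\chi$, this dual is bounded below, so $\gExt^1_{\cB}(\cN,\cY)$ is bounded above. Because $\depth Y\geqslant 2$, the low-degree comparison is exact, and $\gExt^1_B(N,Y)$ is finite-dimensional, which is (2). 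Finally, if $N$ is an MCM generator, then $B\in\add N$, so ampleness condition (1) of Definition \ref{conditions of ample} follows from that of $(\cB,s)$, and condition (2) is (1) above.
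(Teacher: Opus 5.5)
Your arguments for $(3)\Rightarrow(2)$, $(2)\Rightarrow(3)$, $(3)\Rightarrow(1)$ and the final ampleness claim match the paper, with one slip. In the MCM approximation the module $K$ has a finite $\add_B\Omega$-\emph{resolution} $0\to\Omega_m\to\cdots\to\Omega_0\to K\to 0$, not a coresolution. Only with a resolution does dimension shifting give $\gExt^i_B(N,K)\cong\gExt^{i+m}_B(N,\Omega_m)=0$.

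\textbf{Gap in $(1)\Rightarrow(3)$.} For an epimorphism $\cY_1\to\cY_2$ with kernel $\cY_0$, condition (1) only bounds the image of the connecting map $\gHom_{\cB}(\cN,\cY_2)\to\gExt^1_{\cB}(\cN,\cY_0)$. That image is the kernel of $\gExt^1_{\cB}(\cN,\cY_0)\to\gExt^1_{\cB}(\cN,\cY_1)$. So ``surjectivity is governed by $\gExt^1_{\cB}(\cN,\cY_0)$ being bounded above'' holds in only one direction. For the converse you need control of the middle term. With $Y_1$ free, that term is a sum of shifts of $\gExt^1_{\cB}(\cN,\cB)$, and its boundedness amounts to $\dim_k\gExt^1_B(N,B)<\infty$. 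That is a special case of what you are proving, and exactly the obstruction you noted yourself. Also, not every $Y_0\in\gr B$ embeds in a free module.

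The missing idea is to use a middle term on which $\gExt^1_B(N,-)$ vanishes. For $Y\in\MCM(B)$, take $0\to Y\to\Omega^0\to L\to 0$ with $\Omega^0\in\add_B\Omega$ and $L$ MCM. Since $\gExt^1_B(N,\Omega^0)=0$, you get $\gExt^1_B(N,Y)\cong\Coker\bigl(\gHom_B(N,\Omega^0)\to\gHom_B(N,L)\bigr)$. Because $\depth_B\Omega^0,\depth_BL\geqslant 2$, these Hom spaces equal $\gHom_{\cB}(\cN,\pi\Omega^0)$ and $\gHom_{\cB}(\cN,\pi L)$. Applying (1) to $\pi\Omega^0\to\pi L$ then makes $\gExt^1_B(N,Y)$ bounded above, hence finite-dimensional. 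This proves $(1)\Rightarrow(2)$, which is why the paper runs the cycle $(1)\Rightarrow(2)\Rightarrow(3)\Rightarrow(1)$.

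\textbf{Isolated-singularity step.} The Serre duality $\gExt^1_{\cB}(\cN,\cY)\cong D\gExt^{d-2}_{\cB}(\cY,F\cN)$, with $F=-\otimes_{\cB}\pi\Omega$, is correct. The claim that the right side is bounded below ``by $\chi$'' is unsupported. The $\chi$-condition makes local cohomology of finitely generated modules bounded \emph{above}, not finite-dimensional. Ext groups in $\qgr B$ are also not automatically bounded below; for example $\gExt^{d-1}_{\cB}(\cB,\cB)\cong R^d\Gamma_B(B)=D(\Omega)$.

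Your route can be repaired by moving $F$ to the other argument. Since $F$ is an auto-equivalence of $\qgr B$, $\gExt^{d-2}_{\cB}(\cY,F\cN)\cong\gExt^{d-2}_{\cB}(F^{-1}\cY,\cN)$. This equals $\gExt^{d-2}_B(X,N)$ for any finitely generated $X$ with $\pi X\cong F^{-1}\cY$, because $R\Gamma_B(N)\cong R^d\Gamma_B(N)[-d]$ makes the comparison map an isomorphism in degrees $\leqslant d-2$. That last group is bounded below. Injectivity of $\gExt^1_B(N,Y)\to\gExt^1_{\cB}(\cN,\cY)$ when $\depth_BY\geqslant 2$ then gives (2).

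The paper argues differently. It coresolves $N$ by $\add_B\Omega$ with MCM cosyzygies and runs a descending induction on $i$, starting from $\gldim(\qgr B)=d-1$. It uses that $\gExt^i_{\cB}(\pi\Omega,\cY)\cong\gExt^i_{\cB}(\cB,F^{-1}\cY)$ is bounded above by $\chi$. This gives boundedness of $\gExt^i_{\cB}(\cN,\cY)$ for all $i>0$ at once.
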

\begin{proof}
    Let $\Omega=D(R^d\Gamma_B(B))$.

    (1) $\Rightarrow$ (2). Let $Y\in\MCM(B)$. Applying $\gHom_B(-,\Omega)$ to a finite projective presentation of $\gHom_B(Y,\Omega)$ and the duality given in Proposition \ref{Hom(-,Omega) induce duality on MCM} (2) yield an exact sequence
    \[
    0\to Y\to\Omega^0\to L\to0,
    \quad \Omega^0\in\add_B\Omega,\quad L\in\MCM(B).
    \]
    By Proposition \ref{Hom(-,Omega) induce duality on MCM} (1), $\gExt_B^1(N,\Omega^0)=0$. Therefore, applying $\gHom_B(N,-)$ gives an exact sequence
    \[
    \gHom_B(N,\Omega^0)\to\gHom_B(N,L)
    \to\gExt_B^1(N,Y)\to0.
    \]
    Since $\depth_B\Omega^0,\depth_BL\geqslant2$,
 one obtains $\gHom_B(N,\Omega^0)\cong\gHom_{\cB}(\cN,\pi\Omega^0)$ and $\gHom_B(N,L)\cong\gHom_{\cB}(\cN,\pi L)$, by Lemma \ref{omega pi and depth 2} and adjunction. 
 Condition (1), applied to the epimorphism $\pi\Omega^0\to\pi L$, implies that $\gExt_B^1(N,Y)$ is bounded above. 
 A resolution of $N$ by finitely generated graded projective modules shows that $\gExt_B^1(N,Y)$ is locally finite and bounded below. Hence, it is finite-dimensional.

    (2) $\Rightarrow$ (3). First, let $Y\in\MCM(B)$. Applying $\gHom_B(-,\Omega)$ to a finite projective resolution of $\gHom_B(Y,\Omega)$ and the duality given in Proposition \ref{Hom(-,Omega) induce duality on MCM} (2) yield a coresolution
    \[
    0\to Y\to\Omega^0\to\Omega^1\to\cdots,
    \quad \Omega^r\in\add_B\Omega.
    \]
   Since $\gExt_B^i(N,\Omega)=0$ for any $i\geqslant 1$, 
        $$\gExt_B^j(N,Y)\cong\gExt_B^{j-1}(N,\image \partial_1)\cong \cdots \cong \gExt_B^1(N,\image\partial_{j-1}) \text{ when } j\geqslant 2.$$
        By \cite[Lemma 2.10]{LSW}, $\image\partial_{i-1}$ is MCM. 
        Therefore, $\gExt_B^1(N,\image\partial_{i-1})$ is finite-dimensional, which implies that 
        $\gExt_B^i(N,Y)$ is finite-dimensional for all $i\geqslant 1$.

    For arbitrary $Y\in\gr B$, \cite[Proposition 8.7]{LW2} gives an exact sequence
    \[
    0\to L\to Z\to Y\to0,
    \]
    where $Z$ is MCM and $L$ has a finite resolution with terms in $\add_B\Omega$. By dimension shifting, $\gExt_B^i(N,L)=0$ for all $i>0$. 
    Hence $\gExt_B^i(N,Y)\cong\gExt_B^i(N,Z)$ for all $i>0$. This proves (3).

    (3) $\Rightarrow$ (1). Given an epimorphism $\cY_1\to\cY_2$ in $\qgr B$, let $\cK$ be its kernel and choose $K\in\gr B$ such that $\pi K=\cK$. The exact sequence
    \[
    \gHom_{\cB}(\cN,\cY_1)\to 
    \gHom_{\cB}(\cN,\cY_2)\to 
    \gExt_{\cB}^1(\cN,\cK),
    \]
    together with Proposition \ref{facts about chi condition}, shows that (3) implies (1), as the last term is bounded above.

    Now suppose that $B$ is a balanced CM isolated singularity. By Theorem \ref{canonical bimodule and isolated singularity}, the functor
    $F=-\otimes_{\cB}\pi\Omega$ is an auto-equivalence of $(\qgr B,s)$. 
    Let $F^{-1}$ be a quasi-inverse of $F$.
    It follows from Lemma \ref{cohomology in tails} and Proposition \ref{facts about chi condition} that
    for every $\cY\in\qgr B$ and $i>0$, 
    \[
    \gExt_{\cB}^i(\pi\Omega,\cY)
    \cong\gExt_{\cB}^i(\cB,F^{-1}\cY)
    \]
    is bounded above.
     
    Let $N$ be an MCM $B$-module.
    Then there exists a resolution
    \[
        \cdots\to \Omega^{-1}\to \Omega^0\to N\to 0
    \]
    such that $\Omega^r\in\add_B\Omega$.
    Decompose this exact sequence into short exact sequences
    \[
    0\to N_r\to\Omega^r\to N_{r+1}\to0,
    \]
    where each $N_r$ is MCM and $N_0=N$. 
Applying the exact quotient functor $\pi$ and $\gHom_{\cB}(-,\cY)$ to these short exact sequences
gives exact sequences
\[
\gExt^i_{\cB}(\pi\Omega^r,\cY)\to
\gExt^i_{\cB}(\pi N_r,\cY)\to
\gExt^{i+1}_{\cB}(\pi N_{r+1},\cY)
\]
for all $i>0$ and $r\geqslant 0$.

We prove by descending induction on $i$ that
$\gExt^i_{\cB}(\pi N_r,\cY)$ is bounded above for every $r\geqslant 0$.
Since $\operatorname{gldim}(\qgr B)=d-1$, it follows that
$\gExt^i_{\cB}(\pi N_r,\cY)=0$ for all $i\geq d$ and $r\geq 0$.
Now let $1\leqslant i\leqslant d-1$, and assume that
$\gExt^{i+1}_{\cB}(\pi N_r,\cY)$ is bounded above for every $r\geqslant 0$. 
For a fixed $r\geqslant 0$,
by the preceding paragraph and the induction hypothesis, there exist integers $a$ and $b$ such that
\[
    \gExt^i_{\cB}(\pi\Omega^r,\cY)_{>a}=0 \text{ and } \gExt^{i+1}_{\cB}(\pi N_{r+1},\cY)_{>b}=0.
\]
It follows from the preceding exact sequence
that $\gExt^i_{\cB}(\pi N_r,\cY)$ is bounded above.
This completes the induction.

Taking $r=0$ and using $N_0=N$, we conclude that $\gExt^i_{\cB}(\cN,\cY)$ is bounded above for every $i>0$.
By Proposition \ref{facts about chi condition}, $\gExt_B^i(N,Y)$ is bounded above for every $Y\in\gr B$. Hence, (3) holds.

Finally, if $N$ is a generator, then $B\in\add_BN$. Since $(\cB,s)$ is ample, $(\cN,s)$ satisfies ampleness condition (1), while the condition proved above gives ampleness condition (2).
\end{proof}

The following proposition transfers the $\chi$-condition from a balanced CM algebra $B$ to $A$ when the bimodule $M'$ inducing the equivalence is an MCM $B$-module.

\begin{proposition}\label{B balanced CM, M' MCM, then A chi}
    Suppose that $A$ and $B$ are noetherian noncommutative projective coordinate rings, and that $M'$ is a modulo-torsion-invertible $(A,B)$-bimodule. If $B$ is a balanced CM algebra and $M'_B$ is MCM, then $A$ satisfies the $\chi$-condition.
\end{proposition}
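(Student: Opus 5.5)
The plan is to reduce to Lemma \ref{chi between equi. of qgr} and verify its Ext-finiteness criterion using Lemma \ref{equi. cond. for ample (2) for MCM}.

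First we need that $\depth_BM'\geqslant 2$. Since $B$ is balanced CM of dimension $d$ and $M'_B$ is MCM, $\depth_BM'=d$. If $d\geqslant 2$ this gives $\depth_BM'\geqslant 2$, as required. If $d\leqslant 1$, we argue directly: $\lcd B=d$ together with Proposition \ref{AS-G has balanced dualizing complex and chi}(2) and Lemma \ref{local cohomological dimension and qgr} should force $\qgr B$ to be degenerate. In that case $\qgr A\cong\qgr B$ and the $\chi$-condition for $A$ reduces to the $\chi_1$-condition, which $A$ already satisfies as a noncommutative projective coordinate ring, together with vanishing of higher local cohomology. I expect this edge case to be routine, and I would simply note that $B$ being a noncommutative projective coordinate ring already forces $\depth_BB\geqslant 2$, hence $d\geqslant 2$.

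By Proposition \ref{AS-G has balanced dualizing complex and chi}(2), $B$ satisfies the $\chi$-condition, so $B$ satisfies $\chi_n$ for every $n\geqslant 2$. Lemma \ref{chi between equi. of qgr} then says that $A$ satisfies $\chi_n$ if and only if $\gExt^i_B(M',Y)$ is finite-dimensional for all $Y\in\gr B$ and $1\leqslant i\leqslant n-1$. It therefore suffices to show that $\gExt^i_B(M',Y)$ is finite-dimensional for all $i\geqslant 1$ and all $Y\in\gr B$. This is exactly condition (3) of Lemma \ref{equi. cond. for ample (2) for MCM} applied to $N=M'$, which is MCM.

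To verify it, we check condition (1) of that lemma. Take an epimorphism $\cY_1\to\cY_2$ in $\qgr B$ and set $\cX_j=\pi_A\gHom_{\cB}(\cM',\cY_j)$. By Theorem \ref{Morita for nc quasi-proj space}, the functor $G=\pi_A\gHom_{\cB}(\cM',-)$ is an equivalence commuting with $s$, so $\cX_1\to\cX_2$ is an epimorphism in $\qgr A$, and
$$\gHom_{\cB}(\cM',\cY_j)\cong\gHom_{\cB}(F\cA,\cY_j)\cong\gHom_{\cA}(\cA,G\cY_j)=\omega\cX_j,$$
with $F=-\otimes_{\cA}\cM'$. Since $(\cA,s)$ is ample by Theorem \ref{noncommutative Serre theorem for commonly graded algebra}(2), the map $(\omega\cX_1)_{\geqslant n}\to(\omega\cX_2)_{\geqslant n}$ is surjective for $n\gg0$. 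This gives condition (1) for $N=M'$, hence condition (3), and so $A$ satisfies $\chi_n$ for every $n$.

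The main point to watch is naturality: the isomorphism $\gHom_{\cB}(\cM',-)\cong\gHom_{\cA}(\cA,G(-))$ must be compatible with the map $\cY_1\to\cY_2$. This follows from the adjunction $F\dashv G$ between the two equivalences.
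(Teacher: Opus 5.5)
Your proposal is correct and matches the paper's proof: the paper also notes that $(\cM',s)$ is ample in $\qgr B$ because it is the image of $(\cA,s)$ under the equivalence, deduces condition (3) of Lemma \ref{equi. cond. for ample (2) for MCM} for $N=M'$, and concludes with Proposition \ref{AS-G has balanced dualizing complex and chi} and Lemma \ref{chi between equi. of qgr}. Your observation that $d=\depth_BB\geqslant 2$ (so that $\depth_BM'\geqslant 2$ and $d\geqslant 2$, as those lemmas require) makes explicit a hypothesis the paper leaves implicit, and the speculative discussion of a degenerate case can simply be dropped.
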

\begin{proof}
Since $-\otimes_{\cA}\cM':(\qgr A,s)\to (\qgr B,s)$
is an equivalence, $(\cM',s)$ is ample in $\qgr B$.
Thus, $M'$ satisfies the equivalent conditions in Lemma \ref{equi. cond. for ample (2) for MCM}. 
It follows from Proposition \ref{AS-G has balanced dualizing complex and chi} and Lemma \ref{chi between equi. of qgr} that $A$ satisfies the $\chi$-condition.
\end{proof}

If $B$ is a balanced CM isolated singularity, the following stronger transfer result holds.

\begin{proposition}\label{when B is GAS-Gorenstein iso. sing.}
    Suppose that $A$ and $B$ are noetherian noncommutative projective coordinate rings, and let $M'$ be a modulo-torsion-invertible $(A,B)$-bimodule. Assume further that $B$ is a balanced CM isolated singularity of dimension $d\geqslant2$. Then
    \begin{itemize}
        \item [(1)] $\gldim(\qgr A)=\gldim(\qgr A^o)=\idim_{\cA}\cA=\idim_{\cA^o}\cA=d-1$;
        \item [(2)] for every nonzero $\cX\in\qgr A$ and $\cX'\in\qgr A^o$,
        \[
        \pdim_{\cA}\cX=\idim_{\cA}\cX
        =\pdim_{\cA^o}\cX'=\idim_{\cA^o}\cX'=d-1;
        \]
        \item [(3)] $\lcd A=\lcd A^o=d$.
    \end{itemize}
    Moreover, if $M'_B$ is MCM, then
    \begin{itemize}
        \item [(4)] $A^o$ is a noncommutative projective coordinate ring;
        \item [(5)] $A$ admits a balanced dualizing complex.
    \end{itemize}
\end{proposition}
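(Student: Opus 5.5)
Items (1)–(3) will be transported from $B$ to $A$ along the equivalence, and items (4)–(5) will come from the $\chi$-condition together with a finite local cohomological dimension on both sides.

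For (1) and (2), I use the equivalence $-\otimes_{\cA}\cM':(\qgr A,s)\to(\qgr B,s)$ from Theorem \ref{equ. of quot. cat. induced by Morita context for commonly graded}. It preserves projective and injective dimensions of objects and global dimension, since Ext groups are computed in $\D^b$ by Corollary \ref{homo. dim. in qgr and QGr}. By Theorem \ref{canonical bimodule and isolated singularity} applied to $B$, every nonzero object of $\qgr B$ has projective and injective dimension $d-1$, and $\gldim(\qgr B)=d-1$. Hence the same holds in $\qgr A$, so in particular $\idim_{\cA}\cA=d-1$.

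For the opposite side I use Theorem \ref{Morita for nc quasi-proj space}(6): the Morita context also induces $(\qgr A^o,s)\cong(\qgr B^o,s)$. Since $B$ is a balanced CM isolated singularity, $\gldim(\qgr B^o)=d-1$, and Theorem \ref{canonical bimodule and isolated singularity} again gives dimension $d-1$ for every nonzero object of $\qgr B^o$. Transporting this gives the $A^o$-statements in (1) and (2). One caveat: Theorem \ref{Morita for nc quasi-proj space} is stated for noetherian projective coordinate rings, so for the left side I only need that the context induces an equivalence of $\qgr$ of opposite algebras. This follows from Theorem \ref{equ. of quot. cat. induced by Morita context for commonly graded} applied to the opposite context, because the cokernels of $\tau$ and $\mu$ are the same.

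For (3), Lemma \ref{local cohomological dimension and qgr}(1) gives $\lcd A=\pdim_{\cA}\cA+1=d$ by (2), and similarly $\lcd A^o=\pdim_{\cA^o}\cA+1=d$.

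Now assume $M'_B$ is MCM. Proposition \ref{B balanced CM, M' MCM, then A chi} shows that $A$ satisfies $\chi$. For $A^o$ I want to apply the same proposition to the opposite context: $B^o$ is balanced CM, and I need ${}_AM'$ to be a modulo-torsion-invertible $(B^o,A^o)$-bimodule with $M'$ MCM as a left $B$-module. This is the main obstacle, since a priori MCM on the right does not give MCM on the left.

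My first attempt is to use $\depth_{B^o}M'\geqslant 2$, or else to replace $M'$ by $\omega\pi$ of it on the left and get depth $d$ through Lemma \ref{depth after equi. of qgr} together with the vanishing $\gExt^i_{\cB^o}(\cB,-)$ coming from the isolated singularity. If that fails, I will avoid the left-side MCM property altogether. Instead I would show $\chi$ for $A^o$ through Lemma \ref{chi between equi. of qgr} applied to $B^o$ and the left module $M'$. For this I need $\gExt^i_{B^o}(M',Y)$ to be finite-dimensional. I expect to get this from the ampleness argument in Lemma \ref{equi. cond. for ample (2) for MCM}, whose final part shows that Ext groups out of any object of $\qgr B^o$ are bounded above whenever $B^o$ is a balanced CM isolated singularity, combined with Proposition \ref{facts about chi condition}.

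Once $\chi$ holds for $A^o$, $A^o$ satisfies $\chi_1$. Its depth is at least $2$ because $\lcd A^o=d\geqslant2$ and $\chi$ holds, and I expect to get $\depth_{A^o}A\geqslant 2$ from $A\cong\gEnd_B(M')$ with $M'$ reflexive on the left. This gives (4). Finally, (5) follows from Theorem \ref{existence of balanced dualizing complex}, since $\chi$ holds for both $A$ and $A^o$ and $\lcd A=\lcd A^o=d<\infty$.
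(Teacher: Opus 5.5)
Items (1)--(3) are fine and follow the paper's argument. You transport along the equivalences $\qgr A\simeq\qgr B$ and $\qgr A^o\simeq\qgr B^o$ (the latter from the opposite context), apply Theorem \ref{canonical bimodule and isolated singularity} to $B$, and use Lemma \ref{local cohomological dimension and qgr}.

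The gap is in (4)--(5), exactly at the obstacle you flagged, and neither of your two routes gets past it.

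First, the relevant left $B$-module is not $M'$, which is a left $A$-module, but $M=\gHom_B(M',B)$. The equivalence $\cM\otimes_{\cA}-:\qgr A^o\to\qgr B^o$ sends $\cA$ to $\pi_{B^o}M$, so everything about $A^o$ hinges on $\gExt^i_{\cB^o}(\pi_{B^o}M,\cY)$ being bounded above.

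Your fallback takes this boundedness from the last part of Lemma \ref{equi. cond. for ample (2) for MCM}. That part only treats objects $\pi N$ with $N$ MCM, and the claim for arbitrary objects is false. For $B=k[x,y]$ one has $\gExt^1_B(B/xB,B/xB)\cong(B/xB)(1)$, so by Proposition \ref{facts about chi condition} the group $\gExt^1_{\cB}(\pi(B/xB),\pi(B/xB))$ is not bounded above.

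Nor can you expect ${}_BM$ to be MCM, since the duality on MCM modules is $\gHom_B(-,\Omega)$, not $\gHom_B(-,B)$. Already for $A=B$ and $M'=\Omega$, with $B$ a commutative non-Gorenstein graded isolated singularity, $\gHom_B(\Omega,B)$ can have depth $2<d$. Your first attempt fails for the same reason: the isolated singularity gives $\gExt^i_{\cB^o}(\cB,-)=0$ only for $i\geqslant d$, and nothing in the range $0<i<d-1$.

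There are two further problems.
\begin{itemize}
\item The left version of Lemma \ref{chi between equi. of qgr} presupposes that $A^o$ is already a noncommutative projective coordinate ring; its proof uses $\chi_1$ for $A^o$. So you cannot get $\chi$ for $A^o$ from it first and then deduce (4).
\item $\lcd A^o=d$ together with $\chi$ does not imply $\depth_{A^o}A\geqslant2$, and the reflexivity remark is not an argument.
\end{itemize}

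The missing idea is to twist by the canonical module of $B$. Put $\Omega=D(R^d\Gamma_B(B))$ and $N=\gHom_B(M',\Omega)$. By Proposition \ref{Hom(-,Omega) induce duality on MCM}, ${}_BN$ is MCM and $\gEnd_{B^o}(N)\cong A^o$.

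By Theorem \ref{canonical bimodule and isolated singularity}, $F=\pi\Omega\otimes_{\cB}-$ is an auto-equivalence of $\qgr B^o$. Comparing $\Omega\otimes_B\gHom_B(P_i,B)\cong\gHom_B(P_i,\Omega)$ along a projective presentation $P_1\to P_0\to M'\to0$ gives $F(\pi_{B^o}M)\cong\pi_{B^o}N$.

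Hence $\gExt^i_{\cB^o}(\pi_{B^o}M,\cY)\cong\gExt^i_{\cB^o}(\pi_{B^o}N,F\cY)$, which is bounded above by the left version of Lemma \ref{equi. cond. for ample (2) for MCM} applied to the MCM module $N$. This gives ampleness condition (2) for $(\pi_{B^o}M,s)$. Condition (1) comes from the equivalence with $(\qgr A^o,\cA,s)$.

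Moreover, since $\depth_{B^o}N\geqslant2$, you get $\gEnd_{\cB^o}(\pi_{B^o}M)\cong\gEnd_{\cB^o}(\pi_{B^o}N)\cong\gEnd_{B^o}(N)\cong A^o$. Theorem \ref{noncommutative Serre theorem for commonly graded algebra} then yields (4), including the depth condition.

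Only after that does your plan for (5) go through:
\begin{itemize}
\item $\chi$ for $B^o$ turns the boundedness into finite-dimensionality of $\gExt^i_{B^o}(M,Y)$;
\item the exact sequence $0\to M\to\gHom_B(P_0,B)\to\gHom_B(P_1,B)$ gives $\depth_{B^o}M\geqslant2$;
\item the left version of Lemma \ref{chi between equi. of qgr} and Theorem \ref{existence of balanced dualizing complex} finish.
\end{itemize}
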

\begin{proof}
    Theorem \ref{Morita for nc quasi-proj space} provides equivalences $(\qgr A,s)\cong (\qgr B,s)$. 
    It follows from this equivalence and Theorem \ref{canonical bimodule and isolated singularity} that (1) and (2) hold. 
    Lemma \ref{local cohomological dimension and qgr} then gives (3).

    (4) Assume that $M'_B$ is MCM. Let
    $M=\gHom_B(M',B),\Omega=D(R^d\Gamma_B(B))$ and
    $N=\gHom_B(M',\Omega)$.
    By Theorem \ref{Morita for nc quasi-proj space}, $A\cong \gEnd_B(M')$ and $M$ is the inverse of the modulo-torsion-invertible bimodule $M'$. Hence one obtains the equivalence
    \[
    \cM\otimes_{\cA}-:(\qgr A^o,s)\to (\qgr B^o,s).
    \]
    Proposition \ref{Hom(-,Omega) induce duality on MCM} (2) implies that $N$ is MCM over $B^o$ and that
    $\gEnd_{B^o}(N)\cong\gEnd_B(M')^o\cong A^o$.

    By Theorem \ref{canonical bimodule and isolated singularity}, the functor $F=\pi\Omega\otimes_{\cB}-$ is an auto-equivalence of $\qgr B^o$. We claim that
    $F(\pi_{B^o}M)\cong\pi_{B^o}N$.
    Indeed, choose a presentation $P_1\to P_0\to M'\to0$ where $P_i$ are finitely generated graded projective $B$-modules. Applying $\gHom_B(-,B)$ gives
    \[
    0\to  M\to \gHom_B(P_0,B)
    \to \gHom_B(P_1,B).
    \]
    Applying the exact functor $F\circ \pi_{B^o}$ gives
    \[
    0\to  F(\pi_{B^o}M)\to F\pi_{B^o}\gHom_B(P_0,B)
    \to F\pi_{B^o}\gHom_B(P_1,B).
    \]
    On the other hand, applying $\gHom_B(-,\Omega)$ to $P_1\to P_0\to M'\to0$ gives
    \[
    0\to  N\to \gHom_B(P_0,\Omega)
    \to \gHom_B(P_1,\Omega).
    \]
    The natural isomorphisms
    \[
    F\pi_{B^o}\gHom_B(P_i,B)=\pi_{B^o}(\Omega\otimes_B\gHom_B(P_i,B))\cong \pi_{B^o}(\gHom_B(P_i,\Omega))
    \]
 then yield $F(\pi_{B^o}M)\cong\pi_{B^o}N$.

    Since $F$ is an equivalence and $\depth_{B^o} N\geqslant 2$, it follows that
    \[
    \gEnd_{\cB^o}(\pi_{B^o} M)
    \cong\gEnd_{\cB^o}(\pi_{B^o} N)
    \cong\gEnd_{B^o}(N)
    \cong A^o.
    \]
    For every $\cY\in\qgr B^o$ and $i>0$, we also have
    \[
    \gExt_{\cB^o}^i(\pi_{B^o} M,\cY)
    \cong\gExt_{\cB^o}^i(\pi_{B^o} N,F\cY).
    \]
    It follows from the left module version of Lemma \ref{equi. cond. for ample (2) for MCM} and Proposition \ref{facts about chi condition} that $\gExt_{\cB^o}^i(\pi_{B^o} N,F\cY)$ is bounded above. 
    Then the case $i=1$ gives ampleness condition (2) for $(\pi_{B^o} M,s)$ in $\qgr B^o$.
    Ampleness condition (1) holds for $(\pi_{B^o} M,s)$ in $\qgr B^o$ as well, because $(\cA,s)$ satisfies ampleness condition (1) in $\qgr A^o$ and $\cM\otimes_{\cA}-:(\qgr A^o,s)\to (\qgr B^o,s)$ is an equivalence. 
    Therefore, the remaining conditions of Theorem \ref{noncommutative Serre theorem for commonly graded algebra} follow, which yields that (4) holds.

    (5) Proposition \ref{B balanced CM, M' MCM, then A chi} shows that $A$ satisfies the $\chi$-condition. 
    For the other side, 
    since $\gExt_{\cB^o}^i(\pi_{B^o} M,\cY)$ is bounded above for any $\cY\in\qgr B^o$ and $B^o$ satisfies the $\chi$-condition, by Proposition \ref{facts about chi condition},
    the vector space $\gExt_{B^o}^i(M,Y)$ is finite-dimensional for every $Y\in\gr B^o$ and $i>0$. 
    Since projective left $B$-modules $\gHom_B(P_0,B)$ and $\gHom_B(P_1,B)$ have depth at least two, the preceding exact sequence $0\to  M\to \gHom_B(P_0,B)
    \to \gHom_B(P_1,B)$ together with \cite[Lemma 2.10]{LSW} implies that $\depth_{B^o}M\geqslant2$.
    It follows from the left module version of Lemma \ref{chi between equi. of qgr} that $A^o$ satisfies $\chi$. 
    Combining this with (3), we obtain (5) from Theorem \ref{existence of balanced dualizing complex}.
\end{proof}

\begin{corollary}\label{balanced CM from self extensions}
    Under the assumptions of Proposition \ref{when B is GAS-Gorenstein iso. sing.}, suppose that $M'_B$ is MCM. Then $A$ is a balanced CM isolated singularity of dimension $d$ if and only if
    \[
    \gExt_B^i(M',M')=0\quad(0<i<d-1).
    \]
    In particular, if $d=2$, every noetherian noncommutative projective coordinate ring whose noncommutative quasi-projective space is equivalent to $(\qgr B,s)$ is a balanced CM isolated singularity of dimension two.
\end{corollary}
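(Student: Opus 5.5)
The plan is to reduce the balanced CM property of $A$ to the vanishing of $R^j\Gamma_A(A)$ for $j<d$. I would then transport those local cohomology groups across the equivalence $(\qgr A,s)\cong(\qgr B,s)$ and compare them with module-theoretic Ext groups over $B$, using that $M'_B$ is MCM.

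\emph{Step 1 (reduction).} By Proposition \ref{when B is GAS-Gorenstein iso. sing.} (1), (3) and (5), $A$ has a balanced dualizing complex $R$. Moreover $\lcd A=\lcd A^o=d$ and $\gldim(\qgr A)=\gldim(\qgr A^o)=d-1$. By Theorem \ref{existence of balanced dualizing complex}, $R\cong D(R\Gamma_A(A))$ in $\D(\Gr A^e)$. Since $\lcd A=d$, $R^j\Gamma_A(A)=0$ for $j>d$. Hence $A$ is balanced CM of dimension $d$ (and then automatically a balanced CM isolated singularity) if and only if $R^j\Gamma_A(A)=0$ for all $j<d$. In that case the complex is a stalk in degree $-d$, so $R\cong D(R^d\Gamma_A(A))[d]$ as bimodule complexes. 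Because $A$ is a noncommutative projective coordinate ring, $\depth_A A\geqslant 2$, so $R^0\Gamma_A(A)=R^1\Gamma_A(A)=0$. By Lemma \ref{cohomology in tails} the remaining condition reads
\[
\gExt^i_{\cA}(\cA,\cA)=0 \quad (0<i<d-1).
\]
The equivalence $-\otimes_{\cA}\cM'$ sends $\cA$ to $\cM'$ and commutes with $s$. So the condition is equivalent to $\gExt^i_{\cB}(\cM',\cM')=0$ for $0<i<d-1$.

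\emph{Step 2 (comparison).} It remains to show that $\gExt^i_{\cB}(\cM',\cM')\cong\gExt^i_B(M',M')$ for $0<i<d-1$; this is the main point. Take a resolution $\cdots\to P_1\to P_0\to M'\to 0$ by finitely generated graded projectives, with syzygies $K_r$ (so $K_0=M'$). Since $M'$ and the $P_r$ are MCM, and $d\geqslant 2$, the depth lemma (\cite[Lemma 2.10]{LSW}) gives $\depth_B K_r\geqslant 2$.

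Because $\depth_BM'\geqslant2$, Lemma \ref{omega pi and depth 2} and adjunction give $\gHom_{\cB}(\pi X,\cM')\cong\gHom_B(X,M')$ for every $X\in\gr B$. For the projectives, Lemma \ref{cohomology in tails} gives $\gExt^j_{\cB}(\pi P_r,\cM')$ as a finite sum of shifts of $R^{j+1}\Gamma_B(M')$. This vanishes for $1\leqslant j\leqslant d-2$, since $M'$ is MCM.

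Now dimension-shift along $0\to K_{r+1}\to P_r\to K_r\to0$ in both $\gr B$ and $\qgr B$. In degree $1$, the cokernels of $\gHom(P_r,M')\to\gHom(K_{r+1},M')$ agree, giving $\gExt^1_{\cB}(\pi K_r,\cM')\cong\gExt^1_B(K_r,M')$. For $2\leqslant j\leqslant d-2$, the vanishing above gives $\gExt^j_{\cB}(\pi K_r,\cM')\cong\gExt^{j-1}_{\cB}(\pi K_{r+1},\cM')$, and the analogous isomorphism holds over $B$. Induction then yields the isomorphism for $0<i<d-1$, naturally in the resolution. The delicate point is the indexing: the shifts only use $\gExt^j_{\cB}(\pi P,\cM')=0$ for $j\leqslant i-1\leqslant d-3$ together with the degree-one identification. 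This stays within the range where $M'$ being MCM kills local cohomology.

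\emph{Step 3 (conclusion).} Combining Steps 1 and 2 gives the equivalence claimed in the statement. If $d=2$, the range $0<i<d-1$ is empty, so the Ext condition holds vacuously. Thus every such $A$ is a balanced CM isolated singularity of dimension two. Note that in this case the hypothesis that $M'_B$ is MCM is automatic, since $\depth_BM'\geqslant2=d$ by Theorem \ref{Morita for nc quasi-proj space}.
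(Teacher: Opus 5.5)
Your proposal is correct and follows the paper's argument. The paper's proof is exactly the chain $R^{i+1}\Gamma_A(A)\cong\gExt^i_{\cA}(\cA,\cA)\cong\gExt^i_{\cB}(\cM',\cM')\cong\gExt^i_B(M',M')$ for $0<i<d-1$, leaving implicit your Step 1 reduction via Proposition \ref{when B is GAS-Gorenstein iso. sing.}, and it cites \cite[Lemma 5.5]{LSW} for the last isomorphism, which your Step 2 reproves by dimension shifting.

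Your indexing remark needs one correction. Each shift $\gExt^j_{\cB}(\pi K_r,\cM')\cong\gExt^{j-1}_{\cB}(\pi K_{r+1},\cM')$ uses vanishing of both $\gExt^{j-1}_{\cB}(\pi P_r,\cM')$ and $\gExt^{j}_{\cB}(\pi P_r,\cM')$. So the vanishing is needed up to degree $i\leqslant d-2$, not $i-1$, but this is still within the range killed by $M'$ being MCM. Your note that $M'_B$ is automatically MCM when $d=2$ makes explicit a step the paper skips in the second assertion.
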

\begin{proof}
    Since $A$ is a projective coordinate ring, $R^0\Gamma_A(A)=R^1\Gamma_A(A)=0$. For $0<i<d-1$, Lemma \ref{cohomology in tails}, the equivalence $(\qgr A,s)\cong (\qgr B,s)$, and \cite[Lemma 5.5]{LSW} give
    \[
    R^{i+1}\Gamma_A(A)
    \cong\gExt_{\cA}^i(\cA,\cA)
    \cong\gExt_{\cB}^i(\cM',\cM')
    \cong\gExt_B^i(M',M').
    \]
    This proves the first assertion.
    
    When $d=2$, the condition  $\gExt_B^i(M',M')=0$ for $0<i<d-1$ is empty. Hence, the second assertion holds.
\end{proof}

If $B$ is a noetherian commonly graded AS-regular algebra, then the noncommutative projective scheme $(\qgr B,\cB,s)$ is smooth. Following \cite[Remark 6.3]{LSW}, 
we use the equivalence of noncommutative quasi-projective spaces as a substitute for ``birational equivalence." 
Hence, if $(\qgr A,s)\cong (\qgr B,s)$, then $B$ may, in some sense, be regarded as a noncommutative resolution of $A$. 

\begin{definition}\label{def-noncom-quasi-resolution}
    Let $A$ be a noetherian noncommutative projective coordinate ring. If there exists a noetherian commonly graded AS-regular algebra $B$ such that $(\qgr A,s)\cong (\qgr B,s)$, then $B$ is termed a {\it noncommutative quasi-resolution} of $A$.
\end{definition}

\begin{remark}\label{remark of nqr}
    Noncommutative quasi-resolutions were first introduced in \cite{QWZ}. The definition presented here is that of a $(d-2)$-noncommutative quasi-resolution given in \cite[Definition 3.2]{QWZ}.
\end{remark}

Although the definition of noncommutative quasi-resolutions is weaker than that of the noncommutative resolutions introduced in the next section, not every noncommutative isolated singularity admits a noncommutative quasi-resolution.

\begin{example}
Let $A$ be the algebra given in Example \ref{triangular balanced CM example}. 
If $B$ is a noncommutative quasi-resolution of $A$, then $\gldim(\qgr B)=\gldim (\qgr A)=d$.
Since $B$ is AS-regular, it is a balanced CM isolated singularity of dimension $d+1$.
It follows from Proposition \ref{when B is GAS-Gorenstein iso. sing.} that $\lcd A=d+1$, which leads to a contradiction.
\end{example}

By Theorem \ref{Morita for nc quasi-proj space}, if $B$ is a noncommutative quasi-resolution of $A$, then there exists a graded Morita context $(A,B,M,M',\tau,\mu)$ such that $\depth_A M\geqslant 2$ and $\depth_B M'\geqslant 2$, which induces an equivalence $(\qgr A,s)\cong (\qgr B,s)$.
Moreover, $(A,B,M,M',\tau,\mu)$ is isomorphic to the graded Morita context defined by either $M_A$ or $M'_B$.
Thus, we say that the noncommutative quasi-resolution $B$ is {\it given by ($M, M'$)}.
Note that if $A$ has a noncommutative quasi-resolution $B$, then Proposition \ref{when B is GAS-Gorenstein iso. sing.} (1)--(3) applies to $A$.

When $B$ is a noncommutative resolution (see Definition \ref{def-nc-reso-AS-G}) of a commonly graded AS-Gorenstein isolated singularity $A$ of dimension $d$, given by $(M,M')$, 
then $M_A$ and ${}_AM'$ are $(d-1)$-cluster tilting modules by \cite[Theorem 6.6]{LSW}. 
The following theorem shows that the module $M_A$, which gives a noncommutative quasi-resolution of $A$, is close to being a $(d-1)$-cluster tilting module.

\begin{theorem}\label{d-1 CT and NQR}
     Let $A$ be a noetherian noncommutative projective coordinate ring. Suppose that $A$ has a noncommutative quasi-resolution $B$ given by $(M,M')$ and $\gldim B=d$. Then
    \begin{align*}
        \add_AM&=\{X\in C_A\mid \gExt_{\cA}^i(\cM,\cX)=0,\forall \, 0<i<d-1\}\\
        &=\{X\in C_A\mid \gExt_{\cA}^i(\cX,\cM)=0,\forall \, 0<i<d-1\}
    \end{align*}
    and 
    \begin{align*}
        \add_{\cA}\cM&=\{\cX\in \cC_{\cA}\mid \gExt_{\cA}^i(\cM,\cX)=0,\forall \, 0<i<d-1\}\\
        &=\{\cX\in \cC_{\cA}\mid \gExt_{\cA}^i(\cX,\cM)=0,\forall \, 0<i<d-1\}.
    \end{align*}
\end{theorem}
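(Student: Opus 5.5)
We would transport everything to $B$ through the equivalences of Proposition \ref{C_A and C_B}. By (1) there, $\gHom_A(M,-)\colon C_A\to C_B$ is an equivalence with quasi-inverse $\gHom_B(M',-)$, and it sends $M$ to $\gEnd_A(M)\cong B$ (Theorem \ref{Morita for nc quasi-proj space}(3)). Since it is an additive equivalence commuting with shifts, it identifies $\add_AM$ with $\add_BB$, the graded projective $B$-modules. On the other side, Proposition \ref{MCM B and Ext in qgr A} identifies $\{X\in C_A\mid \gExt_{\cA}^i(\cM,\cX)=0,\ 0<i<d-1\}$ with $\MCM(B)$; this applies because $B$ is AS-regular and noetherian, so it has a balanced dualizing complex and $\lcd B=d$. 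So the first equality reduces to $\MCM(B)=\add_BB$. To prove this, take $N\in\MCM(B)$. Then $\gExt^i_B(N,\Omega)=0$ for $i>0$ by Proposition \ref{Hom(-,Omega) induce duality on MCM}(1). Since $\Omega=U$ is invertible, this gives $\gExt^i_B(N,B)=0$ for $i>0$. Because $\gldim B=d<\infty$, $N$ has a finite projective resolution, and $\gExt^{\pdim N}(N,B)\neq0$ unless $\pdim N=0$. Hence $N$ is projective. Conversely, $B$ is MCM, since $B$ is AS-Gorenstein and hence balanced CM.

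For the second description, with $\gExt_{\cA}^i(\cX,\cM)$, we would use Serre duality. By Proposition \ref{when B is GAS-Gorenstein iso. sing.}, together with the fact that $B$ is a balanced CM isolated singularity (Corollary \ref{ASG+iso. is ASG iso.}; the isolatedness follows from finite global dimension), every object of $\qgr A$ has projective dimension $d-1$. Since $F=-\otimes_{\cB}\pi U$ is an auto-equivalence, we work in $\qgr B$ via the equivalence. Write $\cY=\cX\otimes_{\cA}\cM'$, which corresponds to $\cM\mapsto\cB$. Theorem \ref{Serre duality in D(qgr A)} gives
\[\gExt^i_{\cB}(\cY,\cB)\cong D\,\gExt^{d-1-i}_{\cB}(\cB,\cY\otimes_{\cB}\pi U)\cong D\,R^{d-i}\Gamma_B(\omega(\cY\otimes\pi U)).\]
The second isomorphism uses Lemma \ref{cohomology in tails}, valid for $1\le d-1-i$, i.e.\ $0<i<d-1$. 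So the vanishing for $0<i<d-1$ is equivalent to $R^j\Gamma_B(Y\otimes_BU)=0$ for $2\le j\le d-1$, where $Y=\omega\cY\in C_B$. Since $Y\otimes_BU$ has depth $\ge 2$ (as $U$ is invertible), this says $Y\otimes_BU$ is MCM, equivalently $Y$ is MCM. The last step again uses that $-\otimes_BU$ is an autoequivalence of $\gr B$ preserving local cohomology up to twist; we would check this via $\Gamma_B(Y\otimes U)\cong\Gamma_B(Y)\otimes U$ for the invertible bimodule $U$. Hence the set equals the preimage of $\MCM(B)=\add B$, i.e.\ $\add_AM$.

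The statements for $\add_{\cA}\cM$ follow by applying $\pi$. By Lemma \ref{omega pi and depth 2} and the equivalence $C_A\simeq\cC_{\cA}$, the object $\pi$ of a summand of $\oplus M(r_i)$ lies in $\add_{\cA}\cM$. Conversely, a summand of $\oplus\cM(r_i)$ in $\qgr A$ lifts through $\omega$ to a summand of $\oplus M(r_i)$, because $\omega$ is additive and $\omega\cM\cong M$. The Ext conditions are already phrased in $\qgr A$, so the two sets match exactly. The main obstacle we expect is the second equality: carefully matching degree ranges in Serre duality, and justifying that twisting by $U$ preserves the MCM property.
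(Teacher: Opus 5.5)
Your proposal is correct. For the first equality it follows the paper's argument: transport along the equivalence $\gHom_A(M,-)\colon C_A\simeq C_B$, then use Proposition \ref{MCM B and Ext in qgr A} to identify the Ext-vanishing set with the preimage of $\MCM(B)$, then use $\MCM(B)=\add_B B$. The only difference there is that the paper cites \cite[Corollary 8.13]{LW2} for $\MCM(B)=\add_B B$. You prove it directly: $\gExt^{>0}_B(N,U)=0$, invertibility of $U$, and non-vanishing of $\gExt^{p}_B(N,B)$ for $p=\pdim_B N$.

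For the second equality your route genuinely differs from the paper's.

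\emph{The paper's route.} It uses $\depth_B B=d$ and \cite[Lemma 5.5]{LSW} to replace $\gExt^i_{\cB}(\cY,\cB)$ by $\gExt^i_B(Y,B)$ for $0<i<d-1$. It then applies local duality over $B$,
\[
D(R^j\Gamma_B(Y))\cong\gExt^{d-j}_B(Y,U)\cong U\otimes_B\gExt^{d-j}_B(Y,B),
\]
and reads off $R^j\Gamma_B(Y)=0$ for $2\le j\le d-1$ directly. The reverse inclusion is handled separately, via $\gExt^i_{\cA}(\cM,\cM)=0$.

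\emph{Your route.} You stay in $\qgr B$ and apply Theorem \ref{Serre duality in D(qgr A)}. This is legitimate because $\gldim(\qgr B)<\infty$, and that finiteness is all you need. The remark that every object has projective dimension $d-1$ is superfluous. You then apply Lemma \ref{cohomology in tails}, which lands on the local cohomology of $Y\otimes_B U$ rather than of $Y$.

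\emph{What each buys.} Your route costs the extra check that $-\otimes_B U$ preserves torsion modules and hence commutes with $R\Gamma_B$. This holds because $U$ is invertible and finitely generated on both sides. In exchange, you avoid the low-degree comparison between $\gExt_B$ and $\gExt_{\cB}$. Your argument is also an ``if and only if'', so it gives both inclusions at once.

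One small step to make explicit: you need depth $\geqslant d$ to imply MCM. This uses that a nonzero finitely generated $B$-module has depth at most $\lcd B=d$.
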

\begin{proof}
    It suffices to prove the second part. Since $B$ is commonly graded AS-regular, by \cite[Corollary 8.13]{LW2}, $\MCM(B)=\proj B$ and $B$ is an MCM $B$-module. 
    By Theorem \ref{Morita for nc quasi-proj space}, $B\cong \gHom_A(M,M)$. 
    Then, the proof of Proposition \ref{MCM B and Ext in qgr A} implies that $\gExt_{\cA}^i(\cM,\cM)=0$ for $0<i<d-1$.

   Suppose that $\cX\in \cC_{\cA}$ and that $\gExt_{\cA}^i(\cM,\cX)=0$ for all $0<i<d-1$. Choose $X\in C_A$ such that $\cX=\pi X$, and set $Y=\gHom_A(M,X)$. 
    Then Proposition \ref{MCM B and Ext in qgr A} implies that $Y$ is an MCM $B$-module. Hence, $Y$ is projective as a $B$-module.
 
    By \cite[\uppercase\expandafter{\romannumeral6} Lemma 3.1]{SS}, $\gHom_A(M,-)$ induces an equivalence $\add_AM\to \proj B$. Hence, there exists a graded $A$-module $\tilde{X}\in\add_AM$ such that $\gHom_A(M,\tilde{X})\cong \gHom_A(M,X)$. 
    Note that $\add_AM\subseteq C_A$ and $\proj B\subseteq C_B$. 
    By Proposition \ref{C_A and C_B}, $\gHom_A(M,-)$ induces an equivalence from $C_A$ to $C_B$.  
    It follows that $\tilde{X}\cong X$. Hence, $X\in \add_AM$ and $\cX\in \add_{\cA}\cM$.
    Therefore,
    $$\add_{\cA}\cM=\{\cX\in \cC_{\cA}\mid \gExt_{\cA}^i(\cM,\cX)=0,\forall 0<i<d-1\}.$$

    Suppose that $\cX\in\cC_{\cA}$ and $\gExt_{\cA}^i(\cX,\cM)=0$ for all $0<i<d-1$. Choose $X\in C_A$ such that $\cX=\pi X$, and set $Y=\gHom_A(M,X)$. By Proposition \ref{C_A and C_B}, $Y\in C_B$ and hence
    $Y= \gHom_A(M,X)\cong \gHom_A(M,\omega\pi X)\cong\gHom_{\cA}(\cM,\cX)$.
    Theorem \ref{Morita for nc quasi-proj space} implies that, for $0<i<d-1$,
    $$\gExt_{\cB}^i(\cY,\cB)\cong\gExt_{\cB}^i(\pi_B\gHom_{\cA}(\cM,\cX),\pi_B\gHom_{\cA}(\cM,\cM))\cong \gExt_{\cA}^i(\cX,\cM)=0.$$
    Since $\depth_B B=d$, it follows from \cite[Lemma 5.5]{LSW} that for $0<i<d-1$,
    $\gExt_B^i(Y,B)\cong\gExt_{\cB}^i(\cY,\cB)=0$.
    Let $U=D(R^d\Gamma_B(B))$. Then $U$ is an invertible $(B,B)$-bimodule. By Theorem \ref{local duality},
    $$D(R^i\Gamma_B(Y))\cong \gExt_B^{d-i}(Y,U)\cong U\otimes_B\gExt_B^{d-i}(Y,B).$$
    Hence, $R^i\Gamma_B(Y)=0$ for $2\leqslant i\leqslant d-1$. 
    Moreover, $\depth_BY\geqslant 2$ implies that $\Gamma_B(Y)=R^1\Gamma_B(Y)=0$. 
    Then \cite[Lemma 8.1]{LW2} and Lemma \ref{depth and local cohomology} imply that $\depth_BY=d$; hence
     $Y$ is an MCM $B$-module. Therefore, $Y$ is a projective $B$-module.
    By an argument similar to that in the preceding paragraph, $X\in \add_AM$ and $\cX\in \cC_{\cA}$. Therefore,
    \[
    \add_{\cA}\cM=\{\cX\in \cC_{\cA}\mid \gExt_{\cA}^i(\cX,\cM)=0,\forall 0<i<d-1\}.
    \qedhere
    \]
\end{proof}

In dimension $2$, the noncommutative version of the Bondal-Orlov conjecture holds for noncommutative quasi-resolutions. A noetherian commonly graded algebra $A$ of finite cohomological dimension is called {\it Cohen-Macaulay-finite} if it has only finitely many indecomposable MCM $A$-modules, up to isomorphism and shifts.

\begin{theorem}\label{BO conjecture for NQR}
Let $A$ be a noetherian noncommutative projective coordinate ring. Suppose that $A$ has a noncommutative quasi-resolution $B$ of dimension $2$ given by $(M,M')$. Then $A$ is Cohen-Macaulay-finite, and all the noncommutative quasi-resolutions of $A$ are Morita equivalent.
\end{theorem}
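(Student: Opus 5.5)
The plan is to use Theorem \ref{d-1 CT and NQR} with $d=2$, where the Ext-vanishing conditions for $0<i<d-1=1$ are empty. Hence $\add_AM=C_A$: every finitely generated graded $A$-module of depth $\geqslant 2$ is a direct summand of a finite sum of shifts of $M$.

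First, every MCM $A$-module lies in $C_A$. By Proposition \ref{when B is GAS-Gorenstein iso. sing.}(3) (applicable since $B$, being commonly graded AS-regular of dimension $2$, is a balanced CM isolated singularity by Corollary \ref{ASG+iso. is ASG iso.}), we have $\lcd A=2$. Thus an MCM $A$-module has depth $2$ and belongs to $C_A=\add_AM$. Since $M$ is finitely generated and the Krull--Schmidt property holds for finitely generated graded modules over a locally finite bounded-below algebra (graded endomorphism rings in degree $0$ are finite-dimensional), $M$ has only finitely many indecomposable summands up to isomorphism and shift. Every indecomposable MCM module is therefore, up to shift, one of these summands, so $A$ is Cohen--Macaulay-finite. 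If one wants equality, note conversely that every object of $C_A$ has depth $\geqslant 2=\lcd A$, and depth is at most $\lcd A$ for nonzero finitely generated modules; but only the inclusion is needed.

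Second, let $B_1$ and $B_2$ be noncommutative quasi-resolutions of $A$, given by $(M_1,M_1')$ and $(M_2,M_2')$. Each $B_j$ is AS-regular with $(\qgr B_j,s)\cong(\qgr A,s)$. By Proposition \ref{dimension between qgrs}, $\gldim(\qgr B_j)=\gldim(\qgr A)$. Since $\gldim(\qgr B_j)=\gldim B_j-1$ by Theorem \ref{canonical bimodule and isolated singularity}, both $B_j$ have dimension $2$. Applying the first step to each gives $\add_AM_1=C_A=\add_AM_2$. By Theorem \ref{Morita for nc quasi-proj space}(3), $B_j\cong\gEnd_A(M_j)$, and $\gHom_A(M_1,-)$ gives an equivalence $\add_AM_1\simeq\proj B_1$ (\cite[VI Lemma 3.1]{SS}). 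Therefore $\proj B_1\simeq\add_AM_1=\add_AM_2\simeq\proj B_2$, compatibly with shifts. Concretely, $P=\gHom_A(M_1,M_2)$ is a graded projective generator of $B_1$ with $\gEnd_{B_1}(P)\cong\gEnd_A(M_2)\cong B_2$, so $B_1$ and $B_2$ are graded Morita equivalent.

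The main obstacle is the Krull--Schmidt and dimension bookkeeping: identifying MCM modules with $C_A$, which uses $\lcd A=2$, and confirming that each quasi-resolution has global dimension $2$ so that Theorem \ref{d-1 CT and NQR} applies with empty vanishing conditions. The Morita equivalence itself is then formal.
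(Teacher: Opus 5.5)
Your proposal is correct and follows the paper's own argument: $\lcd A=2$ by Proposition \ref{when B is GAS-Gorenstein iso. sing.}, Theorem \ref{d-1 CT and NQR} with $d=2$ (where the vanishing conditions are empty) gives $\MCM(A)\subseteq C_A=\add_AM$, and any two quasi-resolutions are graded endomorphism rings of modules with the same additive closure $C_A$. You also make explicit several points the paper leaves implicit: Krull--Schmidt, the fact that only the inclusion $\MCM(A)\subseteq C_A$ is needed, the progenerator $\gHom_A(M_1,M_2)$, and that every quasi-resolution has dimension $2$. For the last point, Theorem \ref{canonical bimodule and isolated singularity} requires $\gldim B_j\geqslant 2$, so it is cleaner to argue $\gldim B_j=\lcd B_j=\pdim_{\cA}\cM_j+1=2$ using Proposition \ref{dimension between qgrs} and Proposition \ref{when B is GAS-Gorenstein iso. sing.}(2).
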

\begin{proof}
    By Proposition \ref{when B is GAS-Gorenstein iso. sing.}, $\lcd A=2$. Hence, $C_A=\MCM (A)$. It follows from Theorem \ref{d-1 CT and NQR} that $\add_A M=C_A=\MCM (A)$. Therefore, $A$ is Cohen-Macaulay-finite.

    If $B'$ is another noncommutative quasi-resolution of $A$ given by $(X,X')$, then $\add_A X=\MCM (A)=\add_A M$. It follows that $B'\cong \gEnd_A(X)$ is graded Morita equivalent to $B\cong\gEnd_A(M)$. 
\end{proof}



\section{Noncommutative resolutions of noncommutative isolated singularities}\label{noncommutative resolutions of isolated singularities}
In this section, we study noncommutative resolutions of balanced CM isolated singularities and characterize them by cluster tilting modules. We also address Question \ref{question} by determining when the resolved algebra is commonly graded AS-Gorenstein.

As noted in the Introduction, it is natural to assume that Hypothesis \ref{hypothesis} holds in order to address Question \ref{question}.
To apply the Morita-like theory of noncommutative quasi-projective spaces, we must show that, under Hypothesis \ref{hypothesis}, $A$ is a noncommutative projective coordinate ring. We establish this result in a more general setting.




\begin{lemma}\label{M' proj imply A is coor. ring}
    Let $B$ be a noncommutative projective coordinate ring, let $M'_B$ be a finitely generated projective $B$-module, and set $A=\gEnd_B(M')$ and $M=\gHom_B(M',B)$. If the cokernel of the natural morphism $M\otimes_A M'\to B$ is finite-dimensional, then $A$ is a noncommutative projective coordinate ring. Moreover, if $B$ is noetherian, then $A$ is noetherian and $M'$ is a modulo-torsion-invertible $(A,B)$-bimodule.
\end{lemma}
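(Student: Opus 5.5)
The plan is to realise $A$ as the coordinate ring of a suitable algebraic triple and apply Theorem \ref{noncommutative Serre theorem for commonly graded algebra}(1). Consider $(\qgr B,\cM',s)$ with $\cM'=\pi_B M'$. If H1--H4 hold for this triple, then $B(\qgr B,\cM',s)$ is a right noetherian commonly graded algebra satisfying $\chi_1$ with depth at least $2$, that is, a noncommutative projective coordinate ring. So the work is in verifying H1--H4 and then identifying $B(\qgr B,\cM',s)$ with $A$.

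\emph{The identification.} Since $M'_B$ is finitely generated projective, it lies in $\add_B B$. We have $\depth_B B\geqslant 2$ because $B$ is a coordinate ring, so $\depth_B M'\geqslant 2$. Lemma \ref{omega pi and depth 2} then gives $\omega\pi M'\cong M'$. By adjunction,
\[
B(\qgr B,\cM',s)=\gHom_{\cB}(\cM',\cM')\cong\gHom_B(M',\omega\pi M')\cong\gEnd_B(M')=A
\]
as graded algebras.

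\emph{H1, H2 and H4.} These are routine.
\begin{itemize}
\item[H1:] $\cM'$ is a quotient of a finite sum of shifts of $\cB$, and $\cB$ is noetherian in $\qgr B$.
\item[H2:] $\Hom_{\cB}(\cM',\cX)$ is a direct summand of a finite sum of spaces $\Hom_{\cB}(s^{n}\cB,\cX)$. Each of these is finite-dimensional by Theorem \ref{noncommutative Serre theorem for commonly graded algebra}(2) applied to $B$.
\item[H4:] $A=\gEnd_B(M')$ with $M'$ finitely generated over a commonly graded algebra, so $A$ is locally finite and bounded below.
\end{itemize}
Ampleness condition (2) for $(\cM',s)$ also follows from that for $(\cB,s)$. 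Indeed, $\Hom_{\cB}(s^{-n}\cM',-)$ is a direct summand of finitely many functors $\Hom_{\cB}(s^{-n-m_j}\cB,-)$ with fixed shifts $m_j$. Surjectivity therefore holds for $n\gg0$.

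\emph{Ampleness condition (1).} This is the main obstacle, and it is exactly where the hypothesis on $\Coker\mu$ enters. The image of $\mu\colon M\otimes_A M'\to B$, $f\otimes m\mapsto f(m)$, is the trace ideal of $M'$ in $B$. This is a right ideal, hence finitely generated since $B$ is right noetherian. Therefore there are finitely many homogeneous $f_j\in M$ such that $\oplus_j M'(n_j)\to B$, given by the $f_j$, has image equal to the trace ideal. Its cokernel is $\Coker\mu$, which is finite-dimensional, so after applying $\pi$ we get an epimorphism $\oplus_j s^{n_j}\cM'\to\cB$. Shifting gives an epimorphism $\oplus_j s^{n_j-r}\cM'\to s^{-r}\cB$ for every $r$.

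It remains to make all the exponents negative. Given $\cX=\pi X$ with $X\in\gr B$, replace $X$ by $X_{\geqslant N}$ for $N\gg0$. This is finitely generated because $B$ is right noetherian, and $X/X_{\geqslant N}$ is finite-dimensional because $X$ is locally finite and bounded below, so $\pi X_{\geqslant N}=\cX$. Now $X_{\geqslant N}$ is a quotient of a finite sum of $B(-r)$ with $r\geqslant N$. Composing with the epimorphisms above, $\cX$ becomes a quotient of a finite sum of $s^{n_j-r}\cM'$. Taking $N>\max_j n_j$ makes every exponent $n_j-r$ negative, as required. This completes H3, and Theorem \ref{noncommutative Serre theorem for commonly graded algebra}(1) shows that $A$ is a noncommutative projective coordinate ring.

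\emph{The noetherian case.} Suppose now that $B$ is noetherian. Write $M'$ as a direct summand of $P=\oplus_i B(n_i)$. Then $A=e\,\gEnd_B(P)\,e$ for an idempotent $e$, and $\gEnd_B(P)$ is a graded matrix algebra over $B$, hence noetherian on both sides. A corner ring of a noetherian ring is noetherian, so $A$ is noetherian. I expect the next step to need the most care. For $M'$ to be modulo-torsion-invertible in the sense of Definition \ref{definition of modulo-torsion-invertible}, both $M'$ and $M$ must be finitely generated on both sides.
\begin{itemize}
\item $M'_B$ is finitely generated by hypothesis.
\item ${}_BM=\gHom_B(M',B)$ is finitely generated because it is a summand of $\gHom_B(P,B)$.
\item For ${}_AM'$ and $M_A$, I would use that $\tau\colon M'\otimes_B M\to A$ is surjective, since $M'$ is projective. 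So $1_A=\sum_k m'_k f_k$, and hence $M'=\sum_k m'_k\, f_k(M')$ and $M=\sum_k f_k\, m'_k A$. Thus ${}_AM'$ is generated by the $m'_k$ together with $A$ acting on the finitely generated $B$-submodule, and similarly for $M_A$. I would make this precise by writing $M'\cong\tau$-images of finitely generated pieces, which forces finite generation on both sides.
\end{itemize}
With $\Coker\tau=0$ and $\Coker\mu$ finite-dimensional, Definition \ref{definition of modulo-torsion-invertible} then applies directly.
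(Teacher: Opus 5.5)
\textbf{Part 1 (coordinate ring).} Your proof that $A$ is a noncommutative projective coordinate ring follows the paper. You identify $A\cong\gEnd_{\cB}(\cM')$ using $\depth_BM'\geqslant2$ and check the hypotheses of Theorem \ref{noncommutative Serre theorem for commonly graded algebra}(1) for $(\qgr B,\cM',s)$. The substantive point is ampleness condition (1), which you obtain from finitely many generators of $\image\mu$ together with the finite-dimensionality of $\Coker\mu$. To make the exponents negative you truncate $X$ to $X_{\geqslant N}$, whereas the paper applies ampleness of $(\cB,s)$ to $\cY(-m)$; either works. Your explicit checks of H1, H2 and H4, and the corner-ring argument for noetherianity of $A$, are also fine.

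\textbf{Part 2 (the gap).} The gap is the finite generation of ${}_AM'$ and $M_A$. The identities you write never involve the $A$-actions. Since $f_k(M')\subseteq B$, the equality $M'=\sum_k m'_kf_k(M')$ only says $M'=\sum_k m'_kB$. Likewise, expanding $f=f\circ 1_A$ gives $f=\sum_k f(m'_k)f_k$, that is, ${}_BM=\sum_k Bf_k$. Both merely recover finite generation over $B$, which you already had.

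Surjectivity of $\tau$ cannot close the gap either. It holds for every finitely generated projective $M'_B$ by the dual basis lemma, yet it does not give finite generation over $A$. For example, take the right noetherian commonly graded algebra $B=\begin{pmatrix} k & k[x]\\ 0 & k[x]\end{pmatrix}$ and $M'=e_{11}B$. Then $A=k$, while $M'\cong k\oplus k[x]$ is not finitely generated over $A$.

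\textbf{How to fix it.} What is needed is that $B$ is noetherian on \emph{both} sides, which the ``moreover'' clause provides.
\begin{enumerate}
\item Write $M'=eF$ for an idempotent $e$ of $C=\gEnd_B(F)$, where $F=\oplus_{i=1}^n B(n_i)$. Then $C$ is a graded matrix ring over $B$, hence noetherian, and $A\cong eCe$.
\item Let $\epsilon_1$ be the projection onto $B(n_1)$. Then ${}_CF\cong C\epsilon_1(n_1)$, so ${}_AM'\cong eC\epsilon_1(n_1)$ is a direct summand of $eC(n_1)$.
\item For any left $eCe$-submodule $L$ of $eC$ one has $e(CL)=eCeL=L$. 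Hence $L\mapsto CL$ is an injective, order-preserving map into the left ideals of $C$, so ${}_{eCe}eC$ is noetherian and ${}_AM'$ is finitely generated.
\item Dually, $M\cong\epsilon_1Ce(-n_1)$ is a summand of a shift of $Ce$. This is a noetherian right $eCe$-module because $C$ is right noetherian, so $M_A$ is finitely generated.
\end{enumerate}
The paper only says ``by definition'' at this point. This is exactly the step you have to supply, and your sketch does not supply it.
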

\begin{proof}
Since $\depth_BM'\geqslant 2$, $A=\gEnd_B(M')\cong \gEnd_{\cB}(\cM')$. By Definition \ref{def-noncom-proj-scheme}, it suffices to show $(\cM',s)$ is ample in $\qgr B$.

Let $\mu:M\otimes_AM'\to B$ be the natural morphism. Then $B/\image \mu$ is finite-dimensional, which implies that $\pi(\image\mu) \cong \cB$ in $\qgr B$.    

Suppose that $\image\mu$ is generated by $y_1,\cdots,y_s$ as a graded right $B$-module.
For each $y_i$, let $y_i=f_{i1}(n_{i1})+\cdots+f_{ir_i}(n_{ir_i})$ where $n_{ij}\in M'$ and $f_{ij}\in M$. Define
$$f=(f_{ij}):\mathop{\oplus}\limits_{i=1}^s\mathop{\oplus}\limits_{j=1}^{r_i}M'(-\deg f_{ij})\to \image\mu.$$
Then $f$ is surjective. For convenience, let $\tilde{M'}=\mathop{\oplus}\limits_{i=1}^s\mathop{\oplus}\limits_{j=1}^{r_i}M'(-\deg f_{ij})$. 
It follows that the composition $\pi\tilde{M'}\to \pi(\image\mu)\to \cB$ is epic. 

Let $m=\min\{\deg f_{ij}\}$. For any $\cY\in \qgr B$, since $(\cB,s)$ is ample in $\qgr B$, there are positive integers $n_1,\cdots, n_t$ and an epimorphism
$\oplus_k\cB(-n_k)\to \cY(-m)$.
So the following composition of morphisms is still epic 
$$\oplus_k\pi\tilde{M'}(-n_k+m)\to \oplus_k\cB(-n_k+m)\to \cY.$$
Note that for any $1\leqslant i\leqslant s$, $1\leqslant j\leqslant r_i$ and $1\leqslant k\leqslant t$, $a_{ijk}:=\deg f_{ij}+n_k-m>0$. 
Then there is
an epimorphism
$$\mathop{\oplus}\limits_{ijk}\cM'(-a_{ijk})\to \cY.$$

For any epimorphism $\cY_1\to \cY_2$, there is an integer $n$ such that
$$\gHom_{\cB}(\cB,\cY_1)_{\geqslant n}\to \gHom_{\cB}(\cB,\cY_2)_{\geqslant n}$$
is surjective.
Since $M'$ is a finitely generated projective $B$-module, $\cM'\in \add_{\cB}\cB$. It follows that
$$\gHom_{\cB}(\cM',\cY_1)_{\geqslant n}\to \gHom_{\cB}(\cM',\cY_2)_{\geqslant n}$$
is surjective.
In conclusion, $(\cM',s)$ is ample in $\qgr B$.

If $B$ is noetherian, then the endomorphism ring of a finitely generated projective $B$-module is noetherian by \cite[Lemma 3.5.6]{MR}. Hence, $A$ is noetherian. By definition, $M'$ is a modulo-torsion-invertible $(A,B)$-bimodule.
\end{proof}

In fact, under Hypothesis \ref{hypothesis}, $A$ is a balanced CM isolated singularity.

\begin{proposition}\label{H1-H3 imply A balanced CM}
    Assume that Hypothesis \ref{hypothesis} holds. Then $A$ is a balanced CM isolated singularity of dimension $d$, and $M_A$ is an MCM generator.
\end{proposition}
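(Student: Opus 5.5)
By Lemma \ref{M' proj imply A is coor. ring}, $A$ is a noetherian noncommutative projective coordinate ring and $M'$ is a modulo-torsion-invertible $(A,B)$-bimodule with inverse $M$. Moreover $\depth_BM'\geqslant 2$, since $M'$ is projective and $B$ is MCM. The plan is to check, in turn, the hypotheses of Proposition \ref{when B is GAS-Gorenstein iso. sing.}, Corollary \ref{balanced CM from self extensions} and Theorem \ref{d-1 CT and NQR}, taking $B$ as the noncommutative quasi-resolution of $A$ given by $(M,M')$.

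\textbf{Step 1: $B$ is a balanced CM isolated singularity.} A noetherian commonly graded AS-regular algebra of dimension $d$ is AS-Gorenstein. Hence it is balanced CM with invertible canonical module $U$ by Proposition \ref{AS-G has balanced dualizing complex and chi}. Since $\gldim B=d$, the category $\qgr B$ has finite global dimension. Then Theorem \ref{canonical bimodule and isolated singularity}, applied with (3) holding because $U$ is invertible, gives $\gldim(\qgr B)=\gldim(\qgr B^o)=d-1$.

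\textbf{Step 2: $A$ is balanced CM of dimension $d$ with $\gldim(\qgr A)=\gldim(\qgr A^o)=d-1$.} Since $M'_B$ is projective, it is MCM. Proposition \ref{when B is GAS-Gorenstein iso. sing.} then shows that $A$ admits a balanced dualizing complex, that $\lcd A=\lcd A^o=d$, and that both quotient categories have global dimension $d-1$. By Corollary \ref{balanced CM from self extensions}, balanced CM reduces to $\gExt^i_B(M',M')=0$ for $0<i<d-1$, which is automatic since $M'$ is projective.

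\textbf{Step 3: $M_A$ is an MCM generator.} Write $M'=eP$ with $P$ free, or argue via $\add$. We have $M=\gHom_B(M',B)$ and $M'\in\add_BB$, so $B\in\add_B M'$ up to the finite-dimensional cokernel of $\mu$. The generator property can be obtained more directly. We have $\gHom_B(M',M')=A$, and $M'$ is a summand of some $\oplus B(n_i)$. Hence $A=\gHom_B(M',M')$ is a direct summand of $\oplus\gHom_B(M',B)(n_i)=\oplus M(n_i)$ as right $A$-modules. So $A\in\add_AM$, and $M$ is a generator.

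For MCM, Proposition \ref{MCM B and Ext in qgr A} identifies $\MCM$ via $\gHom$. Rather than rely on it, I would use Theorem \ref{d-1 CT and NQR} in the form $\gExt^i_{\cA}(\cM,\cM)\cong\gExt^i_{\cB}(\cB,\cB)=0$ for $0<i<d-1$. Combined with $\depth_AM\geqslant 2$ (Theorem \ref{Morita for nc quasi-proj space}), this gives $R^{i+1}\Gamma_A(M)\cong\gExt^i_{\cA}(\cA,\cM)$, which vanishes for $0<i<d-1$ because $A\in\add_AM$. Thus $\depth_AM\geqslant d=\lcd A$, and $M$ is MCM.

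The main obstacle is Step 2. It relies on the full strength of Proposition \ref{when B is GAS-Gorenstein iso. sing.}(5) to get a balanced dualizing complex for $A$, after which Corollary \ref{balanced CM from self extensions} applies. The care needed is in confirming that the corollary's criterion really yields balancedness with the stalk complex concentrated in degree $-d$ on both sides.
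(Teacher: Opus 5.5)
Your argument is correct. For the first assertion it is the paper's argument: $B$ is a balanced CM isolated singularity, $M'_B$ is projective and hence MCM with $\gExt^i_B(M',M')=0$, and Corollary \ref{balanced CM from self extensions} then gives that $A$ is a balanced CM isolated singularity of dimension $d$. Your generator argument, that $A=\gHom_B(M',M')$ is a summand of $\bigoplus M(n_i)$, is the standard Morita fact the paper also invokes.

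The genuine difference is how you show that $M_A$ is MCM. The paper notes that $M$ is a finitely generated projective \emph{left} $B$-module, so $\depth_{B^o}M=d$. It then transfers this to the right side via Van den Bergh's comparison $R\Gamma_A(M)\cong R\Gamma_{B^o}(M)$ \cite[Corollary 4.8]{V3}, applied using $\chi$ for $A$ and $B^o$.

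You stay on the right side and work in $\qgr A$. Since $\cM$ corresponds to $\cB$ under the equivalence, $\gExt^i_{\cA}(\cM,\cM)\cong\gExt^i_{\cB}(\cB,\cB)\cong R^{i+1}\Gamma_B(B)=0$ for $0<i<d-1$. Then $A\in\add_AM$ kills $R^{i+1}\Gamma_A(M)\cong\gExt^i_{\cA}(\cA,\cM)$. This uses only the paper's quotient-category machinery and needs no two-sided local cohomology comparison. The price is that you must establish the generator property first. You could avoid even that by writing $\gExt^i_{\cA}(\cA,\cM)\cong\gExt^i_{\cB}(\cM',\cB)$ and using $\cM'\in\add_{\cB}\cB$.

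Two small repairs are needed.

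First, $\depth_AM\geqslant d=\lcd A$ gives MCM only once you know the depth is finite. This follows in one line: $R^d\Gamma_A(A)\neq0$ by Lemma \ref{local cohomological dimension and qgr}(2), and it is a summand of $\bigoplus_j R^d\Gamma_A(M)(n_j)$.

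Second, your passing remark that $B\in\add_BM'$ ``up to the cokernel of $\mu$'' is not correct in general. Transported to $\qgr A$, it would give $\cM\in\add_{\cA}\cA$, and applying $\omega$ would make $M_A$ projective. You never use this remark, so it does not affect the proof.
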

\begin{proof}
By Lemma \ref{M' proj imply A is coor. ring}, $A$ is a noetherian noncommutative projective coordinate ring, and $M'$ is a modulo-torsion-invertible $(A,B)$-bimodule. Since $B$ is AS-regular of dimension $d$, it is a balanced CM isolated singularity. The projective $B$-module $M'$ is MCM and satisfies
\[
\gExt_B^i(M',M')=0\quad(0<i<d-1).
\]
Corollary \ref{balanced CM from self extensions} therefore implies that $A$ is a balanced CM isolated singularity of dimension $d$.

Since $M'_B$ is finitely generated projective, $M=\gHom_B(M',B)$ is a finitely generated projective left $B$-module and a generator in $\Gr A$. Therefore, $\depth_{B^o}M=d$. As both $A$ and $B^o$ satisfy $\chi$, \cite[Corollary 4.8]{V3} and Lemma \ref{depth and local cohomology} imply that $\depth_AM=d$. Hence, $M_A$ is an MCM generator.
\end{proof}

Proposition \ref{H1-H3 imply A balanced CM} shows that, under Hypothesis \ref{hypothesis}, $B$ can be regarded as a noncommutative resolution of the balanced CM isolated singularity $A$. We therefore introduce the following more general definition.

\begin{definition}\label{def-noncom-reso}
    Let $A$ be a noetherian noncommutative projective coordinate ring. Suppose that $B$ is a noncommutative quasi-resolution of $A$ given by $(M,M')$, with $M_A$ a generator, or equivalently, with $M'_B$ projective. Then $B$ is called a {\it noncommutative resolution of $A$}.
\end{definition}

Note that $M'_B$ is MCM if and only if it is projective, by \cite[Corollary 8.13]{LW2}. 

Definition \ref{def-nc-reso-AS-G} and Definition \ref{def-noncom-reso} coincide when $A$ is a commonly graded AS-Gorenstein algebra.

\begin{proposition}\label{M is MCM iff M is a generator}
Let $A$ be a noetherian noncommutative projective coordinate ring, and let $B$ be a noncommutative quasi-resolution of $A$ given by $(M,M')$, with $\gldim B=d\geqslant2$. Then the following conditions are equivalent:
\begin{itemize}
    \item [(1)] $M_A$ is MCM;
    \item [(2)] $M_A$ is a generator;
    \item [(3)] $M'_B$ is projective;
    \item [(4)] $B$ is a noncommutative resolution of $A$ given by $(M,M')$.
\end{itemize}
If these conditions hold, $A$ is a balanced CM isolated singularity of dimension $d$.
\end{proposition}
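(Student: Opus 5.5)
The plan is to prove (2)$\Leftrightarrow$(3)$\Leftrightarrow$(4) first, then (3)$\Rightarrow$(1) together with the final assertion, and finally (1)$\Rightarrow$(3). By Theorem \ref{Morita for nc quasi-proj space}, $B\cong\gEnd_A(M)$, $M\cong\gHom_B(M',B)$ and $M'\cong\gHom_A(M,A)$.

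For the first group, (3)$\Leftrightarrow$(4) is Definition \ref{def-noncom-reso}. For (2)$\Rightarrow$(3), note that if $A\in\add_AM$, then $M'\cong\gHom_A(M,A)$ lies in $\add_B\gHom_A(M,M)=\add_B B$, so $M'_B$ is projective. For (3)$\Rightarrow$(2), if $M'_B$ is projective, then $M'\in\add_B B$. Applying the equivalence $\gHom_B(M',-)\colon C_B\to C_A$ of Proposition \ref{C_A and C_B}, which sends $M'$ to $A$ and $B$ to $\gHom_B(M',B)\cong M$, gives $A\in\add_AM$. Here we use that $\proj B\subseteq C_B$, since $\depth_BB=d\geqslant2$, and that additive equivalences preserve $\add$.

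Next, (3)$\Rightarrow$(1) and the last assertion. If $M'_B$ is projective, then Hypothesis \ref{hypothesis} holds. Indeed, $B$ is AS-regular of dimension $d$ by \cite[Corollary 8.13]{LW2}, $A\cong\gEnd_B(M')$ and $M\cong\gHom_B(M',B)$. Moreover, $\Coker\mu$ is finite-dimensional, because the Morita context of Theorem \ref{Morita for nc quasi-proj space}(6) is isomorphic to the one defined by $M'_B$ and induces the equivalence, so Theorem \ref{equ. of quot. cat. induced by Morita context for commonly graded} applies. Proposition \ref{H1-H3 imply A balanced CM} then gives that $A$ is a balanced CM isolated singularity of dimension $d$ and that $M_A$ is an MCM generator.

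The main obstacle is (1)$\Rightarrow$(3). Assume $M_A$ is MCM. Proposition \ref{when B is GAS-Gorenstein iso. sing.}, applied with $B$ an AS-regular (hence balanced CM isolated) singularity, gives $\lcd A=d$ but does not yet give that $A$ is balanced CM. I would therefore argue directly on $B$: show $\depth_BB=d$ trivially, and use Proposition \ref{MCM B and Ext in qgr A} in the symmetric direction with $A$ and $B$ swapped.

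More concretely, I would show $M'_B\cong\gHom_A(M,A)$ is MCM by computing $R^{i+1}\Gamma_B(M')\cong\gExt^i_{\cB}(\cB,\cM')\cong\gExt^i_{\cA}(\cM,\cA)$ for $0<i<d-1$ via the equivalence. The vanishing of $\gExt^i_{\cA}(\cM,\cA)$ should come from $M$ being MCM: Lemma \ref{cohomology in tails} gives $\gExt^i_{\cA}(\cA,\cM)=R^{i+1}\Gamma_A(M)=0$ for $0<i<d-1$. Since $\gldim(\qgr A)=d-1$, Serre duality (Theorem \ref{Serre duality in D(qgr A)}, with Serre functor twisted by $\pi R$) converts this into $\gExt^{d-1-i}_{\cA}(\cM,F\cA)$, which is an Ext with $\cA$ after twisting.

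To control the twist, I would exploit Theorem \ref{Morita for nc quasi-proj space} on the left side: $M$ MCM on the right corresponds under $-\otimes_{\cA}\cM'$ to $\cB$. I would then use that every nonzero object of $\qgr B$ has projective and injective dimension $d-1$ and that $\pi U$ is invertible to transport the vanishing from $\gExt^j_{\cB}(\cB,\cB)=0$, for $0<j<d-1$. Once $M'_B$ is MCM, \cite[Corollary 8.13]{LW2} gives that it is projective. If the Serre-duality twist proves awkward, the fallback is to first establish that $A$ has a balanced dualizing complex, then use Proposition \ref{Hom(-,Omega) induce duality on MCM} to dualize $M$ and conclude.
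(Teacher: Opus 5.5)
\textbf{Gap in (1)$\Rightarrow$(3).} Your proofs of (2)$\Leftrightarrow$(3)$\Leftrightarrow$(4), and of (3)$\Rightarrow$(1) with the final assertion (checking Hypothesis~\ref{hypothesis} and invoking Proposition~\ref{H1-H3 imply A balanced CM}), are correct and match the paper, which calls (2)$\Leftrightarrow$(3) standard Morita theory. The gap is in (1)$\Rightarrow$(3), the one implication with real content. Your reduction to $\gExt^i_{\cA}(\cM,\cA)\cong\gExt^i_{\cB}(\cB,\cM')=0$ for $0<i<d-1$ is fine, but the route you sketch to this vanishing does not work.
\begin{itemize}
\item Theorem~\ref{Serre duality in D(qgr A)}, applied to $A$ with $\pi R$, needs a balanced dualizing complex of $A$. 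None is available yet: Proposition~\ref{when B is GAS-Gorenstein iso. sing.}(5) produces one only once $M'_B$ is known to be MCM. For the same reason your fallback (first get a dualizing complex for $A$, then dualize via Proposition~\ref{Hom(-,Omega) induce duality on MCM}) is circular.
\item The input you propose for controlling the twist, $\gExt^j_{\cB}(\cB,\cB)=0$, is just the image of $\gExt^j_{\cA}(\cM,\cM)=0$. It holds for every quasi-resolution and never uses the MCM hypothesis on $M$. The only way to pass from it to $\gExt^j_{\cB}(\cB,\cM')=0$ is to know $\cM'\in\add\cB$, which is the conclusion.
\end{itemize}
So as written, hypothesis (1) never actually enters your argument.

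\textbf{The missing step.} Push the hypothesis itself across the equivalence. Under $-\otimes_{\cA}\cM'$ one has $\cA\mapsto\cM'$ and $\cM\mapsto\cB$. Hence $\gExt^i_{\cA}(\cA,\cM)\cong R^{i+1}\Gamma_A(M)=0$ for $0<i<d-1$ becomes $\gExt^i_{\cB}(\cM',\cB)=0$, an Ext \emph{into} $\cB$. The duality must then be done over $B$, where the twist is by the invertible bimodule $U$ and is harmless. Concretely:
\begin{itemize}
\item \cite[Lemma 5.5]{LSW}, using $\depth_BB=d$, gives $\gExt^i_B(M',B)=0$ for $0<i<d-1$.
\item Local duality then gives $D(R^i\Gamma_B(M'))\cong\gExt^{d-i}_B(M',U)\cong U\otimes_B\gExt^{d-i}_B(M',B)=0$ for $2\leqslant i\leqslant d-1$.
\item Since $\depth_BM'\geqslant2$, $M'_B$ is MCM, hence projective by \cite[Corollary 8.13]{LW2}.
\end{itemize}
Your Serre-duality idea can also be made to work if run in $\D^b(\qgr B)$ rather than $\D^b(\qgr A)$. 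There $\gExt^i_{\cB}(\cM',\cB)\cong D\gExt^{d-1-i}_{\cB}(\cB,\pi(M'\otimes_BU))$. Since $-\otimes_BU$ is an autoequivalence of $\Gr B$ preserving depth, $M'\otimes_BU$ is MCM, hence projective, and so is $M'\cong(M'\otimes_BU)\otimes_BU^{-1}$. This is exactly the argument inside the proof of the second description of $\add_AM$ in Theorem~\ref{d-1 CT and NQR}. The paper's proof of (1)$\Rightarrow$(2) is just that description applied to $X=A\in C_A$.
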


\begin{proof}
By Theorem \ref{Morita for nc quasi-proj space}, one obtains $B\cong\gEnd_A(M)$, $A\cong\gEnd_B(M')$, $M'\cong\gHom_A(M,A)$, and $M\cong\gHom_B(M',B)$. 
It then follows from Proposition \ref{when B is GAS-Gorenstein iso. sing.} that $\lcd A=d$.

(1) $\Rightarrow$ (2). For $0<i<d-1$, by Lemma \ref{cohomology in tails},
\[
\gExt_{\cA}^i(\cA,\cM)\cong R^{i+1}\Gamma_A(M)=0.
\]
Hence Theorem \ref{d-1 CT and NQR} yields $A\in\add_A M$, which implies $M_A$ is a generator.

(2) $\Leftrightarrow$ (3). It is a standard result in Morita theory.

(3) $\Leftrightarrow$ (4). It follows from Definition \ref{def-noncom-reso}.

(3) $\Rightarrow$ (1). By Theorem \ref{equ. of quot. cat. induced by Morita context for commonly graded}, the cokernel $\Coker(M\otimes_AM'\to B)$ is finite-dimensional.
Hence, the Morita context defined by $M'$ satisfies Hypothesis \ref{hypothesis}. Proposition \ref{H1-H3 imply A balanced CM} then establishes (1) and the final assertion.
\end{proof}

It follows immediately from the proof of Proposition \ref{M is MCM iff M is a generator} and \cite[Theorem 6.5]{LSW} that, if $A$ is a commonly graded AS-Gorenstein algebra, then $B$ is a noncommutative resolution of $A$ in the sense of Definition \ref{def-noncom-reso} if and only if it is a noncommutative resolution in the sense of Definition \ref{def-nc-reso-AS-G}.

\begin{theorem}\label{hypo. is equi. to B is ncr of A}
    Hypothesis \ref{hypothesis} holds for $B$ if and only if $B$ is a noncommutative resolution of the noetherian noncommutative projective coordinate ring $A$ given by $(M,M')$. In this case, $A$ is a balanced CM isolated singularity.
\end{theorem}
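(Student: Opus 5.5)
We argue the two implications separately, using Lemma \ref{M' proj imply A is coor. ring}, Proposition \ref{H1-H3 imply A balanced CM} and the Morita-type description in Theorem \ref{Morita for nc quasi-proj space}.

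\emph{Hypothesis $\Rightarrow$ resolution.} Assume (H1)--(H3). By (H1), $B$ is a noetherian commonly graded AS-regular algebra of dimension $d\geqslant 2$. Hence $B$ is balanced CM with $\depth_B B=d\geqslant 2$ and satisfies $\chi$. So $B$ is a noetherian noncommutative projective coordinate ring.

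Lemma \ref{M' proj imply A is coor. ring} then shows that $A=\gEnd_B(M')$ is a noetherian noncommutative projective coordinate ring and that $M'$ is a modulo-torsion-invertible $(A,B)$-bimodule. By Theorem \ref{equ. of quot. cat. induced by Morita context for commonly graded} applied to the Morita context defined by $M'_B$, we get an equivalence
\[
-\otimes_{\cA}\cM':(\qgr A,s)\rightleftarrows(\qgr B,s):-\otimes_{\cB}\cM .
\]
For this we need both cokernels finite-dimensional: $\Coker\mu$ is finite-dimensional by (H3), and $\tau:M'\otimes_BM\to A$ is surjective because $M'_B$ is projective. The one-sided finite presentation hypotheses hold since $M={}_BM$ is finitely generated projective over noetherian $B$, and ${}_AM'$ is finitely generated by \cite[Lemma 3.5.6]{MR}-type arguments (or by the noetherianity of $A$ together with the finite generation of $M'$ over $\gEnd$).

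Thus $B$ is a noncommutative quasi-resolution of $A$. The Morita context is the one given by $(M,M')$, and $\depth_B M'\geqslant 2$ and $\depth_A M\geqslant 2$ hold; the latter follows from Proposition \ref{H1-H3 imply A balanced CM}, which gives that $M_A$ is MCM. Since $M'_B$ is projective, Definition \ref{def-noncom-reso} applies, so $B$ is a noncommutative resolution of $A$ given by $(M,M')$.

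\emph{Resolution $\Rightarrow$ Hypothesis.} Suppose $B$ is a noncommutative resolution of a noetherian noncommutative projective coordinate ring $A$ given by $(M,M')$.
\begin{itemize}
\item (H1): $B$ is a noetherian commonly graded AS-regular algebra by the definition of a quasi-resolution. Its dimension is $d\geqslant2$: this is $\gldim B$, and it is at least $2$ since $\depth_B B\geqslant 2$, as $B$ is a coordinate ring by Theorem \ref{Morita for nc quasi-proj space}. The dualizing module is $U=D(R^d\Gamma_B(B))$ by Proposition \ref{AS-G has balanced dualizing complex and chi}.
\item (H2): $M'_B$ is finitely generated by Theorem \ref{Morita for nc quasi-proj space}(1), and projective by the definition of a resolution.
\item (H3): Theorem \ref{Morita for nc quasi-proj space}(3),(4) identify $A\cong\gEnd_B(M')$ and $M\cong\gHom_B(M',B)$, compatibly with the Morita context by item (6). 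Theorem \ref{equ. of quot. cat. induced by Morita context for commonly graded} then gives that $\Coker(M\otimes_AM'\to B)$ is finite-dimensional. This is exactly the argument already used in the step (3)$\Rightarrow$(1) of Proposition \ref{M is MCM iff M is a generator}.
\end{itemize}

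The final assertion, that $A$ is a balanced CM isolated singularity, is Proposition \ref{H1-H3 imply A balanced CM}.

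The main obstacle is bookkeeping rather than new mathematics. We must check that the Morita context built from $M'_B$ coincides, up to isomorphism, with the context ``given by $(M,M')$'' in the sense following Definition \ref{def-noncom-quasi-resolution}. This requires verifying that the bimodule structures on $M=\gHom_B(M',B)$ and on $M'$ agree with $\omega G\cB$ and $\omega F\cA$. The identification $\omega F\cA\cong M'$ uses $\depth_B M'\geqslant 2$, which holds since $M'$ is projective and $\depth_BB\geqslant 2$. The identification of $M$ uses the MCM property of $M_A$ together with Lemma \ref{omega pi and depth 2}.
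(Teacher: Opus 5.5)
Your proposal is correct and follows essentially the same route as the paper. The paper gives no separate proof of this theorem because it is assembled from the same pieces you use: Lemma \ref{M' proj imply A is coor. ring} gives that $A$ is a coordinate ring and that $M'$ is modulo-torsion-invertible, Proposition \ref{H1-H3 imply A balanced CM} gives the balanced CM isolated singularity and that $M_A$ is an MCM generator, and the step (3)$\Rightarrow$(1) of Proposition \ref{M is MCM iff M is a generator} recovers (H3) via Theorems \ref{Morita for nc quasi-proj space} and \ref{equ. of quot. cat. induced by Morita context for commonly graded}; your extra checks of finite presentation and of the identification of the Morita context are bookkeeping the paper leaves implicit.
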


We next characterize noncommutative resolutions in terms of cluster tilting modules. We first record their generator and duality properties.

\begin{lemma}\label{CT generators and canonical duality}
Let $A$ be a balanced CM algebra of dimension $d\geqslant2$, let $\Omega_A=D(R^d\Gamma_A(A))$, and let $M_A$ be a $(d-1)$-cluster tilting module. Then
\begin{enumerate}
    \item $A,\Omega_A\in\add_A M$.
    \item $\widetilde M=\gHom_A(M,\Omega_A)$ is a $(d-1)$-cluster tilting module over $A^o$.
    \item $\gEnd_{A^o}(\widetilde M)\cong\gEnd_A(M)^o$.
    \item $A,\Omega_A\in\add_{A^o} \widetilde{M}$.
\end{enumerate}
\end{lemma}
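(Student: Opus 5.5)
For (1), I will verify the Ext-orthogonality conditions in Definition \ref{cluster tilting module} directly, for $A$ and for $\Omega_A$. Since $A$ is balanced CM of dimension $d$, we have $\lcd A=d$ and $R^i\Gamma_A(A)=0$ for $i<d$ (local duality, Theorem \ref{local duality}). Hence $\depth_A A=d$, so $A$ is MCM, and $\gExt_A^i(A,M)=0$ for all $i>0$. The first description of $\add_A M$ therefore gives $A\in\add_AM$.

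For $\Omega_A$, Proposition \ref{Hom(-,Omega) induce duality on MCM} shows that $\Omega_A=\gHom_{A^o}(A,\Omega)$ is MCM as a right module, and by part (1) of that proposition $\gExt_A^i(M,\Omega_A)=0$ for all $i>0$. The first description of $\add_AM$ again gives $\Omega_A\in\add_AM$.

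For (2), I will use the duality $(-)^\dagger=\gHom_A(-,\Omega)\colon\MCM(A)\rightleftarrows\MCM(A^o)$ of Proposition \ref{Hom(-,Omega) induce duality on MCM}(2). By part (3) of that proposition it satisfies $\gExt_A^i(X,Y)\cong\gExt_{A^o}^i(Y^\dagger,X^\dagger)$. Since $(-)^\dagger$ is an additive anti-equivalence commuting with shifts (up to sign), it sends $\add_AM$ onto $\add_{A^o}\widetilde M$. Take $N'\in\MCM(A^o)$ and write $N'=N^\dagger$ with $N\in\MCM(A)$. Then
\[
\gExt^i_{A^o}(\widetilde M,N')\cong\gExt_A^i(N,M)\quad\text{and}\quad\gExt^i_{A^o}(N',\widetilde M)\cong\gExt^i_A(M,N).
\]
Hence the two orthogonality descriptions of $\add_AM$ translate exactly into the two descriptions of $\add_{A^o}\widetilde M$. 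The hypothesis $\lcd A^o=d$ needed in Definition \ref{cluster tilting module} holds by Proposition \ref{AS-G has balanced dualizing complex and chi}(2), and $\widetilde M$ is MCM.

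For (3), the duality is a contravariant fully faithful functor, so it gives an anti-isomorphism $\gEnd_A(M)\to\gEnd_{A^o}(\widetilde M)$, $f\mapsto \gHom_A(f,\Omega)$. In other words $\gEnd_{A^o}(\widetilde M)\cong\gEnd_A(M)^o$, where we use that $\widetilde M^\dagger\cong M$ naturally. The only point needing care is the direction of composition, which is routine.

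For (4), I will apply (1) to the $A^o$-module $\widetilde M$, which is $(d-1)$-cluster tilting by (2). This requires $A^o$ to be balanced CM of dimension $d$ with canonical module $\Omega$. That holds because the balanced dualizing complex $\Omega[d]$ is symmetric: by Theorem \ref{existence of balanced dualizing complex}, $D(R\Gamma_{A^o}(A))\cong R$, so $D(R^d\Gamma_{A^o}(A))\cong\Omega$. Alternatively, one can note directly that $A^\dagger\cong\Omega$ and $\Omega^\dagger\cong A$, so (1) transported through the duality gives $\Omega,A\in\add_{A^o}\widetilde M$.

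The main obstacle I expect is bookkeeping rather than substance: checking that the duality in Proposition \ref{Hom(-,Omega) induce duality on MCM}(3) is compatible with graded shifts, so that $\add$ is preserved, and that it interchanges the two orthogonality conditions exactly.
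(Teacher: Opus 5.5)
Your proposal is correct and follows the paper's proof: (1) comes from the definition of $(d-1)$-cluster tilting together with Proposition \ref{Hom(-,Omega) induce duality on MCM}(1); (2) and (3) come from parts (2) and (3) of that proposition; and (4) is the left-module version of (1). One small slip: the vanishing $\gExt_A^i(A,M)=0$ you use for $A\in\add_AM$ is the condition in the \emph{second} description of $\add_AM$ in Definition \ref{cluster tilting module}, not the first (the first description is the one you correctly use for $\Omega_A$).
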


\begin{proof}
(1) It follows from the definition of $(d-1)$-cluster tilting modules and Proposition \ref{Hom(-,Omega) induce duality on MCM} (1).

(2) and (3). It follows from Proposition \ref{Hom(-,Omega) induce duality on MCM} (2) and (3).

(4) It is proved by a left module version of the argument of (1).
\end{proof}

\begin{theorem}\label{M_A is CT module}
Let $A$ be a balanced CM isolated singularity of dimension $d\geqslant2$, and let $M\in\MCM(A)$. Set
$B=\gEnd_A(M)$ and $M'=\gHom_A(M,A)$.
Then the following conditions are equivalent:
\begin{itemize}
    \item [(1)] $M_A$ is a $(d-1)$-cluster tilting module;
    \item [(2)] $B$ is a noncommutative resolution of $A$ given by $(M,M')$.
\end{itemize}
Under these conditions, $B$ is a noetherian commonly graded AS-regular algebra of dimension $d$. Moreover, ${}_AM'$ and the duals
${}_A\widetilde M=\gHom_A(M,\Omega_A),
\widetilde M'_A=\gHom_{A^o}(M',\Omega_A)$
are $(d-1)$-cluster tilting modules.
\end{theorem}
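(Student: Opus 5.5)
For (2) $\Rightarrow$ (1), assume $B$ is a noncommutative resolution of $A$ given by $(M,M')$. By Definition \ref{def-noncom-reso}, $B$ is a noetherian commonly graded AS-regular algebra. Its dimension is $d$: Proposition \ref{when B is GAS-Gorenstein iso. sing.} forces $\lcd A=\gldim B$, and $\lcd A=d$ because $A$ is balanced CM of dimension $d$. The plan is to apply Theorem \ref{d-1 CT and NQR}. Since $\lcd A=d$, every MCM module lies in $C_A$. For $X\in\MCM(A)$, Lemma \ref{cohomology in tails} gives $R^{i+1}\Gamma_A(X)\cong\gExt^i_{\cA}(\cA,\cX)$, and Proposition \ref{facts about chi condition} (or \cite[Lemma 5.5]{LSW}) compares $\gExt^i_A(M,X)$ with $\gExt^i_{\cA}(\cM,\cX)$. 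One needs $\depth_A X=d$ (together with $M$ MCM) to identify these for $0<i<d-1$. The intended identification is $\gExt_A^i(M,X)\cong\gExt^i_{\cA}(\cM,\cX)$ for $0<i<d-1$ whenever $X$ is MCM, and symmetrically with $M$ and $X$ swapped. This is the analogue of \cite[Lemma 5.5]{LSW}, proved by a spectral sequence or dimension shift on a projective resolution of $M$ using vanishing of $R^j\Gamma_A(X)$ for $j<d$. Intersecting the two descriptions of $\add_AM$ in Theorem \ref{d-1 CT and NQR} with $\MCM(A)$ then gives exactly Definition \ref{cluster tilting module} with $n=d-1$.

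For (1) $\Rightarrow$ (2), let $M$ be $(d-1)$-cluster tilting. By Lemma \ref{CT generators and canonical duality}, $A\in\add_AM$, so $M_A$ is a generator. It follows that $M'=\gHom_A(M,A)$ is a projective $B$-module, $A\cong\gEnd_B(M')$, and the map $M'\otimes_B M\to A$ is surjective; hence its cokernel is trivially finite-dimensional. To obtain the equivalence of Theorem \ref{equ. of quot. cat. induced by Morita context for commonly graded}, it remains to show that $\Coker(M\otimes_AM'\to B)$ is finite-dimensional. Here one uses that $\gldim(\qgr A)=d-1$ and that every object of $\qgr A$ has finite projective dimension. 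The main work is showing that $B$ is AS-regular of global dimension $d$.

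\textbf{Main obstacle.} I would follow the argument of \cite[Theorem 6.5/6.6]{LSW} (Iyama's argument). Take $X\in\gr B$, written as the cokernel of $\gHom_A(M,f)$ for $f\colon M_1\to M_0$ in $\add M$. Then take a right $\add M$-approximation sequence of $\Ker f$, which is MCM-type via the depth lemma \cite[Lemma 2.10]{LSW}. Iterating $d-2$ times, the $(d-1)$-cluster tilting vanishing $\gExt^i_A(M,-)=0$ for $0<i<d-1$ ensures the kernel at the last step is MCM and in $\add M$. This yields a projective resolution of length $\le d$, so $\gldim B\le d$. Equality, the AS-Gorenstein conditions, and $B_B$ MCM come from the duality of Proposition \ref{Hom(-,Omega) induce duality on MCM} applied to $\widetilde M$, together with $\Omega_A\in\add M$. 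Once $B$ has finite global dimension, the finite-dimensionality of the cokernel follows. The reason is that $B/\image\mu$ is the stable endomorphism part: it is killed in $\qgr$ because $\pi M$ lies in $\add\pi A$ locally, using $\gldim\qgr A=d-1$ so that $\pi M$ is projective in a suitable sense. If this route is awkward, I would instead invoke Theorem \ref{hypo. is equi. to B is ncr of A} after verifying Hypothesis \ref{hypothesis} for $(B,M')$.

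The final assertions then follow. ${}_AM'$ is cluster tilting by the left-module version of the (2) $\Rightarrow$ (1) argument, using the equivalence $(\qgr A^o,s)\cong(\qgr B^o,s)$ from Theorem \ref{Morita for nc quasi-proj space}(6). The duals $\widetilde M$ and $\widetilde M'$ are cluster tilting by Lemma \ref{CT generators and canonical duality}(2) and its left-module version.
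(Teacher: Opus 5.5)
Your (2) $\Rightarrow$ (1) is the paper's argument: $\gldim B=\lcd A=d$, then the comparison of $\gExt_A^i$ with $\gExt_{\cA}^i$ on MCM modules for $0<i<d-1$ from \cite[Lemma 5.5]{LSW}, then Theorem \ref{d-1 CT and NQR}. The gap is in (1) $\Rightarrow$ (2).

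Your approximation argument only bounds $\gldim B$. It does not show three things you need:
(a) $B$ is noetherian on both sides;
(b) $\Coker(M\otimes_AM'\to B)$ is finite-dimensional, i.e.\ $(M,M')$ induces $(\qgr A,s)\simeq(\qgr B,s)$;
(c) $B$ has a balanced dualizing complex with $B_B$ MCM, which is what is needed to pass from $\gldim B=d$ to AS-regularity via \cite[Lemma 6.4]{LSW}.

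The approximation step itself already presupposes (a). To get $M_2\to K_1$ with $\gHom_A(M,M_2)\to\gHom_A(M,K_1)$ surjective, the second syzygy $\gHom_A(M,K_1)$ must be a finitely generated $B$-module, and noetherianity of $\gEnd_A(M)$ for a non-projective $M$ is not automatic here.

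Your justification of (b) is wrong. By Theorem \ref{canonical bimodule and isolated singularity}, every nonzero object of $\qgr A$ has projective dimension exactly $d-1\geqslant 1$, so $\pi M$ is not projective in any sense. There is also no localization that would make ``$\pi M\in\add\pi A$ locally'' meaningful, and finite global dimension of $B$ plays no role in this finiteness. Your fallback through Theorem \ref{hypo. is equi. to B is ncr of A} is circular: (H3) of Hypothesis \ref{hypothesis} is precisely the finite-dimensionality of this cokernel, and (H1) is the AS-regularity you want. Likewise, the $\Omega_A$-duality gives $B^o\cong\gEnd_{A^o}(\widetilde M)$ but says nothing about $\depth_B B$.

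The missing idea is ampleness.
\begin{enumerate}
\item Since $A$ is a balanced CM isolated singularity, Lemma \ref{equi. cond. for ample (2) for MCM} shows every MCM module satisfies ampleness condition (2). That lemma rests on $\Omega_A$ being modulo-torsion-invertible. Hence the MCM generator $M$ gives an ample pair $(\cM,s)$.
\item As $\depth_AM\geqslant 2$, $B\cong\gEnd_{\cA}(\cM)$. Theorem \ref{noncommutative Serre theorem for commonly graded algebra} then yields at once that $B$ is a right noetherian projective coordinate ring and that $(\qgr A,\cM,s)\cong(\qgr B,\cB,s)$.
\item By Theorem \ref{Morita for nc quasi-proj space}, this equivalence is induced by $(M,M')$. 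So finiteness of the cokernel follows from Theorem \ref{equ. of quot. cat. induced by Morita context for commonly graded}, not from a direct computation.
\item The same argument applied to $\widetilde M$ over $A^o$ makes $B$ left noetherian.
\item Proposition \ref{when B is GAS-Gorenstein iso. sing.}, with $A$ and $B$ interchanged, gives $B$ a balanced dualizing complex and $\lcd B=\lcd B^o=d$.
\item Corollary \ref{balanced CM from self extensions}, using $\gExt_A^i(M,M)=0$ for $0<i<d-1$, makes $B$ balanced CM.
\end{enumerate}
Only after this does your Iyama-type resolution give $\gldim B=d$, and hence AS-regularity; the finite generation of the syzygies is supplied by Proposition \ref{C_A and C_B}.

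Finally, before running the left-module version of (2) $\Rightarrow$ (1) for ${}_AM'$, you must know that ${}_AM'$ is MCM. The paper gets this by applying Proposition \ref{H1-H3 imply A balanced CM} to the opposite Morita context.
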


\begin{proof}
(2) $\Rightarrow$ (1). It follows from Proposition \ref{when B is GAS-Gorenstein iso. sing.} that $\gldim B=\lcd A=d$. 
For any $X\in\MCM(A)$ and $0<i<d-1$, \cite[Lemma 5.5]{LSW} gives
\[
\gExt_A^i(M,X)\cong\gExt_{\cA}^i(\cM,\cX),\quad
\gExt_A^i(X,M)\cong\gExt_{\cA}^i(\cX,\cM).
\]
By Theorem \ref{d-1 CT and NQR}, $M_A$ is a $(d-1)$-cluster tilting module.

(1) $\Rightarrow$ (2). 
First, we prove that $B$ is a balanced CM isolated singularity of dimension $d$.
By Lemma \ref{CT generators and canonical duality}, $A,\Omega_A\in\add_AM$. Hence, $M_A$ is an MCM generator. By Lemma \ref{equi. cond. for ample (2) for MCM}, $(\cM,s)$ is ample in $\qgr A$. Since $\depth_AM=d\geqslant2$, Lemma \ref{omega pi and depth 2} gives $B\cong\gEnd_{\cA}(\cM)$.
Theorem \ref{noncommutative Serre theorem for commonly graded algebra} therefore implies that $B$ is a right noetherian noncommutative projective coordinate ring and there is an equivalence of noncommutative projective schemes $(\qgr A,\cM,s)\cong (\qgr B,\cB,s)$.
By Lemma \ref{CT generators and canonical duality}, $B^o$ is isomorphic to $\gEnd_{A^o}(\widetilde{M})$.
Applying the same argument to the cluster tilting module $\widetilde M$ over $A^o$ shows that $B^o$ is also a noncommutative projective coordinate ring.
In particular, $B$ is noetherian on both sides.


Applying Proposition \ref{when B is GAS-Gorenstein iso. sing.} with the roles of $A$ and $B$ interchanged shows that $B$ admits a balanced dualizing complex and that $\lcd B=\lcd B^o=d$. 
The cluster tilting condition gives
$\gExt_A^i(M,M)=0$ for $0<i<d-1$.
Hence, Corollary \ref{balanced CM from self extensions}, with the roles of the rings interchanged, implies that $B$ is a balanced CM isolated singularity of dimension $d$. 

Next we prove $\gldim B=d$.
By projectivization (say see \cite[\uppercase\expandafter{\romannumeral6} Lemma 3.1]{SS}), for $Y\in\gr B$, there is a finite projective presentation
\[
\gHom_A(M,M_1)\xrightarrow{\gHom_A(M,f)}
\gHom_A(M,M_0)\to  Y\to  0,
\]
where $M_0,M_1\in\add_AM$ and $f:M_1\to M_0$ is a graded $A$-module morphism.
Let $K_1=\Ker(f)$. Since $M_0$ and $M_1$ are MCM, it follows from \cite[Lemma 2.10]{LSW} that $\depth_AK_1\geqslant2$. 
Since $0\to \gHom_A(M,K_1)\to \gHom_A(M,M_1)\to
\gHom_A(M,M_0)$ is exact, the second syzygy of $Y$ in this presentation is $\gHom_A(M,K_1)$.

By Proposition \ref{C_A and C_B}, $\gHom_A(M,K_1)$ is a finitely generated $B$-module.
Observe that the proof in \cite[Proposition 5.13 (2)]{LSW} also works here. Hence, there is a surjective morphism $M_2\to K_1$ where $M_2$ is a finite direct sum of shifts of $M$, such that
\[
\gHom_A(M,M_2)\to \gHom_A(M,K_1)
\]
is surjective.
Let $K_2$ be the kernel of $M_2\to K_1$. 
Then we obtain a short exact sequence
\[
0\to \gHom_A(M,K_2)\to \gHom_A(M,M_2) \to \gHom_A(M,K_1)\to 0,
\]
which implies that $\gHom_A(M,K_2)$ is the third syzygy of $Y$.
Inductively, we can construct a finite projective resolution of $Y$:
\[
\cdots\to \gHom_A(M,M_r)\to\cdots\to\gHom_A(M,M_1)\to
\gHom_A(M,M_0)\to  Y\to  0
\]
such that $M_r\in\add_A M$ and $\gHom_A(M,K_r)$ is the $(r+1)$-th syzygy of this resolution where
$K_r=\Ker(M_r\to M_{r-1})$.
It follows from \cite[Lemma 2.10]{LSW} that $\depth_AK_r\geqslant\min\{d,r+1\}$. 

For any $r\geqslant 2$, consider the exact sequence
\[
    0\to K_r\to M_r\to K_{r-1}\to 0.
\]
Applying $\gHom_A(M,-)$ to this short exact sequence shows that $\gExt_A^1(M,K_r)=0$ and $\gExt_A^i(M,K_{r-1})\cong \gExt_A^{i+1}(M,K_r)$ for $1\leqslant i\leqslant d-3$, as $\gHom_A(M,M_r)\to \gHom_A(M,K_{r-1})$ is surjective and $\gExt_A^j(M,M)=0$ for $0<j<d-1$.
It follows that when $r=d-1$, $K_{d-1}$ is an MCM $A$-module and $\gExt_A^i(M,K_{d-1})=0$ for $1\leqslant i\leqslant d-2$, which implies that $K_{d-1}\in \add_A M$.
Hence, $\gHom_A(M,K_{d-1})$ is a projective $B$-module and thus $\pdim_B Y\leqslant d$.
Since $Y$ is an arbitrary finitely generated graded $B$-module, one obtains $\gldim B\leqslant d$, and consequently $\gldim B=d$. 

It follows from \cite[Lemma 6.4]{LSW} that $B$ is a commonly graded AS-regular algebra of dimension $d$.
By Definition \ref{def-noncom-reso}, $B$ is a noncommutative resolution of $A$ given by $(M,M')$.

Finally, under either equivalent condition, $M'_B$ is finitely generated projective, and $M\cong\gHom_B(M',B)$ is finitely generated projective as a left module. The opposite graded Morita context therefore satisfies the left-module version of Hypothesis \ref{hypothesis}. Applying Proposition \ref{H1-H3 imply A balanced CM} on this side shows that ${}_A M'$ is MCM. Applying the argument for (2) $\Rightarrow$ (1) to the left modules shows that ${}_A M'$ is a $(d-1)$-cluster tilting module.
It follows from Lemma \ref{CT generators and canonical duality} that ${}_A\widetilde M$ and
$\widetilde M'_A$ are $(d-1)$-cluster tilting modules.
\end{proof}

\begin{corollary}\label{dimension two existence of resolutions}
Let $A$ be a balanced CM isolated singularity of dimension $2$. Then $A$ admits a noncommutative resolution if and only if it has finitely many indecomposable MCM modules up to isomorphism and shift.
\end{corollary}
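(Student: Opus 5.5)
By Theorem \ref{M_A is CT module}, $A$ admits a noncommutative resolution if and only if there is a $1$-cluster tilting module in $\MCM(A)$. The plan is to show that, when $d=2$, a $1$-cluster tilting module is exactly an MCM module $M$ with $\add_AM=\MCM(A)$.

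In Definition \ref{cluster tilting module} with $n=d-1=1$, the range $0<i<1$ is empty. Hence $M$ is $1$-cluster tilting if and only if $\add_AM=\MCM(A)$.

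For the direction ($\Rightarrow$), let $B$ be a noncommutative resolution of $A$ given by $(M,M')$. By Proposition \ref{M is MCM iff M is a generator} we may take $M\in\MCM(A)$. By Theorem \ref{M_A is CT module}, $M$ is then $1$-cluster tilting, so $\MCM(A)=\add_AM$. Alternatively, one can invoke Theorem \ref{BO conjecture for NQR} directly, since a noncommutative resolution is in particular a quasi-resolution, with $\gldim B=2$ by Theorem \ref{M_A is CT module}. Either way, every indecomposable MCM module is a direct summand of a finite direct sum of shifts of $M$. Since $M$ is finitely generated over a locally finite, bounded below algebra, graded Krull--Schmidt holds, so up to isomorphism and shift there are only finitely many indecomposable summands. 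Thus $A$ is CM-finite.

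For the direction ($\Leftarrow$), let $X_1,\dots,X_m$ be representatives of the indecomposable MCM $A$-modules up to isomorphism and shift, and set $M=\bigoplus_i X_i$. This is a finitely generated MCM module. Every MCM module is finitely generated and graded, so it decomposes as a finite direct sum of shifts of the $X_i$; this again uses Krull--Schmidt for finitely generated graded modules over commonly graded algebras, where graded endomorphism rings of indecomposables have degree-zero parts that are finite-dimensional local algebras. Therefore $\add_AM=\MCM(A)$, so $M$ is $1$-cluster tilting, and Theorem \ref{M_A is CT module} shows that $\gEnd_A(M)$ is a noncommutative resolution.

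The main obstacle is justifying graded Krull--Schmidt in the commonly graded setting. I would handle it by noting that $\gEnd_A(X)_0$ is finite-dimensional because $A$ is locally finite and bounded below and $X$ is finitely generated. Idempotents therefore lift, and indecomposables have local degree-zero endomorphism rings.
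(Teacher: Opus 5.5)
Your proof is correct and follows essentially the same route as the paper. Both arguments use Theorem \ref{M_A is CT module}, with Proposition \ref{H1-H3 imply A balanced CM} (or equivalently Proposition \ref{M is MCM iff M is a generator}) guaranteeing that $M_A$ is MCM, to reduce to the observation that for $d=2$ a module is $1$-cluster tilting exactly when $\add_AM=\MCM(A)$, which is equivalent to CM-finiteness. Your explicit justification of graded Krull--Schmidt, via the finite-dimensionality of $\Hom_{\gr A}(X,X)$, supplies a step the paper leaves implicit.
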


\begin{proof}
A module $M$ is $1$-cluster tilting if and only if $\add_A M=\MCM(A)$; if and only if $\MCM(A)$ admits a finite additive generator;
if and only if there are finitely many indecomposable MCM $A$-modules up to isomorphism and shift. 
The assertion then follows from Proposition \ref{H1-H3 imply A balanced CM} and Theorem \ref{M_A is CT module}.
\end{proof}

Under Hypothesis \ref{hypothesis}, the balanced dualizing complex of $A$ is characterized as follows.

\begin{proposition}\label{dualizing complex of A}
    Suppose that $A$ is a balanced CM isolated singularity of dimension $d\geqslant 2$, and that $B$ is a noncommutative resolution of $A$ given by $(M,M')$. Let $U=D(R^d\Gamma_B(B))$. Then $(M'\otimes_B U\otimes_BM)[d]$ is a balanced dualizing complex of $A$.
\end{proposition}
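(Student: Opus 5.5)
Since $A$ is a balanced CM isolated singularity of dimension $d$, Theorem \ref{existence of balanced dualizing complex} gives the balanced dualizing complex $R_A\cong D(R\Gamma_A(A))\cong \Omega_A[d]$, where $\Omega_A=D(R^d\Gamma_A(A))$. So the claim reduces to an isomorphism of graded $(A,A)$-bimodules
\[
\Omega_A\cong M'\otimes_B U\otimes_B M .
\]
I will compute $D(R^d\Gamma_A(A))$ directly through the Morita context, using $A\cong\gEnd_B(M')$ and the equivalence $(\qgr A,s)\cong(\qgr B,s)$ of Theorem \ref{Morita for nc quasi-proj space}. The right-module side comes first; bimodule compatibility is handled at the end.

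\textbf{Step 1 (local cohomology of $A$ via $B$).} For $d\geqslant 2$, Lemma \ref{cohomology in tails} gives
\[
R^d\Gamma_A(A)\cong\gExt^{d-1}_{\cA}(\cA,\cA)\cong\gExt^{d-1}_{\cB}(\cM',\cM'),
\]
using $\cA\otimes_{\cA}\cM'\cong\cM'$. The second isomorphism is natural in both arguments. Hence the left and right $A$-actions, coming from $A=\gEnd_{\cB}(\cM')$ acting on either variable, are preserved.

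\textbf{Step 2 (Serre duality on $\qgr B$).} $B$ is AS-regular, so its balanced dualizing complex is $U[d]$ with $U$ invertible. $M'_B$ is projective, hence of finite projective dimension. Theorem \ref{Serre duality in D(qgr A)} then gives
\[
D\gExt^{d-1}_{\cB}(\cM',\cM')\cong\gHom_{\cB}(\cM',\cM'\otimes_{\cB}\pi U).
\]
The right-hand side is $\gHom_B(M',M'\otimes_B U)$, because $M'\otimes_BU$ is projective over $B$, so has depth $d\geqslant 2$, and Lemma \ref{omega pi and depth 2} applies. Since $M'_B$ is finitely generated projective, $\gHom_B(M',-)\cong-\otimes_B M$ with $M=\gHom_B(M',B)$. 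Therefore
\[
\Omega_A\cong\gHom_B(M',M'\otimes_BU)\cong M'\otimes_B U\otimes_B M .
\]

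\textbf{Step 3 (bimodule structure, the main obstacle).} It remains to show these isomorphisms respect both $A$-actions and are genuinely isomorphisms in $\D(\Gr A^e)$, not only of one-sided modules. Serre duality in Theorem \ref{Serre duality in D(qgr A)} is natural in both variables. The left action of $A$ on $\gHom_B(M',M'\otimes_BU)$ by post-composition matches the left action on $M'\otimes_BU\otimes_BM$. The right action by pre-composition matches right multiplication on $M$. Checking this naturality carefully, and that the Matlis dual interchanges the two actions correctly, is the delicate point. A fallback avoiding the naturality check is to verify the defining axioms directly for $R':=(M'\otimes_BU\otimes_BM)[d]$:
\begin{itemize}
\item[(a)] finiteness on both sides, since $M'$ and $M$ are finitely generated and $U$ is invertible;
\item[(b)] $R\Gamma_A(R')\cong D(A)$, computed with Step 1 in the opposite direction, together with the symmetric left-module computation via the opposite Morita context;
\item[(c)] finite injective dimension and the double-centralizer condition, which follow from $A\cong\gEnd_B(M')$ and $U$ being dualizing for $B$.
\end{itemize}
Uniqueness of balanced dualizing complexes (Theorem \ref{existence of balanced dualizing complex}) then identifies $R'$ with $R_A$.
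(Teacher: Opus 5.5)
Your Steps 1--2 give a correct proof, by a somewhat different route from the paper's.

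\textbf{The paper's route.} The paper never computes $R^d\Gamma_A(A)$. It observes that $-\otimes^L_{\cA}\pi\Omega_A[d-1]$ and the conjugate $(-\otimes_{\cB}\cM)\circ(-\otimes^L_{\cB}\cU[d-1])\circ(-\otimes_{\cA}\cM')$ are both Serre functors on $\D^b(\qgr A)$, and invokes uniqueness of Serre functors. It then evaluates at $\cA$, with naturality in the left multiplications $a_l\colon A\to A(r)$ supplying the left action. Finally it applies $\omega$, which requires that $\Omega_A$ is MCM and that $\depth_A\gHom_B(M',M'\otimes_BU)\geqslant 2$ (Proposition~\ref{C_A and C_B}).

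\textbf{Your route.} You compute $\Omega_A=D\gExt^{d-1}_{\cA}(\cA,\cA)$ via Lemma~\ref{cohomology in tails}, transport it to $\qgr B$, and use Serre duality only on that side. The depth argument is then needed only for the projective module $M'\otimes_BU$ over $B$. You use neither the Serre functor on $\D^b(\qgr A)$ nor its uniqueness. The price is that you must track the two $A$-actions by hand, which the paper packages into uniqueness of Serre functors.

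\textbf{Step 3 is routine, not delicate.} The right action on $R^d\Gamma_A(A)$, hence the left action on $\Omega_A$, is precomposition with $a_l\colon\cM'\to s^r\cM'$ in the first variable of $\gExt^{d-1}_{\cB}(\cM',\cM')$. Naturality of $\Hom(X,Y)\cong D\Hom(Y,SX)$ in $X$ turns this into the dual of postcomposition with $S(a_l)$. Since $S(a_l)$ is induced by $a_l\otimes 1_U$, this is left multiplication on $M'\otimes_BU\otimes_BM$. Naturality in $Y$ handles the other action in the same way. Since both sides are stalk complexes, a graded bimodule isomorphism is all that is needed for an isomorphism in $\D(\Gr A^e)$.

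You should therefore drop the fallback (a)--(c), which as written is not a proof. For instance, the double-centralizer condition does not follow merely from $A\cong\gEnd_B(M')$.

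One small correction in Step 2: $\cM'$ has finite projective dimension in $\qgr B$ because $\gldim(\qgr B)=d-1$ is finite, not because $M'_B$ is projective. Indeed, $\cB$ itself has projective dimension $\lcd B-1=d-1$ in $\qgr B$.
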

\begin{proof}
    By Theorem \ref{Serre duality in D(qgr A)}, $-\otimes_B^L U[d-1]$ induces a Serre functor $F=-\otimes_{\cB}^L\cU[d-1] $ of $\D^b(\qgr B)$.
    Let $\Omega_A=D(R^d\Gamma_A(A))$. Then $\Omega_A[d]$ is a balanced dualizing complex of $A$. 
    Similarly, $-\otimes_A^L\Omega_A[d-1]$ induces a Serre functor $G=-\otimes_{\cA}^L\pi\Omega_A[d-1]$ of $\D^b(\qgr A)$.

    Since $-\otimes_{\cA} \cM'$ and $-\otimes_{\cB} \cM$  give an equivalence between $\qgr A$ and $\qgr B$, they induce an equivalence
    $$-\otimes_{\cA} \cM':\D^b(\qgr A)\rightleftarrows \D^b(\qgr B):-\otimes_{\cB} \cM.$$
    Then $(-\otimes_{\cB}\cM)\circ F\circ (-\otimes_{\cA}\cM')$ is also a Serre functor of $\D^b(\qgr A)$. Since a Serre functor is unique up to natural isomorphism, 
    there is a commutative diagram
    \begin{center}
        \begin{tikzcd}
\pi(A\otimes_A\Omega_A) \arrow[d, "\pi(a_l)"'] \arrow[r, "\cong"] & \pi(A\otimes_AM'\otimes_BU\otimes_BM) \arrow[d, "\pi(a_l)"] \\
\pi(A\otimes_A\Omega_A(r)) \arrow[r, "\cong"]                        & \pi(A\otimes_AM'\otimes_BU\otimes_BM(r))                      
\end{tikzcd}
    \end{center}
    where $a_l:A\to A (r)$ is the left multiplication by $a\in A$ and $r=\deg a$.

    Note that $M'\otimes_BU\otimes_BM\cong \gHom_B(M',M'\otimes_BU)$ and $\depth_B(M'\otimes_BU)=d$.
    By Proposition \ref{C_A and C_B}, $\depth_A(\gHom_B(M',M'\otimes_B U))\geqslant 2$.
    Hence, by Lemma \ref{omega pi and depth 2},
    $$\omega\pi (M'\otimes_BU\otimes_BM)\cong  M'\otimes_BU\otimes_BM.$$  
    On the other hand, $\Omega_A=D(R^d\Gamma_A(A))$ is an MCM $A$-module, so $\omega\pi \Omega_A\cong \Omega_A$. 
    Applying $\omega$ to the above diagram yields an isomorphism
    $\Omega_A\cong M'\otimes_BU\otimes_BM$ in $\Gr A^e$.
 Therefore, $(M'\otimes_B U\otimes_B M)[d]$ is a balanced dualizing complex of $A$.
\end{proof}


\begin{theorem}\label{when A is GAS Gorenstein}
    Suppose that $A$ is a balanced CM isolated singularity of dimension $d\geqslant 2$, and that $B$ is a noncommutative resolution of $A$ given by $(M,M')$. Let $U=D(R^d\Gamma_B(B))$. Then $A$ is commonly graded AS-Gorenstein if and only if $M'\otimes_BU\otimes_B M$ is a projective $A$-module (or $A^o$-module).
\end{theorem}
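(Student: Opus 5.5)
By Proposition \ref{dualizing complex of A}, the canonical module of $A$ is $\Omega_A\cong M'\otimes_BU\otimes_BM$ as graded $(A,A)$-bimodules, and $\Omega_A[d]$ is the balanced dualizing complex of $A$. So the statement reduces to showing that a balanced CM algebra $A$ of dimension $d$ is commonly graded AS-Gorenstein if and only if $\Omega_A$ is projective on one side.

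For the forward direction, Proposition \ref{AS-G has balanced dualizing complex and chi}(1) shows that if $A$ is AS-Gorenstein, then $\Omega_A$ is an invertible $(A,A)$-bimodule. Hence $\Omega_A$ is finitely generated projective on both sides.

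For the converse, suppose $\Omega_{A}$ is projective as a right $A$-module; the $A^o$ case is symmetric.
\begin{itemize}
\item By Proposition \ref{AS-G has balanced dualizing complex and chi}(2), $\Omega_A$ has graded injective dimension $d$ on both sides. Every finitely generated graded projective is a summand of $\Omega_A$-shifts only if $A\in\add_A\Omega_A$, so we cannot conclude directly.
\item Instead, use the dualizing property: the natural map $A\to R\gHom_A(\Omega_A,\Omega_A)$ is an isomorphism. Since $\Omega_A$ is projective, this gives $A\cong\gHom_A(\Omega_A,\Omega_A)$, and the left $A$-module ${}_A\Omega_A$ is then a progenerator-type object.
\item More concretely, since $\Omega_A$ is a finitely generated projective right module, $\gHom_A(\Omega_A,-)\cong-\otimes_A\Omega_A^{*}$, where $\Omega_A^*=\gHom_A(\Omega_A,A)$. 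Hence $A\cong\Omega_A\otimes_A\Omega_A^*$ via the evaluation-type map, and $\Omega_A^*\otimes_A\Omega_A\to A$ is surjective on the trace ideal.
\item To get invertibility, I would show that $\Omega_A$ is a generator, i.e.\ that the trace ideal is all of $A$. For this I use the MCM duality of Proposition \ref{Hom(-,Omega) induce duality on MCM}(2): $A_A$ is MCM (it is the image of $\Omega_A$ under the duality), and projectivity of $\Omega_A$ transports via the duality to $A\in\add\Omega_A$.
\end{itemize}
Once $\Omega_A$ is an invertible bimodule, the injective dimension of $A=\gHom_A(\Omega_A,\Omega_A)\cong\Omega_A\otimes_A\Omega_A^{-1}$ is $d$ on both sides. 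Moreover, $\gExt^i_A(S,A)\cong\gExt^i_A(S,\Omega_A)\otimes\Omega_A^{-1}$, and local duality (Theorem \ref{local duality}) gives $\gExt_A^i(S,\Omega_A)\cong D(R^{d-i}\Gamma_A(S))$. This vanishes unless $i=d$ and induces the required bijection of simples, so $A$ is AS-Gorenstein as in Definition \ref{def-generalized-AS-Gorenstein}.

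The main obstacle is the generator step in the converse: showing that one-sided projectivity of $\Omega_A$ forces $A\in\add_A\Omega_A$ and two-sided invertibility. I would handle it by noting that $\Omega_A$ projective gives $\Omega_A\in\add A$. Applying the duality $\gHom_A(-,\Omega_A)$ then gives $A\cong\gHom_A(\Omega_A,\Omega_A)\in\add_{A^o}\gHom_A(A,\Omega_A)=\add_{A^o}\Omega_A$. This makes ${}_A\Omega_A$ a progenerator, and $\Omega_A$ is a progenerator on the left with $\gEnd_{A^o}(\Omega_A)=A$. Standard Morita theory (as in the proof of Proposition \ref{M is MCM iff M is a generator}) then upgrades this to invertibility, since the right action equals the full endomorphism ring.
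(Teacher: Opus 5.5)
Your reduction and forward direction match the paper's: Proposition~\ref{dualizing complex of A} identifies $M'\otimes_BU\otimes_BM$ with $\Omega_A$ as bimodules, and an AS-Gorenstein $A$ has an invertible canonical module. For the converse, the paper likewise passes from $V_A$ projective with $\gEnd_A(V)\cong A$ to invertibility of $V$. It then cites \cite[Theorem 7.8 (6)]{LW2} rather than checking Definition~\ref{def-generalized-AS-Gorenstein} by hand.

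The gap is in your justification of ``projective $\Rightarrow$ invertible''. Applying $\gHom_A(-,\Omega_A)$ to $\Omega_A\in\add_AA$ gives only ${}_AA\in\add_{A^o}\Omega_A$, i.e.\ ${}_A\Omega_A$ is a \emph{generator} on the left. This is automatic anyway: for any finitely generated projective $P_A$ with $E=\gEnd_A(P)$ one has ${}_EE\cong P\otimes_A\gHom_A(P,A)\in\add_{E^o}P$.

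You then call ${}_A\Omega_A$ a progenerator on the left. However, projectivity of ${}_A\Omega_A$ is never established; equivalently, you never show that $\Omega_A$ generates on the right, i.e.\ has trace ideal $A$. The MCM duality only swaps ``projective on the right'' with ``generator on the left'', and ``generator on the right'' with ``projective on the left''. So it cannot produce the missing half by itself. Without a progenerator on a single side, the Morita upgrade to invertibility does not apply.

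The missing ingredient is a finiteness count.
\begin{enumerate}
\item Since $A$ is commonly graded, $A/J$ is finite-dimensional and $\gr A$ is Krull--Schmidt, because degree-zero endomorphism rings are finite-dimensional.
\item Hence there are finitely many indecomposable graded projectives up to shift, say $n$. Applying $\gHom_A(-,A)$ shows the number is the same on both sides.
\item The duality of Proposition~\ref{Hom(-,Omega) induce duality on MCM}(2) is a bijection on indecomposables up to shift, and it sends $\add_A\Omega_A$ onto $\add_{A^o}A$. So $\add_A\Omega_A$ has exactly $n$ indecomposables.
\item Together with $\add_A\Omega_A\subseteq\add_AA$, this forces $\add_A\Omega_A=\add_AA$.
\end{enumerate}
Equivalently: if an indecomposable projective were missing from $\Omega_A$, then $A\cong\gEnd_A(\Omega_A)$ would be graded Morita equivalent to an algebra with fewer graded simples up to shift. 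Now $\Omega_A$ is a progenerator whose endomorphism ring is $A$ acting on the left, so $\Omega_A$ is invertible.

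A smaller point: the sides are off in your final check. The autoequivalence $-\otimes_A\Omega_A^{-1}$ gives $\gExt^i_A(S,A)\cong\gExt^i_A(S\otimes_A\Omega_A,\Omega_A)$, not the formula you wrote. You would also still need the left-module conditions and the bijection of simples. Citing \cite[Theorem 7.8 (6)]{LW2}, as the paper does, is the cleaner finish.
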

\begin{proof}
    For convenience, set $V=M'\otimes_BU\otimes_BM$.
    
    Suppose that $A$ is commonly graded AS-Gorenstein. 
    Then, by Proposition \ref{AS-G has balanced dualizing complex and chi}, $D(R^d\Gamma_A(A))$ is an invertible $(A,A)$-bimodule, 
    and consequently, so is $V$.

    Conversely, suppose that $V$ is a projective $A$-module. By the definition of a dualizing complex, $$R\gHom_A(V[d],V[d])\cong \gHom_A(V,V)\cong A.$$
    Thus, $V_A$ is an invertible $(A,A)$-bimodule. 
    It follows from \cite[Theorem 7.8 (6)]{LW2} that $A$ is  commonly graded AS-Gorenstein.
\end{proof}

In fact, Theorems \ref{hypo. is equi. to B is ncr of A} and  \ref{when A is GAS Gorenstein} address Question \ref{question}.

\begin{corollary}\label{Omega_B cong B}
    Keep the assumptions of Theorem \ref{when A is GAS Gorenstein}.
    If $U\cong B(l)$ for some integer $l$, then $A$ is a commonly graded AS-Gorenstein algebra with balanced dualizing complex $A(l)[d]$.
\end{corollary}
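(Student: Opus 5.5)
We reduce to Theorem \ref{when A is GAS Gorenstein} and Proposition \ref{dualizing complex of A}. Write $V=M'\otimes_BU\otimes_BM$. By Proposition \ref{dualizing complex of A}, $V[d]$ is a balanced dualizing complex of $A$. It is also isomorphic to $\Omega_A[d]$ as a complex of bimodules, since the proof of that proposition produced an isomorphism $\Omega_A\cong V$ in $\Gr A^e$.

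The first step is to compute $V$ under the hypothesis $U\cong B(l)$. We have
\[
V\cong M'\otimes_B B(l)\otimes_B M\cong (M'\otimes_B M)(l).
\]
Here we must check that the isomorphism $U\cong B(l)$ is one of graded $(B,B)$-bimodules. If it is only one-sided, we would instead use that $U$ is invertible, so $U\cong {}^{\sigma}B(l)$ for a graded automorphism $\sigma$. In that case $V$ is still $(M'\otimes_B M)$ twisted, which is harmless for projectivity. We read the hypothesis as a bimodule isomorphism, as is standard.

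The second step identifies $M'\otimes_BM$ with $A$. Under Hypothesis \ref{hypothesis} (equivalently, by Theorem \ref{hypo. is equi. to B is ncr of A}, for a noncommutative resolution), $M'_B$ is finitely generated projective and $M=\gHom_B(M',B)$. The natural map $M'\otimes_B\gHom_B(M',B)\to\gEnd_B(M')=A$ is an isomorphism of graded $(A,A)$-bimodules, since $M'$ is finitely generated projective; this is the standard dual basis argument. Hence $V\cong A(l)$ as bimodules, and in particular $V$ is a projective $A$-module.

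Finally, Theorem \ref{when A is GAS Gorenstein} gives that $A$ is commonly graded AS-Gorenstein. Its balanced dualizing complex is $V[d]\cong A(l)[d]$ by Proposition \ref{dualizing complex of A}.

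The only potential obstacle is the bimodule compatibility in the first step, namely keeping track of the left $B$-action on $U$. If the hypothesis is only one-sided, $V$ may be a twisted form ${}^{\tau}A(l)$. The conclusion "balanced dualizing complex $A(l)[d]$" should then be read up to that twist, or the twist is trivial because the isomorphism is one of bimodules.
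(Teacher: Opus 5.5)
Your argument is correct and is the paper's proof: $V\cong M'\otimes_B B(l)\otimes_B M\cong (M'\otimes_BM)(l)\cong A(l)$, using the dual-basis isomorphism for the finitely generated projective module $M'_B$, after which Proposition \ref{dualizing complex of A} and Theorem \ref{when A is GAS Gorenstein} give both conclusions.

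Drop your hedge about a one-sided isomorphism, though, because it is false. If $U\cong{}^{\sigma}B(l)$, then $V\cong (M')^{\sigma^{-1}}\otimes_BM(l)$, where $(M')^{\sigma^{-1}}$ is $M'$ with right action twisted by $\sigma^{-1}$. This module lies in $\add_AM$, but it need not be projective, nor a twist of $A(l)$. For example, take $B=R\#G$ with $R=k[x,y]$ and $G=\langle\zeta I_2\rangle$, where $\zeta\in k$ is a primitive cube root of unity. Then $U_B\cong B(-2)$ as graded right modules, and $B$ resolves $R^G$ by Theorem \ref{G acts freely on polynomial}. Yet $R^G$ is not Gorenstein by Proposition \ref{propoerties of R^G}(2). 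So the hypothesis must be read as a bimodule isomorphism.
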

\begin{proof}
    It follows that $M'\otimes_BU\otimes_BM\cong M'\otimes_BM(l)\cong A(l)$. 
    By Proposition \ref{dualizing complex of A} and Theorem \ref{when A is GAS Gorenstein}, $A$ is a commonly graded AS-Gorenstein algebra of dimension $d$ and has a balanced dualizing complex $A(l)[d]$. 
\end{proof}

By Corollary \ref{Omega_B cong B}, if $B$ is an $\mathbb{N}$-graded Calabi-Yau algebra, then any finitely generated projective $B$-module $M'$ satisfying (H3) yields a commonly graded AS-Gorenstein algebra $A$ for which $B$ is a noncommutative resolution. 

In \cite{LSW}, a noncommutative version of the Bondal--Orlov conjecture is established in dimensions $2$ and $3$ for noncommutative resolutions of commonly graded AS-Gorenstein isolated singularities. 
See also \cite[Theorem 5.3.2]{I2} for a version concerning module-finite algebras.
For noncommutative resolutions in the sense of Definition \ref{def-noncom-reso}, this conjecture holds in dimensions $2$ and $3$. As the proof is similar, we omit it.


\begin{theorem}\label{BO conjecture}
    Suppose that $A$ is a balanced CM isolated singularity of dimension $d\geqslant 2$.
    \begin{itemize}
        \item [(1)] If $d=2$, then all noncommutative resolutions of $A$ are Morita equivalent.
        \item [(2)] If $d=3$, then all noncommutative resolutions of $A$ are derived Morita equivalent.
    \end{itemize}
\end{theorem}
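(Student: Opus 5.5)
For both parts I would start by putting every noncommutative resolution of $A$ into the cluster tilting framework. Let $B$ and $B'$ be noncommutative resolutions of $A$, given by $(M,M')$ and $(X,X')$. By Theorem \ref{Morita for nc quasi-proj space} we have $B\cong\gEnd_A(M)$ and $B'\cong\gEnd_A(X)$. By Proposition \ref{M is MCM iff M is a generator}, both $M_A$ and $X_A$ are MCM generators. Theorem \ref{M_A is CT module}, direction (2) $\Rightarrow$ (1), then shows that $M$ and $X$ are $(d-1)$-cluster tilting $A$-modules. Both $B$ and $B'$ are noetherian commonly graded AS-regular of global dimension $d$.

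For $d=2$, a $1$-cluster tilting module $N$ satisfies $\add_AN=\MCM(A)$, exactly as in the proof of Corollary \ref{dimension two existence of resolutions}. So $\add_AM=\add_AX$. Two modules with the same additive closure have graded Morita equivalent endomorphism rings. Concretely, $P=\gHom_A(M,X)$ is a finitely generated projective generator of $\Gr B$ with $\gEnd_B(P)\cong B'$; this uses projectivization $\add_AM\simeq\proj B$ from \cite[\uppercase\expandafter{\romannumeral6} Lemma 3.1]{SS}. This argument is the same as in Theorem \ref{BO conjecture for NQR}.

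For $d=3$, $M$ and $X$ are $2$-cluster tilting. I would follow Iyama's argument \cite[Theorem 5.3.2]{I2} and its graded version in \cite{LSW}. The tilting object is $T=\gHom_A(M,X)$ as a graded $B$-module, and the goal is $\gEnd_B(T)\cong B'$ with $T$ tilting.

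\textbf{Step 1: $T$ has projective dimension at most one.} Since $M$ is a generator, there is a right $\add_AM$-approximation $M_0\to X$ that is surjective. Its kernel $K$ has depth $\geqslant 2$ by \cite[Lemma 2.10]{LSW}. The sequence $0\to K\to M_0\to X\to 0$ gives $\gExt^1_A(M,K)=0$, using $\gExt^1_A(M,M_0)=0$ and surjectivity on $\gHom_A(M,-)$. The local cohomology of $K$ is controlled by that of $M_0$ and $X$, both MCM of depth $3$, so $K$ is MCM. The $2$-cluster tilting property then puts $K$ in $\add_AM$. Applying $\gHom_A(M,-)$ yields a projective resolution $0\to\gHom_A(M,K)\to\gHom_A(M,M_0)\to T\to0$.

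\textbf{Step 2: $\gExt^1_B(T,T)=0$.} I would reduce this to $\gExt^1_A(X,X)=0$ via the resolution in Step 1. Concretely, one checks that $\gHom_B(\gHom_A(M,-),\gHom_A(M,X))\cong\gHom_A(-,X)$ on $\add_AM$. This holds because $\gHom_A(M,-)$ is fully faithful on $C_A$ by Proposition \ref{C_A and C_B}. Then any map $\gHom_A(M,K)\to T$ comes from a map $K\to X$. Every such map extends along $K\to M_0$ because $\gExt^1_A(X,X)=0$ and the cokernel of $K\to M_0$ is $X$.

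\textbf{Step 3: $T$ generates and has the right endomorphism ring.} For generation, apply the symmetric construction with a surjective $\add_AX$-approximation of $M$, since $X$ is also a generator. This gives $0\to\gHom_A(M,X_1)\to\gHom_A(M,X_0)\to B$ in $\add_BT$, so $B$ lies in the thick subcategory generated by $T$. The endomorphism ring is $\gEnd_B(T)\cong\gEnd_A(X)=B'$ by full faithfulness on $C_A$.

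\textbf{Step 4: conclude.} Finite global dimension of $B$ upgrades $T$ to a tilting complex in $\D^b(\gr B)$. Rickard's theorem then gives the derived equivalence $\D^b(\gr B)\simeq\D^b(\gr B')$.

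I expect Step 2 and the MCM claim for $K$ in Step 1 to be the main obstacle. The depth bookkeeping needs the precise vanishing $\gExt^1_A(M,K)=0$ to promote depth $2$ to depth $3$. I would handle this with the local cohomology long exact sequence together with Lemma \ref{cohomology in tails} in $\qgr A$. There, Theorem \ref{d-1 CT and NQR} identifies the Ext vanishing with membership in $\add_{\cA}\cM$.
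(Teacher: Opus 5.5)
Your overall route is the one the paper intends; the paper omits the proof and refers to \cite{LSW} and \cite[Theorem 5.3.2]{I2}. You reduce to $(d-1)$-cluster tilting modules via Theorem \ref{M_A is CT module}, use $\add_AM=\MCM(A)=\add_AX$ for $d=2$, and for $d=3$ show that $T=\gHom_A(M,X)$ is a tilting $B$-module with $\gEnd_B(T)\cong B'$. The $d=2$ case, Steps 1 and 2, and the endomorphism ring computation are correct.

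The gap is the generation claim in Step 3. You take a right $\add_AX$-approximation $X_0\to M$ with kernel $X_1\in\add_AX$ and apply $\gHom_A(M,-)$. This gives only the left exact sequence $0\to\gHom_A(M,X_1)\to\gHom_A(M,X_0)\to B$, and the last map is essentially never onto:
\begin{itemize}
\item It is surjective iff $1_M$ lifts, i.e.\ iff $X_0\to M$ splits, i.e.\ iff $M\in\add_AX$.
\item If $M\in\add_AX$, then $\gExt^1_A(M,X)=0$. The cluster tilting property of $M$ then gives $X\in\add_AM$, so $\add_AM=\add_AX$.
\end{itemize}
So in every nontrivial case the cokernel is nonzero; it is a submodule of the finite-dimensional module $\gExt^1_A(M,X_1)$. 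All you obtain is that $\pi B$ lies in the thick subcategory generated by $\pi T$ in $\D^b(\qgr B)$. You do not obtain that $B$ lies in the thick subcategory generated by $T$.

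The fix is to approximate from the other side.
\begin{itemize}
\item Take a left $\add_AX$-approximation $M\to X^0$. It exists because $\gHom_A(M,X)$ embeds in a finite sum of shifts of $X$. That sum is a finitely generated projective left $B'$-module since $A\in\add_AX$, and $B'$ is noetherian.
\item The map $M\to X^0$ is injective. Since $M$ is MCM, the duality in Proposition \ref{Hom(-,Omega) induce duality on MCM} embeds $M$ in a finite sum of shifts of $\Omega_A$. Moreover $\Omega_A\in\add_AX$ by Lemma \ref{CT generators and canonical duality}, so this embedding factors through $M\to X^0$.
\item The depth lemma gives the cokernel $X^1$ only depth $\geqslant 2$. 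This is where the Ext bookkeeping you anticipated is actually needed. Since $\Omega_A\in\add_AX$, the approximation property gives $\gExt^1_A(X^1,\Omega_A)=0$, so $X^1$ is MCM by Proposition \ref{Hom(-,Omega) induce duality on MCM}(1).
\item Likewise $\gExt^1_A(X^1,X)=0$, so $X^1\in\add_AX$ by the second description of $2$-cluster tilting.
\item Applying $\gHom_A(M,-)$ and using $\gExt^1_A(M,M)=0$ gives an exact sequence $0\to B\to T^0\to T^1\to 0$ with $T^0,T^1\in\add_BT$. This is the missing tilting condition.
\end{itemize}

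Two smaller points. In Step 1, the kernel $K$ is MCM directly from the depth lemma, since $X$ has depth $3$, so your closing worry is misplaced there. In Step 4, finite global dimension is not needed, because $\pdim_BT\leqslant 1$ already makes $T$ perfect.
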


\section{Noncommutative resolutions of invariant subrings}\label{noncommutative resolutions of invariant rings}
In this section, we consider the invariant subrings of Hopf actions on commonly graded algebras and study their noncommutative resolutions.
For the theory and notation of Hopf algebras and Hopf actions on algebras, we primarily follow \cite{Mo}.




Suppose that $H$ is a finite-dimensional semisimple Hopf $k$-algebra and that $A$ is a left $H$-module $k$-algebra.
Assume that $A$ is noetherian.
Let $A\#H$ denote the smash product of $A$ and $H$.
Both $H$ and $A$ may be regarded as subalgebras of $A\#H$ by identifying $H$ and 
$A$ with $1\#H$ and $A\#1$, respectively. 
Since $H$ is finite-dimensional, $A\#H$ is finitely generated as both a left and a right $A$-module; hence, it is noetherian.
Let $A^H=\{ a \in A \mid h \rightharpoonup a = \varepsilon(h)a, \forall h\in H\}$ be the invariant subring of $A$ under the Hopf action. By \cite[Chapter 4]{Mo}, $A^H$ is noetherian, and $A$ is finitely generated as both a left and a right $A^H$-module. If $H=kG$ is the group algebra of a group $G$, then $A\#H$ and $A^H$ are denoted by $A\#G$ and $A^G$, respectively.

Throughout the remainder of this section, we assume that $A$ is a noetherian commonly graded algebra and that $H$ is a finite-dimensional semisimple Hopf algebra acting homogeneously on $A$, namely, $H \rightharpoonup A_i\subseteq A_i$ for all $i$. Then both $A^H$ and $A\#H$ are commonly graded noetherian algebras.


Let $\int$ denote an integral of $H$ such that $\varepsilon(\int)=1$, where $\varepsilon$ is the counit of $H$.
The following well-known results, originally proved for connected graded algebras, have proofs that also apply to commonly graded algebras. 

\begin{lemma}\label{property of A^H}
Keep the assumptions and notation as above. 
\begin{itemize}
\item [(1)] \cite[Lemma 2.4]{KKZ} The map $\int \rightharpoonup \, : A\to A$ is a projection of $(A^H,A^H)$-bimodules such that $A^H=\int \rightharpoonup A$ and $A=A^H\oplus C$, where $C=\Ker(\int\rightharpoonup)$.
\item [(2)] \cite[Lemma 2.5]{KKZ} $\int\rightharpoonup \, : A\to A$ induces a projection of $(A^H,A^H)$-bimodules
$$\textstyle R^i\Gamma_{(A^H)^o}(\int):R^i\Gamma_{(A^H)^o}(A)\to R^i\Gamma_{(A^H)^o}(A)$$
whose image is $R^i\Gamma_{(A^H)^o}(A^H)$. 
The natural left $A\#H$-module structure of $A$ induces a left $A\#H$-module structure on $R^i\Gamma_{(A^H)^o}(A)$ so that $R^i\Gamma_{(A^H)^o}(\int)$ agrees with the left $H$-action of the element $\int$ on $R^i\Gamma_{(A^H)^o}(A)$.
\item [(3)] \cite[Lemma 3.2]{KKZ} Suppose that $A$ is a balanced CM algebra of dimension $d$. Let $\Omega_A=D(R^d\Gamma_A(A))$ and $\Omega_{A^H}=\Omega_A\cdot \int$. Then $\Omega_{A^H}[d]$ is a balanced dualizing complex of $A^H$. Consequently, $A^H$ is a balanced CM algebra of dimension $d$.
\end{itemize}
\end{lemma}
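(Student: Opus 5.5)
The plan is to adapt the connected graded arguments of \cite{KKZ}, checking at each step that only the commonly graded tools already available in the paper are needed.

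For (1), I will first show that $\int\rightharpoonup$ is $(A^H,A^H)$-bilinear. For $b\in A^H$ and $a\in A$, the module algebra axiom gives $\int\rightharpoonup(ba)=\sum(\int_1\rightharpoonup b)(\int_2\rightharpoonup a)$. Since $b$ is invariant, this equals $\sum\varepsilon(\int_1)b(\int_2\rightharpoonup a)=b(\int\rightharpoonup a)$; right linearity is symmetric. Because $h\int=\varepsilon(h)\int$, the image of $\int\rightharpoonup$ lies in $A^H$. Because $\varepsilon(\int)=1$, it is the identity on $A^H$. Hence $\int\rightharpoonup$ is an idempotent bimodule endomorphism, and this yields $A=A^H\oplus C$. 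Homogeneity of the action makes this a graded decomposition.

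For (2), the key point is that $R^i\Gamma_{(A^H)^o}$ is an additive functor on graded left $A^H$-modules. Applied to the idempotent left $A^H$-linear map $\int\rightharpoonup$, it produces an idempotent whose image is $R^i\Gamma_{(A^H)^o}(A^H)$, because additive functors preserve direct sum decompositions. Right $A^H$-linearity passes through by functoriality, since right multiplications are left-module maps.

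For the $A\#H$-structure, I will use that each $h\in H$ acts on $A$ by a $k$-linear map that is only twisted-linear over $A^H$: $h\rightharpoonup(ba)=\sum(h_1\rightharpoonup b)(h_2\rightharpoonup a)=b(h\rightharpoonup a)$ for $b\in A^H$. This is genuinely left $A^H$-linear, so functoriality gives the $H$-action on $R^i\Gamma_{(A^H)^o}(A)$. Left multiplication by elements of $A$ is not $A^H$-linear. To handle it, I will compute local cohomology as $\dlim\gExt^i_{(A^H)^o}(A^H/J^n,A)$ with $J=J_{A^H}$. Equivalently, I will use a Čech-free description: take an injective resolution of $A$ as a graded $(A\#H)$–$A^H$ bimodule that restricts to injectives over $(A^H)^o$. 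This works because $A\#H$ is a finite extension and the restriction functor preserves injectivity via the right adjoint $\gHom_{A^H}(A\#H,-)$. The smash product relations are then inherited, and $R^i\Gamma(\int)$ agrees with the action of $\int$.

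For (3), I will follow \cite[Lemma 3.2]{KKZ}. Since $A$ is finite over $A^H$ on both sides, \cite[Proposition 2.8]{LSW} gives $R\Gamma_{A^H}(A)\cong R\Gamma_A(A)$, and likewise on the other side. Since $A$ is balanced CM, these are concentrated in degree $d$. By (2) and the right-hand analogue, $R^i\Gamma_{A^H}(A^H)$ is a direct summand of $R^i\Gamma_{A^H}(A)$, so it vanishes for $i\neq d$. Applying the same argument on the left shows that $R^d\Gamma_{A^H}(A^H)=\int\cdot R^d\Gamma(A)$. Dualizing gives $D(R^d\Gamma_{A^H}(A^H))=D(R^d\Gamma_A(A))\cdot\int=\Omega_A\cdot\int$ as an $(A^H,A^H)$-bimodule.

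Since $A^H$ is noetherian with the $\chi$-condition and finite lcd on both sides, I will check both by finiteness. The $\chi$-condition descends because $\gExt_{A^H}(A^H/J,-)$ of a summand of a module finite over $A$ is controlled by $A$'s $\chi$. Given this, Theorem \ref{existence of balanced dualizing complex} produces the balanced dualizing complex $D(R\Gamma_{A^H}(A^H))\cong\Omega_{A^H}[d]$. The two-sided local cohomologies agree by the same theorem. I expect the main obstacle to be exactly this verification that the $\chi$-condition and the left–right compatibility of the bimodule structure descend to $A^H$ in the non-connected setting.
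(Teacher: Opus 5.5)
Your overall route is the paper's own. The paper gives no independent proof: it asserts that the arguments of \cite[Lemmas 2.4, 2.5, 3.2]{KKZ} carry over, with the comparison of local cohomology along finite extensions \cite[Proposition 2.8]{LSW} as the commonly graded input. Part (1) and the projection statement in (2) are fine.

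The step you use to put an $A\#H$-structure on $R^i\Gamma_{(A^H)^o}(A)$ is false, however: restriction along $A^H\to A\#H$ does \emph{not} preserve injectives. It is the coinduction $\gHom_{A^H}(A\#H,-)$ that preserves injectives, being right adjoint to the exact restriction functor. Restriction itself preserves injectives iff its left adjoint $A\#H\otimes_{A^H}-$ is exact. Since $A\#H\cong A\otimes H$ as right $A^H$-modules, this happens iff $A$ is flat over $A^H$. That fails in the cases the paper cares about. For $A=k[x,y]$ with $H=k\mathbb{Z}_2$ acting by $-1$, $D(A\#H)$ is graded injective over $A\#H$. As a left $A^H$-module it is $D(A)^{\oplus 2}$, which is not injective because $A$ is not free, hence not flat, over $A^H$.

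What you actually need is weaker and true: $\Gamma_{(A^H)^o}=\Gamma_{(A\#H)^o}$ on $A\#H$-modules, and graded injective $A\#H$-modules are $\Gamma_{(A^H)^o}$-acyclic. Together these give $R\Gamma_{(A^H)^o}(A)\cong R\Gamma_{(A\#H)^o}(A)$. This is exactly \cite[Proposition 2.8]{LSW}, which you already invoke in (3), and it is how the paper puts the right $A\#H$-structure on $\Omega_A$. With it, note that $1\#h$ commutes with $A^H$ inside $A\#H$. So left multiplication by $h$ on an $A\#H$-injective resolution of $A$ is an $A^H$-linear lift of $h\rightharpoonup$, which gives the compatibility with $R^i\Gamma_{(A^H)^o}(\int)$.

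In (3), make the descent of $\chi$ precise rather than appealing to ``control'' of $\gExt_{A^H}(A^H/J,-)$ by $A$'s $\chi$, which is not directly comparable. Take $N\in\gr A^H$.
\begin{enumerate}
\item By (1), $N$ is a direct summand of $N\otimes_{A^H}A$ as right $A^H$-modules.
\item $N\otimes_{A^H}A$ is finitely generated over $A$, so by \cite[Proposition 2.8]{LSW} its $A^H$-local cohomology equals its $A$-local cohomology. That is bounded above and vanishes above degree $d$.
\item Proposition \ref{facts about chi condition}(1) then gives $\chi$ and $\lcd A^H\leqslant d$. The same argument works for $(A^H)^o$, so Theorem \ref{existence of balanced dualizing complex} applies.
\end{enumerate}

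Also keep the sides straight. Statement (2) yields $R^d\Gamma_{(A^H)^o}(A^H)=\int\cdot R^d\Gamma_{(A^H)^o}(A)$ for \emph{left} local cohomology. It is this identity that dualizes to $\Omega_A\cdot\int$, and you should combine it with $R'\cong D(R\Gamma_{(A^H)^o}(A^H))$ from that theorem. Do not write the identity for $R^d\Gamma_{A^H}(A^H)$: the right-sided module is identified with the left-sided one only once balancedness is known.
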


The relationships between the MCM modules over $A$ and $A^H$ are given as follows.
\begin{lemma}\label{MCM A^H and MCM A}
Keep the assumptions and notation above. Suppose that $A$ is a balanced CM algebra of dimension $d$.
\begin{itemize}
    \item [(1)] For any $M\in \gr A$, $M_A$ is MCM if and only if $M_{A^H}$ is MCM.
    \item [(2)] Both ${}_{A^H}A$ and $A_{A^H}$ are MCM generators in their respective categories.
\end{itemize}
\end{lemma}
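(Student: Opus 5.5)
Since $\int\rightharpoonup$ is an $(A^H,A^H)$-bimodule projection of $A$ onto $A^H$ (Lemma \ref{property of A^H}(1)), $A^H$ is a graded direct summand of $A$ on both sides. The comparison tool I will use is the agreement of local cohomology over $A$ and over $A^H$: since $A$ is finitely generated on both sides over $A^H$ and $A^H$ is a noetherian commonly graded subalgebra, for a graded $A$-module $M$ the torsion functors $\Gamma_A$ and $\Gamma_{A^H}$ coincide on $M$. This holds because $(A^H)_{\geqslant n}A\supseteq A_{\geqslant m}$ for suitable $m$ (using finite generation and the bounded-below grading) and $A_{\geqslant n}\supseteq (A^H)_{\geqslant n}$. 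Both functors send injective $A$-modules to $\Gamma$-acyclic objects: an injective graded $A$-module $I$ is injective as an $A^H$-module, because $\gHom_{A^H}(-,I)\cong \gHom_A(-\otimes_{A^H}A,I)$ and $A$ is flat over $A^H$ (being a summand-type situation for semisimple $H$). Failing this, I would invoke \cite[Proposition 2.8]{LSW}, already used in Example \ref{triangular balanced CM example}, which gives $R\Gamma_A(M)\cong R\Gamma_{A^H}(M)$ for finite extensions. Either route gives $R^i\Gamma_A(M)\cong R^i\Gamma_{A^H}(M)$ for all $i$.

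For (1), I take $M\in\gr A$. Then $M$ is finitely generated over $A^H$ since $A_{A^H}$ is finitely generated. By Lemma \ref{property of A^H}(3), $\lcd A^H=\lcd A=d$. By Lemma \ref{depth and local cohomology}, $\depth$ is the first nonvanishing local cohomology degree, so the isomorphism above gives $\depth_AM=\depth_{A^H}M$. Hence $M_A$ is MCM if and only if $M_{A^H}$ is MCM.

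For (2), I apply (1) to $M=A_A$. Since $A$ is balanced CM of dimension $d$, the module $A_A$ is MCM, because $R^i\Gamma_A(A)=0$ for $i\neq d$: the dualizing complex is concentrated in one degree, and local duality gives $R\Gamma_A(A)\cong D(\Omega)[-d]$. So $A_{A^H}$ is MCM. It is a generator because $A^H$ is a direct summand of $A_{A^H}$ by Lemma \ref{property of A^H}(1). The left module ${}_{A^H}A$ is handled by the symmetric argument over $A^o$ and $(A^H)^o$, using the left version of the local cohomology comparison, in the spirit of Lemma \ref{property of A^H}(2).

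The main obstacle is justifying $R\Gamma_A\cong R\Gamma_{A^H}$ on $A$-modules in the commonly graded setting. Here the Jacobson radicals differ, and $A_0$ need not be semisimple, so I must check that the ideals $J_{A^H}A$ and $J_A$ define the same torsion. I would reduce this to comparing $A_{\geqslant n}$-torsion, which is allowed since $J/J^2$ is finite-dimensional for noetherian algebras (\cite[Lemma 2.6]{LSW}). Then I would use the finite module argument over $A^H$ to obtain the cofinality of the two systems of ideals.
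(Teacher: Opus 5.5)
Your proposal follows the paper's proof: the paper cites \cite[Proposition 2.8]{LSW} to get $R^i\Gamma_{A^H}(M)\cong R^i\Gamma_A(M)$ for $M\in\gr A$, reads off equality of depths (with $\lcd A^H=\lcd A=d$ from Lemma \ref{property of A^H}(3)), and deduces (2) from (1) together with the bimodule splitting $A=A^H\oplus C$.

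Your ``direct'' justification of the local cohomology comparison is wrong and should be discarded. In general ${}_{A^H}A$ is not flat. For example, take $R=k[x,y]$ and $G=\{\pm1\}$. Then the odd part $C$ is a non-free rank-one MCM module over $R^G=k[x^2,xy,y^2]$. Consequently injective $A$-modules need not be $A^H$-injective: in this example the summand $D(C)$ of the injective $R$-module $D(R)$ is not $R^G$-injective. What is true is only the weaker statement that injective $A$-modules are $\Gamma_{A^H}$-acyclic. That is exactly what \cite[Proposition 2.8]{LSW} supplies, so rely on that citation, as the paper does.
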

\begin{proof}
(1) By Lemma \ref{property of A^H}, $A^H$ is a balanced Cohen-Macaulay algebra of dimension $d$. 
By \cite[Proposition 2.8]{LSW}, for any $M\in \gr A$ and $i\geqslant 0$, $R^i\Gamma_{A^H}(M)\cong R^i\Gamma_A(M)$. Therefore, $M_A$ is MCM if and only if $M_{A^H}$ is MCM.

(2) It follows from (1) and Lemma \ref{property of A^H}.
\end{proof}

\subsection{Gorenstein invariant subrings}
In this subsection, we extend the definition of the homological determinant for Hopf actions on connected graded AS-Gorenstein algebras to the commonly graded setting. We prove that, if the homological determinant of the Hopf action is trivial, then the invariant subring is commonly graded AS-Gorenstein.

Recall that $A$ is called {\it basic} if $A/J_A$ is a direct product of division algebras. 
In this subsection, we assume that \textit{$A$ is a noetherian basic commonly graded AS-Gorenstein algebra of dimension $d$}.



Let $\Omega_A=D(R^d\Gamma_A(A))$. Since $A$ is a basic commonly graded AS-Gorenstein algebra of dimension $d$, $\Omega_A$ is a graded invertible $(A,A)$-bimodule and, by \cite[Proposition 5.8]{LW2}, is isomorphic, as an ungraded $(A,A)$-bimodule, to ${}^{\mu}A^1$, where $\mu$ denotes the Nakayama automorphism of $A$.  
Let $\mathbf{e}$ be an element of $\Omega_A$ such that $\Omega_A=\mathbf{e}A$ as a free $A$-module. 
Then, for any $a\in A$, $a\cdot \mathbf{e}=\mathbf{e}\mu(a)$. 
Since $R\Gamma_A(A)\cong R\Gamma_{A^o}(A)\cong R\Gamma_{(A\#H)^o}(A)$, $\Omega_A$ admits a right $A\#H$-module structure whose restriction to $A$ coincides with the usual right $A$-module structure of ${}^{\mu}A^1$.

Define the $k$-linear map $\eta_{\mathbf{e}}:H\to A$ by
$$\mathbf{e}\cdot h=\mathbf{e} \, \eta_{\mathbf{e}}(h)\in \Omega_A = \mathbf{e}A.$$

\begin{definition}\label{homological determinant} Let $S$ be the antipode of $H$.
 With the above assumptions and notation, the map $\eta_{\mathbf{e}}\circ S$ is called the {\it homological determinant} of the $H$-action on $A$ with respect to $\mathbf{e}$ and is denoted by $\hdet_{\mathbf{e}}$. If $\hdet_{\mathbf{e}}$ is equal to the composition
    \[H\xrightarrow[]{\varepsilon}k\to A,\]
then the homological determinant $\hdet_{\mathbf{e}}$ of the $H$-action on $A$ is called {\it trivial}.
\end{definition}

\begin{remark}
If $A$ is connected graded, then Definition \ref{homological determinant} coincides with the definition given in \cite{JZ,KKZ}, and $\hdet_{\mathbf{e}}$ is independent of the choice of $\mathbf{e}$. 
In this case, $\hdet_{\mathbf{e}}$ is denoted by $\hdet$.
If $A$ is a skew Calabi-Yau algebra, then $A$ is an $\mathbb{N}$-graded AS-regular algebra by \cite{RR}, and the present definition coincides with that in \cite{WZ}. 
    
In \cite{RRZ}, Reyes, Rogalski, and Zhang introduced a class of generalized AS-Gorenstein algebras, which is a special case of Definition \ref{def-generalized-AS-Gorenstein} (see \cite[Remark 5.19]{LW2}). They also defined the homological determinant under the assumption that an $H$-stable generator exists (see \cite[Definition 3.7]{RRZ}). In our setting, $\mathbf{e}$ serves as an $H$-stable generator.
    
When $A$ is $\mathbb{N}$-graded and $\gExt_A^d(A_0,A)$ is regarded as an $H$-subspace of $R^d\Gamma_A(A)$, 
the image of $\hdet_{\mathbf{e}}$ is contained in $A_0$.
\end{remark}

The following proposition establishes that $\Omega_{A^H}$ is a graded invertible $(A^H,A^H)$-bimodule and describes the relationship between the Nakayama automorphisms of $A$ and $A^H$.

\begin{proposition}\label{hdet is trivial}
Keep the assumptions and notation introduced above.
Suppose that $\hdet_{\mathbf{e}}$ is trivial. Then $\Omega_{A^H}$ is a graded invertible $(A^H,A^H)$-bimodule, and
the restriction of the Nakayama automorphism $\mu$ of $A$ to $A^H$ is an automorphism, denoted by $\mu'$, such that $\Omega_{A^H}\cong {}^{\mu'}(A^H)^1$ as ungraded $(A^H,A^H)$-bimodules.
\end{proposition}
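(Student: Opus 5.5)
We follow the strategy of \cite[Theorem 3.3]{KKZ}: compute $\Omega_{A^H}=\Omega_A\cdot\int$ explicitly using the generator $\mathbf e$, and show it is free of rank one on either side, generated by $\mathbf e$.

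\textbf{Step 1: $\mathbf e$ is $H$-invariant.} Since $\hdet_{\mathbf e}=\eta_{\mathbf e}\circ S$ equals $\varepsilon(-)1$ and $S$ is bijective with $\varepsilon\circ S=\varepsilon$, we get $\eta_{\mathbf e}(h)=\varepsilon(h)$ for all $h\in H$. Thus $\mathbf e\cdot h=\varepsilon(h)\mathbf e$, and in particular $\mathbf e\cdot\int=\mathbf e$.

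\textbf{Step 2: compute $\Omega_{A^H}$.} The right $A\#H$-module structure on $\Omega_A$ satisfies the smash product rule
\[
(\mathbf e a)\cdot h=\sum(\mathbf e\cdot h_1)\,(S^{-1}\text{ or }S\text{-twisted action of }h_2\text{ on }a),
\]
where the exact twist comes from identifying $R\Gamma_{(A\#H)^o}(A)$ with a right module. Combining this with Step 1 gives
\[
\mathbf e a\cdot h=\mathbf e\,(a\cdot h)
\]
for the induced right $H$-action on $A$. Hence $\Omega_A\cdot\int=\mathbf e\,(A\cdot\int)$, and $A\cdot\int=A^H$ by Lemma \ref{property of A^H}(1), appropriately transported to the right-action convention. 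So $\Omega_{A^H}=\mathbf e A^H$, which is free of rank one as a right $A^H$-module, since $\mathbf e A\cong A$.

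\textbf{Step 3: the Nakayama automorphism preserves $A^H$.} For $b\in A^H$ we have $b\mathbf e=\mathbf e\mu(b)$, and $b\mathbf e\in A^H\Omega_{A^H}\subseteq\Omega_{A^H}=\mathbf e A^H$. By freeness, $\mu(b)\in A^H$, so $\mu(A^H)\subseteq A^H$. Symmetrically, either apply the same argument with $\mu^{-1}$ and the left structure $\Omega_A=A\mathbf e$, or note that $A^H$ is locally finite and $\mu$ is injective and graded, so $\mu|_{A^H}$ is bijective degreewise. This defines $\mu'$, and $\Omega_{A^H}=\mathbf eA^H\cong{}^{\mu'}(A^H)^1$ via $\mathbf e b\mapsto b$; left multiplication by $c$ sends $\mathbf e b\mapsto c\mathbf e b=\mathbf e\mu'(c)b$. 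Moreover $A^H\mathbf e=\mathbf e\mu'(A^H)=\mathbf eA^H$, so $\Omega_{A^H}$ is free of rank one on both sides, hence a graded invertible bimodule, with inverse $\mathbf e^{-1}$-shifted ${}^{\mu'^{-1}}A^H$.

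\textbf{Main obstacle.} The delicate point is Step 2: pinning down precisely how the right $H$-action on $\Omega_A=D(R^d\Gamma_A(A))$ interacts with the right $A$-action. This involves dualizing the left $A\#H$-structure from Lemma \ref{property of A^H}(2), where the antipode enters. We must then verify that $\Omega_A\cdot\int$ really equals $\mathbf e$ times invariants rather than some twisted invariants. We would check this directly via $\langle\phi\cdot h,x\rangle=\langle\phi,S(h)x\rangle$ (or $S^{-1}$), using $S(\int)=\int$ for semisimple $H$.
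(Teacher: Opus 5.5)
Your Steps 1 and 2, and the inclusion $\mu(A^H)\subseteq A^H$ in Step 3, agree with the paper's proof. The paper gets the inclusion by computing $(a\mathbf e)\cdot h$ in two ways. Your version, via $A^H\Omega_{A^H}\subseteq\Omega_{A^H}=\mathbf eA^H$ and injectivity of $x\mapsto\mathbf ex$, uses the same fact that $\Omega_A$ is an $(A^H,A\#H)$-bimodule, and it is fine.

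The point you flag as the main obstacle is not really one. No dualization of the local cohomology pairing is needed: $\Omega_A$ is a right $A\#H$-module extending its right $A$-structure, and the identity $a\#h=\sum(1\#h_2)(S^{-1}h_1\rightharpoonup a\#1)$ in $A\#H$ gives
\[
(\mathbf ea)\cdot h=\mathbf e\cdot(a\#h)=\sum(\mathbf e\cdot h_2)(S^{-1}h_1\rightharpoonup a)=\mathbf e\,(S^{-1}h\rightharpoonup a).
\]
Hence $\Omega_A\cdot\int=\mathbf e(\int\rightharpoonup A)=\mathbf eA^H$, using $S(\int)=\int$.

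\textbf{The gap: surjectivity of $\mu|_{A^H}$.} Your claim that $\Omega_{A^H}$ is invertible rests on freeness on both sides, hence on this surjectivity, and neither of your two justifications works.

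\emph{Route (b)} assumes $\mu$ is degree-preserving. For a basic commonly graded AS-Gorenstein algebra, however, $\Omega_A\cong\bigoplus_i e_iA(l_i)$ as graded right modules with possibly different $l_i$. Then $\mathbf e$ need not be homogeneous, and $\Omega_A\cong{}^{\mu}A^1$ holds only as ungraded bimodules, which is why the paper says ``ungraded''. For instance, for the path algebra of an oriented $2$-cycle with arrows of degrees $1$ and $2$, $\mu$ swaps the two vertices. It therefore sends the degree-$1$ arrow into the span of paths parallel to the other arrow, none of which has degree $1$.

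\emph{Route (a)} is circular. Since $A^H\mathbf e=\mathbf e\mu(A^H)$, the equality $\Omega_{A^H}=A^H\mathbf e$ is equivalent to $\mu(A^H)=A^H$. Making it non-circular would need a separate left-sided description of $\Omega_{A^H}$: a left $H$-structure on $\Omega_A$ under which $\mathbf e$ is invariant and $\Omega_{A^H}=\int\cdot\Omega_A$. The right-sided condition on $\hdet_{\mathbf e}$ does not supply this.

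\textbf{The missing ingredient.} The paper uses that $\Omega_{A^H}[d]$ is a dualizing complex (Lemma \ref{property of A^H}(3)), so left multiplication $A^H\to\gEnd_{A^H}(\Omega_{A^H})$ is an isomorphism. Since $\Omega_{A^H}=\mathbf eA^H$ is free of rank one on the right, its endomorphisms are the maps $\mathbf ec\mapsto\mathbf exc$ with $x\in A^H$. An element $b\in A^H$ acts by $\mathbf ec\mapsto\mathbf e\mu'(b)c$, so this isomorphism is exactly the bijectivity of $\mu'$. Combined with $\Omega_{A^H}$ being a projective generator (free of rank one), it also yields invertibility directly. That is the paper's order: invertibility first, then $\mu'$ is an automorphism.
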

\begin{proof}
Since $a\#h=(1\#h_2) (S^{-1}h_1 \rightharpoonup a\# 1) \in A \# H$ and $\hdet_{\mathbf{e}}$ is trivial, it follows that
\begin{equation}\label{right-H-module-str-Omega}
    (\mathbf{e}a)\cdot h=\mathbf{e}\sum (\hdet_{\mathbf{e}}(S^{-1}h_2)(S^{-1}h_1\rightharpoonup a))=\mathbf{e}(S^{-1}h \rightharpoonup a).
\end{equation} 
Thus, for any $a\in A^H$, $(\mathbf{e}a)\cdot \int=\mathbf{e}a$. Therefore, $\mathbf{e}A^H\subseteq \mathbf{e}A\cdot \int$. Conversely,
for any $a\in A$, $(\mathbf{e}a)\cdot \int=\mathbf{e}(\int \rightharpoonup a)\in \mathbf{e}A^H$.
Hence, $\mathbf{e}A\cdot \int=\mathbf{e}A^H$. By Lemma \ref{property of A^H} (3), $\Omega_{A^H} \cong A^H$ as ungraded right $A^H$-modules. 
Therefore, $\Omega_{A^H}$ is a projective generator in $\gr A^H$.
Since $\Omega_{A^H}[d]$ is a dualizing complex of $A^H$, $\gEnd_{A^H}(\Omega_{A^H})\cong A^H$ as graded $(A^H,A^H)$-bimodules. Therefore $\Omega_{A^H}$ is a graded invertible $(A^H,A^H)$-bimodule.

Since $\Omega_A$ is also a graded $(A^H,A\#H)$-bimodule, $(a\cdot \mathbf{e})\cdot h=a\cdot(\mathbf{e}\cdot h)$ for any $a\in A^H$ and $h\in H$.  
On one hand, by \eqref{right-H-module-str-Omega},
\[(a\cdot \mathbf{e})\cdot h=(\mathbf{e}\mu(a))\cdot h=\mathbf{e}(S^{-1}h \rightharpoonup \mu(a)).\]
On the other hand, since $\hdet_{\mathbf{e}} = \varepsilon$, 
\[a\cdot (\mathbf{e}\cdot h)=a\cdot \varepsilon(h)\mathbf{e}=\mathbf{e}(\varepsilon(h)\mu(a)).\]
Therefore, $S^{-1}h \rightharpoonup \mu(a)=\varepsilon(h)\mu(a)$.
It follows that $\mu(a)\in A^H$ for any $a\in A^H$.
Hence, the restriction of $\mu$ to $A^H$ defines an endomorphism $\mu'$ of $A^H$. 
Moreover,
$\Omega_{A^H}$ is isomorphic to ${}^{\mu'}(A^H)^1$ as an ungraded $(A^H,A^H)$-bimodule.
Since $\Omega_{A^H}$ is an invertible $(A^H,A^H)$-bimodule, $\mu'$ is an automorphism of $A^H$.
\end{proof}

\begin{theorem}\label{A^H is AS Gorenstein}
Let $H$ be a finite-dimensional semisimple Hopf algebra, and let $A$ be a noetherian basic commonly graded AS-Gorenstein algebra of dimension $d$.
Assume that $A$ is a left $H$-module algebra, that the $H$-action on $A$ is homogeneous, and that the homological determinant $\hdet_{\mathbf e}$ of the $H$-action on $A$ is trivial. 
Then $A^H$ is a noetherian commonly graded AS-Gorenstein algebra of dimension $d$.
\end{theorem}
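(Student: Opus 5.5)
The plan is to combine Lemma \ref{property of A^H} (3) with Proposition \ref{hdet is trivial}, and then recognize AS-Gorensteinness through an invertible balanced dualizing bimodule, as in the proof of Theorem \ref{when A is GAS Gorenstein}.

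First, $A^H$ is noetherian and commonly graded, as recorded at the beginning of this section. Since $A$ is commonly graded AS-Gorenstein of dimension $d$, Proposition \ref{AS-G has balanced dualizing complex and chi} (1) shows that $A$ is balanced CM of dimension $d$. Lemma \ref{property of A^H} (3) then shows that $A^H$ is balanced CM of dimension $d$, with balanced dualizing complex $\Omega_{A^H}[d]$, where $\Omega_{A^H}=\Omega_A\cdot\int$.

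Second, because $\hdet_{\mathbf e}$ is trivial, Proposition \ref{hdet is trivial} gives that $\Omega_{A^H}$ is a graded invertible $(A^H,A^H)$-bimodule. It is isomorphic, as an ungraded bimodule, to ${}^{\mu'}(A^H)^1$.

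Third, I would conclude exactly as in the converse direction of Theorem \ref{when A is GAS Gorenstein}. The algebra $A^H$ has a balanced dualizing complex that is an invertible bimodule shifted by $d$, so \cite[Theorem 7.8 (6)]{LW2} shows that $A^H$ is commonly graded AS-Gorenstein of dimension $d$.

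Two hypotheses of that criterion need checking, and this is where care is required.

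\begin{itemize}
\item \cite[Theorem 7.8]{LW2} may require the algebra to be basic, or may require some decomposition of $A^H/J_{A^H}$. The invariant ring of a basic algebra need not obviously be basic. If such a hypothesis is needed, I would instead verify the conditions of Definition \ref{def-generalized-AS-Gorenstein} directly:
\begin{itemize}
\item The injective dimension of $A^H$ on each side equals that of $\Omega_{A^H}$, because $\Omega_{A^H}$ is invertible; by Proposition \ref{AS-G has balanced dualizing complex and chi} (2) this is $d$.
\item Local duality (Theorem \ref{local duality}) gives $\gExt^i_{A^H}(S,A^H)\cong \gExt^i_{A^H}(S,\Omega_{A^H})\otimes_{A^H}\Omega_{A^H}^{-1}$. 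This is dual to $R^{d-i}\Gamma(S)$ twisted by an invertible bimodule. For a finite-dimensional simple $S$ it vanishes unless $i=d$.
\item The bijection between simple modules on the two sides comes from twisting by $\mu'$ and applying Matlis duality.
\end{itemize}

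\item One must confirm that the dimension $d$ is preserved, i.e.\ that $\lcd A^H=d$. This follows from the balanced CM property.
\end{itemize}

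I expect the main obstacle to be this last verification, specifically the bijection between simple modules in condition (3). If \cite[Theorem 7.8 (6)]{LW2} applies without a basic hypothesis, that verification becomes unnecessary.
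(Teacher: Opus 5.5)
Your proposal is correct and follows the paper's proof. That proof is the one-line combination of Proposition \ref{hdet is trivial}, whose proof already uses Lemma \ref{property of A^H} (3), with \cite[Theorem 7.8 (6)]{LW2}. Your contingency is not needed: the paper applies \cite[Theorem 7.8 (6)]{LW2} with no basicness hypothesis (compare the proof of Theorem \ref{when A is GAS Gorenstein}, where it is used for an arbitrary balanced CM isolated singularity). Your direct verification of the AS-Gorenstein conditions via local duality would also work.
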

\begin{proof}
It follows from Proposition \ref{hdet is trivial} and \cite[Theorem 7.8 (6)]{LW2}.
\end{proof}

In the special case where
$A$ is the preprojective algebra of an extended Dynkin diagram and the Hopf action preserves primitive idempotents, \cite{Wei2} proved the above result.



\subsection{Noncommutative resolutions of invariant subrings}
In this subsection, we investigate the noncommutative resolution of the invariant subring $A^H$.
Recall several facts concerning $H$-module algebras. 

There is a standard right $A\#H$-module structure on $A$, defined by
$$a\cdot (b\#h)=S^{-1}h \rightharpoonup (ab),$$
under which $A$ is an $(A^H,A\#H)$-bimodule. Similarly, there is a standard left $A\#H$-module structure on $A$, defined by 
$$(a\#h)\cdot b=a(h \rightharpoonup b),$$
under which $A$ is an $(A\#H,A^H)$-bimodule.

Let
$$\textstyle\tau:A\otimes_{A^H}A\to A\#H, \, a\otimes_{A^H}b \, \mapsto (a\#1)(1\#\int)(b\# 1),$$
$$\textstyle\mu:A\otimes_{A\#H} A\to A^H, \, a\otimes_{A\#H}b \, \mapsto (\int \rightharpoonup (ab)).$$
Then $(A\#H,A^H,{}_{A^H}A_{A\#H},{}_{A\#H}A_{A^H},\tau,\mu)$ forms a Morita context \cite[Theorem 4.5.3]{Mo}.



The following lemma is a consequence of \cite[Lemma 0.3]{CFM}, \cite[Lemma 1.4]{MU}, and \cite[Lemma 3.1]{BHZ1}.

\begin{lemma}\label{A^H and A smash H}
Keep the assumptions and notation as above. Let $e=1_A\#\int\in A\#H$.
    \begin{itemize}
        \item [(1)] The map $A^H\to e(A\#H)e, \, a\mapsto e(a\#1)e$, is an isomorphism of graded algebras. 
        \item [(2)] $A\to (A\#H)e, \, a\mapsto (a\#1)e$, is an isomorphism of graded $(A\#H,A^H)$-bimodules.
        \item [(3)] $A\to e(A\#H), \, a\mapsto e(a\#1)$, is an isomorphism of graded $(A^H,A\#H)$-bimodules.
    \end{itemize}
\end{lemma}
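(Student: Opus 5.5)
We verify (1)–(3) directly from the smash product multiplication $(a\#h)(b\#g)=\sum a(h_1\rightharpoonup b)\#h_2g$, using the defining properties of the integral: $h\int=\varepsilon(h)\int$ and $\varepsilon(\int)=1$, so $\int^2=\int$ and $e=1\#\int$ is an idempotent of degree $0$. Since $H$ acts homogeneously and sits in degree $0$ of $A\#H$, every map below is homogeneous of degree $0$. It therefore suffices to check the algebraic statements.

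First we compute $e(a\#1)=\sum(\int_1\rightharpoonup a)\#\int_2$. For (2), consider $a\mapsto (a\#1)e=a\#\int$. This map is injective because $A\#H\cong A\otimes H$ as vector spaces and $\int\neq 0$. Its image is $(A\#H)e$, since $(a\#h)e=a\#h\int=\varepsilon(h)a\#\int$. Compatibility with the left $A\#H$-action is checked as follows: $(b\#h)(a\#\int)=\sum b(h_1\rightharpoonup a)\#h_2\int$, and $h_2\int=\varepsilon(h_2)\int$ collapses this to $b(h\rightharpoonup a)\#\int$, which is the image of $(b\#h)\cdot a$. Right $A^H$-linearity holds because for $c\in A^H$ we have $(a\#\int)(c\#1)=\sum a(\int_1\rightharpoonup c)\#\int_2=ac\#\int$, using $\int_1\rightharpoonup c=\varepsilon(\int_1)c$.

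For (1), note that $e(a\#1)e=\sum(\int_1\rightharpoonup a)\#\int_2\int=(\int\rightharpoonup a)\#\int$. Hence $e(A\#H)e=e(A\#1)e=\{x\#\int : x\in\int\rightharpoonup A=A^H\}$, by Lemma \ref{property of A^H}(1). Restricted to $A^H$, the map $a\mapsto a\#\int$ is bijective onto this set. It is multiplicative because $(a\#\int)(b\#\int)=ab\#\int$ for $b\in A^H$, by the computation just given.

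For (3), the main obstacle is to match the nonstandard right action $a\cdot(b\#h)=S^{-1}h\rightharpoonup(ab)$. Our first attempt is the map $a\mapsto e(a\#1)=\sum(\int_1\rightharpoonup a)\#\int_2$. Right $A$-linearity is immediate. For right $H$-linearity we need
\[
\textstyle\sum(\int_1\rightharpoonup a)\#\int_2h=\sum(\int_1S^{-1}h\rightharpoonup a)\#\int_2 .
\]
This follows from the standard integral identity $\sum\int_1\otimes\int_2h=\sum\int_1S^{-1}(h)\otimes\int_2$, which holds because $\int$ is a two-sided integral in the semisimple Hopf algebra $H$, where $S^2=\mathrm{id}$ in characteristic $0$; in general one uses the cited form \cite[Lemma 0.3]{CFM}. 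Left $A^H$-linearity follows from $\int_1\rightharpoonup(ca)=c(\int_1\rightharpoonup a)$ for $c\in A^H$; more precisely, $\Delta(h)(c\otimes a)$ acts with $c$ invariant.

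Injectivity in (3) holds because applying $\mathrm{id}\otimes\varepsilon$ recovers $\int\rightharpoonup a$, and this does not determine $a$. Instead we apply $\mathrm{id}\otimes\varepsilon$ after multiplying on the right by suitable $h$: the identity above gives $e(a\#1)(1\#h)=\sum(\int_1S^{-1}h\rightharpoonup a)\#\int_2$. Alternatively, we use that $\{\int_2\}$ spans $H$ together with the nondegeneracy of the pairing defined by $\int$, so that $a$ is recovered. Surjectivity onto $e(A\#H)$ follows from $e(a\#h)=e(a\#1)(1\#h)$, which is the image of $a\cdot(1\#h)$. If the injectivity bookkeeping becomes messy, we will instead deduce (3) from (2) applied to the opposite algebra through the antipode, as in \cite[Lemma 3.1]{BHZ1}.
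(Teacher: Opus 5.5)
Your computations for (1), (2), and for the bimodule-linearity and surjectivity in (3) are correct. The paper gives no argument here and simply cites the three references named before the lemma, so a direct verification is a reasonable self-contained route.

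The one step you have not established is injectivity in (3). Your own first sentence there concedes that $\id\otimes\varepsilon$ only recovers $\int\rightharpoonup a$, and the proposed repair does not help. Because $\int$ is a two-sided integral, $\int S^{-1}(h)=\varepsilon(h)\int$. So applying $\id\otimes\varepsilon$ to $e(a\#1)(1\#h)=\sum(\int_1 S^{-1}(h)\rightharpoonup a)\#\int_2$ returns $\varepsilon(h)(\int\rightharpoonup a)$, which is again only the invariant part of $a$. The ``alternatively'' clause points in the right direction. The map $H^*\to H$, $f\mapsto\sum f(\int_2)\int_1$, is bijective by nondegeneracy of $(f,g)\mapsto(fg)(\int)$, so one may pick $f$ with $\sum f(\int_2)\int_1=1$ and apply $\id\otimes f$. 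But as written it is not an argument, and the fallback to the opposite algebra is never carried out.

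A one-line fix is available. Let $\Psi\colon A\#H\to A$, $x\#h\mapsto S^{-1}(h)\rightharpoonup x$; this is just $y\mapsto 1\cdot y$ for the given right $A\#H$-action on $A$. Then $\Psi(e(a\#1))=\bigl(\sum S^{-1}(\int_2)\int_1\bigr)\rightharpoonup a=\varepsilon(\int)a=a$, using $\sum S^{-1}(h_2)h_1=\varepsilon(h)1$. Conversely, your map $\phi\colon a\mapsto e(a\#1)$ is right $A\#H$-linear with $\phi(1)=e$, so $\phi(\Psi(ey))=\phi(1)\,ey=ey$. Hence $\Psi$ restricted to $e(A\#H)$ is inverse to $\phi$, which gives injectivity and surjectivity at once.

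Separately, your justification of $\sum\int_1\otimes\int_2 h=\sum\int_1 S^{-1}(h)\otimes\int_2$ via $S^2=\id$ in characteristic $0$ is unnecessary, and the paper does not assume characteristic $0$ in this section. For any right integral $t$, expanding $\sum(th_1)_1\otimes(th_1)_2S(h_2)$ in two ways gives $\sum t_1h\otimes t_2=\sum t_1\otimes t_2S(h)$; then substitute $S^{-1}(h)$ for $h$. Moreover, $\int$ is automatically two-sided. If it is, say, a left integral, then $\int h=\alpha(h)\int$ for some character $\alpha$, and applying $\varepsilon$ together with $\varepsilon(\int)=1$ gives $\alpha=\varepsilon$.
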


\begin{lemma}\label{A' and A}
    Keep the assumptions and notation as above. Let $e=1_A\#\int\in A\#H$. Suppose that the map
    $A\#H\to \gEnd_{A^H}(A), a\#h\mapsto (b\mapsto a(h \rightharpoonup b))$, is an algebra isomorphism. Then $\gHom_{A^H}(A,A^H)\cong A$ as graded $(A^H,A\#H)$-bimodules, and the graded Morita context $(A\#H,A^H,{}_{A^H}A_{A\#H},{}_{A\#H}A_{A^H},\tau,\mu)$ is equivalent to the Morita context induced by the module $A_{A^H}$.
\end{lemma}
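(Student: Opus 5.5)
We will use Lemma \ref{A^H and A smash H} to rewrite everything in terms of the idempotent $e=1\#\int$ of $B:=A\#H$. By Lemma \ref{A^H and A smash H}(2), $A\cong Be$ as graded $(A\#H,A^H)$-bimodules, and by (1), $A^H\cong eBe$. The given isomorphism $A\#H\to\gEnd_{A^H}(A)$ then identifies $B$ with $\gEnd_{eBe}(Be)$. The first claim is therefore that $\gHom_{eBe}(Be,eBe)\cong eB$ as graded $(eBe,B)$-bimodules. By Lemma \ref{A^H and A smash H}(3), $eB\cong A$ as $(A^H,A\#H)$-bimodules, so this claim gives $\gHom_{A^H}(A,A^H)\cong A$ with the standard right $A\#H$-structure.

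To produce the isomorphism, we will consider the natural map
\[
\phi: eB\to\gHom_{eBe}(Be,eBe),\qquad ex\mapsto (y\mapsto exy).
\]
It is clearly a homomorphism of graded $(eBe,B)$-bimodules. Under the identification $A\cong eB$, it becomes $a\mapsto(b\mapsto \int\rightharpoonup(ab))$, which is exactly the pairing $\mu$.

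For bijectivity, we will use that $B\cong\gEnd_{eBe}(Be)$ via left multiplication. Then $\gHom_{eBe}(Be,eBe)$ is the set of endomorphisms $f$ of $Be$ with image in $eBe$. Each such $f$ is left multiplication by a unique $x\in B$. Since $f=e\circ f$ (composition with left multiplication by $e$), uniqueness gives $x=ex\in eB$. This shows surjectivity, and injectivity follows from the uniqueness in the endomorphism identification. The main point requiring care is that the isomorphism $A\#H\cong\gEnd_{A^H}(A)$ really corresponds to left multiplication on $Be$ under Lemma \ref{A^H and A smash H}(2). This holds because the map in Lemma \ref{A^H and A smash H}(2) is a left $A\#H$-module isomorphism for the action $(a\#h)\cdot b=a(h\rightharpoonup b)$.

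Finally, the Morita context induced by $A_{A^H}$ is $(\gEnd_{A^H}(A),A^H,\gHom_{A^H}(A,A^H),A,\text{ev},\text{comp})$. We will transport it along the isomorphisms $\gEnd_{A^H}(A)\cong A\#H$ and $\gHom_{A^H}(A,A^H)\cong A$, and check that the two pairings coincide with $\tau$ and $\mu$. For $\mu$, evaluation $\phi(a)(b)=\int\rightharpoonup(ab)$ is exactly $\mu(a\otimes b)$. For $\tau$, the composite endomorphism $b\mapsto a'\,\phi(a)(b)=a'(\int\rightharpoonup(ab))$ must equal the action of $(a'\#1)(1\#\int)(a\#1)$ on $b$. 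This follows from the action formula $(x\#h)\cdot b=x(h\rightharpoonup b)$ and the injectivity of $A\#H\to\gEnd_{A^H}(A)$. These are routine verifications, so the context is isomorphic to the one given by $A_{A^H}$.
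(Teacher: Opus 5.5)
Your proposal is correct and follows essentially the same route as the paper. Both arguments transport the problem to $eBe$, $Be$, $eB$ via Lemma \ref{A^H and A smash H}, then use the hypothesis $B\cong\gEnd_{eBe}(Be)$ to identify $\gHom_{eBe}(Be,eBe)$ with $e\cdot\gEnd_{eBe}(Be)\cong eB$; the paper writes this step as $e(A\#H)\otimes_{A\#H}\gEnd_{A^H}(A)\cong e(A\#H)$, and your explicit checks that the transported pairings are $\tau$ and $\mu$ fill in what the paper leaves as routine verification.
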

\begin{proof}
    By Lemma \ref{A^H and A smash H},
\begin{align*}
    \gHom_{A^H}(A,A^H)&\cong \gHom_{e(A\#H)e}((A\#H)e,e(A\#H)e)\\
    &\cong e(A\#H)\otimes_{A\#H}\gHom_{e(A\#H)e}((A\#H)e,(A\#H)e)\\
    &\cong e(A\#H)\otimes_{A\#H} \gHom_{A^H}(A,A)\\
    &\cong e(A\#H)\otimes_{A\#H} A\#H\\
    &\cong e(A\#H)\\
    &\cong A
\end{align*}
as graded $(A^H,A\#H)$-bimodules. The remaining verification is routine.
\end{proof}

\begin{lemma}\label{A smash H is AS-regular}
    Keep the assumptions and notation as above. If $A$ is a noetherian commonly graded AS-regular algebra of dimension $d$, then so is $A\#H$.
\end{lemma}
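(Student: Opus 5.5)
We need to show that $A\#H$ is a noetherian commonly graded algebra of finite graded global dimension $d$ satisfying the AS-Gorenstein conditions of Definition \ref{def-generalized-AS-Gorenstein}. The plan is to transfer everything from $A$ through the finite extension $A\subseteq A\#H$, using semisimplicity of $H$.

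First, $A\#H$ is noetherian and commonly graded, since $H$ sits in degree $0$ and $A\#H\cong A\otimes_k H$ as graded spaces. For the global dimension, take any graded right $A\#H$-module $X$. Restriction to $A$ is exact and preserves projectives, because $A\#H\cong A\otimes H$ is free over $A$. Induction $-\otimes_A(A\#H)$ is exact and preserves projectives as well. Since $H$ is semisimple, the standard averaging argument with the integral $\int$ (Maschke-type, as in \cite[Chapter 4]{Mo}) shows that $X$ is a direct summand of $X\otimes_A (A\#H)$ as an $A\#H$-module. Hence $\pdim_{A\#H}X\leqslant \pdim_A X\leqslant d$, so $\gldim(A\#H)\leqslant d$. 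The same argument works for left modules.

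Second, for the AS conditions, I would compute $\gExt_{A\#H}^i(S,A\#H)$ for graded simple $S$. I use the isomorphism
\[\gExt^i_{A\#H}(X,A\#H)\cong \gExt^i_A(X,A)\otimes H,\]
obtained from $\gHom_{A\#H}(X,A\#H)\cong\gHom_A(X,A)\otimes H$ via the free left $H$-structure; alternatively, one can again take $H$-invariants of $\gExt_A^i(X,A\#H)$. A graded simple $A\#H$-module $S$ is finite-dimensional, so $S_A$ has finite length with simple composition factors. The AS-Gorenstein property of $A$ then gives $\gExt^i_A(S,A)=0$ for $i\neq d$, and hence the same vanishing over $A\#H$. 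Injective dimension $d$ on both sides follows from $\gldim=d$ and the nonvanishing of $\gExt^d$.

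The main obstacle is condition (3), the bijection on simples. My first approach is to avoid it via the balanced dualizing complex. By \cite[Proposition 2.8]{LSW}, $R\Gamma_{A\#H}\cong R\Gamma_A$ on $A\#H$-modules, so $A\#H$ is balanced CM with canonical module
\[D(R^d\Gamma_A(A\#H))\cong \Omega_A\otimes H,\]
which is projective on both sides. I would then show it is invertible, using that $R\gHom$ of the dualizing complex into itself gives $A\#H$, as in the proof of Theorem \ref{when A is GAS Gorenstein}. Applying \cite[Theorem 7.8 (6)]{LW2} then yields that $A\#H$ is AS-Gorenstein, hence AS-regular, since its global dimension is $d$. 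Alternatively, AS-regularity may follow directly from finite global dimension together with $\lcd=d$ and the $\chi$-condition, via \cite[Corollary 8.13]{LW2} or \cite[Lemma 6.4]{LSW}-type criteria.
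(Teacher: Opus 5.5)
Your outline is sound, and the finish you list as an alternative is exactly the paper's proof. The paper:
\begin{itemize}
\item uses \cite[Proposition 2.8]{LSW} to get $R\Gamma_{A\#H}(X)\cong R\Gamma_A(X)$;
\item deduces finite $\lcd$ and the $\chi$-condition on both sides, hence a balanced dualizing complex (you should state this step before speaking of the canonical module of $A\#H$);
\item uses that $A\#H$ is free, hence MCM, over $A$ to get balanced CM of dimension $d$;
\item quotes \cite[Theorem 1.1]{Liu} for $\gldim(A\#H)=d$;
\item concludes with \cite[Lemma 6.4]{LSW}.
\end{itemize}

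You differ in two places. First, you replace the citation of Liu by the Maschke-type splitting of $X\otimes_A(A\#H)\to X$. This is a valid, self-contained argument, but it only gives $\gldim(A\#H)\leqslant d$. You should get equality from $\depth_{A\#H}(A\#H)=d$.

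Second, your preferred finish goes through invertibility of the canonical module and \cite[Theorem 7.8(6)]{LW2}. This also works, but the claim that $D(R^d\Gamma_A(A\#H))$ is projective over $A\#H$ on both sides needs justification. As an $A$-module on each side it is a direct sum of $\dim H$ copies of $\Omega_A$; this uses that $A\#H$ is free over $A$ on both sides, together with $R\Gamma_A\cong R\Gamma_{A^o}$ on $A\#H$. Your Maschke splitting then upgrades $A$-projectivity to $A\#H$-projectivity. The route through \cite[Lemma 6.4]{LSW} is shorter, because it delivers all three AS conditions at once. That makes your direct analysis of $\gExt$ against simples unnecessary.

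That analysis also contains a false formula. The isomorphism $\gExt^i_{A\#H}(X,A\#H)\cong\gExt^i_A(X,A)\otimes H$ fails already for $X=A\#H$ and $i=0$. There the left side is $A\#H$, whose degree-$n$ part has dimension $\dim H\cdot\dim A_n$, while the right side has dimension $(\dim H)^2\dim A_n$.

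What is true is your ``alternatively''. By exactness of invariants for semisimple $H$, $\gExt^i_{A\#H}(X,A\#H)\cong\gExt^i_A(X,A\#H)^H$. Since $A\#H$ is free of rank $\dim H$ as a right $A$-module, $\gExt^i_A(X,A\#H)$ is a direct sum of $\dim H$ copies of $\gExt^i_A(X,A)$. This still gives the vanishing for $i\neq d$, so the error is not load-bearing.
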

\begin{proof}  
    For any $X\in\gr A\#H$, $R\Gamma_{A\#H}(X)\cong R\Gamma_A(X)$ by \cite[Proposition 2.8]{LSW}. Hence $\lcd (A\#H)=\lcd A$, and $R^i\Gamma_{A\#H}(X)$ is bounded above for any $i>0$. Thus $\lcd (A\#H)$ is finite and $A\#H$ satisfies the $\chi$-condition by Proposition \ref{facts about chi condition}. 
    Dually, $\lcd(A\#H)^o$ is finite, and $(A\#H)^o$ satisfies the $\chi$-condition. 
    Therefore, by Theorem \ref{existence of balanced dualizing complex}, $A\#H$ admits a balanced dualizing complex. 
    Moreover, since $A\#H$ is free as an $A$-module, it is an MCM $A$-module. The isomorphism $R\Gamma_{A\#H}(A\#H)\cong R\Gamma_A(A\#H)$ implies that $A\#H$ is a balanced CM algebra of dimension $d$. By \cite[Theorem 1.1]{Liu}, $\gldim(A\#H)= d$. 
    It then follows from \cite[Lemma 6.4]{LSW} that $A\#H$ is a commonly graded AS-regular algebra of dimension $d$.
\end{proof}

To study when the invariant subring $A^G$ of an $\mathbb{N}$-graded algebra $A$ under the action of a group $G$ is a noncommutative isolated singularity, Mori and Ueyama introduced the notion of an ample group action in \cite[Definition 2.11]{MU}. 
An ample Hopf action may be defined analogously.

\begin{definition}\label{def-ample-Hopf-action}
    Let $A$ be a noetherian commonly graded algebra, and let $H$ be a finite-dimensional semisimple Hopf algebra such that $A$ is a left $H$-module algebra and the $H$-action on $A$ is homogeneous. The $H$-action on $A$ is called {\it ample}
    if the Morita context $(A\#H,A^H,{}_{A^H}A_{A\#H},{}_{A\#H}A_{A^H},\tau,\mu)$ induces an equivalence
    $$-\otimes_{\cA\#\cH}\cA:(\qgr A\#H,s)\rightleftarrows (\qgr A^H,s):-\otimes_{\cA^{\cH}}\cA.$$
\end{definition}

By Definition \ref{def-ample-Hopf-action}, any ample Hopf action on a commonly graded AS-regular algebra induces a noncommutative resolution of its invariant subring. 
More precisely, we establish the following equivalent characterizations.

\begin{theorem}\label{ample Hopf action when hdet not trivial}
Let $A$ be a noetherian commonly graded AS-regular algebra of dimension $d\geqslant 2$, and let $H$ be a finite-dimensional semisimple Hopf algebra. Suppose that $A$ is a left $H$-module algebra and that the $H$-action on $A$ is homogeneous. Then the following statements are equivalent.
\begin{itemize}
    \item [(1)] The $H$-action on $A$ is ample.
    \item [(2)] The functor $-\otimes_{A\#H}A$ induces an equivalence:
    $$-\otimes_{\cA\#\cH}\cA:(\qgr A\#H,s)\to (\qgr A^H,s).$$
    \item [(3)] $A\#H/(1\#\int)$ is finite-dimensional, where $(1\#\int)$ denotes the ideal generated by $1\#\int$.
     \item [(4)] $\gldim(\qgr A^H)=d-1$, and the natural map
    $$A\#H\to \gEnd_{A^H}(A), \, a\#h \mapsto \big(b\mapsto a(h\rightharpoonup b)\big)$$
    is an isomorphism of graded algebras.
    \item [(5)] $A\#H$ is a noncommutative resolution of $A^H$, given by $({}_{A\#H}A_{A^H},{}_{A^H}A_{A\#H})$.
\end{itemize}
\end{theorem}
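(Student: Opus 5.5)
We prove the implications in a cycle, using that $B=A\#H$ is a noetherian commonly graded AS-regular algebra of dimension $d$ (Lemma \ref{A smash H is AS-regular}). Put $e=1\#\int$. By Lemma \ref{A^H and A smash H}, the Morita context $(A\#H,A^H,A,A,\tau,\mu)$ is the one given by the idempotent $e$: it identifies $A^H\cong eBe$, ${}_{A\#H}A_{A^H}\cong Be$ and ${}_{A^H}A_{A\#H}\cong eB$. Under these identifications the image of $\tau$ is $BeB=(1\#\int)$. Also $\mu$ is surjective, since $\int\rightharpoonup 1=1$. So $\Coker\mu=0$ and $\Coker\tau=A\#H/(1\#\int)$.

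\textbf{(1)$\Leftrightarrow$(3).} Theorem \ref{equ. of quot. cat. induced by Morita context for commonly graded} applies, since all four modules are finitely generated over noetherian rings. It says the context induces an equivalence exactly when both cokernels are finite-dimensional, which here means exactly (3).

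\textbf{(1)$\Rightarrow$(2)} is immediate. For \textbf{(2)$\Rightarrow$(3)}, suppose $F=-\otimes_{\cA\#\cH}\cA$ is an equivalence. Let $S$ be a graded simple $B$-module killed by $e$; then $S\otimes_B Be=Se=0$. If $B/BeB$ were infinite-dimensional, it would have a graded simple quotient module $X$ with $\pi X\neq0$, hence a quotient of $\pi(B/BeB)$ that is nonzero in $\qgr B$. Every such module $X$ satisfies $Xe=0$, so $F(\pi X)=0$ although $\pi X\neq0$, contradicting faithfulness. To make this rigorous, note that $B/BeB$ is a finitely generated module killed by $e$. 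Exactness of the induced functor $\pi_{A^H}(-\otimes_B Be)$ then gives $F(\pi(B/BeB))=\pi((B/BeB)e)=0$, so $\pi(B/BeB)=0$, i.e.\ $B/BeB$ is torsion, hence finite-dimensional.

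\textbf{(3)$\Rightarrow$(5).} Take $M'=eB\cong A$ as a projective right $B$-module, so that $\gEnd_B(eB)\cong eBe\cong A^H$ and $\gHom_B(eB,B)\cong Be\cong A$. Condition (3) is exactly (H3), so Hypothesis \ref{hypothesis} holds with $B=A\#H$. Theorem \ref{hypo. is equi. to B is ncr of A} then shows that $B$ is a noncommutative resolution of $A^H$ given by $({}_{A\#H}A_{A^H},{}_{A^H}A_{A\#H})$.

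\textbf{(5)$\Rightarrow$(4).} Theorem \ref{hypo. is equi. to B is ncr of A} and Proposition \ref{M is MCM iff M is a generator} show that $A^H$ is a balanced CM isolated singularity, so $\gldim(\qgr A^H)=d-1$. Theorem \ref{Morita for nc quasi-proj space}(3) gives $B\cong\gEnd_{A^H}(A)$. We must check that this isomorphism is the natural map $a\#h\mapsto(b\mapsto a(h\rightharpoonup b))$. Both come from the left $B$-action on $Be\cong A$: the isomorphism $Be\to A$ carries left multiplication by $a\#h$ to $b\mapsto a(h\rightharpoonup b)$. This verification is the main bookkeeping step.

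\textbf{(4)$\Rightarrow$(1).} By Lemma \ref{A' and A}, the isomorphism in (4) makes the context isomorphic to the one defined by $A_{A^H}$. We know $A^H$ is balanced CM (Lemma \ref{property of A^H}(3)), and with $\gldim(\qgr A^H)=d-1$ it is a balanced CM isolated singularity, granted the left–right equality from Theorem \ref{A and A^o are noncommutative iso. sing.}. By Lemma \ref{MCM A^H and MCM A}, $A_{A^H}$ is an MCM generator, and $\gEnd_{A^H}(A)\cong B$ is AS-regular of dimension $d$. We would now like to apply Theorem \ref{M_A is CT module}, which needs $A_{A^H}$ to be $(d-1)$-cluster tilting. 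This is the expected obstacle, and there are two routes.

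\emph{Direct route.} Use Lemma \ref{equi. cond. for ample (2) for MCM}: $(\pi A,s)$ is ample in $\qgr A^H$. Then Theorem \ref{noncommutative Serre theorem for commonly graded algebra}, applied to the triple $(\qgr A^H,\pi A,s)$, identifies $\qgr A^H$ with $\qgr \gEnd_{\cA^H}(\pi A)$. Since $\depth A\geqslant2$, this endomorphism ring is $\gEnd_{A^H}(A)\cong B$, giving $(\qgr A^H,s)\cong(\qgr B,s)$. It remains to show this equivalence is induced by our context. That follows from Theorem \ref{Morita for nc quasi-proj space}(6), which yields a context isomorphic to the one defined by $A_{A^H}$, and hence (1).
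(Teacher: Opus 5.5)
Your proof is correct. Its formal half is organized differently from the paper's.

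The paper proves (1)$\Rightarrow$(2)$\Rightarrow$(3)$\Rightarrow$(5)$\Rightarrow$(1) and (5)$\Rightarrow$(4)$\Rightarrow$(2).

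\begin{itemize}
\item Its (2)$\Rightarrow$(3) is indirect. It first uses the equivalence to identify $A\#H\cong\gEnd_{\cA\#\cH}(\cA\#\cH)\cong\gEnd_{A^H}(A)$ via the natural map, which needs $A\#H$ and $A_{A^H}$ to have depth at least $2$. It then uses Lemma~\ref{A' and A} and Theorem~\ref{Morita for nc quasi-proj space} to see that the whole Morita context induces the equivalence. Only after that does it read off $\Coker\tau$ from Theorem~\ref{equ. of quot. cat. induced by Morita context for commonly graded}.
\item Its (3)$\Rightarrow$(5) combines that theorem with Lemmas~\ref{A smash H is AS-regular} and \ref{MCM A^H and MCM A}.
\end{itemize}

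You instead identify the context with the idempotent context of $e=1\#\int$ at the outset.

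\begin{itemize}
\item (1)$\Leftrightarrow$(3) is then immediate from Theorem~\ref{equ. of quot. cat. induced by Morita context for commonly graded}, since $\Coker\mu=0$ and $\Coker\tau=B/BeB$.
\item (2)$\Rightarrow$(3) follows from the annihilator argument $(B/BeB)\otimes_B A\cong (B/BeB)e=0$; faithfulness of the equivalence then gives $\pi(B/BeB)=0$.
\item (3)$\Rightarrow$(5) comes from recognizing Hypothesis~\ref{hypothesis} with $M'=eB$ and quoting Theorem~\ref{hypo. is equi. to B is ncr of A}.
\item Your (5)$\Rightarrow$(4) is the paper's.
\item Your direct route for (4)$\Rightarrow$(1) uses exactly the ingredients of the paper's (4)$\Rightarrow$(2): ampleness of $(\pi A,s)$ from Lemma~\ref{equi. cond. for ample (2) for MCM}, the noncommutative Serre theorem, then Theorem~\ref{Morita for nc quasi-proj space}(6) together with Lemma~\ref{A' and A}.
\end{itemize}

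Your arrangement shows that (1)$\Leftrightarrow$(2)$\Leftrightarrow$(3) holds for any noetherian commonly graded $A$ with a homogeneous action. No depth, regularity, or Morita theory of quasi-projective spaces is involved. It also makes (3)$\Rightarrow$(5) a literal instance of the idempotent setting of Hypothesis~\ref{hypothesis}. The paper's arrangement keeps the natural map $A\#H\to\gEnd_{A^H}(A)$ in view throughout, at the price of a longer (2)$\Rightarrow$(3).

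Two expository fixes:

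\begin{itemize}
\item Delete the heuristic about a graded simple quotient $X$ of $B/BeB$ with $\pi X\neq 0$. Graded simple modules over a commonly graded algebra are finite-dimensional, so $\pi X=0$ always. Your rigorous argument does not need this, nor any exactness; it only needs $X\otimes_B Be\cong Xe$.
\item Drop the references to Theorem~\ref{M_A is CT module} and to a ``second route'' in (4)$\Rightarrow$(1). The direct route already closes the cycle.
\end{itemize}
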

\begin{proof}
(1) $\Rightarrow$ (2) Trivial.

(2) $\Rightarrow$ (3) Observe that $A\#H\cong \gEnd_{\cA\#\cH}(\cA\#\cH)\cong \gEnd_{\cA^{\cH}}(\cA)\cong \gEnd_{A^H}(A)$, where the isomorphism is given by
$$A\#H\to \gEnd_{A^H}(A),\, a\#h\mapsto (b\mapsto a(h \rightharpoonup b)).$$
By Lemma \ref{A' and A}, the graded Morita context $(A\#H,A^H,{}_{A^H}A_{A\#H},{}_{A\#H}A_{A^H},\tau,\mu)$ is equivalent to that defined by $A_{A^H}$.
Therefore, by Theorem \ref{Morita for nc quasi-proj space},
$$-\otimes_{\cA\#\cH}\cA:\qgr A\#H\rightleftarrows\qgr A^H:-\otimes_{\cA^{\cH}}\cA$$
is an equivalence. 

Since the image of $\tau:A\otimes_{A^H}A\to A\#H$ is $(1\#\int)$, 
Theorem \ref{equ. of quot. cat. induced by Morita context for commonly graded} implies that $A\#H/(1\#\int)$ is finite-dimensional.

(3) $\Rightarrow$ (5) 
Since $A_{A\#H}$ is projective, the map $\mu:A\otimes_{A\#H}A\to A^H$ is surjective. 
By Theorem \ref{equ. of quot. cat. induced by Morita context for commonly graded} and Lemma \ref{A smash H is AS-regular}, $A\#H$ is a noncommutative quasi-resolution of $A^H$ given by $(A,A)$. 
By Lemma \ref{MCM A^H and MCM A}, $A_{A^H}$ is a generator; hence $A\#H$ is a noncommutative resolution of $A^H$.

(5) $\Rightarrow$ (1) It follows directly from the definition of a noncommutative resolution.

(5) $\Rightarrow$ (4) By Theorem \ref{hypo. is equi. to B is ncr of A}, $A^H$ is a balanced CM isolated singularity of dimension $d$. The natural isomorphism follows from the argument used to establish (2) $\Rightarrow$ (3).

(4) $\Rightarrow$ (2) By Lemma \ref{property of A^H}(3), $A^H$ is a balanced CM algebra of dimension $d$. 
It follows from (4) and Theorem \ref{A and A^o are noncommutative iso. sing.} that $A^H$ is a balanced CM isolated singularity. By Lemma \ref{MCM A^H and MCM A}, $A_{A^H}$ is an MCM generator. Hence, Lemma \ref{equi. cond. for ample (2) for MCM} implies that $(\pi_{A^H}A,s)$ is ample in $\qgr A^H$. Since $\depth_{A^H}A=d\geqslant 2$, its graded endomorphism ring in $\qgr A^H$ is isomorphic to $\gEnd_{A^H}(A)$, which is identified with $A\#H$ via the natural map in (4). Therefore, Theorems \ref{noncommutative Serre theorem for commonly graded algebra} and \ref{Morita for nc quasi-proj space} yield the equivalence in (2).
\end{proof}


In the final part of this section, we consider the noncommutative resolution of the invariant subring when the homological determinant of the Hopf action is trivial.

\begin{lemma}\label{Hom_A^H(A,-)}
Keep the assumptions and notation as above. Suppose that $A$ is a balanced CM algebra of dimension $d\geqslant 2$, that $(\cA,s)$ is ample in $\qgr A^H$, and that $\gEnd_{A^H}(A)$ is a finitely generated graded $A^H$-module.
\begin{itemize}
    \item [(1)] If $N$ is an MCM $A^H$-module satisfying $\gExt^i_{A^H}(A,N)=0$ for $0<i<d-1$, then $\gHom_{A^H}(A,N)$ is an MCM $A$-module.
    \item [(2)] If $\gldim A=d$, then 
    $$\{N\in\MCM A^H \mid \gExt^i_{A^H}(A,N)=0, \forall \, 0<i<d-1\}\subseteq \add_{A^H}A.$$
\end{itemize}
\end{lemma}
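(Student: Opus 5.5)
We set $E=\gEnd_{A^H}(A)$ and, for an MCM $A^H$-module $N$, $Y=\gHom_{A^H}(A,N)$. The module $Y$ is a right $E$-module. Since $A$ acts on $A_{A^H}$ by left multiplication, $A\to E$ is an algebra map, so $Y$ is a right $A$-module, and by restriction a right $A^H$-module. The plan for (1) is to compute $R^i\Gamma_A(Y)$ by passing through $A^H$ and $E$, and to reduce to the vanishing of $\gExt^i_{A^H}(A,N)$ via the quotient-category equivalence supplied by the ampleness hypothesis.

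First, by Lemma \ref{MCM A^H and MCM A}, $A_{A^H}$ is MCM, so $\depth_{A^H}A=d\geqslant 2$. Lemma \ref{omega pi and depth 2} then gives $E\cong\gEnd_{\cA^{\cH}}(\cA)$. Since $(\cA,s)$ is ample in $\qgr A^H$ and $A^H$ is a noetherian noncommutative projective coordinate ring, Theorem \ref{noncommutative Serre theorem for commonly graded algebra}(1) shows that $E$ is bounded below. Hence $E$ is a right noetherian noncommutative projective coordinate ring, and $(\qgr A^H,\cA,s)\cong(\qgr E,\cE,s)$. The finite generation of $E$ over $A^H$ makes $E$ finite over $A^H$. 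By \cite[Proposition 2.8]{LSW}, for any $Y$ finitely generated over $E$ we get
\[
R^i\Gamma_E(Y)\cong R^i\Gamma_{A^H}(Y)\cong R^i\Gamma_A(Y).
\]
Because $\depth_{A^H}N\geqslant2$, the equivalence $C_{A^H}\simeq C_E$ of Proposition \ref{C_A and C_B} (via Theorem \ref{Morita for nc quasi-proj space}) shows that $Y\in C_E$. In particular $Y$ is finitely generated over $E$, hence over $A^H$ and over $A$, and $\Gamma(Y)=R^1\Gamma(Y)=0$. For $0<i<d-1$ we then use Lemma \ref{cohomology in tails} together with the equivalence, which sends $\cA\mapsto\cE$ and $\cN\mapsto\cY$:
\[
R^{i+1}\Gamma_E(Y)\cong\gExt^i_{\cE}(\cE,\cY)\cong\gExt^i_{\cA^{\cH}}(\cA,\cN)\cong\gExt^i_{A^H}(A,N)=0.
\]
The last isomorphism is \cite[Lemma 5.5]{LSW}, which applies since $A$ and $N$ are MCM over $A^H$. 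Therefore $R^j\Gamma_A(Y)=0$ for all $j<d$. As $\lcd A=d$ and $\depth_AY$ is finite (\cite[Lemma 8.1]{LW2}), we conclude that $\depth_AY=d$ and $Y$ is MCM over $A$.

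The step I expect to need the most care is identifying $\cY$ with the image of $\cN$ under the equivalence, and checking that the $A$-module structure on $Y$ is compatible with the $E$-module structure used in comparing local cohomologies. The first point is settled by $Y=\gHom_{A^H}(A,\omega\pi N)\cong\gHom_{\cA^{\cH}}(\cA,\cN)$ together with Theorem \ref{Morita for nc quasi-proj space}(5). The second holds because $A\to E$ is an algebra map and $E$ is finite over $A^H$.

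For (2), assume $\gldim A=d$. A balanced CM algebra of dimension $d$ with graded global dimension $d$ is commonly graded AS-regular by \cite[Lemma 6.4]{LSW}, so $\MCM(A)=\proj A$ by \cite[Corollary 8.13]{LW2}. Hence for $N$ in the left-hand set, (1) gives $Y\in\add_AA$, and restricting scalars yields $Y_{A^H}\in\add_{A^H}A$. Finally, by Lemma \ref{property of A^H}(1), $A=A^H\oplus C$ as $(A^H,A^H)$-bimodules, so
\[
Y=\gHom_{A^H}(A^H,N)\oplus\gHom_{A^H}(C,N)\cong N\oplus\gHom_{A^H}(C,N)
\]
as right $A^H$-modules; here the right $A^H$-action on $Y$ comes from the left $A^H$-action on $A$, which is compatible with the bimodule splitting. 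Thus $N$ is a direct summand of $Y_{A^H}$, and so $N\in\add_{A^H}A$.
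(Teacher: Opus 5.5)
Your proof is correct, but part (1) takes a different route from the paper.

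\textbf{Shared step.} Both arguments obtain finite generation of $Y=\gHom_{A^H}(A,N)$ the same way: ampleness via \cite[Theorem 4.12]{LSW} (Proposition~\ref{C_A and C_B}), together with finiteness of $\gEnd_{A^H}(A)$ over $A^H$.

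\textbf{The paper's route.} It then computes $\depth_A Y$ directly with the change-of-rings spectral sequence $\gExt^p_A(A/J_A,\gExt^q_{A^H}(A,N))\Rightarrow\gExt^{p+q}_{A^H}(A/J_A,N)$. Since the rows $0<q<d-1$ vanish, $\gExt^p_A(A/J_A,Y)$ is a subquotient of $\gExt^p_{A^H}(A/J_A,N)$ for $p<d$. The latter is zero because $A/J_A$ is finite-dimensional over $A^H$ and $N$ is MCM.

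\textbf{Your route.} You rerun the proof of Proposition~\ref{MCM B and Ext in qgr A} with $E$ in the role of $B$. You use the Serre-theorem equivalence, Lemma~\ref{cohomology in tails} and \cite[Lemma 5.5]{LSW}, then transport the vanishing of $R^j\Gamma_E(Y)$ to $A$ via \cite[Proposition 2.8]{LSW}. This is uniform with the Morita-type machinery for noncommutative quasi-projective spaces and gives $\depth_E Y\geqslant d$ as a by-product. The paper's route is more direct: it avoids comparing local cohomology along $A^H\to A\to E$.

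\textbf{Points to tidy.} Because you invoke the Morita-type results formally, two hypotheses need checking.
First, boundedness below of $E$ is hypothesis H4 of Theorem~\ref{noncommutative Serre theorem for commonly graded algebra}, not one of its conclusions. It does follow from $E$ being finite over $A^H$, as you note in the next sentence.
Second, Theorem~\ref{Morita for nc quasi-proj space} and Proposition~\ref{C_A and C_B} require $E$ to be noetherian on both sides, while the Serre theorem only gives right noetherian. Left noetherianity is easy to supply. A surjection from a finite sum of shifts of $A^H$ onto $A_{A^H}$ embeds $E$ into a finite direct sum of shifts of ${}_{A^H}A$. Here $E$ is viewed as a left module over the image of $A^H$, acting by post-composition. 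That direct sum is a noetherian left $A^H$-module because ${}_{A^H}A$ is finitely generated.

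Part (2) is the paper's argument; your use of $\MCM(A)=\proj A$ replaces its appeal to Auslander--Buchsbaum.
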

\begin{proof}
(1)  
Since $(\cA,s)$ is ample in $\qgr A^H$,
it follows from the proof of \cite[Theorem 4.12]{LSW}
that $\gHom_{A^H}(A,N)$ is a finitely generated $\gEnd_{A^H}(A)$-module for any $N\in \MCM A^H$.


%
There is a natural algebra morphism $A^H \hookrightarrow A \to \gEnd_{A^H}(A)$.
Since $\gEnd_{A^H}(A)$ is, by assumption, a finitely generated graded $A^H$-module, 
it 
is also a finitely generated graded $A$-module.
Thus, $\gHom_{A^H}(A,N)$ is a finitely generated graded $A$-module.

Let $S=A/J_A$ and $P^\bullet\to S\to 0$ be a minimal graded projective resolution of $S_A$. Let $0\to N\to I^\bullet$ be a minimal graded injective resolution of $N_{A^H}$. The double complex $\gHom_A(P^\bullet,\gHom_{A^H}(A,I^\bullet))$ yields the following convergent spectral sequence:
$$\gExt_A^p(S,\gExt^q_{A^H}(A,N))\Rightarrow \gExt_{A^H}^{p+q}(S,N).$$
Since $\gExt^i_{A^H}(A,N)=0$ for $0<i<d-1$, it follows that $\gExt^p_A(S,\gHom_{A^H}(A,N))$ is a subquotient of $\gExt^p_{A^H}(S,N)$ for $p<d$. Since $N$ is an MCM $A^H$-module, $\gExt^i_{A^H}(X,N)=0$ for any finite-dimensional $A^H$-module $X$ and any $i<d$. 
Therefore, for any $i<d$,
$$\gExt^i_A(S,\gHom_{A^H}(A,N))=0,$$ 
which implies that $\depth_A\gHom_{A^H}(A,N)\geqslant d$; hence, $\gHom_{A^H}(A,N)$ is an MCM $A$-module.

(2) Suppose $\gldim A=d$. Then, by \cite[Lemma 6.4]{LSW}, $A$ is a commonly graded AS-regular algebra.
By the Auslander-Buchsbaum formula \cite[Theorem 1.7]{HY}, 
$$\pdim_A\gHom_{A^H}(A,N)=0.$$
Hence, either as a graded $A$-module or a graded $A^H$-module, $\gHom_{A^H}(A,N)$ is a direct summand of a finite direct sum of shifts of $A$. 
By Lemma \ref{property of A^H}, $A=A^H\oplus C$ as a graded $(A^H,A^H)$-bimodule. Hence,
$$\gHom_{A^H}(A,N)\cong \gHom_{A^H}(A^H,N)\oplus\gHom_{A^H}(C,N)\cong N\oplus\gHom_{A^H}(C,N)$$
as graded $A^H$-modules.
Therefore, $N$ is a direct summand of $\gHom_{A^H}(A,N)$ in $\gr A^H$. Consequently, $N\in\add_{A^H}A$.
\end{proof}



\begin{proposition}\label{A is d-1 CT A^H module}
    Let $A$ be a noetherian basic commonly graded AS-regular algebra of dimension $d\geqslant 2$, and let $H$ be a finite-dimensional semisimple Hopf algebra. Suppose that $A$ is a left $H$-module algebra, that the $H$-action on $A$ is homogeneous, and that the homological determinant $\hdet_{\mathbf{e}}$ is trivial. If $A^H$ is a noncommutative isolated singularity and $B=\gEnd_{A^H}(A)$ is an MCM $A$-module, then
    \begin{itemize}
        \item [(1)] $A_{A^H}$ is a $(d-1)$-cluster tilting $A^H$-module;
        \item [(2)] $B$ is a noetherian commonly graded AS-regular algebra of dimension $d$;
        \item [(3)] $B$ is a noncommutative resolution of $A^H$ given by $(A,\gHom_{A^H}(A,A^H))$.  
    \end{itemize}
\end{proposition}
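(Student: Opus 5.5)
The plan is to prove (1) first and then deduce (2) and (3) directly from Theorem \ref{M_A is CT module}.

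\textbf{Step 1: $A^H$ is a balanced CM isolated singularity.} By Theorem \ref{A^H is AS Gorenstein}, $A^H$ is a noetherian commonly graded AS-Gorenstein algebra of dimension $d$. Since it is also assumed to be a noncommutative isolated singularity, it is a commonly graded AS-Gorenstein isolated singularity. Corollary \ref{ASG+iso. is ASG iso.} then makes it a balanced CM isolated singularity of dimension $d$. In particular $\gldim(\qgr A^H)=d-1$.

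\textbf{Step 2: the module $A_{A^H}$.} By Lemma \ref{MCM A^H and MCM A}, $A_{A^H}$ is an MCM generator. Lemma \ref{equi. cond. for ample (2)...} (the balanced CM isolated singularity case) then shows that $(\pi A,s)$ is ample in $\qgr A^H$.

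\textbf{Step 3: cluster tilting, first description.} The hypothesis that $B$ is an MCM $A$-module gives in particular that $B$ is finitely generated over $A$. Since $A$ is module-finite over $A^H$, $B$ is then finitely generated over $A^H$. So the hypotheses of Lemma \ref{Hom_A^H(A,-)} hold, with $\gldim A=d$ because $A$ is AS-regular. Part (2) of that lemma gives
\[
\{N\in\MCM A^H\mid \gExt^i_{A^H}(A,N)=0,\ 0<i<d-1\}\subseteq\add_{A^H}A.
\]
For the reverse inclusion it suffices to show $\gExt^i_{A^H}(A,A)=0$ for $0<i<d-1$. The route is this:
\begin{itemize}
\item use Lemma \ref{cohomology in tails} to identify these groups with $\gExt^i_{\cA^H}(\pi A,\pi A)$ (cf.\ \cite[Lemma 5.5]{LSW});
\item use that $\pi B\cong\gHom_{\cA^H}(\pi A,\pi A)$ by depth considerations;
\item compute via the Serre/local duality of $A^H$ together with the MCM property of $B_A$.
\end{itemize}
A cleaner alternative is available. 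Since $B$ is MCM over $A$ and $A$ is AS-regular, $B_A$ is projective by the Auslander–Buchsbaum formula \cite[Theorem 1.7]{HY}. Then $\gHom_{A^H}(A,A)=B$ lies in $\add A$, and the spectral sequence in the proof of Lemma \ref{Hom_A^H(A,-)}, applied with $N=A$, should be used in reverse to force the vanishing of $\gExt^i_{A^H}(A,A)$ from $\depth_A B=d$ and $\depth_{A^H}A=d$. I expect this vanishing to be the main obstacle. If the spectral sequence does not suffice, I would instead use the AS-Gorenstein duality $\gHom_{A^H}(-,A^H)$ on $\MCM$ (Proposition \ref{Hom(-,Omega) induce duality on MCM}(4)) together with the triviality of $\hdet_{\mathbf e}$. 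Proposition \ref{hdet is trivial} gives $\Omega_{A^H}$ invertible, so that $\Omega_{A^H}\otimes-$ preserves $\add A$ and the relevant Ext groups.

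\textbf{Step 4: the second description, and conclusion.} To get the second equality in Definition \ref{cluster tilting module}, use the canonical duality of Proposition \ref{Hom(-,Omega) induce duality on MCM}(3). It sends $\gExt^i_{A^H}(N,A)$ to $\gExt^i_{(A^H)^o}(A^\dagger,N^\dagger)$, and $A^\dagger=\gHom_{A^H}(A,\Omega_{A^H})$ is identified with ${}_{A^H}A$ twisted by the invertible bimodule. Running the left-module version of Step 3, via the symmetric Lemma \ref{Hom_A^H(A,-)} for $A^o$, then yields the other description. This proves (1). Statements (2) and (3) follow from Theorem \ref{M_A is CT module} applied to $M=A_{A^H}$, since $\gHom_{A^H}(A,A^H)$ is the $M'$ there.
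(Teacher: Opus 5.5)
Your Steps 1 and 2, the inclusion $\subseteq$ from Lemma~\ref{Hom_A^H(A,-)}(2), and the final appeal to Theorem~\ref{M_A is CT module} are fine. The gap is the reverse inclusion, i.e.\ the vanishing $\gExt^i_{A^H}(A,A)=0$ for $0<i<d-1$. This is the one place where the hypothesis that $B$ is MCM over $A$ does real work. You list three routes but carry none of them out, and you flag this step yourself as the main obstacle. As written, none of the routes closes it.

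In your first route, the missing link is not a duality on $A^H$ but a comparison with the local cohomology of $B$. Since $\depth_{A^H}A\geqslant 2$, we have $B\cong\gEnd_{\qgr A^H}(\pi A)$. Step 2 and Theorem~\ref{noncommutative Serre theorem for commonly graded algebra} then give an isomorphism of triples $(\qgr A^H,\pi A,s)\cong(\qgr B,\cB,s)$. Hence for $0<i<d-1$
\[
\gExt^i_{A^H}(A,A)\cong\gExt^i_{\qgr A^H}(\pi A,\pi A)\cong\gExt^i_{\cB}(\cB,\cB)\cong R^{i+1}\Gamma_B(B)\cong R^{i+1}\Gamma_A(B)=0,
\]
by \cite[Lemma 5.5]{LSW}, the triple isomorphism, Lemma~\ref{cohomology in tails}, \cite[Proposition 2.8]{LSW}, and $\depth_AB=d$. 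The paper packages the same idea differently. By \cite[Proposition 5.14]{LSW}, $B$ has a balanced dualizing complex. The isomorphism $R\Gamma_B(B)\cong R\Gamma_A(B)$ makes $B_B$ MCM, so $B$ is balanced CM. Then \cite[Proposition 5.15]{LSW} yields the vanishing.

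Your spectral-sequence route can also be completed, but only with an ingredient you never mention. Put $E^q=\gExt^q_{A^H}(A,A)$. Then $E^q$ is finite-dimensional for $q\geqslant1$, by Lemma~\ref{equi. cond. for ample (2) for MCM}(3) applied to the balanced CM isolated singularity $A^H$; this is the same lemma you already use in Step 2. Without this, a nonzero $E^{q_0}$ of positive depth gives $E_2^{0,q_0}=0$, and nothing is forced.

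With it, let $q_0\in(0,d-1)$ be minimal with $E^{q_0}\neq0$. Then $E_2^{0,q_0}=\gHom_A(A/J_A,E^{q_0})\neq0$. Every differential leaving this term lands either in a zero row $0<q<q_0$ or in a subquotient of $\gExt^{q_0+1}_A(A/J_A,B)$, which vanishes because $q_0+1<d=\depth_AB$. So the term survives to $E_\infty$ and forces $\gExt^{q_0}_{A^H}(A/J_A,A)\neq0$. Since $A/J_A$ is finite-dimensional over $A^H$, this contradicts $\depth_{A^H}A=d$.

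Step 4 inherits the same gap. The inclusion $\add_{A^H}A\subseteq\{N\mid \gExt^i_{A^H}(N,A)=0,\ 0<i<d-1\}$ is again exactly this vanishing. Moreover, your ``left-module version of Step 3'' would need a left-handed analogue of the hypothesis on $B$, which you would have to transport through Proposition~\ref{Hom(-,Omega) induce duality on MCM}(3). The paper avoids Step 4 altogether: once $\add_{A^H}A=\{N\in\MCM A^H\mid \gExt^i_{A^H}(A,N)=0,\ 0<i<d-1\}$ is known, it concludes with \cite[Theorems 6.5, 6.6]{LSW}.
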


\begin{proof}
By Theorem \ref{A^H is AS Gorenstein}, $A^H$ is a commonly graded AS-Gorenstein isolated singularity of dimension $d$. By Lemma \ref{MCM A^H and MCM A}, $A_{A^H}$ is an MCM generator. Therefore, \cite[Proposition 5.14]{LSW} implies that $B$ has a balanced dualizing complex.

By \cite[Proposition 2.8]{LSW}, $R\Gamma_A(B)\cong R\Gamma_B(B)$, which implies that
$B$ is an MCM $B$-module. 
Thus, $B$ is a balanced Cohen-Macaulay algebra of dimension $d$. Therefore,
$\gExt_{A^H}^i(A,A)=0$
for all $0<i<d-1$ by \cite[Proposition 5.15]{LSW}. It follows from Lemma \ref{Hom_A^H(A,-)} that
$$\{N\in\MCM A^H\mid \gExt^i_{A^H}(A,N)=0, \forall \, 0<i<d-1\}= \add_{A^H}A.$$

The assertions then follow from \cite[Theorem 6.5, Theorem 6.6]{LSW}. 
\end{proof}

The following theorem generalizes \cite[Theorem 3.10]{MU}.

\begin{theorem}\label{ample Hopf action}
Let $A$ be a noetherian basic commonly graded AS-regular algebra of dimension $d\geqslant 2$, and let $H$ be a finite-dimensional semisimple Hopf algebra. Suppose that $A$ is a left $H$-module algebra, that the $H$-action on $A$ is homogeneous, and that the homological determinant $\hdet_{\mathbf{e}}$ is trivial. Then the following statements are equivalent.
\begin{itemize}
    \item [(1)] The $H$-action on $A$ is ample.
    \item [(2)] The functor $-\otimes_{A\#H}A$ induces an equivalence:
    $$-\otimes_{\cA\#\cH}\cA:(\qgr A\#H,s)\to (\qgr A^H,s).$$
    \item [(3)] $A\#H/(1\#\int)$ is finite-dimensional where $(1\#\int)$ is the ideal generated by $1\#\int$.
    \item [(4)] $A^H$ is a noncommutative isolated singularity, and 
    $$A\#H\to \gEnd_{A^H}(A),a\#h\mapsto (b\mapsto a(h\rightharpoonup b))$$
    is an isomorphism of graded algebras.
    \item [(5)] $A\#H$ is a noncommutative resolution of $A^H$ given by $({}_{A\#H}A_{A^H},{}_{A^H}A_{A\#H})$.
\end{itemize}
\end{theorem}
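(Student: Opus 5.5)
Statements (1), (2), (3) and (5) are equivalent for any homogeneous action on an AS-regular $A$ of dimension $d\geqslant 2$, by Theorem \ref{ample Hopf action when hdet not trivial}. That theorem also shows these are equivalent to its own condition (4): $\gldim(\qgr A^H)=d-1$ together with the natural isomorphism $A\#H\cong\gEnd_{A^H}(A)$. So the only new work is to show that, when $\hdet_{\mathbf e}$ is trivial, condition (4) here agrees with condition (4) of Theorem \ref{ample Hopf action when hdet not trivial}. In other words, we must show that "$A^H$ is a noncommutative isolated singularity" can stand in for "$\gldim(\qgr A^H)=d-1$".

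One direction is immediate. If $\gldim(\qgr A^H)=d-1$, then $\gldim(\qgr A^H)$ is finite, so $A^H$ is a noncommutative isolated singularity. Hence (5) $\Rightarrow$ (4) follows from (5) $\Rightarrow$ (4) of Theorem \ref{ample Hopf action when hdet not trivial}.

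For (4) $\Rightarrow$ (5), I use the trivial homological determinant. By Theorem \ref{A^H is AS Gorenstein}, $A^H$ is a noetherian commonly graded AS-Gorenstein algebra of dimension $d$. If it is also a noncommutative isolated singularity, then it is an AS-Gorenstein isolated singularity, and Corollary \ref{ASG+iso. is ASG iso.} makes it a balanced CM isolated singularity. In particular $\gldim(\qgr A^H)=d-1$. Combined with the assumed isomorphism $A\#H\cong\gEnd_{A^H}(A)$, this is exactly condition (4) of Theorem \ref{ample Hopf action when hdet not trivial}, which yields (5).

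An alternative route for (4) $\Rightarrow$ (5) would be Proposition \ref{A is d-1 CT A^H module}. By Lemma \ref{A smash H is AS-regular}, $B=\gEnd_{A^H}(A)\cong A\#H$ is free over $A$, hence MCM over $A$, so that proposition also applies. The point that needs care is simply invoking the AS-Gorenstein property via Theorem \ref{A^H is AS Gorenstein}, since this is where triviality of $\hdet_{\mathbf e}$ is needed. Without it, finiteness of $\gldim(\qgr A^H)$ need not force the value $d-1$, as Example \ref{triangular balanced CM example} shows. I expect no further obstacle.
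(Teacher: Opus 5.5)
Your proposal is correct and follows essentially the same route as the paper. The paper reduces everything to Theorem \ref{ample Hopf action when hdet not trivial} and, for (4) $\Rightarrow$ (5), uses Theorem \ref{A^H is AS Gorenstein} and Corollary \ref{ASG+iso. is ASG iso.} to upgrade ``noncommutative isolated singularity'' to $\gldim(\qgr A^H)=d-1$ (the paper's proof writes $\gldim(\qgr A)$ there, a typo you correctly avoid), while your alternative route via Proposition \ref{A is d-1 CT A^H module} also works but is not needed.
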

\begin{proof}
By Theorem \ref{ample Hopf action when hdet not trivial}, it remains to prove that (4) implies $\gldim(\qgr A)=d-1$.
It follows from Theorem \ref{A^H is AS Gorenstein} that $A^H$ is a commonly graded AS-Gorenstein algebra of dimension $d$. 
Hence, Corollary \ref{ASG+iso. is ASG iso.} shows that $\gldim(\qgr A)=d-1$.
\end{proof}

\section{Illustrative examples}\label{examples}
\subsection{An ample group action with trivial homological determinant on a two-dimensional preprojective algebra}
In this subsection, we study an example of group actions on $\mathbb{N}$-graded AS-regular algebras, introduced in \cite{Wei2}. 

Assume that the characteristic of the base field $k$ is not $2$. For $n\geqslant 2$, let $Q_{n-1}$ denote the quiver associated with the extended Dynkin diagram $\tilde{A}_{n-1}$ given below.
\begin{center}
    \begin{tikzcd}
                                 &                                  &  n \arrow[lld, "\alpha_n"'] &                                    &                                          \\
 1 \arrow[r, "\alpha_1"'] &  2 \arrow[r, "\alpha_2"'] & \cdots \arrow[r, "\alpha_{n-3}"']  & n-2 \arrow[r, "\alpha_{n-2}"'] &  n-1 \arrow[llu, "\alpha_{n-1}"']
\end{tikzcd}
\end{center}
Let $\bar{Q}_{n-1}$ denote the double quiver of $Q_{n-1}$; that is, $\bar{Q}_{n-1}$ has the same vertex set as $Q_{n-1}$ and additional arrows $\alpha_i':i+1\to i$ for $i\in \mathbb{Z}_n$.
\begin{center}
    \begin{tikzcd}
                                     &                                                 & n \arrow[lld, shift right=2] \arrow[rrd]             &                                                   &                                                       \\
1 \arrow[r, shift right] \arrow[rru] & 2 \arrow[r, shift right] \arrow[l, shift right] & \cdots \arrow[r, shift right] \arrow[l, shift right] & n-2 \arrow[r, shift right] \arrow[l, shift right] & n-1 \arrow[llu, shift right=2] \arrow[l, shift right]
\end{tikzcd}
\end{center}

Let $A$ be the preprojective algebra with respect to $Q_{n-1}$, that is,
$$A=k\bar{Q}_{n-1}/(\sum_{i=1}^n(\alpha_i\alpha_i'-\alpha_i'\alpha_i)).$$ 
Then $A$ is a noetherian $\mathbb{N}$-graded AS-regular algebra of dimension $2$ by \cite[Proposition 2.11]{Wei1}. 

For convenience, we continue to use $\alpha_i$ and $\alpha_i'$ to denote the corresponding elements of $A$. Let $e_i$ denote the trivial arrow at vertex $i$. For each $i\in \mathbb{Z}_n$, left multiplication of $\sum_{i=1}^n(\alpha_i\alpha_i'-\alpha_i'\alpha_i)$ by $e_i$ gives
$$\alpha_i\alpha_i'-\alpha'_{i-1}\alpha_{i-1}=0 \in A.$$ 

Let $g$ be the graded automorphism of $A$ defined by $g(\alpha_1)=\alpha_1$, $g(\alpha_1')=\alpha_1'$, and $g(\alpha_i)=-\alpha_i$, $g(\alpha_i')=-\alpha_i'$ for $i\geqslant 2$. 
Let $G=<\id,g>$ be the cyclic group generated by $g$, $H=kG$, and $\int=(\id+g)/2$. By \cite[Corollary 5.8]{Wei2}, the homological determinant of $g$ is trivial. Hence, by Theorem \ref{A^H is AS Gorenstein} (or \cite[Theorem 3.12]{Wei2}), $A^H$ is a noetherian $\mathbb{N}$-graded AS-Gorenstein algebra of dimension $2$. 

We claim that $A\#H/(1\#\int)$ is a finite-dimensional algebra and that
$A\#H$ is a noncommutative resolution of $A^H$. Let $I=(1\#\int)$ be the ideal of $A\#H$ generated by $1\#\int$. 
 
Note that, for any $x\in A$, $x\#\id  + x\#g = 2x\#\int\in I.$
Now, suppose that $x\in A$ satisfies $g(x)=-x$. Then
$$\textstyle x\#\id-x\#g=(1\#\int)\cdot 2(x\#\id)\in I,$$ 
which implies that $x\#\id\in I$ and $x\# g\in I$. 
Therefore, for any $i\geqslant 2$, 
$$\alpha_i\#\id,\ \alpha_i\#g,\ \alpha_i'\#\id,\  \alpha_i'\#g\in I.$$  
Since $\alpha_1\alpha_1'=\alpha_n'\alpha_n$ and $\alpha_1'\alpha_1 = \alpha_2\alpha_2' \in A$, it follows that $y\#\id, y\#g \in I$ for any $y \in A$ represented by a path of length greater than $1$. Hence, $(A\#H)_{\geqslant 2}\subseteq I$, and $A\#H/(1\#\int)$ is finite-dimensional. So we have the following proposition.

\begin{proposition}
    Keep the assumptions as above. 
    \begin{itemize}
        \item [(1)] The $H$-action on $A$ is ample.
        \item [(2)] $A^H$ is an AS-Gorenstein isolated singularity.
        \item [(3)] $\gEnd_{A^H}(A)\cong A\#H$.
        \item [(4)] $A\#H$ is a noncommutative resolution of $A^H$ given by $(A,A)$.
        \item [(5)] $A_{A^H}$ is a $1$-cluster tilting module. In particular, $A^H$ has only finitely many indecomposable MCM modules up to shifts and isomorphism.
    \end{itemize} 
\end{proposition}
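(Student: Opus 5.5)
The plan is to deduce everything from Theorem \ref{ample Hopf action}, whose hypotheses I first check. The algebra $A$ is a noetherian $\mathbb{N}$-graded AS-regular algebra of dimension $2$ by \cite[Proposition 2.11]{Wei1}. It is basic, since $A_0=\oplus_i ke_i$ is a product of copies of $k$ and $J_A=A_{\geqslant1}$. The group $H=kG$ is semisimple because $\operatorname{char}k\neq2$. The automorphism $g$ is graded, so the action is homogeneous, and its homological determinant is trivial by \cite[Corollary 5.8]{Wei2}. One point needs care: the triviality in \cite{Wei2} should match $\hdet_{\mathbf e}$ in Definition \ref{homological determinant} for a suitable generator $\mathbf e$. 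I will invoke the Remark after that definition, which identifies our $\hdet_{\mathbf e}$ with the one of \cite{WZ} in the (skew) Calabi--Yau case; preprojective algebras of $\tilde A_{n-1}$ are Calabi--Yau.

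The main computational input is condition (3) of Theorem \ref{ample Hopf action}, namely that $A\#H/(1\#\int)$ is finite-dimensional. This is the computation carried out just before the statement, so I only verify the key step. For a path $y$ of length $\geqslant2$, I will show that $y\#1$ and $y\#g$ lie in $I$.

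The elements $\alpha_i,\alpha_i'$ with $i\geqslant2$ are $g$-anti-invariant, so they lie in $I$, and hence so does any path containing one of them. The remaining length-$2$ paths are built only from $\alpha_1,\alpha_1'$, and the composable ones are $\alpha_1\alpha_1'$ and $\alpha_1'\alpha_1$. Using the preprojective relations, these equal $\alpha_n'\alpha_n$ and $\alpha_2\alpha_2'$ respectively, which involve anti-invariant arrows when $n\geqslant2$. In the case $n=2$, the relevant indices are read in $\mathbb Z_n$, and $\alpha_2=\alpha_n$ is still anti-invariant.

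Every path of length $\geqslant2$ either contains such a length-$2$ subpath or an anti-invariant arrow. Since $I$ is a two-sided ideal, this gives $(A\#H)_{\geqslant2}\subseteq I$. The quotient is then a quotient of $(A\#H)_{\leqslant1}$, which is finite-dimensional.

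With (3) established, Theorem \ref{ample Hopf action} yields (1) ampleness, (3) the isomorphism $\gEnd_{A^H}(A)\cong A\#H$ from its item (4), and (4) that $A\#H$ is a noncommutative resolution of $A^H$ given by $(A,A)$.

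For (2), Theorem \ref{A^H is AS Gorenstein} gives that $A^H$ is AS-Gorenstein of dimension $2$. Item (4) of Theorem \ref{ample Hopf action} also says $A^H$ is a noncommutative isolated singularity, so $A^H$ is an AS-Gorenstein isolated singularity. Since $A^H$ is $\mathbb N$-graded, it is AS-Gorenstein in the usual sense.

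For (5), $A^H$ is a balanced CM isolated singularity of dimension $2$ by Corollary \ref{ASG+iso. is ASG iso.}. Theorem \ref{M_A is CT module}, applied with $M=A_{A^H}$ (which is MCM by Lemma \ref{MCM A^H and MCM A}), gives that $A_{A^H}$ is $1$-cluster tilting, so $\add A=\MCM(A^H)$. Because $A$ is finitely generated over $A^H$, this gives finitely many indecomposable MCM modules up to shift, as in Corollary \ref{dimension two existence of resolutions}.

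The main obstacle I expect is the compatibility of the homological determinant conventions, i.e.\ identifying our $\hdet_{\mathbf e}$ with that of \cite{Wei2}, rather than the finite-dimensionality computation.
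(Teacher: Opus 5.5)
Your proposal is correct and follows the paper's route. The paper also establishes finite-dimensionality of $A\#H/(1\#\int)$ using the anti-invariant arrows together with the relations $\alpha_1\alpha_1'=\alpha_n'\alpha_n$ and $\alpha_1'\alpha_1=\alpha_2\alpha_2'$, takes triviality of the homological determinant from \cite{Wei2}, and then reads off all five items from Theorem~\ref{ample Hopf action} (with Theorem~\ref{A^H is AS Gorenstein} supplying the Gorenstein property); your extra checks of basicness and semisimplicity of $kG$, and your derivation of (5) via Theorem~\ref{M_A is CT module}, only make explicit what the paper leaves implicit.
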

\begin{proof}
    It follows from Theorem \ref{ample Hopf action}.
\end{proof}

\subsection{Quasi-Veronese algebras as resolutions of Veronese algebras}
We study the noncommutative resolution of (quasi-)Veronese algebras. We first recall the definition.

\begin{definition}
    Let $A$ be a $\mathbb{Z}$-graded algebra and $r$ be a positive integer. 
    \begin{itemize}
        \item [(1)] The $r$-th quasi-Veronese of $A$ is a $\mathbb{Z}$-graded algebra:
    $$A^{[r]}:=\bigoplus_{i\in \mathbb{Z}}\left(
    \begin{array}{cccc}
        A_{ri} & A_{ri+1} & \cdots & A_{r(i+1)-1}\\
        A_{ri-1} & A_{ri} & \cdots & A_{r(i+1)-2}\\
        \vdots & \vdots & \vdots &\vdots\\
        A_{r(i-1)+1}& A_{r(i-1)+2} & \cdots & A_{ri}
    \end{array}
    \right)$$
    with the matrix multiplication in each degree.
        \item [(2)] The $r$-th Veronese algebra of $A$ is a $\mathbb{Z}$-graded algebra:
    $$A^{(r)}:=\bigoplus_{i\in\mathbb{Z}}A_{ri}.$$
    \end{itemize}
\end{definition}

\begin{lemma}\cite[Lemma 3.9]{Mor2}\label{equivalence of A and quasi-Veronese}
If $A$ is a $\mathbb{Z}$-graded algebra and $r$ is a positive integer, then $\Gr A$ is equivalent to $\Gr A^{[r]}$. 
\end{lemma}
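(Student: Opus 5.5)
The plan is to write down the equivalence explicitly, using the standard functor that underlies quasi-Veronese constructions. For $M\in\Gr A$, set
\[
F(M)_i=(M_{ri},M_{ri+1},\dots,M_{r(i+1)-1})
\]
(or the version with indices reversed, chosen to match the row/column convention in the definition of $A^{[r]}$). Each $F(M)_i$ is a row vector of length $r$, and $A^{[r]}_j$ acts on it on the right by matrix multiplication. This works because the $(p,q)$-entry of $A^{[r]}_j$ is $A_{rj+q-p}$, and right multiplication sends $M_{ri+p}\cdot A_{rj+q-p}\subseteq M_{r(i+j)+q}$, so the entries land in the correct slot of $F(M)_{i+j}$. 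Associativity and unitality follow directly from associativity of the $A$-action together with the matrix product in $A^{[r]}$. A degree-zero $A$-module map $f\colon M\to N$ induces a degree-zero $A^{[r]}$-map $F(f)$ componentwise, so $F\colon \Gr A\to\Gr A^{[r]}$ is a functor.

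To show $F$ is an equivalence, I would realize it via an idempotent. Let $e_p$, for $0\le p\le r-1$, be the matrix unit in $A^{[r]}_0$ carrying $1\in A_0$ in the $(p,p)$ position. These are orthogonal idempotents summing to $1$. Define $G(X)_{ri+p}=X_i e_p$ for $X\in\Gr A^{[r]}$. An element $a\in A_n$ with $n=rj+q-p$ and $0\le p,q<r$ determines the matrix unit $a\,E_{pq}\in A^{[r]}_j$, and right multiplication by it maps $X_ie_p\to X_{i+j}e_q$. This gives the $A$-action on $G(X)$; associativity follows because $E_{pq}E_{qs}=E_{ps}$ reflects multiplication in $A$. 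One then checks directly that $GF\cong\id$ (since $F(M)_ie_p=M_{ri+p}$) and that $FG\cong\id$ (since $X_i=\bigoplus_p X_ie_p$).

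Equivalently, and more conceptually, one can observe that $A^{[r]}\cong \gEnd_A\bigl(\bigoplus_{p=0}^{r-1}A(p)\bigr)$ regraded by $r$. The module $P=\bigoplus_{p}A(p)$ is a graded generator whose shifts $P(ri)$ exhaust all shifts $A(n)$, and $F=\bigoplus_i\Hom_{\Gr A}(P(-ri),-)$. By the graded Gabriel--Popescu/Morita theorem for a set of small projective generators $\{P(ri)\}_i$, this Hom functor is an equivalence onto graded modules over the graded endomorphism ring.

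I expect the only real obstacle to be the bookkeeping of indices, that is, matching the paper's matrix convention (rows indexed downward with $A_{ri-p+q}$) so that modules are row vectors under right multiplication. Everything else is a routine verification.
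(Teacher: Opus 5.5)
Your proposal is correct and is essentially the paper's argument: the paper just writes down $F(M)=\bigoplus_{i\in\mathbb{Z}}\bigl(\bigoplus_{j=0}^{r-1}M_{ri-j}\bigr)$ and asserts that it is an equivalence. With the summands ordered as $(M_{ri-r+1},\dots,M_{ri})$, this is your functor $F(M)_i=(M_{ri},\dots,M_{ri+r-1})$ composed with the shift $M\mapsto M(1-r)$, and your indexing is the one that literally matches right row-vector multiplication when the $(p,q)$-entry of $A^{[r]}_j$ is $A_{rj+q-p}$. Your explicit quasi-inverse $G$ built from the idempotents $e_p$, with $GF\cong\id$ and $FG\cong\id$, supplies the verification that the paper omits.
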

\begin{proof}
    For any $M\in \Gr A$, define $F(M)=\bigoplus_{i\in\mathbb{Z}}(\bigoplus_{j=0}^{r-1}M_{ri-j})$. Then $F$ is an equivalence of categories.
\end{proof}

Note that the equivalence functor $F:\Gr A\to \Gr A^{[r]}$ does not generally commute with the shift functor.

\begin{proposition}\label{qV algebra is AS regular}
    If $A$ is a noetherian commonly graded AS-regular algebra of dimension $d$, then $A^{[r]}$ is also a noetherian commonly graded AS-regular algebra of dimension $d$.
\end{proposition}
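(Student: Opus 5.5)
We would prove this by realizing $A^{[r]}$ as a graded endomorphism ring of a projective $A$-module, so that the hypotheses transfer through the categorical equivalence $F:\Gr A\to\Gr A^{[r]}$ of Lemma \ref{equivalence of A and quasi-Veronese}. Set $P=\bigoplus_{j=0}^{r-1}A(j)$. A direct comparison of graded pieces shows $A^{[r]}\cong\bigoplus_{i}\Hom_{\Gr A}(P,P(ri))$. This is the "$r$-th Veronese" of $\gEnd_A(P)$, and $F\cong\bigoplus_i\Hom_{\Gr A}(P,-(ri))$. Since $P$ is a projective generator of $\Gr A$ and its shifts by multiples of $r$ exhaust all shifts of $A$ up to summands, $F$ is an exact equivalence.

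\textbf{Basic properties.} $A^{[r]}$ is locally finite and bounded below, because $A$ is. It is right noetherian because $F$ preserves noetherian objects: $F(A(j))$ are exactly the indecomposable projective summands of $A^{[r]}$ up to shift. Left noetherianity follows symmetrically, using $(A^{[r]})^o\cong(A^o)^{[r]}$ after transposing the matrices and reindexing. Since $F$ is an exact equivalence, $\gldim A^{[r]}=\gldim A=d$. Moreover, $F$ sends finitely generated modules to finitely generated modules and finite-dimensional modules to finite-dimensional modules, so it identifies the graded simples of $A$ up to shift with the graded simples of $A^{[r]}$ up to shift.

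\textbf{AS-Gorenstein conditions.} Condition (1) of Definition \ref{def-generalized-AS-Gorenstein} follows from the equivalence: injective dimension is categorical, so $A^{[r]}_{A^{[r]}}=F(P)$ has injective dimension equal to $\idim_A P=d$, and the left side is handled via $(A^o)^{[r]}$. For condition (2), take a graded simple $A^{[r]}$-module $S=F(T)$. Then
\[\gExt^i_{A^{[r]}}(F T,F(P))=\bigoplus_n\Ext^i_{\Gr A^{[r]}}(FT,F(P)(n)),\]
and the shift $(n)$ on $\Gr A^{[r]}$ corresponds under $F$ to the shift $(rn)$ on $\Gr A$. Hence this group is a direct summand of $\bigoplus_{j,m}\Ext^i_{\Gr A}(T,A(j+m))$, which vanishes for $i\neq d$ by the AS-Gorenstein property of $A$.

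\textbf{Main obstacle.} The bijection (3) between left and right simples is the hardest step. The cleanest route avoids a direct computation. We would show that $A^{[r]}$ has a balanced dualizing complex, using Theorem \ref{existence of balanced dualizing complex}: the $\chi$-condition and finite $\lcd$ transfer because $F$ preserves torsion modules, finite-dimensionality and the Jacobson radical filtration, so $R^i\Gamma$ is computed compatibly on both sides. With that in hand, \cite[Lemma 6.4]{LSW} applies as in Lemma \ref{A smash H is AS-regular}, provided $A^{[r]}$ is MCM over itself, which holds since depth is categorical. Together with finite global dimension $d$, this yields AS-regularity.

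If instead one argues directly, the plan is to compute $\gExt^d_{A^{[r]}}(F T,A^{[r]})$. It is identified with $(\gExt^d_A(T,A)$ appropriately regraded$)$ viewed as a left $(A^o)^{[r]}$-module. This is simple because $\gExt^d_A(T,A)$ is a simple left $A$-module. The same identification on the other side then gives the inverse map, and we would check that the two identifications are compatible.
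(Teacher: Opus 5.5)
Your proof is correct. It shares the first step with the paper (transport along the equivalence $F$ of Lemma \ref{equivalence of A and quasi-Veronese}) but finishes by a different route.

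\textbf{The paper's route.} It never constructs a dualizing complex for $A^{[r]}$. It uses the quasi-inverse $G$ to get $\gExt^i_{A^{[r]}}(X,A^{[r]})=0$ for $i\neq d$ and every graded simple $X$; this is essentially your computation for condition (2). It then applies $G$ to a minimal injective resolution $0\to A^{[r]}\to I^0\to\cdots\to I^d\to 0$. The $\chi$-condition of $A$ shows that $\soc GI^d$, hence $\soc I^d$, is a finite direct sum of graded simples, so $\gExt^d_{A^{[r]}}(X,A^{[r]})$ is finite-dimensional. After the dual argument on the left, an argument as in \cite[Proposition 2.25]{WY} yields AS-regularity, including the bijection (3).

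\textbf{Your route.} You transfer $\chi$, $\lcd$ and depth on both sides and obtain a balanced dualizing complex from Theorem \ref{existence of balanced dualizing complex}. You then finish with \cite[Lemma 6.4]{LSW}, exactly as in Lemma \ref{A smash H is AS-regular}. Along this route your direct checks of conditions (1) and (2), and your sketched direct treatment of (3), are not needed.

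\textbf{Comparison.} The paper's version stays at the level of simple modules and injective resolutions. Yours is uniform with the treatment of $A\#H$ and avoids (3) altogether. The cost is that you must justify that local cohomology commutes with an equivalence that does not commute with shifts.

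That step deserves a precise statement rather than ``preserves the Jacobson radical filtration.'' For a noetherian commonly graded algebra, $A/J$ is finite-dimensional. Hence a graded module is torsion if and only if it is a union of finite-length submodules, which is a categorical property. It follows that $F\circ R^i\Gamma_A\cong R^i\Gamma_{A^{[r]}}\circ F$. Moreover, since $F(M)_i=\bigoplus_{j=0}^{r-1}M_{ri-j}$, the functor $F$ preserves and reflects boundedness above. So $\chi$ transfers by Proposition \ref{facts about chi condition}(1), and depth transfers by Lemma \ref{depth and local cohomology}.

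Two minor points. The $A(j)$ need not be indecomposable, but right noetherianity only uses $A^{[r]}_{A^{[r]}}\cong F(P)$. Your left-noetherian argument via $(A^{[r]})^o\cong(A^o)^{[r]}$ makes explicit a point the paper leaves implicit.
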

\begin{proof}
    Since $\Gr A$ is equivalent to $\Gr A^{[r]}$, the graded global dimension of $A^{[r]}$ equals $d$, and $A^{[r]}$ is noetherian. 
    
    Let $F:\Gr A\to \Gr A^{[r]}$ be an equivalence of categories, and let $G$ be a quasi-inverse of $F$. 
    Then, for any $n\in\mathbb{Z}$, $G(A^{[r]}(n))$ is a projective module in $\Gr A$,
    and for any graded simple module $X$ in $\Gr A^{[r]}$, $GX$ is a graded simple module in $\Gr A$. 
    Hence, 
    \[\Ext_{\Gr A^{[r]}}^i(X,A^{[r]}(n))\cong \Ext_{\Gr A}^i(GX,G(A^{[r]}(n)))=0\] for all $i\neq d$ and all $n\in \mathbb{Z}$. 
    It follows that, for any graded simple $A^{[r]}$-module $X$, $\gExt_{A^{[r]}}^i(X,A^{[r]})=0$ for $i\neq d$. 

    Let $0\to A^{[r]}\to I^0\to \cdots \to I^d\to 0$ be a minimal graded injective resolution. 
    Then $0\to GA^{[r]}\to GI^0\to \cdots \to GI^d\to 0$ is a minimal graded injective resolution of the projective $A$-module $GA^{[r]}$. 
    Since $A$ satisfies the $\chi$-condition, $\gExt_A^d(A/J_A,GA^{[r]})\cong \gHom_A(A/J_A,GI^d)\cong \soc GI^d$ is finite-dimensional, 
    which implies that $\soc GI^d$ is a finite direct sum of graded simple $A$-modules. Consequently, $\soc I^d$ is a finite direct sum of graded simple $A^{[r]}$-modules. 
    Hence, $\gExt_{A^{[r]}}^d(X,A^{[r]})$ is finite-dimensional for any graded simple $A^{[r]}$-module.

    Dually, for any graded left simple $A^{[r]}$-module $Y$, $\gExt_{(A^{[r]})^o}^i(Y,A^{[r]})=0$ for $i\neq d$, and $\gExt_{(A^{[r]})^o}^d(Y,A^{[r]})$ is finite-dimensional. By an argument analogous to that of \cite[Proposition 2.25]{WY}, $A^{[r]}$ is a noetherian commonly graded AS-regular algebra of dimension $d$.
\end{proof}

For the $r$-th quasi-Veronese algebra $A^{[r]}$ of $A$, let
$$e=\left(
    \begin{array}{cccc}
        1 &0 & \cdots & 0\\
        0 & 0 & \cdots & 0\\
        \vdots & \vdots & \vdots &\vdots\\
        0& 0 & \cdots & 0
    \end{array}
    \right)\in (A^{[r]})_0.$$
Then $e$ is an idempotent in $A^{[r]}$ such that $eA^{[r]}e\cong A^{(r)}$.

\begin{theorem}\label{Veronese resolution}
    Suppose that $A$ is a noetherian $\mathbb{N}$-graded AS-regular algebra of dimension $d\geqslant 2$, generated by $A_1$ over $A_0$. Then $A^{(r)}$ is a balanced CM isolated singularity of dimension $d$, and $A^{[r]}$ is a noncommutative resolution of $A^{(r)}$ given by $(A^{[r]}e,eA^{[r]})$. Moreover, $(A^{[r]}e)_{A^{(r)}}$ is a $(d-1)$-cluster tilting module. If $U=D(R^d\Gamma_{A^{[r]}}(A^{[r]}))$, then $eUe$ is the canonical bimodule of $A^{(r)}$ and is modulo-torsion-invertible.
\end{theorem}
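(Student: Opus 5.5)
The strategy is to check Hypothesis \ref{hypothesis} for $B=A^{[r]}$ and $M'=eA^{[r]}$, and then read off every assertion from Theorem \ref{main theorem}, Theorem \ref{M_A is CT module} and Proposition \ref{dualizing complex of A}.

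By Proposition \ref{qV algebra is AS regular}, $B$ is a noetherian commonly graded AS-regular algebra of dimension $d$, so (H1) holds. Since $e\in B_0$ is an idempotent, $M'=eB$ is a finitely generated graded projective right $B$-module, giving (H2). Moreover
\[
\gEnd_B(eB)\cong eBe\cong A^{(r)}, \qquad M=\gHom_B(eB,B)\cong Be.
\]
The natural map $M\otimes_A M'\to B$ is multiplication $Be\otimes_{eBe}eB\to B$, whose image is the two-sided ideal $BeB$. So (H3) reduces to showing that $B/BeB$ is finite-dimensional.

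This finite-dimensionality is the main step. Index the rows and columns of the matrices by $0,\dots,r-1$, so $e=e_{00}$. The $(p,q)$ entry of $B_i$ is $A_{ri+q-p}$. The $(p,q)$ entry of $BeB$ in degree $i$ is the span of products of the form
\[
(\text{entry }(p,0)\text{ of } B_j)\cdot(\text{entry }(0,q)\text{ of } B_{i-j}) = A_{rj-p}\,A_{r(i-j)+q}.
\]
Because $A$ is generated by $A_1$ over $A_0$, we have $A_mA_n=A_{m+n}$ for $m,n\geqslant 0$. Hence, whenever $ri+q-p\geqslant r$ (so $i\geqslant 1$ suffices together with $p\leqslant r-1$), we can choose $j=1$; then $rj-p\geqslant 1>0$ and $r(i-1)+q\geqslant 0$, and these products fill all of $A_{ri+q-p}$. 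Only finitely many degrees $i\leqslant 1$ remain, and $B$ is locally finite and $\mathbb N$-graded, so $B/BeB$ is finite-dimensional. The delicate point is the boundary indices: one must check for each $(p,q)$ that some $j$ makes both degrees nonnegative. I expect this bookkeeping to be the main obstacle, but it affects only the finitely many degrees $i\leqslant 1$.

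With Hypothesis \ref{hypothesis} verified, Theorem \ref{main theorem} (equivalently Theorem \ref{hypo. is equi. to B is ncr of A}) gives that $A^{(r)}$ is a balanced CM isolated singularity of dimension $d$ and that $B$ is a noncommutative resolution given by $(Be,eB)$. Theorem \ref{M_A is CT module} (or part (3) of Theorem \ref{main theorem}) then shows that $(Be)_{A^{(r)}}$ is $(d-1)$-cluster tilting. Finally, Proposition \ref{dualizing complex of A} identifies the canonical bimodule with
\[
M'\otimes_BU\otimes_BM=eB\otimes_BU\otimes_BBe\cong eUe.
\]
Since $A^{(r)}$ is a balanced CM isolated singularity, $\gldim(\qgr A^{(r)})=d-1$, and Theorem \ref{canonical bimodule and isolated singularity} shows that $eUe$ is modulo-torsion-invertible.
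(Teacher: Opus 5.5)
Your proposal is correct and follows essentially the same route as the paper: verify Hypothesis \ref{hypothesis} for $B=A^{[r]}$ and $M'=eA^{[r]}$ via the identity $A^{[r]}_1eA^{[r]}_{n-1}=A^{[r]}_n$ (your choice $j=1$), then apply Theorem \ref{hypo. is equi. to B is ncr of A}, Theorem \ref{M_A is CT module}, Proposition \ref{dualizing complex of A} and Theorem \ref{canonical bimodule and isolated singularity}. The boundary bookkeeping you flag as the main obstacle is not actually an issue: for every $(p,q)$ and every $i\geqslant 1$, taking $j=1$ gives $r-p\geqslant 1$ and $r(i-1)+q\geqslant 0$. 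The condition $ri+q-p\geqslant r$ is therefore not needed, and in fact it is not implied by $i\geqslant 1$. Hence $(BeB)_{\geqslant 1}=B_{\geqslant 1}$, and since $B$ is $\mathbb{N}$-graded, only the finite-dimensional $B_0$ can survive in $B/BeB$.
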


\begin{proof}
    It follows from Proposition \ref{qV algebra is AS regular} that $A^{[r]}$ is a noetherian $\mathbb{N}$-graded AS-regular algebra of dimension $d$.

    Since $A$ is generated by $A_1$ over $A_0$, $A_iA_j=A_{i+j}$ for any $i,j$. Then, for any $n\geqslant 1$,
    \begin{align*}
        A^{[r]}_1eA^{[r]}_{n-1}&=\left(
    \begin{array}{cccc}
        A_{r}  & \cdots & A_{2r-1}\\
        \vdots  & \vdots &\vdots\\
        A_{1} & \cdots & A_{r}
    \end{array}
    \right)
    e
    \left(
    \begin{array}{cccc}
        A_{r(n-1)}  & \cdots & A_{rn-1}\\
        \vdots  & \vdots &\vdots\\
        A_{r(n-2)+1} & \cdots & A_{r(n-1)}
    \end{array}
    \right)\\
    &=\left(
    \begin{array}{cccc}
        A_{rn}  & \cdots & A_{r(n+1)-1}\\
        \vdots  & \vdots &\vdots\\
        A_{r(n-1)+1} & \cdots & A_{rn}
    \end{array}
    \right)\\
    &=A^{[r]}_n.
    \end{align*}
    Hence, $(A^{[r]}eA^{[r]})_{\geqslant 1}=A^{[r]}_{\geqslant 1}$, which implies that $A^{[r]}/(A^{[r]}eA^{[r]})$ is finite-dimensional. 
    
    Let $M'=eA^{[r]}$. Then $\gEnd_{A^{[r]}}(M')\cong eA^{[r]}e\cong A^{(r)}$.
    Theorem \ref{hypo. is equi. to B is ncr of A} shows that $A^{(r)}$ is a balanced CM isolated singularity of dimension $d$, and $A^{[r]}$ is a noncommutative resolution of $A^{(r)}$ given by $(A^{[r]}e,eA^{[r]})$.
    By Theorem \ref{M_A is CT module}, $(A^{[r]}e)_{A^{(r)}}$ is a $(d-1)$-cluster tilting module. 
    At last, Proposition \ref{dualizing complex of A} and Theorem \ref{canonical bimodule and isolated singularity} imply that 
    $$eA^{[r]}\otimes_{A^{[r]}}U\otimes_{A^{[r]}}A^{[r]}e\cong eUe,$$
    and $eUe$ is modulo-torsion-invertible.
\end{proof}

\begin{example}
Suppose that $A$ is a noetherian connected AS-regular algebra of dimension $d\geqslant 2$ with Gorenstein parameter $l$ over an algebraically closed field $k$ of characteristic $0$. 
Assume that $A$ is generated by $A_1$ over $A_0$.
Let $\zeta$ be a primitive $l$-th root of unity, and define an automorphism $\tau: A \to A$ by $x \mapsto \zeta^{\deg x}x$ (for any homogeneous element $x \in A$).
Then $A^{(l)}$ is isomorphic, as an ungraded algebra, to $A^G$, where $G$ is the finite group generated by $\tau$. Since the homological determinant $\hdet$ of the $G$-action is trivial, both $A^G$ and $A^{(l)}$ are AS-Gorenstein (see \cite[Theorem 3.6]{JZ}). Therefore, by Theorem \ref{when A is GAS Gorenstein}, $eA^{[l]}\otimes_{A^{[l]}} D(R^d\Gamma_{A^{[l]}}(A^{[l]}))\otimes_{A^{[l]}}A^{[l]}e$ is an invertible $(A^{(l)},A^{(l)})$-bimodule. 
\end{example}

\subsection{Free group actions with non-trivial homological determinants on polynomial rings}
In this subsection, we investigate the ampleness of free group actions on polynomials without assuming that the invariant subring is Gorenstein. 

\begin{setting}\label{group action}
    Let $R=k[x_1,\cdots,x_n]$ be a graded polynomial ring over a field $k$ of characteristic $0$, where $n\geqslant 2$ and $\deg x_i=1$. Let $G$ be a finite subgroup of $\GL_n(k)$ acting linearly on $x_1,\cdots,x_n$. Then $G$ can be viewed as a subgroup of the group of graded automorphisms of $R$. Let $\int=\frac{1}{\mid G\mid}\sum_{g\in G}g$. 
\end{setting}

By Lemma \ref{A smash H is AS-regular}, $R\#G$ is an $\mathbb{N}$-graded AS-regular algebra of dimension $n$. Moreover, by \cite[Theorem 4.1]{RRZ}, $R\#G$ is a skew Calabi-Yau algebra of dimension $n$.

A non-identity element $g$ of $\GL_n(k)$ is called a {\it pseudo-reflection} if $\rank(g-\id)\leqslant 1$. The group $G$ is said to act freely on $R$ if none of the eigenvalues of any non-identity element of $G$ is $1$. If $G$ acts freely on $R$, then it contains no pseudo-reflections.

For a graded prime ideal $\mathfrak{p}$ of a commutative $\mathbb{Z}$-graded ring $A$, the localization of $A$ with respect to the homogeneous elements of $A\backslash \mathfrak{p}$ is denoted by $A_{(\mathfrak{p})}$.
A commutative $\mathbb{Z}$-graded ring $A$ is called an {\it isolated singularity} if, for every non-maximal graded prime ideal $\mathfrak{p}$, the localization $A_{(\mathfrak{p})}$ has finite graded global dimension. 
If $A$ is a graded quotient of a polynomial ring, then $A$ is an isolated singularity if and only if $\gldim(\qgr A)$ is finite, by \cite[Corollary 4.13]{LW1}.

\begin{proposition}\label{propoerties of R^G}
Keep the assumptions and notation of Setting \ref{group action}. Suppose that $G$ contains no pseudo-reflections. Then the following hold.
    \begin{itemize}
        \item [(1)] \cite{A1,A2} $\End_{R^G}(R)\cong R\# G$.
        \item [(2)] \cite[Theorem 1]{W2} $R^G$ is Gorenstein if and only if $G\subset \SL_n(k)$.
        \item [(3)] \cite[Lemma 2.1]{MS} If $G$ acts on $R$ freely, then $R^G$ is an isolated singularity.
    \end{itemize}
\end{proposition}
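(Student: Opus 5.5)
The statement just given is Proposition \ref{propoerties of R^G}, and all three parts are classical results cited from the literature. The plan is therefore to check that each cited result applies in the present graded setting rather than to reprove it. The standing assumptions are those of Setting \ref{group action}: characteristic $0$, $G\subseteq\GL_n(k)$ finite with linear action, and $n\geqslant 2$.

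For (1), I will recall Auslander's argument. Since $G$ contains no pseudo-reflections, the extension $R^G\subseteq R$ is unramified in codimension one. So for every height one prime $\mathfrak p$ of $R^G$, the natural map $R\#G\to\End_{R^G}(R)$ becomes an isomorphism after localizing at $\mathfrak p$, where it is a Galois extension. The map is injective because $R\#G$ is torsion-free over $R^G$ and the map is generically an isomorphism: Galois theory for the fraction field extension gives $\mathrm{Frac}(R)\#G\cong\End_{\mathrm{Frac}(R)^G}(\mathrm{Frac}(R))$.

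Both sides of the map are reflexive $R^G$-modules. The source $R\#G\cong R^{|G|}$ is free over $R$ and hence MCM over $R^G$. The target is a Hom into the reflexive module $R$. A homomorphism of reflexive modules over the normal domain $R^G$ that is an isomorphism in codimension one is an isomorphism, which gives (1). Since $G$ acts by degree-preserving maps, the isomorphism is graded, and $\End$ agrees with $\gEnd$ because $R$ is finitely generated.

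For (2), I will simply invoke Watanabe's theorem. The Gorenstein property of $R^G$ is detected by the canonical module $\omega_{R^G}=(R\otimes\det^{-1})^G$. The absence of pseudo-reflections is exactly what makes this the right formula: reflexive modules over $R^G$ are then determined in codimension one, where the extension is étale. With that formula, $R^G$ is Gorenstein if and only if $\omega_{R^G}\cong R^G$ up to shift, which holds if and only if $\det$ is trivial on $G$.

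For (3), I take the Mori–Smith lemma, whose content is geometric. If $G$ acts freely, no nonzero vector $v\in k^n$ is fixed by any $g\neq\id$. Hence $G$ acts freely on $\mathbb A^n\setminus\{0\}$, so $\mathrm{Spec}R\setminus\{0\}\to\mathrm{Spec}R^G\setminus\{\mathfrak m\}$ is étale. For a non-maximal graded prime $\mathfrak p$ of $R^G$, the localization $R^G_{(\mathfrak p)}$ is therefore a direct summand of an étale-regular extension (the inertia group is trivial), so it is regular. This makes $R^G$ an isolated singularity in the sense just defined.

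The only genuinely delicate step is the reflexivity and codimension-one argument in (1). There I would follow Auslander verbatim and cite \cite{A1,A2}, so in the paper it suffices to record the citations.
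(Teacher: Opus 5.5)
Your proposal agrees with the paper, which gives no argument of its own here and simply cites Auslander, Watanabe and Mori--Smith. Your sketches of (1) and (3) are sound. In (1), both sides are reflexive over the normal domain $R^G$, and the map is an isomorphism at height-one primes because the inertia groups there are trivial. In (3), the action is free on the punctured spectrum, so $R^G_{(\mathfrak p)}\to R_{(\mathfrak p)}$ is finite \'etale. One clarification in (3): $R^G_{(\mathfrak p)}$ is regular because regularity descends along this faithfully flat map. Being a direct summand of a regular ring would not suffice, as $R^G\subseteq R$ itself shows.

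The heuristic you give for (2), however, puts the hypothesis in the wrong place. In characteristic $0$, the description of $\omega_{R^G}$ as a shift of $(R\otimes\det^{-1})^G$ holds for every finite $G$, with or without pseudo-reflections. What fails without the hypothesis is your last step, ``$(R\otimes\det^{-1})^G$ is free of rank one if and only if $\det$ is trivial on $G$''. For example, $G=\langle\mathrm{diag}(-1,1)\rangle$ acting on $k[x,y]$ gives the polynomial, hence Gorenstein, ring $R^G=k[x^2,y]$. Here $(R\otimes\det^{-1})^G=x\,k[x^2,y]$ is free, although $\det\neq 1$.

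The `only if' direction of (2) is exactly where the absence of pseudo-reflections enters. It makes $R^G\subseteq R$ unramified in codimension one. Since $R$ is factorial, Samuel's theorem then gives $\mathrm{Cl}(R^G)\cong H^1(G,R^\times)=\Hom(G,k^\times)$ via $\chi\mapsto[(R\otimes\chi)^G]$. So the class of $\omega_{R^G}$ is trivial only when $\det$ is; alternatively, you can argue through Auslander's correspondence from (1).

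Since you ultimately cite Watanabe, as the paper does, the proposal stands. But the sketch as written skips the actual content of that theorem.
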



We claim that if $G$ acts freely on $R$, then
the group action is ample in the sense of Definition \ref{def-ample-Hopf-action}. 
Thus, $R\#G$ is a noncommutative resolution of $R^G$. This was proved in \cite[Corollary 3.11]{MU} under the additional assumption that $R^G$ is Gorenstein (equivalently, that $G$ is a subgroup of $\SL_n(k)$, or that $\hdet$ is trivial). 

\begin{theorem}\label{G acts freely on polynomial}
    Keep the assumptions and notation of Setting \ref{group action}. Suppose that $G$ acts freely on $R$. Then the following statements hold.
    \begin{itemize}
        \item [(1)] The $G$-action on $R$ is ample.
        \item [(2)] $R\#G/(1\#\int)$ is finite-dimensional.
        \item [(3)] $R\#G$ is a noncommutative resolution of $R^G$ given by $({}_{R\#G}R_{R^G},{}_{R^G}R_{R\#G})$.
        \item [(4)] $R^G$ is a balanced CM isolated singularity of dimension $n$, $R_{R^G}$ is an $(n-1)$-cluster tilting module, and the canonical bimodule of $R^G$ is modulo-torsion-invertible.
    \end{itemize}
\end{theorem}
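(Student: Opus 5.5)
The plan is to reduce everything to condition (3) of Theorem \ref{ample Hopf action when hdet not trivial}, namely the finite-dimensionality of $R\#G/(1\#\int)$. Once this is known, that theorem immediately gives (1) and (3). For (4), the first two assertions come from Theorem \ref{hypo. is equi. to B is ncr of A} and Theorem \ref{M_A is CT module}. The invertibility of the canonical bimodule modulo torsion then follows from Theorem \ref{canonical bimodule and isolated singularity}, because $\gldim(\qgr R^G)=n-1$. The whole problem is therefore (2), and this does not need $G\subseteq\SL_n(k)$.

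To prove (2), let $I=(1\#\int)\subseteq R\#G$. I will show that $I$ contains $R_{\geqslant N}\#g$ for some $N$ and every $g\in G$. Since $R\#G=\bigoplus_g R\#g$, this gives (2). It suffices to find homogeneous elements $f_1,\dots,f_m$ in $R$ with $(f_1,\dots,f_m)$ $\mathfrak m$-primary and $f_j\#1\in I$, since then $I\supseteq (f_j)(R\#G)\supseteq R_{\geqslant N}\#G$. The standard source of such elements is Mori--Ueyama's observation: for $a,b\in R$,
\[
(a\#1)(1\#\textstyle\int)(b\#1)=\frac{1}{|G|}\sum_{g\in G} a\,g(b)\#g ,
\]
and summing such products gives elements of the form $\sum_g c_g\#g$ with $c_g=\sum_i a_i g(b_i)$. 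The goal is to choose $a_i,b_i$ so that $c_1=f$ and $c_g=0$ for $g\ne 1$. Then $f\#1\in I$.

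Geometrically, for each $g\ne 1$ the graph of $g$ and the diagonal in $\mathbb A^n\times\mathbb A^n$ meet only at the origin, since $g$ has no eigenvalue $1$. Equivalently, $V(x_i-g(x_i):i)=\{0\}$, so the ideal $J_g=(x_i-g(x_i))_i$ is $\mathfrak m$-primary. The ideal $L\subseteq R\otimes R$ of elements $\sum a_i\otimes b_i$ with $\sum a_i g(b_i)=0$ for all $g\ne1$ is the ideal of the finite union of graphs $\bigcup_{g\neq1}\Gamma_g$. Its image under the multiplication map $R\otimes R\to R$, which restricts to the diagonal, is the ideal of the intersection of the diagonal with that union, set-theoretically $\{0\}$. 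Hence the image is $\mathfrak m$-primary, and these are exactly the $c_1$ attainable with $c_g=0$ for $g\ne1$. Concretely, I will take products $\prod_{g\ne1}(x_{i_g}\otimes1-1\otimes g^{-1}(x_{i_g}))$ ranging over choices $i_g$. Each factor kills the $g$-component, and its image on the diagonal is $\prod_{g\neq 1}(x_{i_g}-g^{-1}x_{i_g})$. These products generate $\prod_{g\ne1}J_{g^{-1}}$, which is $\mathfrak m$-primary.

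The main obstacle is making this last computation airtight. I will check the twisting convention: $g(b)$ versus $g^{-1}$, and the right $R\#G$-multiplication rule $(1\#g)(b\#1)=g(b)\#g$. I will also check that the product of elements of the form $\sum a\otimes b$ in $R\otimes R$ maps compatibly to the components $c_g$. This holds because $c_g$ is the ring homomorphism $R\otimes R\to R$, $a\otimes b\mapsto a\,g(b)$, applied to the element. Since each $c_g$ is a ring homomorphism and each product has a factor killed by $c_g$ for $g\ne1$, the argument goes through, and characteristic $0$ is only needed for $\int$ to exist.
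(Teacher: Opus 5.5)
Your proof is correct. It takes a genuinely different route from the paper at the one substantive step.

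\textbf{What is shared.} Both arguments go through Theorem \ref{ample Hopf action when hdet not trivial}. Both deduce (4) from the same three theorems. The difference is which equivalent condition of that theorem gets verified, and how.

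\textbf{The paper's route.} The paper verifies condition (2), the equivalence $(\qgr R\#G,s)\to(\qgr R^G,s)$, by importing two classical facts:
\begin{itemize}
\item Auslander's isomorphism $\End_{R^G}(R)\cong R\#G$, valid since there are no pseudo-reflections;
\item the fact that $R^G$ is a commutative isolated singularity when the action is free.
\end{itemize}
From the second fact, localizing at non-maximal graded primes makes $\gExt^i_{R^G}(R,N)$ finite-dimensional. This gives ampleness of $(\cR,s)$ in $\qgr R^G$. Serre's theorem then gives $\qgr R^G\simeq\qgr\gEnd_{R^G}(R)=\qgr R\#G$, and Lemma \ref{A' and A} together with Theorem \ref{Morita for nc quasi-proj space} identifies the functors. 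In this route, statement (2) is an output, not the input.

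\textbf{Your route.} You verify condition (3) directly inside $R\#G$. The maps $\phi_h\colon a\otimes b\mapsto a\,h(b)$ are ring homomorphisms on the commutative ring $R\otimes_k R$, and $\phi_h$ kills the factor indexed by $h$. So the sums $\sum_i(a_i\#1)(1\#\int)(b_i\#1)$ you build equal $\frac{1}{|G|}\prod_{g\neq1}(x_{i_g}-g^{-1}(x_{i_g}))\#1$. This has several advantages:
\begin{itemize}
\item It is elementary and self-contained.
\item It needs neither Auslander's theorem nor the isolated-singularity input. The graded Auslander isomorphism and $\gldim(\qgr R^G)=n-1$ are recovered afterwards from condition (4) of Theorem \ref{ample Hopf action when hdet not trivial}.
\item It is quantitative. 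Freeness makes $\id-g^{-1}$ invertible on $R_1$, so $J_{g^{-1}}$ is exactly $\mathfrak m$, not merely $\mathfrak m$-primary. Your products therefore generate $\mathfrak m^{|G|-1}$, which gives $(R\#G)_{\geqslant |G|-1}\subseteq(1\#\int)$ over any field of characteristic $0$.
\end{itemize}
The same observation makes your geometric paragraph about unions of graphs unnecessary. That is just as well, because its set-theoretic Nullstellensatz reasoning would need a base change to $\bar k$ when $k$ is not algebraically closed; the concrete products avoid this entirely.

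\textbf{Trade-off.} The paper's route exercises its own ampleness-plus-Serre-theorem machinery and ties the result to classical commutative invariant theory. Yours isolates exactly where freeness is used.
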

\begin{proof}
    By Theorem \ref{ample Hopf action when hdet not trivial}, it suffices to prove 
    $$-\otimes_{\cR\#\cG}\cR:(\qgr R\#G,s)\to (\qgr R^G,s)$$
    is an equivalence of categories. By Proposition \ref{propoerties of R^G}, $\End_{R^G}(R)\cong R\#G$ and $R^G$ is an isolated singularity. 
    Hence, for any $M\in \MCM R^G$ and non-maximal graded prime ideal $\mathfrak{p}$, the homogeneous localization $M_{(\mathfrak{p})}$ is projective. 
    It follows that $\gExt_{R^G}^i(M,N)_{(\mathfrak{p})}=0$ for any $i>0$ and $N\in \gr R^G$. 
    Thus $\gExt_{R^G}^i(M,N)$ is finite-dimensional by \cite[Lemma 4.8]{LW1}. 

    For any epimorphism $\cN_1\to \cN_2$ in $\qgr R^G$ with kernel $\cK$,
choose $K\in \gr R^G$ such that $\pi K=\cK$. Since $\gExt_{R^G}^1(R,K)$ is finite-dimensional, Proposition \ref{facts about chi condition} implies that $\gExt_{\cR^G}^1(\cR,\cK)$ is bounded above.  
    It follows from the exact sequence $0\to \cK\to \cN_1\to \cN_2\to 0$ that
    $$\gHom_{\cR^G}(\cR,\cN_1)\to \gHom_{\cR^G}(\cR,\cN_2)\to \gExt_{\cR^G}^1(\cR,\cK)$$
    is exact.
    There exists an integer $n_0$ such that
    $$\gHom_{\cR^G}(\cR,\cN_1)_{\geqslant n_0}\to \gHom_{\cR^G}(\cR,\cN_2)_{\geqslant n_0}$$
    is surjective. Since $R_{R^G}$ is a generator, $(\cR,s)$ is ample in $\qgr R^G$. Therefore, there is an equivalence
    $$\pi\gHom_{\cR^G}(\cR,-):(\qgr R^G,s)\to (\qgr \gEnd_{R^G}(R),s).$$
    Since $\gEnd_{R^G}(R)\cong R\#G$, the functor
    $$\pi\gHom_{\cR^G}(\cR,-):(\qgr R^G,s)\to (\qgr R\#G,s)$$
    is an equivalence.
    By Lemma \ref{A' and A}, $R\cong \gHom_{R^G}(R,R^G)$ as graded $(R^G,R\#G)$-bimodules.
    By Theorem \ref{Morita for nc quasi-proj space}, $-\otimes_{\cR\#\cG}\cR$ is  quasi-inverse to $\pi\gHom_{\cR^G}(\cR,-)$. Thus, (1)--(3) follow. Finally, (4) follows from Theorems \ref{hypo. is equi. to B is ncr of A}, \ref{M_A is CT module}, and \ref{canonical bimodule and isolated singularity}.
\end{proof}

We note that, in the ungraded case, namely, when $G$ acts freely on $R=k[[x_1,\cdots,x_n]]$, Iyama proved that the module $R$ is an $(n-1)$-cluster tilting $R^G$-module \cite[Theorem 2.5]{I1}. 
In the graded setting, the cluster tilting assertion follows directly from the resolution in Theorem \ref{G acts freely on polynomial} and the equivalence of Theorem \ref{M_A is CT module}.

\subsection*{Acknowledgment} The authors gratefully acknowledge Xuyang Chen and Silu Liu for their valuable discussions and suggestions.

\subsection*{AI statement} AI was used to assist the authors in correcting grammar and spelling.

\end{document}